\title{A predicative variant of Hyland's Effective Topos}
\author{Maria Emilia Maietti and Samuele Maschio\\Dipartimento di Matematica, Universit\`a di Padova, Italy\\maietti/maschio@math.unipd.it}
\date{}
\newtheorem{theorem}{Theorem}[section]
\newtheorem{lemma}[theorem]{Lemma}
\newtheorem{corollary}[theorem]{Corollary}
\theoremstyle{definition}
\newtheorem{definition}[theorem]{Definition}
\newtheorem{remark}[theorem]{Remark}
\def\noteps{\mathrel{\!\not\mathrel{\,\overline{\varepsilon}\!}\,}}
\newcommand{\mtt}{\mbox{{\bf mTT}}}
\newcommand{\tset}{{\mathbf{Set}^{r}}}
\newcommand{\thprops}{\overline{\mathbf{Prop}^{r}_{s}}}
\newcommand{\thprop}{\overline{\mathbf{Prop}^{r}}}
\newcommand{\tar}{\mbox{$\widehat{ID_1}$}}
\newcommand{\eff}{{\mathbf{Eff} }}
\newcommand{\peff}{\mathbf{pEff}}
\newcommand{\mset}{{\mathbf{pEff}_{set}}}
\newcommand{\mcol}{{\mathbf{pEff}_{col}}}
\newcommand{\mprop}{{\mathbf{pEff}_{prop}}}
\newcommand{\mprops}{\mathbf{pEff}_{prop_s}}
\newcommand{\mf}{{\bf MF}}
\begin{document}
\maketitle
\begin{abstract}
Here, we present a subcategory $\peff$ of Hyland's Effective Topos $\eff$
 which can be considered a predicative
variant  of  $\eff$ itself.

 The construction of $\peff$ is motivated
by the desire of providing
a ``predicative'' categorical universe of realizers to model the Minimalist Foundation  for constructive mathematics which was ideated by the first author with G. Sambin
in 2005 and completed into a two-level formal system by the first author in 2009.

$\peff$  is a  ``predicative'' categorical universe because its objects and morphisms
can be formalized in Feferman's predicative weak theory of inductive definitions $\tar$.

Moreover, it  is a predicative variant of the Effective Topos for the following reasons.

First,
 $\peff$ is a list-arithmetic locally cartesian closed pretopos of definable
objects in $\tar$  with a  fibred category of small objects over $\peff$
and a (non-small) classifier of small subobjects. 

Second, it happens to coincide with the exact completion on the lex
category   defined as a predicative rendering in $\tar$
of the subcategory of $\eff$ of recursive functions. As a consequence
it validates the Formal Church's thesis and it embeds  in $\eff$
 by preserving the list-arithmetic locally cartesian closed pretopos  structure.

 %% is built by completing with quotients a Lawvere's hyperdoctrine
%%  $\overline{\mathbf{Prop}^{r}}:\mathcal{C}_{r}^{op}\rightarrow \mathbf{Cat}$
%% where the base category $\mathcal{C}_{r}$ is a rendering in $\tar$
%% of the subcategory 
%% extending Kleene interpretation
%% of intuitionistic connectives so that the Formal Church's thesis
%% turns out to be validated in $\peff$. 

%% Third, whilst $\peff$ is built by using  the notion of  elementary quotient completion of a Lawvere's elementary doctrine 
%% introduced by the first author with G. Rosolini in 2013, 
%% it
%% happens to actually coincide with the exact completion on the lex category $\mathcal{C}_{r} $. Moreover,  since  the realizability doctrine $\overline{\mathbf{Prop}^{r}}$  embeds into the weak subobject
%% doctrine of partitioned assemblies of $\eff$ and $\eff$ is the exact completion of partitioned assemblies, $\peff$ embeds in $\eff$
%%  by preserving the list-arithmetic locally cartesian closed  structure.

Keywords: Realizability topos,  Lawvere's hyperdoctrines, Type theory \\
AMS classification: 03F50,18D30,18C99, 03D70
\end{abstract}
%\tableofcontents
\section{Introduction}
As reported in \cite{story} Hyland's paper ``The Effective Topos'' \cite{eff}
 gave input to a whole new strand of research about realizability and its applications
to logic, mathematics and computer science.  Hyland applied the tripos-to-topos
construction in \cite{tripos}  by producing one of the first examples
of  elementary topos, denoted with $\eff$,  that is not a Grothendieck topos.
A characteristics of $\eff$ which attracted a lot of interest in logic and computer science relies on the fact that $\eff$ provides a realizability interpretation
of high-order logic that extends Kleene realizability semantics
of intuitionistic arithmetic and hence validates the formal Church's thesis (see \cite{vanOOsten}).

A predicative study of $\eff$, and more generally of realizability toposes,
had been already developed in the context of algebraic set theory by B. Van den Berg and I. Moerdijk,  in particular in \cite{VDB},
by taking Aczel's Constructive Zermelo-Fraenkel set theory (for short CZF) in \cite{czf} as the predicative constructive set theory to be realized in such toposes.

The authors of the present paper embarked into the project of making
a strictly predicative version of $\eff$, called $\peff$, which can be formalized
in Feferman's  weak theory of inductive definitions $\tar$ (see e.\,g.\,\cite{Fef})  whose proof-theoretic strenght is much lower than that of CZF.
 Our ultimate goal is to use $\peff$ as the effective universe
where to validate proofs done in the Minimalist Foundation in order to extract
their computational contents in terms of programs. 

The Minimalist Foundation, for short $\mf$,
is a predicative foundation for constructive mathematics ideated  in joint work of the first author with G. Sambin in \cite{mtt}
and completed to a two-level system in \cite{m09}.

$\mf$ is weaker than CZF (in terms of proof-theoretic strenght)  because $\mf$ can be interpreted
in Martin-L{\"o}f's type theory with one universe \cite{PMTT} as shown in \cite{m09}, and directly also  in Feferman's predicative
weak theory of inductive definitions \cite{Fef} as shown in \cite{MMM}.

 $\mf$ was called {\it minimalist} in \cite{mtt} because
it was intended to  constitute a common core among the most relevant constructive and classical foundations.
%% , introduced both in type theory, in category theory and in axiomatic set theory.

 One of the novelties of $\mf$ is that it consists of two levels with an interpretation of one into the other: 
an {\it intensional level} which should be a (type) theory with enough decidable properties to be a  base for a proof-assistant and to extract the computational contents from its proofs,  an {\it extensional level} formulated in a language as close as possible to that of ordinary mathematics, and an interpretation
of the extensional level in the intensional one by means of a quotient completion. Both the intensional level and the extensional level of  $\mf$
in \cite{m09} consists of dependent type systems based on versions of Martin-L\"of's type theory: the intensional one is based on \cite{PMTT}
and the extensional one on \cite{ML84}. 

A key difference between $\mf$ and the mentioned Martin-L\"of's type theories
is that in $\mf$ propositions are defined in such a way that choice principles, including the axiom of unique choice, are no longer necessarily valid in order to make
$\mf$ minimalist.

It is worth noting that the presence of two levels in $\mf$ is  relevant
to show its compatibility  both with intensional theories such as
 those formulated in type theory like Martin-L\"of's type theory~\cite{PMTT} or Coquand's Calculus of Constructions, or with extensional theories
such as those formulated in axiomatic set theory like Aczel's CZF axiomatic set theory, or those arising  in category theory like the internal theories of topoi or pretopoi.

Moreover, there is an analogy between  the two-level formal system of $\mf$ and 
the tripos-to-topos construction of a realizability topos: the role of the tripos is taken by the intensional level of $\mf$, the role of the realizability topos construction is taken by the quotient model
used in \cite{m09} to interpret the extensional level of $\mf$ in its intensional one, and
the internal language of a generic elementary topos corresponds to the extensional level of 
$\mf$.

The key difference between the tripos-to-topos construction
and the construction of $\mf$ is that the quotient completion employed in $\mf$
does  not  yield to an exact category.
The quotient completion of $\mf$ had been studied categorically in joint work of the first author with G. Rosolini in \cite{qu12}, \cite{elqu}
%, \cite{uxc}
 under the name of ``elementary quotient completion of a Lawvere's elementary doctrine''. Such a completion turned out to be  a {\it generalization of the  well-known notion of
 exact completion on a lex category}. Instead,  the tripos-to-topos construction
is an instance of a  generalization
of the exact on regular completion related to an existential elementary
doctrine as shown in \cite{uxc}.

Here we build a predicative variant of $\eff$, called $\peff$,
by applying  the elementary quotient completion in \cite{qu12}
to 
 a Lawvere's hyperdoctrine
$$\thprop :\mathcal{C}_{r}^{op}\rightarrow \mathbf{Heyt}$$ 
whose logical structure extends the well-known Kleene realizability interpretation of intuitionistic connectives and supports
an interpretation (whilst with a pure combinatory non categorical interpretation of $\lambda$-abstraction) of the intensional level of \mf\
 extended
 with the  formal Church's thesis and the full axiom of choice (see \cite{IMMSt}). Then, from results in \cite{m09}, it  follows that  
$\peff$ validates the extensional level of \mf\ as desired.

$\peff$ can be seen as a predicative variant of $\eff$ for the following reasons.

First, $\peff$ is a predicative rendering of a topos  because
it is a list-arithmetic locally cartesian closed pretopos with
 a  fibred category of small objects over itself and a (non-small) classifier of small subobjects, somewhat  in the spirit of algebraic set theory
employed in \cite{VDB}.
The perculiarity of $\peff$ is that its structure of small objects 
is given in a fibred way via an indexed category of
 small objects in the starting realizability doctrine $\thprop$.
In turn small objects of $\thprop$ are defined via fixpoints of suitable monotone operators representing codes of sets and small propositions definable in the intensional level of the Minimalist Foundation.
 The fact that the classifier of small subobjects in $\peff$ is not small
is the key point to view $\peff$ as predicative.
Even more, the property that $\peff$ is formalizable in \tar\ makes the construction
{\it strictly predicative} as much as the theory \tar.

%% the realizability doctrine $\thprop $ extends Kleene interpretation
%% of intuitionistic connectives so that the Formal Church's thesis, for short \ct, becomes
%% valid in it and in the elementary quotient completion $\peff$, as it happens %in the tripos originating $\eff$ as well as
%% in $\eff$ itself.

Second,
the elementary quotient completion construction used to build  $\peff$ happens to actually coincide with the exact completion on the lex base  category $\mathcal{C}_{r} $ of the doctrine $\thprop$  which is a predicative rendering  of the subcategory of $\eff$ of recursive functions
 within Feferman's predicative weak theory of inductive definitions $\tar$.
Therefore, recalling that  $\eff$ is the exact completion of partitioned assemblies,
since  the realizability doctrine $\thprop$ embeds
in the weak subobject doctrine of partitioned assemblies,
 it follows that $\peff$ embeds into $\eff$ by preserving the list-arithmetic locally cartesian closed  structure. As a consequence $\peff$ validates
the Formal Church's thesis.

%% Our key motivation to build $\peff$ is to provide an effective strictly  predicative analogue of Hyland's Effective Topos where
%% to extract programs from proofs done in \mf. Indeed, thanks to results in \cite{m09} and the fact that, as shown in \cite{IMMSt},  the starting realizability doctrine
%% $\thprop$ validates the intensional level of \mf\ (whilst with a pure combinatory non categorical interpretation of $\lambda$-abstraction), it follows that  
%% $\peff$ validates the extensional level of \mf.

As a further future work we  intend to
develop an abstract notion of predicative
topos that has $\peff$ and any elementary topos as examples
and the extensional level of \mf\ as its
internal language.

\section{How we build our predicative variant of $\eff$}
%% A main use of Hyland's Effective Topos $\eff$ is to provide a categorical
%% universe {\it to found constructive mathematics} with an explicit effective notion
%% of set and function. A predicative version of $\eff$ would then be a categorical universe
%% {\it to found predicative constructive mathematics}. 
It is well known that in a predicative foundation the power-collection of subsets of a non-empty set can not be a set but only a proper collection. Therefore in order to build a predicative version of the topos $\eff$  we need to distinguish {\it sets and collections} in the universe. This is the key difference employed in the context of
algebraic set theory in \cite{swopos} to make a categorical model of a constructive predicative
set theory as Aczel's CZF~\cite{czf} with the use of ``small maps'' to denote
a family of sets indexed on a collection. This notion of categorical model
 had been
 in turn used to produce a predicative version
of realizability toposes, and hence also of $\eff$, in \cite{VDB}.

Here we want to make a predicative version of $\eff$ by working within a strictly predicative
theory as Feferman's weak theory of inductive definitions \tar. The ultimate goal
is to build a predicative effective universe where to validate
 (the extensional level of) the 
Minimalist Foundation \mf\ in order to extract programs from its constructive proofs. Hence, as in \mf,  in our predicative  we take into account also a further distinction besides that between sets and collections, namely 
 among generic propositions we distinguish ``small propositions'' as those
propositions closed under intuitionistic connectives and
 quantifiers restricted to sets only.
 Indeed, as in \mf,  we want to use the notion of small proposition
to define the notion of subset of a set and that of subsets classifier
in such a way that the {\it subobject classifier typical of a topos
becomes in our predicative universe a {\em collection} classifying subobjects
 defined as comprehensions of small propositions}.

In order to build our strictly predicative variant of $\eff$, that we name $\peff$, we proceed as follows:
\begin{list}{-}{ }
\item
We define 
a category $ \mathcal{C}_{r}$ of ``realized collections'' which
is a predicative rendering  in Feferman's weak theory of inductive definitions $\tar$ of the subcategory of recursive functions of $\eff$~\cite{MRC}.

\item
We build an indexed category of realized collections
$$ \mathbf{Col}^{r}:\mathcal{C}_{r}^{op}\rightarrow \mathbf{Cat}$$
 which provides a splitting 
of the codomain fibration associated to the category of realized collections
$\mathcal{C}_{r}$.

\item
We define an indexed category of ``realized propositions''
$$\mathbf{Prop}^r :\mathcal{C}_{r}^{op}\rightarrow \mathbf{Cat}$$
 as the  preordered reflection of $\mathbf{Col}^{r}$.
In particular, the fibres of $\mathbf{Prop}^r$
%% are the preordered reflection of $ \mathbf{Col}^{r}$ and 
  extend
Kleene interpretation
of intuitionistic connectives so that $\mathbf{Prop}^r$ validates the Formal Church's thesis. 

\item
Within $\mathcal{C}_{r}$
we construct an object $\mathsf{U_{S}}$, which is intended as the universe of sets, and  which is defined as a class of realizers in $\tar$
representing
codes of sets definable in the intensional level
$\mtt$ of the Minimalist Foundation. We adapt here a similar
  construction made
in \cite{MMM} where we gave an extensional realizability interpretation of $\mtt$ in $\tar$.
Based on the internal notion of set we build an indexed category of sets
$$\tset:\mathcal{C}_{r}^{op}\rightarrow \mathbf{Cat}$$
which will be a sub-indexed category of $ \mathbf{Col}^{r}$.

\item
We build an indexed category
of small propositions
 $$\mathbf{Prop}^{r}_s:\mathcal{C}_{r}^{op}\rightarrow \mathbf{Heyt}$$
 as the preordered reflection of $\tset$.

\item
We then call {\it Effective Kleene \mf-tripos} the structure given
by the indexed category of realized collections, its sub-indexed category of realized sets, the hyperdoctrine  of realized propositions and its sub-doctrine
of small realized propositions.
\item
Finally, we define our effective universe $\peff$ as the elementary quotient
completion in \cite{qu12} with respect to the hyperdoctrine $\thprop$ whose fibres are the posetal reflection of those of $\mathbf{Prop}^{r}$. 
As a consequence, 
$\peff$ turns out to be closed under stable effective quotients with respect to an hyperdoctrine 
$$\mprop:\peff^{op}\longrightarrow{\bf Heyt}$$
obtained by lifting $\thprop$ to $\peff$ as described in \cite{qu12}.
\item
Within $\peff$ we single out an hyperdoctrine of small propositions:
$$\mprops:\peff^{op}\longrightarrow{\bf Heyt}$$
whose fibres are descent datas of small propositions in $\thprops$.
\item 
we show that in $\peff$ there is an object $\Omega$ classifying
subobjects obtained as comprehensions of small propositions, or in other terms
we show that  the hyperdoctrine
of small propositions in $\peff$ is representable in $\peff$ by $\Omega$, i.e. 
$$\mprops(-)\simeq \peff(-, \Omega)$$
\item 
Within $\peff$ we single out a fibration of families of realized sets whose fibres are quotients of indexed sets in the fibres of $\tset$ over small equivalence relations in the fibres  of $\thprops$.
\end{list}
We conclude by saying  that $\peff$ will turn out to be an exact on lex completion of the category of realized collections $\mathcal{C}_{r}$ and as a consequence
it embeds into $\eff$.

\section{Categorical preliminaries}
We just recall some categorical definitions we will use in the next.
%% \begin{definition}
%% A \emph{finite product category} is a category with terminal object and 
%% binary products of all pairs of its objects.
%% \end{definition}
\begin{definition}
%% Let $\mathcal{C}$ be a finite product category, i.e. a category with terminal object and 
%% binary products of all pairs of its objects.
 \begin{enumerate}
\item A \emph{weak exponential} (rel.\ \emph{exponential}) for $A$ and $B$ in a category  $\mathcal{C}$ with binary products is given by an object $B^{A}$ and an arrow $ev:B^{A}\times A\rightarrow B$ such that for every arrow $f:C\times A\rightarrow B$ in $\mathcal{C}$, there is an (rel.\ unique) arrow $f':C\rightarrow B^{A}$ in $\mathcal{C}$ for which the following diagram commutes:

$$\xymatrix@-1pc{
C\times A\ar[r]^{f}\ar[d]_-{f'\times id_{A}}			&B\\
B^{A}\times A\ar[ru]_-{ev}\\
}$$
\item A \emph{parameterized list object} for an object $A$ of $\mathcal{C}$ is given by an object $List(A)$ and two arrows $\epsilon:\mathsf{1}\rightarrow List(A)$ and $conc:List(A)\times A\rightarrow List(A)$ such that for every pair of arrows $f:P\rightarrow B$ and $g:B\times A\rightarrow B$ there is a unique arrow $listrec(f,g):P\times List(A)\rightarrow B$ for which the following diagram commute

$$\xymatrix@-1pc{
P\ar[rr]^-{\langle id_{P},\epsilon\circ !_{P}\rangle}\ar[rrdd]_-{f}	 &	&P\times List(A)\ar[dd]^-{listrec(f,g)}		&	&P\times (List(A)\times A)\ar[ll]_-{id_{P}\times conc }\ar[d]^-{ \langle \langle \pi_{1},\pi_{1}\circ \pi_{2}\rangle, \pi_{2}\circ \pi_{2}\rangle }\\
									&			&				&		&(P\times List(A))\times A\ar[d]^{listrec(f,g)\times id_{A}}\\
									&			&B				&		&B\times A\ar[ll]^{g}\\
}$$
where $\pi_{1}$ and $\pi_{2}$ denote left and right projections respectively.

\end{enumerate}
%% A \emph{finitely complete category} is a finite product category with equalizers of parallel arrows.
%% , or equivalently a category with terminal objects and pullbacks for all pairs of arrows with common codomain. 
\end{definition}
%% \begin{definition}
%% Let $\mathcal{C}$ be a finitely complete category, i.e  a finite product category with equalizers of parallel arrows. Suppose that $\mathcal{C}$ has also finite coproducts, i.e. an initial object and binary coproducts.
 
%% An initial object $0$ of $\mathcal{C}$ is \emph{stable} if every domain of an arrow with codomain $0$ is an initial object in $\mathcal{C}$.

%% A binary coproduct given by $A+B$ and injection arrows $j_{1}$ and $j_{2}$ is \emph{stable} if the following property holds: for every arrow $f:C\rightarrow A+B$, if we consider the following diagram in which the two squares are pullback,
%% $$\xymatrix{
%% D\ar[r]^-{i_{1}}\ar[d]		&C\ar[d]^{f}	&E\ar[l]_-{i_{2}}\ar[d]\\
%% A\ar[r]_-{j_{1}}			&A+B		&B\ar[l]^-{j_{2}}\\
%% }$$
%% then $C$ is a binary coproduct for $D$ and $E$ with injections $i_{1}$ and $i_{2}$.

%% Moreover, the binary coproduct is \emph{disjoint} if its injection arrows are monomorphism and 
%% their pullback vertex is an initial object. 

%% \end{definition}
 \begin{definition}
 A \emph{(weakly) cartesian closed} category is a finite product category with (weak) exponentials for all pairs of objects.

 A \emph{(weakly) locally cartesian closed} category is a category whose slices are all (weakly) cartesian closed categories.

\end{definition}

%% A \emph{pretopos} is a finitely complete category equipped with stable and disjoint finite coproducts and stable effective quotients of monic equivalence relations. If a pretopos has parameterized list objects it is called a \emph{list-arithmetic pretopos}.
%% \end{definition}
%% %% \begin{definition}
%% %% A \emph{cartesian closed category} is a finite product category and with exponentials of all pairs of objects;
%% %% a \emph{locally cartesian closed category} is a category for which all its slice categories are cartesian closed and an \emph{indexed category} is a functor $\mathbf{C}:\mathcal{C}^{op}\rightarrow \mathbf{Cat}$, where $\mathbf{Cat}$ is the category of (small) categories.
%% %% \end{definition}
\begin{definition}\label{hyp}
A \emph{first-order hyperdoctrine} is a functor $\mathbf{P}:\mathcal{C}^{op}\rightarrow \mathbf{preHeyt}$ from a finite product category $\mathcal{C}$ to the category of Heyting prealgebras \footnote{A Heyting prealgebra is a preorder whose posetal reflection is a Heyting algebra. Morphisms between them are preorder morphisms which preserve the Heyting algebra structure on the posetal reflections (see e.\,g.\ \cite{VOO08}).} such that for every $f$ in $\mathcal{C}$, the morphism of Heyting prealgebras $\mathbf{P}_{f}$ has left and right adjoints $\exists_{f}$ and $\forall_{f}$ (in the category of preorders) satisfying the Beck-Chevalley condition i.\,e.\ if the following diagram is a pullback diagram in $\mathcal{C}$,

$$\xymatrix@-1pc{
P\ar[r]^{p_{2}}\ar[d]_{p_{1}}	&B\ar[d]^{g}	\\
A\ar[r]_{f}					&C\\
}$$

then $\mathbf{P}_{g}\circ\exists_{f}$ is equivalent to $\exists_{p_{2}}\circ \mathbf{P}_{p_{1}}$, i.\,e.\ for every $a\in \mathbf{P}(A)$, we have that both $\mathbf{P}_{g}(\exists_{f}(a))\sqsubseteq \exists_{p_{2}}(\mathbf{P}_{p_{1}}(a))$ and $\mathbf{P}_{g}(\exists_{f}(a))\sqsupseteq \exists_{p_{2}}(\mathbf{P}_{p_{1}}(a))$ hold in $\mathbf{P}(B)$. From this it also follows that $\mathbf{P}_{g}\circ\forall_{f}$ is equivalent to $\forall_{p_{2}}\circ\mathbf{P}_{p_{1}}$.
\end{definition}

\begin{definition}\label{weacomp}
A first-order hyperdoctrine $\mathbf{P}:\mathcal{C}^{op}\rightarrow \mathbf{preHeyt}$ has \emph{weak comprehensions} if for every object $A$ of $\mathcal{C}$ and for every $p\in \mathbf{P}(A)$, there exists an arrow $\mathsf{cmp}:\mathsf{Cmp}(p)\rightarrow A$ in $\mathcal{C}$ such that $\top\sqsubseteq \mathbf{P}_{\mathsf{cmp}}(p)$ in $\mathbf{P}(\mathsf{Cmp}(p))$ and for every arrow $f:B\rightarrow A$ such that $\top\sqsubseteq \mathbf{P}_{f}(p)$ in $\mathbf{P}(B)$ there exists an arrow $f':B\rightarrow \mathsf{Cmp}(p)$ such that $\mathsf{cmp}\circ f'=f$.
\end{definition}

\begin{definition}\label{preorder}
If $\mathbf{C}:\mathcal{C}^{op}\rightarrow \mathbf{Cat}$ is an indexed category, then  its  \emph{preorder reflection} is called $\mathbf{P}[\mathbf{C}]:\mathcal{C}^{op}\rightarrow \mathbf{Cat}$ and it is the functor whose fibres are the preordered  reflections  of the fibres of $\mathbf{C}$, while
its  \emph{posetal reflection} is called $\mathbf{PR}[\mathbf{C}]:\mathcal{C}^{op}\rightarrow \mathbf{Cat}$ and   it is the functor whose fibres are the posetal reflections  of those of $\mathbf{C}$.

\end{definition}

\begin{definition}\label{fcc}
Given a finitely complete category $\mathcal{C}$,  we denote by $\mathbf{wSub}_{\mathcal{C}}$ the posetal reflection of the slice pseudofunctor which we call
{\it the doctrine of weak subobject of $\mathcal{C}$}.
\end{definition}

\section{Feferman's weak theory of inductive definitions $\tar$}\label{tar}
Here we are going to give a brief description of Feferman's weak theory of inductive definitions (see e.\,g.\,\cite{Fef}).

Consider the language of second-order arithmetic given by a countable list of individual variables $x_{1},...,x_{n}...$, a countable list of set variables $X_{1},...,X_{n}...$, a constant $0$, a unary successor functional symbol $succ$, an $n$-ary functional symbol for every $n$-ary (definition of a) primitive recursive function, the equality predicate $=$ between individuals, the membership predicate $\epsilon$ between individuals and sets, connectives $\wedge, \vee, \rightarrow, \neg$ and individual and set quantifiers $\exists x$, $\forall x$, $\exists X$ and $\forall X$.

In particular atomic formulas of this language are $t=s$ and $t\,\epsilon\, X$ for individual terms $t$ and $s$ and set variables $X$.

Let $X$ be a set variable; its occurrence in the atomic formula $t\,\epsilon\, X$ is positive, while an occurrence of $X$ in a non-atomic formula $\varphi$ is \emph{positive} (\emph{negative} resp.\,) if one of the following conditions holds:

\begin{enumerate}
\item $\varphi$ is $\psi\wedge \rho$ or $\psi\vee \rho$ and the occurrence is positive (negative) in $\psi$ or in $\rho$;
\item $\varphi$ is $\psi\rightarrow \rho$ and the occurrence is positive (negative) in $\rho$ or it is negative (positive) in $\psi$;
\item $\varphi$ is $\neg \psi$ and the occurrence is negative (positive) in $\psi$;
\item $\varphi$ is $\exists x\, \psi$ or $\forall x\,\psi$ and the occurrence is positive (negative) in $\psi$;
\item $\varphi$ is $\exists X\, \psi$ or $\forall X\,\psi$ and the occurrence is positive (negative) in $\psi$.
\end{enumerate}

A second-order formula $\varphi(x,X)$ is \emph{admissible}  if it does not contain set quantifiers and it has at most one free individual variable $x$ and at most one free set variable $X$ and all the occurrences of the variable $X$ are positive. 

Let's now define the system $\tar$. 
It is a first-order classical theory whose language has a countable list of individual variables $x_{1},...,x_{n}...$, a constant $0$, a unary successor functional symbol $succ$, an $n$-ary functional symbol for every $n$-ary (definition of a) primitive recursive function, a unary predicate symbol $P_{\varphi}$ for every admissible second-order formula $\varphi(x,X)$, the equality predicate $=$, connectives $\wedge, \vee, \rightarrow, \neg$ and quantifiers $\exists$ and $\forall$. \footnote{We define $\bot$ and $\top$ as abbreviations for $0=succ(0)$ and $0=0$ respectively.}

The axioms of $\tar$ include the axioms of Peano arithmetic (including defining equations for primitive recursive functions) plus the following axiom schemas:
\begin{enumerate}
\item \emph{Induction principle} for every formula of the language of $\tar$;
\item \emph{Fixpoint schema}: for every admissible second-order formula $\varphi(x,X)$, $$P_{\varphi}(x)\leftrightarrow \varphi(x,P_{\varphi})$$ 
\end{enumerate}
where $\varphi(x,P_{\varphi})$ is the formula of $\tar$ obtained by substituting in $\varphi(x,X)$ all the subformulas $t\,\epsilon\, X$ for some $t$ with $P_{\varphi}(t)$.

\subsection{Notation of recursive functions}
In the weak theory of inductive definition $\tar$, one can encode Kleene's application $\{x\}(y)$ via a Kleene predicate $T(x,y,z)$ and a primitive recursive function $U(z)$: for every formula $P(z)$, $P(\{x\}(y))$ is an abbreviation for $\exists z\,(T(x,y,z)\wedge P(U(z)))$.

 We define the abbreviations $\{t\}(s_{1},...,s_{n})$ as follows:
\begin{enumerate}
\item $\{t\}(\,)$ is $t$;
\item $\{t\}(s_{1},...,s_{n+1})$ is $\{\{t\}(s_{1},...,s_{n})\}(s_{n+1})$.
\end{enumerate}
In $\tar$ one can encode a bijective product of natural numbers $p$ with its projections $p_{1}$, $p_{2}$ using primitive recursive functions which can be represented by numerals $\mathbf{p}_{1},\mathbf{p}_{2}, \mathbf{p}$ such that 
\begin{enumerate}
\item $\tar\vdash p(x,y)=\{\mathbf{p}\}(x,y)$, 
\item $\tar\vdash p_{i}(x)=\{\mathbf{p}_{i}\}(x)$ for $i=1,2$.
\end{enumerate}
There exists a numeral $\mathbf{ite}$ such that 
\begin{enumerate}
\item $\tar\vdash\{\mathbf{ite}\}(0,x,y)=x$
\item $\tar\vdash\{\mathbf{ite}\}(1,x,y)=y$
\end{enumerate}

Moreover one can surjectively encode finite lists of natural numbers $\overline{x}=[x_{0},...,x_{lh(\overline{x})-1}]$ in such a way that $0$ encode the empty list, the concatenation function is primitive recursive and it is represented by a numeral $\mathbf{cnc}$, the length function is primitive recursive and there is a numeral $\mathbf{listrec}$ representing the list recursor. Moreover the successor function can be represented by a numeral $\mathbf{succ}$ and the natural numbers recursor can be represented by a numeral $\mathbf{rec}$.

In $\tar$ one can also define $\lambda$-astraction $\Lambda x.t$ of terms built with Kleene brackets, variables and numerals as in any partial combinatory algebra. 

\section{The Effective Kleene \mf-tripos}
It is well known that Hyland's Effective Topos $\eff$ can be obtained
both as the tripos-to-topos completion~\cite{eff,tripos, vanOOsten}
of a tripos as well as
 the exact completion on the lex category of partioned assemblies \cite{MRC}.

In \cite{elqu,uxc}
it has been shown how the above completions can be seen as instances of   suitable
quotient completions with respect to a Lawvere's elementary doctrine (possibly with further structure).
In essence all {\it exact completions can be seen as a result of a  {\bf tripos-to-quotient completion} process}.

In particular the exact completion on a lex category $\mathcal{C}$ can be seen as
 an instance of the so called  
{\it elementary quotient completion} construction introduced in \cite{qu12}  and applied to the Lawvere's elementary  doctrine of weak subobjects on $\mathcal{C}$.

It is worth noting that
the notion of elementary quotient completion is more general than that of exact completion on a lex category since it does not necessarily
yield to an exact category unless a choice rule is satisfied by the 
starting doctrine (see \cite{MR16}).

Here, we are going to introduce a Lawvere's hyperdoctrine
$$\thprop :\mathcal{C}_{r}^{op}\rightarrow \mathbf{Heyt}$$ 
of {\it realized propositions} on a category of {\it realized collections $\mathcal{C}_{r}$ }
on which we perform an elementary quotient completion which happens
to be exact and which we take as our predicative variant of $\eff$
under the name of  $\peff$.

The  doctrine $\thprop$ will be part of a richer structure, which will be called  {\it Effective Kleene \mf-tripos}.
%% , made of four indexed categories 
%% satisfying  the following
%% commutative diagram of embeddings in $\mathbf{Cat}^{\mathcal{C}_{r}^{op}}$
%% $$\xymatrix{\mathbf{Set}^{r}\ar @{^{(}->}[r]			&\mathbf{Col}^{r}\\
%% \mathbf{Prop}^{r}_{s}\ar @{^{(}->}[r]\ar @{^{(}->}[u]			
%% &\mathbf{Prop}^{r}\ar @{^{(}->}[u]\\
%% }$$

\subsection{The category $\mathcal{C}_r$ of realized collections}
Here we  define the category $\mathcal{C}_r$ of realized collections which is a rendering
of the subcategory of recursive functions of the Effective Topos (see for ex.\cite{MRC}) in  $\tar$. 
Then, we will describe its categorical structure.

 In particular to show that this category is weakly locally cartesian closed 
 we found easier to  introduce an indexed category 
 which gives a functorial presentation of the pullback pseudofunctor
associated to $\mathcal{C}_{r}$ and hence provides a splitting
to the codomain fibration of $\mathcal{C}_r$.
The existence of  such a splitting   is due to the syntactic nature
of our category $\mathcal{C}_r$ on whose objects we can  express a notion of family of realized collections  by using formulas of $\tar$
where the functorial action corresponds to a term substitution in the considered
formulas.

%% Indeed realized collections provide a sort of external set theory based on $\tar$ in which it is allowed to perform many of the constructions which are usually considered set theoretical, but in fact are just logical. Moreover this presentation is closer to that of dependent types in type theory.

\begin{definition}\label{col}
A \emph{realized collection} (or simply a \emph{class}) $\mathsf{A}$ of $\tar$ is a formal expression $$\{x|\,\varphi_{\mathsf{A}}(x)\}$$ where $\varphi_{\mathsf{A}}(x)$ is a formula of $\tar$ with at most $x$ as free variable. 
We write $x\,\varepsilon\, \mathsf{A}$ as an abbreviation for $\varphi_{\mathsf{A}}(x)$. Classes with provably equivalent membership relations in $\tar$ are identified.

An \emph{operation} between classes of $\tar$ from $\mathsf{A}$ to $\mathsf{B}$ is an equivalence class $[\mathbf{n}]_{\approx_{\mathsf{A},\mathsf{B}}}$ of numerals with
$$x\,\varepsilon\,\mathsf{A}\vdash_{\tar}\{\mathbf{n}\}(x)\,\varepsilon\,\mathsf{B}$$
The equivalence relation $\approx_{\mathsf{A},\mathsf{B}}$ is defined as follows:
$$\mathbf{n}\approx_{\mathsf{A},\mathsf{B}} \mathbf{m}\textrm{ if and only if }x\,\varepsilon \,\mathsf{A}\vdash_{\tar} \{\mathbf{n}\}(x)=\{\mathbf{m}\}(x)$$
If $[\mathbf{n}]_{\approx_{\mathsf{A},\mathsf{B}}}:\mathsf{A}\rightarrow \mathsf{B}$ and $[\mathbf{m}]_{\approx_{\mathsf{B},\mathsf{C}}}:\mathsf{B}\rightarrow \mathsf{C}$ are operations between classes of $\tar$, then their \emph{composition} is defined by $$[\mathbf{m}]_{\approx_{\mathsf{B},\mathsf{C}}}\circ [\mathbf{n}]_{\approx_{\mathsf{A},\mathsf{B}}}:=[\Lambda x. \{\mathbf{m}\}(\{\mathbf{n}\}(x))]_{\approx_{\mathsf{A},\mathsf{C}}}:\mathsf{A}\rightarrow\mathsf{C}$$
The \emph{identity} operation for the class $\mathsf{A}$ of $\tar$ is defined as $$\mathsf{id}_{\mathsf{A}}:=[\Lambda x.x]_{\approx_{\mathsf{A},\mathsf{A}}}:\mathsf{A}\rightarrow \mathsf{A}$$
\end{definition}

\begin{lemma}
Realized collections of $\tar$ and operations between them with their composition and identity operations form a category which we will denote with $\mathcal{C}_{r}$ and we will call the category of realized collections of $\tar$. 
\end{lemma}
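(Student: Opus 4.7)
The plan is to verify the three category axioms (well-definedness of composition, identity laws, and associativity) by reducing each to a provable equality in $\tar$ that follows from standard properties of Kleene's application $\{-\}(-)$ and of the combinatory $\lambda$-abstraction $\Lambda x.t$ introduced at the end of Section~4.

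First I would check that the composition $[\mathbf{m}]_{\approx_{\mathsf{B},\mathsf{C}}}\circ [\mathbf{n}]_{\approx_{\mathsf{A},\mathsf{B}}}$ does not depend on the choice of representatives. Assuming $\mathbf{n}\approx_{\mathsf{A},\mathsf{B}}\mathbf{n}'$ and $\mathbf{m}\approx_{\mathsf{B},\mathsf{C}}\mathbf{m}'$, one has to show that under the hypothesis $x\,\varepsilon\,\mathsf{A}$ the equality $\{\mathbf{m}\}(\{\mathbf{n}\}(x))=\{\mathbf{m}'\}(\{\mathbf{n}'\}(x))$ is derivable in $\tar$; this follows by substituting $\{\mathbf{n}\}(x)$ for the free variable in the hypothesis $y\,\varepsilon\,\mathsf{B}\vdash_{\tar}\{\mathbf{m}\}(y)=\{\mathbf{m}'\}(y)$, after noting that $\{\mathbf{n}\}(x)\,\varepsilon\,\mathsf{B}$ holds assuming $x\,\varepsilon\,\mathsf{A}$ (which is exactly the defining condition of an operation), and then using the equality $\{\mathbf{n}\}(x)=\{\mathbf{n}'\}(x)$. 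I would also check that the composite numeral $\Lambda x.\{\mathbf{m}\}(\{\mathbf{n}\}(x))$ actually represents an operation from $\mathsf{A}$ to $\mathsf{C}$, which amounts to using the defining $\beta$-like property of $\Lambda$, namely that in $\tar$ one has $\{\Lambda x.t\}(s)=t[s/x]$ whenever the substitution makes sense.

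Next I would verify the identity laws: for $[\mathbf{n}]_{\approx_{\mathsf{A},\mathsf{B}}}:\mathsf{A}\to\mathsf{B}$, the identities $\mathsf{id}_{\mathsf{B}}\circ [\mathbf{n}]=[\mathbf{n}]$ and $[\mathbf{n}]\circ \mathsf{id}_{\mathsf{A}}=[\mathbf{n}]$ reduce, via the $\beta$-property of $\Lambda$, to the provable equalities $\{\Lambda y.y\}(\{\mathbf{n}\}(x))=\{\mathbf{n}\}(x)$ and $\{\mathbf{n}\}(\{\Lambda x.x\}(x))=\{\mathbf{n}\}(x)$ in $\tar$ under the hypothesis $x\,\varepsilon\,\mathsf{A}$. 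Associativity of composition is analogous: both $(h\circ g)\circ f$ and $h\circ (g\circ f)$ are represented, up to $\approx$, by numerals whose evaluations on $x\,\varepsilon\,\mathsf{A}$ both $\tar$-provably reduce to $\{\mathbf{h}\}(\{\mathbf{g}\}(\{\mathbf{f}\}(x)))$ via the $\beta$-property, and hence they are equal modulo $\approx_{\mathsf{A},\mathsf{D}}$.

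The main, and essentially only, obstacle is bookkeeping around the combinatory $\lambda$-abstraction: one must ensure that each time we apply $\Lambda x.t$ we can internally derive in $\tar$ the requisite equality $\{\Lambda x.t\}(s)=t[s/x]$, and that all intermediate terms satisfy the membership conditions needed to invoke the hypotheses defining operations. Once this is recorded as a lemma about $\Lambda$ (a standard partial combinatory algebra fact already available in $\tar$), each of the three axioms reduces to a short calculation in the equational theory generated by Kleene application, and the lemma follows.
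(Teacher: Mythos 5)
Your proposal is correct and matches the paper's (implicit) treatment: the paper states this lemma without proof, regarding it as a routine verification, and your argument supplies exactly that routine check — well-definedness of composition on $\approx$-classes, the identity and associativity laws, all reduced via the $\beta$-property of the combinatory $\Lambda$-abstraction to provable equalities in $\tar$. Your closing caveat about tracking definedness of Kleene application and the membership hypotheses is the right point to flag, and it is handled by the standard partial combinatory algebra facts the paper sets up in Section~4.
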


From now on, we will omit subscripts of $\approx$ when they will be clear from the context.

Now, we show that
$\mathcal{C}_{r}$ is finitely complete with finite coproducts, weak exponentials, 
and parameterized list objects. 

\begin{theorem}\label{t1} $\mathcal{C}_{r}$ is a finitely complete category with finite coproducts, parameterized list objects and weak exponentials. 
\end{theorem}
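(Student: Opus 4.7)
The plan is to construct each piece of structure explicitly by giving the object as a realized collection $\{x\mid\varphi(x)\}$ for an appropriate formula of $\tar$, and providing the structural morphisms as equivalence classes of numerals realizing the desired recursive behaviour. In every case the relevant universal property is verified inside $\tar$ using the arithmetical encoding tools and the $\Lambda$-abstraction recalled at the end of Section~\ref{tar}. The only genuine obstacle is the lack of extensionality of the combinatory structure, which forces weak rather than strict exponentials.

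For finite completeness I would take the terminal object to be $\mathsf{1}:=\{x\mid x=0\}$ with unique map $[\Lambda x.0]_{\approx}$; binary products as
$$\mathsf{A}\times\mathsf{B}:=\{x\mid p_{1}(x)\,\varepsilon\,\mathsf{A}\wedge p_{2}(x)\,\varepsilon\,\mathsf{B}\},$$
with projections realized by $\mathbf{p}_{1},\mathbf{p}_{2}$ and pairing of $[\mathbf{n}],[\mathbf{m}]$ realized by $\Lambda x.\{\mathbf{p}\}(\{\mathbf{n}\}(x),\{\mathbf{m}\}(x))$; and the equalizer of $[\mathbf{n}],[\mathbf{m}]:\mathsf{A}\rightrightarrows\mathsf{B}$ as $\{x\mid\varphi_{\mathsf{A}}(x)\wedge \{\mathbf{n}\}(x)=\{\mathbf{m}\}(x)\}$ with inclusion $[\Lambda x.x]_{\approx}$. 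Universality in each case follows by reading off the defining formulas. For coproducts the initial object is $\{x\mid\bot\}$ (any numeral gives the unique outgoing map), while
$$\mathsf{A}+\mathsf{B}:=\{x\mid(p_{1}(x)=0\wedge p_{2}(x)\,\varepsilon\,\mathsf{A})\vee(p_{1}(x)=1\wedge p_{2}(x)\,\varepsilon\,\mathsf{B})\}$$
has injections realized by $\Lambda x.\{\mathbf{p}\}(0,x)$ and $\Lambda x.\{\mathbf{p}\}(1,x)$, with copairing obtained by branching on $p_{1}(x)$ via $\mathbf{ite}$. Parameterized list objects use the primitive recursive list coding of Section~\ref{tar}: $List(\mathsf{A})$ is the class of codes of finite lists all of whose entries satisfy $\varphi_{\mathsf{A}}$, with empty list realized by $0$, concatenation realized by $\mathbf{cnc}$, and $listrec(f,g)$ realized by applying $\mathbf{listrec}$ to codes of $f$ and $g$; commutativity and uniqueness in the list recursor diagram reduce to the defining equations of $\mathbf{listrec}$, provable in $\tar$.

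The main obstacle is the weak exponential. I would put
$$\mathsf{B}^{\mathsf{A}}:=\{x\mid\forall y\,(y\,\varepsilon\,\mathsf{A}\rightarrow\{x\}(y)\,\varepsilon\,\mathsf{B})\},$$
which is a legitimate class since its defining formula has at most $x$ free. Evaluation is realized by $\Lambda z.\{\{\mathbf{p}_{1}\}(z)\}(\{\mathbf{p}_{2}\}(z))$, and for $[\mathbf{f}]:\mathsf{C}\times\mathsf{A}\to\mathsf{B}$ the transpose is $[\Lambda x.\Lambda y.\{\mathbf{f}\}(\{\mathbf{p}\}(x,y))]_{\approx}$. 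The two verifications to be done inside $\tar$ are: first, that the transpose lands in $\mathsf{B}^{\mathsf{A}}$, which amounts to showing that for each $c\,\varepsilon\,\mathsf{C}$ the numeral obtained by applying the outer $\Lambda$ to $c$ realizes an element of $\mathsf{B}^{\mathsf{A}}$; second, the commutativity of the exponential triangle. Both reduce to the combinatory equations of $\Lambda$-abstraction over Kleene application and to the membership hypothesis on $\mathbf{f}$. Uniqueness of the transpose fails, since distinct numerals that happen to realize the same operation out of $\mathsf{C}$ give distinct but $\approx$-inequivalent transposes into $\mathsf{B}^{\mathsf{A}}$; this is precisely the intensional slack which motivates passing to the elementary quotient completion $\peff$ in later sections, and it is the reason the statement restricts to \emph{weak} exponentials.
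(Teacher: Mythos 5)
Your constructions coincide exactly with the paper's own proof: the same terminal object, product, equalizer, initial object, coproduct, list object built from the primitive recursive list coding, and the same weak exponential $\{x\mid\forall y\,(y\,\varepsilon\,\mathsf{A}\rightarrow\{x\}(y)\,\varepsilon\,\mathsf{B})\}$ with the same evaluation realizer. The paper merely lists these witnesses without spelling out the verifications, so your additional remarks (in particular the correct diagnosis of why uniqueness of the transpose fails, namely that equality in $\mathsf{B}^{\mathsf{A}}$ is literal numerical equality rather than extensional equality of realized functions) are a faithful elaboration of the same argument.
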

\begin{proof} 
In $\mathcal{C}_{r}$ the following hold:
\begin{enumerate}
\item $\mathsf{1}:=\{x|\,x=0\}$ is a terminal object;
\item $\mathsf{A}\times \mathsf{B}:=\{x|\,p_{1}(x)\,\varepsilon\, \mathsf{A}\,\wedge\,  p_{2}(x)\,\varepsilon\,\mathsf{B}\}$ defines a binary product for $\mathsf{A}$ and $\mathsf{B}$ together with the projections defined as $\mathsf{p}_{1}:=[\mathbf{p}_{1}]_{\approx}:\mathsf{A}\times \mathsf{B}\rightarrow \mathsf{A}$ and  $\mathsf{p}_{2}:=[\mathbf{p}_{2}]_{\approx}:\mathsf{A}\times \mathsf{B}\rightarrow \mathsf{B}$;

\item if $\mathsf{f}=[\mathbf{n}]_{\approx}$ and $\mathsf{g}=[\mathbf{m}]_{\approx}$ are arrows in $\mathcal{C}_{r}$ from $\mathsf{A}$ to $\mathsf{B}$, then their equalizer is given by the class  $\mathsf{Eq}(\mathsf{f},\mathsf{g}):=\{x|\,x\,\varepsilon\,\mathsf{A}\,\wedge\, \{\mathbf{n}\}(x)=\{\mathbf{m}\}(x)\}$ together with the arrow $[\Lambda x.x]_{\approx}:\mathsf{Eq}(\mathsf{f},\mathsf{g})\rightarrow \mathsf{A}$;

\item $\mathsf{0}:=\{x|\,\bot\}$ is an initial object;

\item $\mathsf{A}+\mathsf{B}:=\{x|\;( p_{1}(x)=0\,\wedge\,  p_{2}(x)\,\varepsilon\,\mathsf{A})\vee ( p_{1}(x)=1\,\wedge \, p_{2}(x)\,\varepsilon\,\mathsf{B})\}$ gives a binary coproduct for $\mathsf{A}$ and $\mathsf{B}$ together with the injections $\mathsf{j}_{1}:=[\Lambda x.\{\mathbf{p}\}(0,x)]_{\approx}$ from $\mathsf{A}$ to $\mathsf{A}+\mathsf{B}$ and $\mathsf{j}_{2}:=[\Lambda x.\{\mathbf{p}\}(1,x)]_{\approx}$ from $\mathsf{B}$ to $\mathsf{A}+\mathsf{B}$;
\item $\mathsf{List}(\mathsf{A}):=\{x|\,\forall j(\,j<lh(x)\rightarrow (x)_{j}\,\varepsilon\,\mathsf{A})\}$ defines a parameterized list object for $\mathsf{A}$ together with the empty list arrow $\epsilon:=[\Lambda x.0]_{\approx}:\mathsf{1}\rightarrow \mathsf{List}(\mathsf{A})$ and the append arrow $\mathsf{cons}$ defined as $$[\Lambda x. \{\mathbf{cnc}\}(\{\mathbf{p}_{1}\}(x),\{\mathbf{p}_{2}\}(x))]_{\approx}:\mathsf{List}(\mathsf{A})\times \mathsf{A}\rightarrow\mathsf{List}(\mathsf{A})$$
\item $\mathsf{A}\Rightarrow\mathsf{B}:=\{x|\,\forall u\, (u\,\varepsilon\,\mathsf{A}\, \rightarrow\, \{x\}(u)\,\varepsilon\,\mathsf{B})\}$ defines a weak exponential 
for $\mathsf{A}$ and $\mathsf{B}$ together with the evaluation arrow defined by $$\mathsf{ev}:=[\Lambda x.\{\{\mathbf{p}_{1}\}(x)\}(\{\mathbf{p}_{2}\}(x))]_{\approx}:(\mathsf{A}\Rightarrow \mathsf{B})\times \mathsf{A}\rightarrow\mathsf{B}$$
\end{enumerate}
\end{proof}
\begin{remark}
A parameterized natural numbers object can be defined by considering the object $\mathsf{List}(\mathsf{1})$, and the arrows zero and successor defined as $\epsilon$ and $\mathsf{cons}\circ \langle \mathsf{id}_{\mathsf{List}(\mathsf{1})},!_{\mathsf{List}(\mathsf{1})}\rangle$ respectively where $!_{\mathsf{List}(\mathsf{1})}$ is the unique arrow in $\mathcal{C}_{r}$ from $\mathsf{List}(\mathsf{1})$ to $\mathsf{1}$. However one can directly define a parameterized natural numbers object $\mathsf{N}$ as $\{x|\,x=x\}$ together with the arrows $[\Lambda x.0]_{\approx}:\mathsf{1}\rightarrow \mathsf{N}$ and $[\mathbf{succ}]_{\approx}:\mathsf{N}\rightarrow \mathsf{N}$. This presentation of  natural numbers object will help to avoid
annoyed encodings when showing the validity the Formal Church's thesis in the
doctrine based on $\mathcal{C}_r$.
\end{remark}

\begin{lemma}\label{mono} An arrow $\mathsf{j}:=[\mathbf{j}]_{\approx}:\mathsf{A}\rightarrow \mathsf{B}$ in $\mathcal{C}_{r}$ is a monomorphism if and only if 
$$x\,\varepsilon\, \mathsf{A}\wedge y\,\varepsilon\, \mathsf{A}\wedge \{\mathbf{j}\}(x)=  \{\mathbf{j}\}(y)\vdash_{\tar} x=y.$$
\end{lemma}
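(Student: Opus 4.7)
The plan is to prove both directions directly from the definitions of composition and of the equivalence $\approx$ in $\mathcal{C}_r$, using a cleverly chosen class as a test object for the forward direction.

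For the \emph{if} direction, I assume the sequent $x\,\varepsilon\,\mathsf{A}\wedge y\,\varepsilon\,\mathsf{A}\wedge \{\mathbf{j}\}(x)=\{\mathbf{j}\}(y)\vdash_{\tar} x=y$ holds and I take two arrows $\mathsf{f}=[\mathbf{f}]_{\approx},\mathsf{g}=[\mathbf{g}]_{\approx}\colon \mathsf{C}\rightarrow\mathsf{A}$ with $\mathsf{j}\circ\mathsf{f}=\mathsf{j}\circ\mathsf{g}$. Unfolding composition and $\approx$, this equality says that $z\,\varepsilon\,\mathsf{C}\vdash_{\tar}\{\mathbf{j}\}(\{\mathbf{f}\}(z))=\{\mathbf{j}\}(\{\mathbf{g}\}(z))$, while by definition of operation we also have $z\,\varepsilon\,\mathsf{C}\vdash_{\tar}\{\mathbf{f}\}(z)\,\varepsilon\,\mathsf{A}$ and $z\,\varepsilon\,\mathsf{C}\vdash_{\tar}\{\mathbf{g}\}(z)\,\varepsilon\,\mathsf{A}$. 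Substituting $\{\mathbf{f}\}(z)$ for $x$ and $\{\mathbf{g}\}(z)$ for $y$ in the assumed sequent yields $z\,\varepsilon\,\mathsf{C}\vdash_{\tar}\{\mathbf{f}\}(z)=\{\mathbf{g}\}(z)$, i.e.\ $\mathsf{f}=\mathsf{g}$, so $\mathsf{j}$ is mono.

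For the \emph{only if} direction, I assume $\mathsf{j}$ is a monomorphism and I use the class $\mathsf{K}:=\{z\mid p_{1}(z)\,\varepsilon\,\mathsf{A}\wedge p_{2}(z)\,\varepsilon\,\mathsf{A}\wedge \{\mathbf{j}\}(p_{1}(z))=\{\mathbf{j}\}(p_{2}(z))\}$ as a test object, together with the two operations $\mathsf{f}:=[\mathbf{p}_{1}]_{\approx},\mathsf{g}:=[\mathbf{p}_{2}]_{\approx}\colon\mathsf{K}\rightarrow\mathsf{A}$. By construction $\mathsf{j}\circ\mathsf{f}=\mathsf{j}\circ\mathsf{g}$, so monicity gives $\mathsf{f}=\mathsf{g}$, i.e.\ $z\,\varepsilon\,\mathsf{K}\vdash_{\tar}p_{1}(z)=p_{2}(z)$. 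Finally, to derive the required sequent I argue in $\tar$ from $x\,\varepsilon\,\mathsf{A}$, $y\,\varepsilon\,\mathsf{A}$ and $\{\mathbf{j}\}(x)=\{\mathbf{j}\}(y)$: the element $p(x,y)$ lies in $\mathsf{K}$, since $p_{1}(p(x,y))=x$ and $p_{2}(p(x,y))=y$, and the established equality on $\mathsf{K}$ then yields $x=y$.

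No step poses a real obstacle; the only subtlety is to remember that equality of operations in $\mathcal{C}_r$ is the $\approx$-relation (i.e.\ a provable equality of outputs under the hypothesis of lying in the source class), and to choose the test class $\mathsf{K}$ so that universal use of monicity translates exactly into the desired entailment after pairing $x$ and $y$ with the primitive recursive encoding $p$.
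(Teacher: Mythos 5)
Your proof is correct and is essentially the paper's own argument spelled out: the test class $\mathsf{K}$ you construct is exactly the kernel pair of $\mathsf{j}$ (the equalizer of $\mathsf{j}\circ\mathsf{p}_{1}$ and $\mathsf{j}\circ\mathsf{p}_{2}$ on $\mathsf{A}\times\mathsf{A}$), which is precisely what the paper's one-line proof ``just consider the kernel pair of the monomorphism'' refers to, and your easy direction is the standard substitution argument.
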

\begin{proof}
Just consider the kernel pair of the monomorphism.
%% $$\xymatrix{
%% \mathsf{Eq}(\mathsf{j}\circ \mathsf{p}_{1},\mathsf{j}\circ \mathsf{p}_{2})\ar[r]^-{[\mathbf{p}_{2}]_{\approx}}\ar[d]_-{[\mathbf{p}_{1}]_{\approx}}						&\mathsf{A}\ar[d]^{\mathsf{j}}\\
%% \mathsf{A}	\ar[r]_{\mathsf{j}}			&\mathsf{B}\\
%% }$$
%% The arrow $\mathsf{j}$ is  monic if and only if $$[\mathbf{p}_{1}]_{\approx}=[\mathbf{p}_{2}]_{\approx}:\mathsf{Eq}(\mathsf{j}\circ \mathsf{p}_{1},\mathsf{j}\circ \mathsf{p}_{2})\rightarrow \mathsf{A}$$
%% if and only if  $$\mathbf{p}_{1}\approx_{\mathsf{Eq}(\mathsf{j}\circ \mathsf{p}_{1},\mathsf{j}\circ \mathsf{p}_{2}),\mathsf{A}}\mathbf{p}_{2}$$ if and only if 
%% $$p_{1}(x)\,\varepsilon\,\mathsf{A}\wedge p_{2}(x)\,\varepsilon \,\mathsf{A}\wedge \{\mathbf{j}\}(p_{1}(x))= \{\mathbf{j}\}(p_{2}(x))\vdash_{\tar}p_{1}(x)=p_{2}(x).$$
%% If here we substitute $p(x,y)$ instead of $x$ we obtain the condition:
%% $$x\,\varepsilon\,\mathsf{A}\wedge y\,\varepsilon \,\mathsf{A}\wedge \{\mathbf{j}\}(x)= \{\mathbf{j}\}(y)\vdash_{\tar}x=y.$$
%% Vice versa if we substitute in the last condition $p_{1}(x)$ instead of $x$ and $p_{2}(x)$ instead of $y$ we obtain the previous condition.
\end{proof}
\begin{theorem}
Finite coproducts in $\mathcal{C}_{r}$ are disjoint and stable.
\end{theorem}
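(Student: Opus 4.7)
The plan is to verify each property of disjointness and stability in turn by direct computation in $\mathcal{C}_{r}$, exploiting the fact that the tagging in $\mathsf{A}+\mathsf{B}$ uses the bijective pairing $p$ with a $0/1$-discriminant that can be decided by the primitive recursive conditional $\mathbf{ite}$.

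First, for disjointness I will show that the injections $\mathsf{j}_{1}$ and $\mathsf{j}_{2}$ are monomorphisms by applying Lemma~\ref{mono}: if $\{\mathbf{p}\}(0,x)=\{\mathbf{p}\}(0,y)$ then $x=y$ by the injectivity of pairing (provable in $\tar$), and similarly for $\mathsf{j}_{2}$. Next I will exhibit the pullback of $\mathsf{j}_{1}$ against $\mathsf{j}_{2}$ as the equalizer inside $\mathsf{A}\times \mathsf{B}$ of $\mathsf{j}_{1}\circ \mathsf{p}_{1}$ and $\mathsf{j}_{2}\circ \mathsf{p}_{2}$, whose defining formula forces $\{\mathbf{p}\}(0,p_{1}(x))=\{\mathbf{p}\}(1,p_{2}(x))$. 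This entails $0=1$ in $\tar$, so the membership predicate of the pullback is provably equivalent to $\bot$; hence it coincides with $\mathsf{0}$, and disjointness follows.

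For stability, given any arrow $\mathsf{f}=[\mathbf{f}]_{\approx}:\mathsf{C}\rightarrow \mathsf{A}+\mathsf{B}$, I will describe the pullbacks of the injections along $\mathsf{f}$ as the classes
$$\mathsf{C}_{1}:=\{x\,|\,x\,\varepsilon\,\mathsf{C}\wedge p_{1}(\{\mathbf{f}\}(x))=0\wedge p_{2}(\{\mathbf{f}\}(x))\,\varepsilon\,\mathsf{A}\},$$
$$\mathsf{C}_{2}:=\{x\,|\,x\,\varepsilon\,\mathsf{C}\wedge p_{1}(\{\mathbf{f}\}(x))=1\wedge p_{2}(\{\mathbf{f}\}(x))\,\varepsilon\,\mathsf{B}\},$$
each equipped with the inclusion into $\mathsf{C}$ (represented by $\Lambda x.x$) and with the evident map into $\mathsf{A}$ or $\mathsf{B}$ given by $\Lambda x.p_{2}(\{\mathbf{f}\}(x))$; a routine diagram chase using Lemma~\ref{mono} verifies the pullback universal property. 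Then I will verify that $\mathsf{C}$, with the cotupling given by the inclusions, is their coproduct by producing an inverse to the canonical map $\mathsf{C}_{1}+\mathsf{C}_{2}\rightarrow \mathsf{C}$, namely
$$\mathsf{k}:=[\Lambda x.\{\mathbf{p}\}(p_{1}(\{\mathbf{f}\}(x)),x)]_{\approx}:\mathsf{C}\rightarrow \mathsf{C}_{1}+\mathsf{C}_{2},$$
which types correctly because $x\,\varepsilon\,\mathsf{C}$ entails that $\{\mathbf{f}\}(x)\,\varepsilon\,\mathsf{A}+\mathsf{B}$, so in $\tar$ one can do the two-case split on $p_{1}(\{\mathbf{f}\}(x))\in\{0,1\}$ to see that the output lies in $\mathsf{C}_{1}+\mathsf{C}_{2}$. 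The cotupling and $\mathsf{k}$ are mutual inverses modulo $\approx$ by the same case distinction.

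The main subtlety is constructing $\mathsf{k}$ and checking it agrees on each branch with the corresponding inclusion into the coproduct; however this is exactly the situation where $\mathbf{ite}$ (or the fact that provably $p_{1}(\{\mathbf{f}\}(x))$ is $0$ or $1$ under $x\,\varepsilon\,\mathsf{C}$) lets the argument go through without needing decidability to be encoded on the nose. I do not expect a genuine obstacle beyond keeping track of the equivalence classes of numerals.
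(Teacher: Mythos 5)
Your proposal is correct and follows essentially the same route as the paper: the injections are shown to be monic via Lemma~\ref{mono}, and the remaining disjointness and stability claims are exactly the ``direct straightforward verification'' the paper leaves implicit, which you carry out with the expected explicit classes and realizers. The only thing worth noting is that, as you yourself observe, the $\mathbf{ite}$ combinator is not actually needed since the case split on the tag happens at the level of provability in $\tar$ rather than in the realizer.
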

\begin{proof}
By lemma~\ref{mono} coproduct injections are monic.
The rest
 %% The initial object is clearly stable as if $[\mathbf{n}]_{\approx}:\mathsf{A}\rightarrow \mathsf{0}$, then $x\,\varepsilon\, A\vdash_{\tar} \bot$, i.\,e.\ $\mathsf{A}$ coincides with $\mathsf{0}$ in $\mathcal{C}_{r}$. Stability of binary coproducts 
can be proven via a direct straightforward verification.
\end{proof}

\begin{remark}
Notice that the category $\mathcal{C}_{r}$ is not well pointed. In fact if $I$ is an undecidable sentence of $\tar$ (which exists as soon as $\tar$ is consistent, by G\"odel's incompleteness theorem) the arrows $[\Lambda x.0]_{\approx}$ and $[\Lambda x.1]_{\approx}$ are well defined distinct arrows from $\{x|\,(x=0\wedge I)\vee (x=1\wedge \neg I)\}$ to $\{x|x=0\vee x=1\}$. However the object  $\{x|\,(x=0\,\wedge\, I)\,\vee\, (x=1\,\wedge\, \neg I)\}$ has no points, i.\,e.\ there are no arrows from $\mathsf{1}$ to $\{x|\,(x=0\,\wedge \,I)\,\vee\, (x=1\,\wedge\, \neg I)\}$ in $\mathcal{C}_{r}$.
\end{remark}

\subsubsection{The indexed category  of realized collections}
\label{indexcol}
Here we are going to describe the indexed category of realized collections which
we will use  to show that
$\mathcal{C}_{r}$ is weakly locally cartesian closed in a straightforward way.

\begin{definition}\label{colcon}
Suppose $\mathsf{A}$ is an object of $\mathcal{C}_{r}$. A \emph{family of realized collections} on $\mathsf{A}$ is a formal expression $$\{x'|\,\varphi_{\mathsf{C}}(x,x')\}$$ where $\varphi_{\mathsf{C}}(x,x')$ is a formula of $\tar$ with at most $x$ and $x'$ as free variables (we write $x'\,\varepsilon\, \mathsf{C}(x)$ as an abbreviation for $\varphi_{\mathsf{C}}(x,x')$) for which $$x'\,\varepsilon\,\mathsf{C}(x)\,\vdash_{\tar}\,x\,\varepsilon\, \mathsf{A}$$ 

Families of realized collections on $\mathsf{A}$ with provably (in $\tar$) equivalent membership relations are identified.

An \emph{operation} from a family of realized collections $\mathsf{C}(x)$ on $\mathsf{A}$ to another $\mathsf{D}(x)$ is given by an equivalence class $[\mathbf{n}]_{\approx_{\mathsf{C}(x),\mathsf{D}(x)}}$ of numerals such that 
$$x'\,\varepsilon\,\mathsf{C}(x)\,\vdash_{\tar}\,\{\mathbf{n}\}(x,x')\,\varepsilon\,\mathsf{D}(x)$$
with respect to the equivalence relation defined as follows: 
$$\mathbf{n}\approx_{\mathsf{C}(x),\mathsf{D}(x)} \mathbf{m}\textnormal{ if and only if }x'\,\varepsilon\, \mathsf{C}(x)\,\vdash_{\tar}\,\{\mathbf{n}\}(x,x')=\{\mathbf{m}\}(x,x')$$

 If $\mathsf{f}=[\mathbf{n}]_{\approx_{\mathsf{C}(x),\mathsf{D}(x)}}:\mathsf{C}(x)\rightarrow \mathsf{D}(x)$ and $\mathsf{g}=[\mathbf{m}]_{\approx_{\mathsf{D}(x),\mathsf{E}(x)}}:\mathsf{D}(x)\rightarrow \mathsf{E}(x)$ are operations between families of realized collection on $\mathsf{A}$, then their \emph{composition} is defined as 
 $$\mathsf{g}\circ \mathsf{f}:=[\Lambda x.\Lambda x'.\{\mathbf{m}\}(x,\{\mathbf{n}\}(x,x'))]_{\approx_{\mathsf{C}(x),\mathsf{E}(x)}}:\mathsf{C}(x)\rightarrow\mathsf{E}(x)$$
  
If $\mathsf{C}(x)$ is a family of realized collections on $\mathsf{A}$, then its \emph{identity} operation is defined by
 $$\mathsf{id}_{\mathsf{C}(x)}:=[\Lambda x.\Lambda x'.x']_{\approx_{\mathsf{C}(x),\mathsf{C}(x)}}:\mathsf{C}(x)\rightarrow\mathsf{C}(x)$$
\end{definition}
The proof of the following lemma is an immediate verification.

\begin{lemma}
For every object $\mathsf{A}$ of $\mathcal{C}_{r}$, families of realized collections on $\mathsf{A}$ and operations between them together with their composition and identity operations define a category. We denote this category with $\mathbf{Col}^{r}(\mathsf{A})$.
\end{lemma}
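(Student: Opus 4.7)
The plan is a routine verification of the three category axioms in the setting of realized operations indexed by $\mathsf{A}$. The one subtlety worth highlighting is that the $\Lambda$-abstraction used here is the combinator-style abstraction built inside $\tar$ (as recalled at the end of Section~\ref{tar}), so every $\beta$-reduction one wants to use has to be derivable in $\tar$; apart from this the proof parallels the verification already done for $\mathcal{C}_r$ itself.

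First I would check that composition is well defined. Given $\mathsf{f}:\mathsf{C}(x)\to\mathsf{D}(x)$ and $\mathsf{g}:\mathsf{D}(x)\to\mathsf{E}(x)$ represented by $\mathbf{n}$ and $\mathbf{m}$ respectively, I would observe that under the hypothesis $x'\,\varepsilon\,\mathsf{C}(x)$ the computation $\{\Lambda x.\Lambda x'.\{\mathbf{m}\}(x,\{\mathbf{n}\}(x,x'))\}(x,x')$ reduces in $\tar$ to $\{\mathbf{m}\}(x,\{\mathbf{n}\}(x,x'))$; since $\mathbf{n}$ is an operation we have $\{\mathbf{n}\}(x,x')\,\varepsilon\,\mathsf{D}(x)$, hence $\{\mathbf{m}\}(x,\{\mathbf{n}\}(x,x'))\,\varepsilon\,\mathsf{E}(x)$ by the operation property of $\mathbf{m}$, so the result is indeed an operation $\mathsf{C}(x)\to\mathsf{E}(x)$. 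Independence from representatives then follows by the same reduction: if $\mathbf{n}\approx_{\mathsf{C}(x),\mathsf{D}(x)}\mathbf{n}'$ and $\mathbf{m}\approx_{\mathsf{D}(x),\mathsf{E}(x)}\mathbf{m}'$, then under $x'\,\varepsilon\,\mathsf{C}(x)$ we have $\{\mathbf{n}\}(x,x')=\{\mathbf{n}'\}(x,x')$, which is provably in $\mathsf{D}(x)$, whence $\{\mathbf{m}\}(x,\{\mathbf{n}\}(x,x'))=\{\mathbf{m}'\}(x,\{\mathbf{n}'\}(x,x'))$ in $\tar$.

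Next I would verify associativity: given a third operation $\mathsf{h}=[\mathbf{k}]_{\approx}:\mathsf{E}(x)\to\mathsf{F}(x)$, both $(\mathsf{h}\circ\mathsf{g})\circ\mathsf{f}$ and $\mathsf{h}\circ(\mathsf{g}\circ\mathsf{f})$ are represented, after $\beta$-reduction inside $\tar$, by a numeral whose Kleene application to $(x,x')$ yields $\{\mathbf{k}\}(x,\{\mathbf{m}\}(x,\{\mathbf{n}\}(x,x')))$ under the assumption $x'\,\varepsilon\,\mathsf{C}(x)$. Hence the two representatives are $\approx$-equivalent. For the identity axioms, the term $\{\Lambda x.\Lambda x'.x'\}(x,x')$ reduces to $x'$ in $\tar$, so $\mathsf{id}_{\mathsf{D}(x)}\circ\mathsf{f}$ is represented by a numeral whose application to $(x,x')$ is provably equal to $\{\mathbf{n}\}(x,x')$ under $x'\,\varepsilon\,\mathsf{C}(x)$, and symmetrically for $\mathsf{f}\circ\mathsf{id}_{\mathsf{C}(x)}$.

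The only real obstacle, and really more a matter of bookkeeping than of mathematics, is making sure that the $\Lambda$-abstractions one writes down are legitimate terms of $\tar$ in the sense of the partial combinatory algebra encoding, and that every $\beta$-reduction invoked is the one guaranteed by the construction of $\Lambda$ recalled in Section~\ref{tar}. Once this is granted, the three axioms reduce to the analogous computations already used for $\mathcal{C}_r$ in Definition~\ref{col}, with the extra parameter $x$ threaded through uniformly.
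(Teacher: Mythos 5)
Your proposal is correct and is exactly the ``immediate verification'' that the paper itself omits: the paper states the lemma with no written proof, and your checks of well-definedness under $\approx$, associativity, and the identity laws via $\beta$-reduction of the combinator-style $\Lambda$-abstraction in $\tar$ are precisely what that verification consists of. Nothing is missing and nothing diverges from the paper's (implicit) argument.
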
 
 From now on, we will omit the subscripts of $\approx$ when they will be clear from the context.

The proof of the following lemma consists of an immediate verification, too.

 \begin{lemma}
If $\mathsf{f}:=[\mathbf{n}]_{\approx}:\mathsf{A}\rightarrow \mathsf{B}$, then the following assignments give rise to a functor $\mathbf{Col}^{r}_{\mathsf{f}}$ from $\mathbf{Col}^{r}(\mathsf{B})$ to $\mathbf{Col}^{r}(\mathsf{A})$:
\begin{enumerate}
\item for every object $\mathsf{C}(x)$ of $\mathbf{Col}^{r}(\mathsf{B})$
$$\mathbf{Col}^{r}_{\mathsf{f}}(\mathsf{C}(x)):=\{x'|\,x\,\varepsilon\, \mathsf{A}\wedge x'\,\varepsilon\, \mathsf{C}(\{\mathbf{n}\}(x))\}$$
\item for every arrow $\mathsf{g}:=[\mathbf{k}]_{\approx}:\mathsf{C}(x)\rightarrow\mathsf{D}(x)$ of $\mathbf{Col}^{r}(\mathsf{B})$ 
$$\mathbf{Col}^{r}_{\mathsf{f}}(\mathsf{g}):=[\Lambda x.\Lambda x'. \{\mathbf{k}\}(\{\mathbf{n}\}(x),x')]_{\approx}:{\mathbf{Col}^{r}_{\mathsf{f}}(\mathsf{C}(x))\rightarrow \mathbf{Col}^{r}_{\mathsf{f}}(\mathsf{D}(x))}.$$\end{enumerate}

\end{lemma}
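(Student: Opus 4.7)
The plan is to perform the routine functoriality checks in four steps, treating them as straightforward syntactic manipulations in $\tar$ once we unfold the definitions. I will assume that $\mathsf{f}=[\mathbf{n}]_{\approx}:\mathsf{A}\to\mathsf{B}$ is fixed, and I will verify in turn that the object assignment lands in the target fibre, that the morphism assignment is well-defined on equivalence classes of numerals, and that identities and composition are preserved.

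First I would check that $\mathbf{Col}^{r}_{\mathsf{f}}(\mathsf{C}(x))$ is actually an object of $\mathbf{Col}^{r}(\mathsf{A})$ in the sense of Definition~\ref{colcon}. Since its defining formula explicitly contains the conjunct $x\,\varepsilon\,\mathsf{A}$, the required sequent $x'\,\varepsilon\,\mathbf{Col}^{r}_{\mathsf{f}}(\mathsf{C}(x))\vdash_{\tar} x\,\varepsilon\,\mathsf{A}$ is immediate. Next, for the morphism assignment on $\mathsf{g}=[\mathbf{k}]_{\approx}:\mathsf{C}(x)\to\mathsf{D}(x)$, I would first verify the typing condition: from $x\,\varepsilon\,\mathsf{A}$ we get $\{\mathbf{n}\}(x)\,\varepsilon\,\mathsf{B}$ since $\mathsf{f}$ is an operation, and then from $x'\,\varepsilon\,\mathsf{C}(\{\mathbf{n}\}(x))$ and the typing of $\mathbf{k}$ we get $\{\mathbf{k}\}(\{\mathbf{n}\}(x),x')\,\varepsilon\,\mathsf{D}(\{\mathbf{n}\}(x))$, which together with $x\,\varepsilon\,\mathsf{A}$ proves $\{\Lambda x.\Lambda x'.\{\mathbf{k}\}(\{\mathbf{n}\}(x),x')\}(x,x')\,\varepsilon\,\mathbf{Col}^{r}_{\mathsf{f}}(\mathsf{D}(x))$ in $\tar$. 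Well-definedness on the $\approx$-class is equally routine: if $\mathbf{k}\approx_{\mathsf{C}(x),\mathsf{D}(x)}\mathbf{k}'$, then substituting $\{\mathbf{n}\}(x)$ for $x$ in the defining sequent yields the required equality of the lifted numerals under $\approx_{\mathbf{Col}^{r}_{\mathsf{f}}(\mathsf{C}(x)),\mathbf{Col}^{r}_{\mathsf{f}}(\mathsf{D}(x))}$.

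For functoriality itself, I would observe that $\mathbf{Col}^{r}_{\mathsf{f}}(\mathsf{id}_{\mathsf{C}(x)})$ is represented by $\Lambda x.\Lambda x'.\{\Lambda y.\Lambda y'.y'\}(\{\mathbf{n}\}(x),x')$, which is $\approx$-equivalent to $\Lambda x.\Lambda x'.x'$, i.e.\ $\mathsf{id}_{\mathbf{Col}^{r}_{\mathsf{f}}(\mathsf{C}(x))}$, using the standard $\beta$-type equalities for $\Lambda$ in $\tar$. For composition, given $\mathsf{g}=[\mathbf{k}]_{\approx}$ and $\mathsf{h}=[\mathbf{m}]_{\approx}$, unfolding both $\mathbf{Col}^{r}_{\mathsf{f}}(\mathsf{h}\circ\mathsf{g})$ and $\mathbf{Col}^{r}_{\mathsf{f}}(\mathsf{h})\circ\mathbf{Col}^{r}_{\mathsf{f}}(\mathsf{g})$ gives two numerals that, after $\beta$-reduction, both compute to $\{\mathbf{m}\}(\{\mathbf{n}\}(x),\{\mathbf{k}\}(\{\mathbf{n}\}(x),x'))$; hence they are $\approx$-equivalent under the appropriate equivalence relation.

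There is no substantial obstacle here: the only subtlety worth flagging is keeping the two distinct $\approx$-relations straight (the one indexed by families on $\mathsf{A}$ versus the one indexed by families on $\mathsf{B}$) and remembering that substitution of the term $\{\mathbf{n}\}(x)$ into a sequent valid over $\mathsf{B}$ requires the hypothesis $x\,\varepsilon\,\mathsf{A}$ to guarantee $\{\mathbf{n}\}(x)\,\varepsilon\,\mathsf{B}$; this is precisely why the conjunct $x\,\varepsilon\,\mathsf{A}$ is baked into the definition of $\mathbf{Col}^{r}_{\mathsf{f}}(\mathsf{C}(x))$.
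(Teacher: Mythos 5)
Your verification is correct and is exactly the "immediate verification" that the paper omits: the paper gives no written proof of this lemma, stating only that it consists of an immediate check, and your unfolding of the typing conditions, well-definedness on $\approx$-classes, and preservation of identities and composition via the combinatory $\beta$-equalities is the intended routine argument.
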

We can now prove the following theorem enumerating the properties of $\mathbf{Col}^{r}$.
\begin{theorem}\label{tc1} For every object $\mathsf{A}$ of $\mathcal{C}_{r}$, $\mathbf{Col}^{r}(\mathsf{A})$ is a finitely complete category with list objects, finite coproducts and weak exponentials. Moreover for every arrow $\mathsf{f}:\mathsf{A}\rightarrow\mathsf{B}$ in $\mathcal{C}_{r}$, the functor $\mathbf{Col}^{r}_{\mathsf{f}}$ preserves this structure. 
\end{theorem}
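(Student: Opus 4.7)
The plan is to mimic the proof of Theorem~\ref{t1} in the indexed setting, constructing each piece of structure in $\mathbf{Col}^{r}(\mathsf{A})$ by relativizing the absolute constructions of $\mathcal{C}_r$ to families parametrized by $x\,\varepsilon\,\mathsf{A}$. Concretely, I would define the terminal family as $\{x'|\,x\,\varepsilon\,\mathsf{A}\wedge x'=0\}$ with unique map $[\Lambda x.\Lambda x'.0]_{\approx}$; the binary product of $\mathsf{C}(x)$ and $\mathsf{D}(x)$ as $\{x'|\, p_1(x')\,\varepsilon\,\mathsf{C}(x)\wedge p_2(x')\,\varepsilon\,\mathsf{D}(x)\}$ with projections represented by the same numerals $\mathbf{p}_1,\mathbf{p}_2$ lifted to an extra parameter; and the equalizer of $[\mathbf{n}]_{\approx},[\mathbf{m}]_{\approx}:\mathsf{C}(x)\to\mathsf{D}(x)$ as $\{x'|\,x'\,\varepsilon\,\mathsf{C}(x)\wedge \{\mathbf{n}\}(x,x')=\{\mathbf{m}\}(x,x')\}$. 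In each case universality is verified by a direct calculation entirely analogous to the absolute case of Theorem~\ref{t1}, the only change being the extra parameter $x$ threaded through the $\lambda$-abstractions.

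For the remaining structure I would set $\mathsf{0}_\mathsf{A}:=\{x'|\,x\,\varepsilon\,\mathsf{A}\wedge\bot\}$, define the coproduct by the obvious relativization of item (5) of Theorem~\ref{t1}, and put
$$\mathsf{List}(\mathsf{C}(x)):=\{x'|\,x\,\varepsilon\,\mathsf{A}\wedge\forall j(j<lh(x')\to (x')_j\,\varepsilon\,\mathsf{C}(x))\}$$
with empty-list and cons arrows inherited from the absolute case. For weak exponentials, set
$$\mathsf{C}(x)\Rightarrow\mathsf{D}(x):=\{x'|\,x\,\varepsilon\,\mathsf{A}\wedge\forall u(u\,\varepsilon\,\mathsf{C}(x)\to\{x'\}(u)\,\varepsilon\,\mathsf{D}(x))\}$$
with evaluation $[\Lambda x.\Lambda y.\{\{\mathbf{p}_1\}(y)\}(\{\mathbf{p}_2\}(y))]_{\approx}$; given $\mathsf{h}=[\mathbf{k}]_{\approx}:\mathsf{E}(x)\times\mathsf{C}(x)\to\mathsf{D}(x)$, its weak transpose is represented by $\Lambda x.\Lambda e.\Lambda u.\{\mathbf{k}\}(x,p(e,u))$. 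The only nonobvious point is that weak exponentials are indeed only weak and not genuine exponentials, but this is inherited from the absolute case (and from the fact that Kleene application cannot be uniquely internalized).

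For the preservation statement, let $\mathsf{f}=[\mathbf{n}]_{\approx}:\mathsf{A}\to\mathsf{B}$. The key observation is that the reindexing functor $\mathbf{Col}^{r}_{\mathsf{f}}$ is defined by purely syntactic substitution $x\mapsto\{\mathbf{n}\}(x)$ inside the membership formula $\varphi_{\mathsf{C}}$. Each of the constructions above is specified by a formula in the variable $x$ together with a numeral built by $\lambda$-abstraction from $\mathbf{p},\mathbf{p}_1,\mathbf{p}_2,\mathbf{cnc},\mathbf{listrec},\ldots$, and substitution commutes with these logical connectives, quantifiers and with the encoded operations of the partial combinatory algebra. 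Hence one gets canonical isomorphisms between $\mathbf{Col}^{r}_{\mathsf{f}}$ applied to any of the above constructions in $\mathbf{Col}^{r}(\mathsf{B})$ and the corresponding construction performed directly in $\mathbf{Col}^{r}(\mathsf{A})$, and these isomorphisms are compatible with the structural arrows.

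The main obstacle I anticipate is not any single construction but the sheer bookkeeping: one must be careful that every numeral encoding a structural arrow is built uniformly in the parameter $x$, so that the resulting equivalence class is well defined modulo $\approx_{\mathsf{C}(x),\mathsf{D}(x)}$ and so that reindexing along $\mathsf{f}$ acts on it by pure substitution. Once that uniformity is checked for products and equalizers, the coproduct, list and weak-exponential cases are parallel; after that, preservation under $\mathbf{Col}^{r}_{\mathsf{f}}$ reduces to observing that $\{\mathbf{n}\}(x)$ is substituted into the same schema on both sides of the putative isomorphism.
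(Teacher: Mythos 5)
Your proposal is correct and follows essentially the same route as the paper: the paper's proof likewise relativizes each construction of Theorem~\ref{t1} by threading the parameter $x$ through the defining formulas and $\lambda$-abstractions, with the same terminal, product, equalizer, coproduct, list and weak-exponential families, and dismisses preservation under $\mathbf{Col}^{r}_{\mathsf{f}}$ as an easy verification for exactly the substitution reason you give.
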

\begin{proof} The proof of this theorem is similar to the proof of theorem \ref{t1} and the fact that the functors $\mathbf{Col}^{r}_{\mathsf{f}}$ preserve the structure is an easy verification. 
In particular
\begin{enumerate}
\item $\mathsf{1}_{\mathsf{A}}:=\{x'|\,x'=0\wedge x\,\varepsilon\, \mathsf{A}\}$ is a terminal object;
\item $\mathsf{B}(x)\times_{\mathsf{A}} \mathsf{C}(x):=\{x'|\,p_{1}(x')\,\varepsilon\, \mathsf{B}(x)\,\wedge\,  p_{2}(x')\,\varepsilon\,\mathsf{C}(x)\}$ defines a binary product for $\mathsf{B}(x)$ and $\mathsf{C}(x)$ together with the left  projection arrow defined as $[\Lambda x.\Lambda x'.\{\mathbf{p}_{1}\}(x')]_{\approx}:\mathsf{B}(x)\times_{\mathsf{A}} \mathsf{C}(x)\rightarrow \mathsf{B}(x)$ and the right projection arrow defined as $[\Lambda x.\Lambda x'.\{\mathbf{p}_{2}\}(x')]_{\approx}:\mathsf{B}(x)\times_{\mathsf{A}} \mathsf{C}(x)\rightarrow \mathsf{C}(x)$; 
\item if $\mathsf{f}=[\mathbf{n}]_{\approx}$ and $\mathsf{g}=[\mathbf{m}]_{\approx}$ are arrows in $\mathbf{Col}^{r}(\mathsf{A})$ from $\mathsf{B}(x)$ to $\mathsf{C}(x)$, then their equalizer is given by the realized collection  
$$\mathsf{Eq}(\mathsf{f},\mathsf{g}):=\{x'|\,x'\,\varepsilon\,\mathsf{B}(x)\,\wedge\, \{\mathbf{n}\}(x,x')=\{\mathbf{m}\}(x,x')\}$$ and the arrow $\mathsf{eq}(\mathsf{f},\mathsf{g}):=[\Lambda x.\Lambda x'.x']_{\approx}:\mathsf{Eq}(\mathsf{f},\mathsf{g})\rightarrow \mathsf{B}(x)$;
\item $\mathsf{0}_{\mathsf{A}}:=\{x'|\,\bot\}$ is an initial object;

\item the realized collection $\mathsf{B}(x)+_{\mathsf{A}}\mathsf{C}(x)$ defined as 
$$\{x'|\;( p_{1}(x')=0\wedge  p_{2}(x')\,\varepsilon\, \mathsf{B}(x))\vee ( p_{1}(x')=1\wedge  p_{2}(x')\,\varepsilon\,\mathsf{C}(x))\}$$ gives a binary coproduct in $\mathbf{Col}^{r}(\mathsf{A})$ for $\mathsf{B}(x)$ and $\mathsf{C}(x)$ together with the injection arrows defined as $[\Lambda x.\Lambda x'.\{\mathbf{p}\}(0,x')]_{\approx}:\mathsf{B}(x)\rightarrow\mathsf{B}(x)+_{\mathsf{A}}\mathsf{C}(x)$ and $[\Lambda x.\Lambda x'.\{\mathbf{p}\}(1,x')]_{\approx}:\mathsf{C}(x)\rightarrow\mathsf{B}(x)+_{\mathsf{A}}\mathsf{C}(x)$;

\item $\mathsf{List}_{\mathsf{A}}(\mathsf{B}(x)):=\{x'|\,x\,\varepsilon\, A\wedge\forall j(j<lh(x')\rightarrow (x')_{j}\,\varepsilon\,\mathsf{B}(x))\}$ defines a parameterized list object for $\mathsf{B}(x)$ together with the empty list arrow defined as $[\Lambda x.\Lambda x'.0]_{\approx}:\mathsf{1}_{\mathsf{A}}\rightarrow \mathsf{List}_{\mathsf{A}}(\mathsf{B}(x))$ and the append arrow defined as $[\Lambda x.\Lambda x'.\{\mathbf{cnc}\}(\{\mathbf{p}_{1}\}(x'),\{\mathbf{p}_{2}\}(x'))]_{\approx}:\mathsf{List}_{\mathsf{A}}(\mathsf{B}(x))\times_{\mathsf{A}} \mathsf{B}(x)\rightarrow \mathsf{List}_{\mathsf{A}}(\mathsf{B}(x))$;

\item $\mathsf{B}(x)\Rightarrow_{\mathsf{A}}\mathsf{C}(x):=\{x'|\,x\,\varepsilon\, \mathsf{A}\wedge \forall t\, (t\,\varepsilon\,\mathsf{B}(x)\, \rightarrow\, \{x'\}(t)\,\varepsilon\,\mathsf{C}(x))\}$ defines a weak exponential for $\mathsf{B}(x)$ and $\mathsf{C}(x)$ together with the evaluation arrow $$[\Lambda x.\Lambda x'.\{\{\mathbf{p}_{1}\}(x')\}(\{\mathbf{p}_{2}\}(x'))]_{\approx}:(\mathsf{B}(x)\Rightarrow_{\mathsf{A}}\mathsf{C}(x))\times_{\mathsf{A}}\mathsf{B}(x)\rightarrow \mathsf{C}(x)$$
\end{enumerate}
\end{proof}
The following theorem follows from a simple verification.
\begin{theorem} The assignments $\mathsf{A}\mapsto \mathbf{Col}^{r}(\mathsf{A})$ and $\mathsf{f}\mapsto \mathbf{Col}^{r}_{\mathsf{f}}$ define an indexed category
$$\mathbf{Col}^{r}:\mathcal{C}_{r}^{op}\rightarrow \mathbf{Cat}.$$
\end{theorem}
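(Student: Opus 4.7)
The plan is to verify the two strict functoriality axioms: that $\mathbf{Col}^{r}_{\mathsf{id}_{\mathsf{A}}}$ is the identity functor on $\mathbf{Col}^{r}(\mathsf{A})$, and that $\mathbf{Col}^{r}_{\mathsf{h}\circ\mathsf{f}}$ coincides with $\mathbf{Col}^{r}_{\mathsf{f}}\circ \mathbf{Col}^{r}_{\mathsf{h}}$ for composable $\mathsf{f}:\mathsf{A}\to\mathsf{B}$ and $\mathsf{h}:\mathsf{B}\to\mathsf{D}$. The key observation throughout is that, by Definition~\ref{colcon}, families of realized collections are identified up to provable equivalence of their membership formulas in $\tar$, and operations are identified up to the relation $\approx$; hence every equality below need only hold modulo these identifications, and each reduction rests on a Kleene-application identity provable in $\tar$.

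For the identity axiom, $\mathsf{id}_{\mathsf{A}}=[\Lambda x.x]_{\approx}$, so on an object $\mathsf{C}(x)$ of $\mathbf{Col}^{r}(\mathsf{A})$ one has
$$\mathbf{Col}^{r}_{\mathsf{id}_{\mathsf{A}}}(\mathsf{C}(x))=\{x'|\,x\,\varepsilon\,\mathsf{A}\wedge x'\,\varepsilon\,\mathsf{C}(\{\Lambda x.x\}(x))\}$$
Since $\tar$ proves $\{\Lambda x.x\}(x)=x$, and since the defining clause of a family forces $x'\,\varepsilon\,\mathsf{C}(x)\vdash_{\tar}x\,\varepsilon\,\mathsf{A}$, the displayed membership formula is provably equivalent to $x'\,\varepsilon\,\mathsf{C}(x)$, so the two families are identified. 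The same Kleene identity shows that $\mathbf{Col}^{r}_{\mathsf{id}_{\mathsf{A}}}(\mathsf{g})=\mathsf{g}$ for every operation $\mathsf{g}=[\mathbf{k}]_{\approx}$, since $\Lambda x.\Lambda x'.\{\mathbf{k}\}(\{\Lambda x.x\}(x),x')$ and $\Lambda x.\Lambda x'.\{\mathbf{k}\}(x,x')$ are identified under $\approx$.

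For composition, given $\mathsf{f}=[\mathbf{n}]_{\approx}$ and $\mathsf{h}=[\mathbf{m}]_{\approx}$, we have $\mathsf{h}\circ\mathsf{f}=[\Lambda x.\{\mathbf{m}\}(\{\mathbf{n}\}(x))]_{\approx}$, and for any $\mathsf{C}(x)$ in $\mathbf{Col}^{r}(\mathsf{D})$ the reindexing along $\mathsf{h}\circ\mathsf{f}$ yields
$$\{x'|\,x\,\varepsilon\,\mathsf{A}\wedge x'\,\varepsilon\,\mathsf{C}(\{\mathbf{m}\}(\{\mathbf{n}\}(x)))\}$$
after $\beta$-reduction inside $\tar$. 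Computing the other side, $\mathbf{Col}^{r}_{\mathsf{h}}(\mathsf{C}(x))$ is the family on $\mathsf{B}$ with membership $y\,\varepsilon\,\mathsf{B}\wedge x'\,\varepsilon\,\mathsf{C}(\{\mathbf{m}\}(y))$, and applying $\mathbf{Col}^{r}_{\mathsf{f}}$ substitutes $\{\mathbf{n}\}(x)$ for the index, producing $x\,\varepsilon\,\mathsf{A}\wedge\{\mathbf{n}\}(x)\,\varepsilon\,\mathsf{B}\wedge x'\,\varepsilon\,\mathsf{C}(\{\mathbf{m}\}(\{\mathbf{n}\}(x)))$. Since $\mathsf{f}$ being an operation gives $x\,\varepsilon\,\mathsf{A}\vdash_{\tar}\{\mathbf{n}\}(x)\,\varepsilon\,\mathsf{B}$, the middle conjunct is redundant and the two formulas are provably equivalent, hence the two families coincide under the identification. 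The action on arrows is checked by exactly the same Kleene-application computation. The only point to watch, and arguably the main (mild) obstacle, is precisely this distinction between syntactic equality and provable equivalence in $\tar$: it is the identification convention of Definition~\ref{colcon} that bridges the gap and promotes the result from a pseudofunctor to a strict indexed category.
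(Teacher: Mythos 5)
Your proof is correct and is essentially the verification the paper has in mind: the paper offers no argument beyond the remark that the theorem ``follows from a simple verification,'' and what you have written out -- checking strict preservation of identities and composition by $\beta$-reducing the Kleene terms inside $\tar$ and invoking the identification of families up to provable equivalence and of operations up to $\approx$ -- is precisely that verification.
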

The functors $\mathbf{Col}^{r}_{\mathsf{f}}$ are also called \emph{substitution functors}.

In the following theorem we prove that substitution functors have left adjoints.
\begin{theorem}\label{tc2} For every $\mathsf{f}:\mathsf{A}\rightarrow \mathsf{B}$ in $\mathcal{C}_{r}$, the functor $\mathbf{Col}^{r}_{\mathsf{f}}$ has a left adjoint $$\Sigma_{\mathsf{f}}:\mathbf{Col}^{r}(\mathsf{A})\rightarrow \mathbf{Col}^{r}(\mathsf{B}).$$
\end{theorem}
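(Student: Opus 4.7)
The plan is to define $\Sigma_{\mathsf{f}}$ by taking ``dependent sum along $\mathsf{f}$'' on fibres, and then to exhibit the adjunction bijection via explicit syntactic transposes, relying on the bijective pairing primitive recursive encoding $p,p_1,p_2$ that is already available in $\tar$.

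Concretely, if $\mathsf{f}=[\mathbf{n}]_{\approx}:\mathsf{A}\to\mathsf{B}$ and $\mathsf{C}(x)$ is an object of $\mathbf{Col}^{r}(\mathsf{A})$, I set
$$\Sigma_{\mathsf{f}}(\mathsf{C}(x))(y):=\{z\mid p_{1}(z)\,\varepsilon\,\mathsf{A}\wedge \{\mathbf{n}\}(p_{1}(z))=y\wedge p_{2}(z)\,\varepsilon\,\mathsf{C}(p_{1}(z))\},$$
which is a family on $\mathsf{B}$ because $\mathsf{f}$ carries realizers of $\mathsf{A}$ to realizers of $\mathsf{B}$; on arrows $\mathsf{g}=[\mathbf{k}]_{\approx}:\mathsf{C}(x)\to\mathsf{D}(x)$ I set $\Sigma_{\mathsf{f}}(\mathsf{g}):=[\Lambda y.\Lambda z.\{\mathbf{p}\}(\{\mathbf{p}_{1}\}(z),\{\mathbf{k}\}(\{\mathbf{p}_{1}\}(z),\{\mathbf{p}_{2}\}(z)))]_{\approx}$, which preserves the first component and applies $\mathbf{k}$ on the second. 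Functoriality of $\Sigma_{\mathsf{f}}$ reduces to the equations $p_{i}(p(a,b))=a,b$ provable in $\tar$.

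To exhibit the adjunction $\Sigma_{\mathsf{f}}\dashv \mathbf{Col}^{r}_{\mathsf{f}}$, I give explicit transposes. Given $\mathsf{h}=[\mathbf{h}]_{\approx}:\Sigma_{\mathsf{f}}(\mathsf{C}(x))\to \mathsf{D}(y)$ in $\mathbf{Col}^{r}(\mathsf{B})$, I define the transpose $\mathsf{h}^{\flat}:=[\Lambda x.\Lambda x'.\{\mathbf{h}\}(\{\mathbf{n}\}(x),\{\mathbf{p}\}(x,x'))]_{\approx}:\mathsf{C}(x)\to \mathbf{Col}^{r}_{\mathsf{f}}(\mathsf{D}(y))$; conversely, given $\mathsf{g}=[\mathbf{g}]_{\approx}:\mathsf{C}(x)\to \mathbf{Col}^{r}_{\mathsf{f}}(\mathsf{D}(y))$, I define $\mathsf{g}^{\sharp}:=[\Lambda y.\Lambda z.\{\mathbf{g}\}(\{\mathbf{p}_{1}\}(z),\{\mathbf{p}_{2}\}(z))]_{\approx}$. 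Each of these is well typed, e.g.\ for $\mathsf{h}^{\flat}$ one checks in $\tar$ that $x'\,\varepsilon\,\mathsf{C}(x)$ forces $\{\mathbf{p}\}(x,x')\,\varepsilon\,\Sigma_{\mathsf{f}}(\mathsf{C}(x))(\{\mathbf{n}\}(x))$ using the projection equations, and then $\{\mathbf{h}\}$ lands in $\mathsf{D}(\{\mathbf{n}\}(x))$ as required by the definition of $\mathbf{Col}^{r}_{\mathsf{f}}(\mathsf{D}(y))$.

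It remains to check that the two transposes are mutually inverse modulo $\approx$ and natural in $\mathsf{C}$ and $\mathsf{D}$. Both verifications reduce to $\beta$-equalities for $\Lambda$-abstraction together with the identities $p_{1}(p(a,b))=a$ and $p_{2}(p(a,b))=b$; for the round trip $(\mathsf{h}^{\flat})^{\sharp}$ one also uses the defining hypothesis $\{\mathbf{n}\}(p_{1}(z))=y$ available on $z\,\varepsilon\,\Sigma_{\mathsf{f}}(\mathsf{C}(x))(y)$ to rewrite $\{\mathbf{h}\}(\{\mathbf{n}\}(p_{1}(z)),\{\mathbf{p}\}(p_{1}(z),p_{2}(z)))$ to $\{\mathbf{h}\}(y,z)$. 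Naturality in the appropriate slot of $\mathbf{Col}^{r}$ then follows by an analogous $\Lambda$-calculus manipulation. The only genuine subtlety, and what I would present carefully, is the last rewriting: it uses the definitional equality available in $\tar$ under the assumption $z\,\varepsilon\,\Sigma_{\mathsf{f}}(\mathsf{C}(x))(y)$ rather than an unconditional equation between numerals, and so one has to argue at the level of the equivalence relation $\approx_{\Sigma_{\mathsf{f}}(\mathsf{C}(x)),\mathsf{D}(y)}$ whose quantifier ranges precisely over those $z$.
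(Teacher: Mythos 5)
Your construction of $\Sigma_{\mathsf{f}}$ on objects and arrows is, up to renaming of variables and the (provably redundant) extra conjunct $p_{1}(z)\,\varepsilon\,\mathsf{A}$, exactly the one the paper gives, and your explicit transposes correctly supply the adjunction verification that the paper leaves implicit. So the proposal is correct and takes essentially the same approach as the paper's proof.
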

\begin{proof}
If $\mathsf{f}=[\mathbf{n}]_{\approx_{\mathsf{A},\mathsf{B}}}:\mathsf{A}\rightarrow \mathsf{B}$ is an arrow in $\mathcal{C}_{r}$, then a left adjoint $\Sigma_{\mathsf{f}}$ to $\mathbf{Col}^{r}_{\mathsf{f}}$ is defined by the following conditions:
\begin{enumerate}
\item if $\mathsf{C}(x)$ is an object of $\mathbf{Col}^{r}(\mathsf{A})$, then 
$$\Sigma_{\mathsf{f}}(\mathsf{C}(x)):=\{x'|\,x=\{\mathbf{n}\}(p_{1}(x'))\,\wedge\, 
p_{2}(x')\,\varepsilon\,\mathsf{C}(p_{1}(x'))\}$$
\item if $\mathsf{g}:=[\mathbf{m}]_{\approx}:\mathsf{C}(x)\rightarrow \mathsf{D}(x)$ in $\mathbf{Col}^{r}(\mathsf{A})$, then $$\Sigma_{\mathsf{f}}(\mathsf{g}):\Sigma_{\mathsf{f}}(\mathsf{C}(x))\rightarrow \Sigma_{\mathsf{f}}(\mathsf{D}(x)).$$
is defined as $[\Lambda x.\Lambda x'.\{\mathbf{p}\}\,(\{\mathbf{p}_{1}\}(x'),\,\{\mathbf{m}\}(\{\mathbf{p}_{1}\}(x'),\,\{\mathbf{p}_{2}\}(x')))]_{\approx}$.
\end{enumerate}

\end{proof}

In the following theorem we prove that substitution functors have weak versions of right adjoints.

\begin{theorem}\label{tc3} For every $\mathsf{f}:=[\mathbf{n}]_{\approx}:\mathsf{A}\rightarrow \mathsf{B}$ in $\mathcal{C}_{r}$ and every object $\mathsf{C}(x)$ in $\mathbf{Col}^{r}(\mathsf{A})$, there exists 
\begin{enumerate}
\item an object $\Pi_{\mathsf{f}}(\mathsf{C}(x))$ in $\mathbf{Col}^{r}(\mathsf{B})$ and 
\item an arrow $\mathsf{ev}^{\Pi,\mathsf{f}}_{\mathsf{C}(x)}:\mathbf{Col}^{r}_{\mathsf{f}}(\Pi_{\mathsf{f}}(\mathsf{C}(x)))\rightarrow \mathsf{C}(x)$ in $\mathbf{Col}^{r}(\mathsf{B})$
\end{enumerate}
such that for every $\mathsf{D}(x)\in \mathbf{Col}^{r}(\mathsf{B})$ and every arrow $\mathsf{g}:\mathbf{Col}_{f}^{r}(\mathsf{D}(x))\rightarrow \mathsf{C}(x)$ in $\mathbf{Col}^{r}(\mathsf{A})$, there exists an arrow $\mathsf{g}':\mathsf{D}(x)\rightarrow \Pi_{\mathsf{f}}(\mathsf{C}(x))$ in $\mathbf{Col}^{r}(\mathsf{B})$ such that the following diagram commutes in $\mathbf{Col}^{r}(\mathsf{A})$:
{\small\def\objectstyle{\scriptstyle}
\def\labelstyle{\scriptstyle}
{
$$\xymatrix@-1pc{ 
\mathbf{Col}_{\mathsf{f}}^{r}(\mathsf{D}(x))\ar[r]^-{\mathsf{g}}\ar[d]_-{\mathbf{Col}^{r}_{\mathsf{f}}(\mathsf{g}')}			&\mathsf{C}(x)\\
\mathbf{Col}_{\mathsf{f}}^{r}(\Pi_{\mathsf{f}}(\mathsf{C}(x)))\ar[ru]_-{\mathsf{ev}^{\Pi,\mathsf{f}}_{\mathsf{C}(x)}}\\
}$$}}
\end{theorem}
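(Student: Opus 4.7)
The plan is to adapt the weak-exponential construction of Theorem \ref{tc1} from the non-dependent to the dependent case, by interpreting $\Pi_{\mathsf{f}}(\mathsf{C}(x))$ as the family of (codes of) partial recursive functions that send each point of the fiber of $\mathsf{f}$ to a realizer of the corresponding fiber of $\mathsf{C}$. Writing $\mathsf{f}=[\mathbf{n}]_{\approx}$, I would define
$$\Pi_{\mathsf{f}}(\mathsf{C}(x)) := \{x' \mid x\,\varepsilon\,\mathsf{B} \wedge \forall z(\,z\,\varepsilon\,\mathsf{A} \wedge \{\mathbf{n}\}(z)=x \rightarrow \{x'\}(z)\,\varepsilon\,\mathsf{C}(z)\,)\}$$
as a family over $\mathsf{B}$, the free-variable discipline of Definition \ref{colcon} being satisfied because the defining formula entails $x\,\varepsilon\,\mathsf{B}$. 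The evaluation arrow is then the self-application realizer
$$\mathsf{ev}^{\Pi,\mathsf{f}}_{\mathsf{C}(x)}:=[\Lambda x.\Lambda x'.\{x'\}(x)]_{\approx}:\mathbf{Col}^{r}_{\mathsf{f}}(\Pi_{\mathsf{f}}(\mathsf{C}(x)))\to \mathsf{C}(x),$$
whose well-typedness is immediate: after substitution along $\mathsf{f}$ the parameter becomes $\{\mathbf{n}\}(x)$, and specializing the universal quantifier to $z:=x$ (which witnesses $\{\mathbf{n}\}(x)=\{\mathbf{n}\}(x)$) gives $\{x'\}(x)\,\varepsilon\,\mathsf{C}(x)$.

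Given $\mathsf{g}=[\mathbf{m}]_{\approx}:\mathbf{Col}^{r}_{\mathsf{f}}(\mathsf{D}(x))\to \mathsf{C}(x)$, I would take the transpose
$$\mathsf{g}' := [\Lambda x.\Lambda x'.\Lambda z.\{\mathbf{m}\}(z,x')]_{\approx}:\mathsf{D}(x)\to \Pi_{\mathsf{f}}(\mathsf{C}(x)),$$
which typechecks because for any $z$ in the fiber of $\mathsf{f}$ over $x$ the membership $x'\,\varepsilon\,\mathsf{D}(x)$ coincides with $x'\,\varepsilon\,\mathsf{D}(\{\mathbf{n}\}(z))$, so the typing hypothesis on $\mathbf{m}$ applies and gives $\{\mathbf{m}\}(z,x')\,\varepsilon\,\mathsf{C}(z)$ as required. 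Commutativity of the triangle then reduces to a direct Kleene-$\beta$ calculation: composing $\mathsf{g}'$ with $\mathbf{Col}^{r}_{\mathsf{f}}$ substitutes $\{\mathbf{n}\}(x)$ for the $\mathsf{B}$-parameter, and post-composing with $\mathsf{ev}^{\Pi,\mathsf{f}}_{\mathsf{C}(x)}$ applies the resulting realizer to $x$, contracting $\{\Lambda z.\{\mathbf{m}\}(z,x')\}(x)$ to $\{\mathbf{m}\}(x,x')$, which is exactly the representative of $\mathsf{g}$ modulo $\approx_{\mathbf{Col}^{r}_{\mathsf{f}}(\mathsf{D}(x)),\mathsf{C}(x)}$.

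The only genuinely delicate aspect is to recognize that this yields a \emph{weak}, not a genuine, right adjoint: the realizer of $\mathsf{g}'$ is only constrained on arguments $z$ lying in the fiber of $\mathsf{f}$ over $x$, so any modification of it off the fiber produces distinct arrows with the same composite along $\mathsf{ev}^{\Pi,\mathsf{f}}_{\mathsf{C}(x)}$. This mirrors the weakness of the exponentials of $\mathcal{C}_{r}$ already recorded in Theorem \ref{t1}, and is precisely the reason the paper subsequently passes to the preorder reflection $\thprop$ where genuine Heyting adjoints $\forall_{\mathsf{f}}$ will be recovered. No new inductive definition beyond those already implicit in $\mathsf{A}$, $\mathsf{B}$ and $\mathsf{C}$ is required, so the whole construction remains formalizable in $\tar$.
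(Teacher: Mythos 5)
Your construction is exactly the paper's: the same defining formula for $\Pi_{\mathsf{f}}(\mathsf{C}(x))$ (up to renaming the bound variable and reordering conjuncts) and the same evaluation realizer $[\Lambda x.\Lambda x'.\{x'\}(x)]_{\approx}$, with the transpose and the Kleene-$\beta$ verification that the paper leaves implicit correctly filled in. The proposal is correct and follows the paper's proof in essence.
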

\begin{proof}
It is sufficient to define 
$$\Pi_{\mathsf{f}}(\mathsf{C}(x)):=\{x'|\,x\,\varepsilon\,\mathsf{B}\wedge \forall y\,(\, x=\{\mathbf{n}\}(y)\wedge y\,\varepsilon\,\mathsf{A}\rightarrow \{x'\}(y)\,\varepsilon\,\mathsf{C}(y)\,)\}$$
and 
$$\mathsf{ev}^{\Pi,\mathsf{f}}_{\mathsf{C}(x)}:=[\Lambda x.\Lambda x'. \{x'\}(x)]_{\approx}$$
 \end{proof} 
\begin{remark}
Suppose $\mathsf{f}:\mathsf{A}\rightarrow \mathsf{B}$ is an arrow in $\mathcal{C}_{r}$.
Notice that for every arrow $\mathsf{g}:\mathsf{C}(x)\rightarrow \mathsf{D}(x)$ in $\mathsf{Col}^{r}(\mathsf{A})$, one can obtain an arrow from $\Pi_{\mathsf{f}}(\mathsf{C}(x))$ to $ \Pi_{\mathsf{f}}(\mathsf{D}(x))$ since there exists an arrow $\mathsf{h}$ making the following diagram commute.
{\small
\def\objectstyle{\scriptstyle}
\def\labelstyle{\scriptstyle}
{
$$\xymatrix@-1pc{
\mathbf{Col}_{\mathsf{f}}^{r}(\Pi_{\mathsf{f}}(\mathsf{C}(x)))\ar[r]^-{\mathsf{g}\circ \mathsf{ev}^{\Pi,\mathsf{f}}_{\mathsf{C}(x)} }\ar[d]_-{\mathbf{Col}^{r}_{\mathsf{f}}(\mathsf{h})}			&\mathsf{D}(x)\\
\mathbf{Col}_{\mathsf{f}}^{r}(\Pi_{\mathsf{f}}(\mathsf{D}(x)))\ar[ru]_-{\mathsf{ev}^{\Pi,\mathsf{f}}_{\mathsf{D}(x)}}\\
}$$}}
However, while the existence of such an arrow is guaranteed by the previous theorem, it is not the case for its uniqueness.
 In particular, we are not able to find a funtorial interpretation of $\lambda$-abstraction  to obtain a functor  $\Pi_{\mathsf{f}}:\mathbf{Col}^{r}(\mathsf{A})\rightarrow \mathbf{Col}^{r}(\mathsf{B})$.
\end{remark}
\begin{remark}\label{remark} If we consider the arrow $\mathsf{p}_{1}:\mathsf{A}\times \mathsf{B}\rightarrow \mathsf{A}$ in $\mathcal{C}_{r}$ and an object $\mathsf{R}(x)$ in $\mathbf{Col}^{r}(\mathsf{A}\times \mathsf{B})$, then $\Sigma_{\mathsf{p}_{1}}(\mathsf{R}(x))$ is isomorphic in $\mathbf{Col}^{r}(\mathsf{A})$ to $$\Sigma'_{\mathsf{p}_{1}}(\mathsf{R}(x)):=\{x'|\,x\,\varepsilon\, \mathsf{A} \wedge p_{1}(x')\,\varepsilon \, \mathsf{B}\wedge p_{2}(x')\,\varepsilon\,\mathsf{R}(p(x,p_{1}(x')))\}$$
and $\Pi_{\mathsf{p}_{1}}(\mathsf{R})$ is equivalent in $\mathbf{Col}^{r}(\mathsf{A})$ to $$\Pi'_{\mathsf{p}_{1}}(\mathsf{R}(x)):=\{x'|\,x\,\varepsilon\, \mathsf{A} \wedge \forall t( t\,\varepsilon \, \mathsf{B}\rightarrow \{x'\}(t)\,\varepsilon\,\mathsf{R}(p(x,t)))\}$$
\end{remark}

\begin{definition}\label{sigma} If $\mathsf{A}$ is an object of $\mathcal{C}_{r}$ and $\mathsf{B}(x)$ is a an object of $\mathbf{Col}^{r}(\mathsf{A})$, then $\Sigma(\mathsf{A},\mathsf{B}(x))$ is the object of $\mathcal{C}_{r}$ defined as $\{x|\,p_{1}(x)\,\varepsilon\, \mathsf{A}\wedge p_{2}(x)\,\varepsilon\, \mathsf{B}(p_{1}(x))\}$.
\end{definition}
The proof of the following lemma consists of an easy verification.
\begin{lemma}
For every $\mathsf{f}:=[\mathbf{n}]_{\approx}:\mathsf{A}\rightarrow \mathsf{B}$ in $\mathcal{C}_{r}$ and every $\mathsf{C}$ in $\mathbf{Col}^{r}(\mathsf{B})$, the following diagram is a pullback
{\small \def\objectstyle{\scriptstyle}
\def\labelstyle{\scriptstyle}
{
$$\xymatrix@-1pc{
\Sigma(\mathsf{A},\mathbf{Col}_{\mathsf{f}}^{r}(\mathsf{C}))\ar[rrr]^-{\Sigma(\mathsf{f},\mathsf{C})}\ar[d]_-{\mathsf{p}_{1}^{\Sigma}}		&& &\Sigma(\mathsf{B},\mathsf{C})\ar[d]^{\mathsf{p}_{1}^{\Sigma}}\\
\mathsf{A}\ar[rrr]_-{\mathsf{f}}	& &	&\mathsf{B}\\
}$$}}
if $\Sigma(\mathsf{f},\mathsf{C})$ is $[\Lambda x. \{\mathbf{p}\}(\{\mathbf{n}\}(\{\mathbf{p}_{1}\}(x)),\{\mathbf{p}_{2}\}(x))]_{\approx}$. 
\end{lemma}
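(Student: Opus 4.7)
The plan is to verify commutativity of the square and then construct the mediating arrow out of any competing cone, using only the basic combinatory operations $p, p_1, p_2$ available in $\tar$ together with the defining equations of realized collections.

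First I would unfold the definitions. Writing $\mathsf{f} = [\mathbf{n}]_\approx$, the object $\mathbf{Col}^r_\mathsf{f}(\mathsf{C})$ is the family $\{x'\mid x\,\varepsilon\,\mathsf{A}\wedge x'\,\varepsilon\,\mathsf{C}(\{\mathbf{n}\}(x))\}$ on $\mathsf{A}$, so $\Sigma(\mathsf{A},\mathbf{Col}^r_\mathsf{f}(\mathsf{C}))$ is the class $\{y\mid p_1(y)\,\varepsilon\,\mathsf{A}\wedge p_2(y)\,\varepsilon\,\mathsf{C}(\{\mathbf{n}\}(p_1(y)))\}$ while $\Sigma(\mathsf{B},\mathsf{C})$ is $\{y\mid p_1(y)\,\varepsilon\,\mathsf{B}\wedge p_2(y)\,\varepsilon\,\mathsf{C}(p_1(y))\}$. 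The arrow $\Sigma(\mathsf{f},\mathsf{C})$ acts on a realizer $y$ as $y\mapsto p(\{\mathbf{n}\}(p_1(y)),p_2(y))$, which lies in $\Sigma(\mathsf{B},\mathsf{C})$ whenever the source condition holds. Commutativity then becomes the identity $p_1(p(\{\mathbf{n}\}(p_1(y)),p_2(y))) = \{\mathbf{n}\}(p_1(y))$ in $\tar$, which is an immediate consequence of the defining equation $p_1(p(u,v))=u$ for the primitive recursive pairing.

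For the universal property, suppose we are given arrows $\mathsf{a} = [\mathbf{a}]_\approx : \mathsf{X}\to\mathsf{A}$ and $\mathsf{b}=[\mathbf{b}]_\approx : \mathsf{X}\to\Sigma(\mathsf{B},\mathsf{C})$ with $\mathsf{f}\circ\mathsf{a} = \mathsf{p}_1^\Sigma\circ\mathsf{b}$, i.e.\ $z\,\varepsilon\,\mathsf{X}\vdash_{\tar}\{\mathbf{n}\}(\{\mathbf{a}\}(z)) = p_1(\{\mathbf{b}\}(z))$. I would define the candidate mediator
\[
\mathsf{h} := [\Lambda z.\, p(\{\mathbf{a}\}(z),\, p_2(\{\mathbf{b}\}(z)))]_\approx\, : \, \mathsf{X}\longrightarrow \Sigma(\mathsf{A},\mathbf{Col}^r_\mathsf{f}(\mathsf{C})).
\]
The fact that this is a legal operation in $\mathcal{C}_r$ uses the hypothesis to rewrite $\mathsf{C}(p_1(\{\mathbf{b}\}(z)))$ as $\mathsf{C}(\{\mathbf{n}\}(\{\mathbf{a}\}(z)))$, so the witness $p_2(\{\mathbf{b}\}(z))$ indeed lies in $\mathsf{C}(\{\mathbf{n}\}(\{\mathbf{a}\}(z)))$. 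The two required equations, $\mathsf{p}_1^\Sigma\circ\mathsf{h} = \mathsf{a}$ and $\Sigma(\mathsf{f},\mathsf{C})\circ\mathsf{h} = \mathsf{b}$, reduce via $p_1,p_2$ beta-rules to provable equalities in $\tar$, the second one using bijectivity of pairing $p(p_1(w),p_2(w)) = w$ applied to $w = \{\mathbf{b}\}(z)$.

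For uniqueness, any other realizer $\mathbf{h}'$ satisfying both equations must obey $p_1(\{\mathbf{h}'\}(z)) = \{\mathbf{a}\}(z)$ (from the first equation) and, expanding the second, $p_2(\{\mathbf{h}'\}(z)) = p_2(\{\mathbf{b}\}(z))$; then bijectivity of $p$ forces $\{\mathbf{h}'\}(z) = p(\{\mathbf{a}\}(z), p_2(\{\mathbf{b}\}(z)))$ in $\tar$, i.e.\ $\mathbf{h}'\approx \mathbf{h}$ on $\mathsf{X}$, so $\mathsf{h}$ is the unique mediating arrow. I do not expect a serious obstacle here: the only thing to be careful about is tracking that each equational reasoning step is valid up to the equivalence $\approx$ over the relevant class, which is routine once the pairing laws are used.
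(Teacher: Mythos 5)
Your proof is correct and is precisely the ``easy verification'' the paper alludes to without spelling out: unfolding the $\Sigma$ and $\mathbf{Col}^{r}_{\mathsf{f}}$ definitions, checking commutativity and the universal property via the pairing laws $p_{1}(p(u,v))=u$, $p_{2}(p(u,v))=v$ and $p(p_{1}(w),p_{2}(w))=w$. Nothing is missing; the construction of the mediator and the uniqueness argument are exactly as required.
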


\begin{theorem}\label{pdue} For every $\mathsf{A}$ in $\mathcal{C}_{r}$, the categories $\mathcal{C}_{r}/\mathsf{A}$ and $\mathbf{Col}^{r}(\mathsf{A})$ are equivalent.
\end{theorem}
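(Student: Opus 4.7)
The plan is to exhibit an explicit equivalence by defining pseudo-inverse functors $F : \mathbf{Col}^{r}(\mathsf{A}) \to \mathcal{C}_r/\mathsf{A}$ and $G : \mathcal{C}_r/\mathsf{A} \to \mathbf{Col}^{r}(\mathsf{A})$ and checking that the composites are naturally isomorphic to the identities. The functor $F$ is essentially the total-space construction of Definition~\ref{sigma}: on objects set $F(\mathsf{B}(x)) := \mathsf{p}_1^\Sigma : \Sigma(\mathsf{A},\mathsf{B}(x)) \to \mathsf{A}$, and on a morphism $\mathsf{g} = [\mathbf{m}]_\approx : \mathsf{B}(x) \to \mathsf{C}(x)$ set $F(\mathsf{g}) := [\Lambda z.\{\mathbf{p}\}(\{\mathbf{p}_1\}(z),\{\mathbf{m}\}(\{\mathbf{p}_1\}(z),\{\mathbf{p}_2\}(z)))]_\approx$, which visibly commutes with the first projections and is thus a morphism in the slice.

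The functor $G$ sends an object $\mathsf{f} = [\mathbf{n}]_\approx : \mathsf{B} \to \mathsf{A}$ of $\mathcal{C}_r/\mathsf{A}$ to the family of realized collections
\[
G(\mathsf{f}) := \{x' \mid x\,\varepsilon\, \mathsf{A} \wedge x'\,\varepsilon\, \mathsf{B} \wedge \{\mathbf{n}\}(x') = x\}
\]
(the right-hand side's implicit entailment $x' \,\varepsilon\, G(\mathsf{f})(x) \vdash_{\tar} x\,\varepsilon\,\mathsf{A}$ holds because $\mathsf{f}$ is an operation from $\mathsf{B}$ to $\mathsf{A}$). On a morphism in $\mathcal{C}_r/\mathsf{A}$ given by $\mathsf{h} = [\mathbf{k}]_\approx : \mathsf{B} \to \mathsf{C}$ over $\mathsf{A}$, set $G(\mathsf{h}) := [\Lambda x.\Lambda x'.\{\mathbf{k}\}(x')]_\approx$; the commutativity of the defining triangle in $\mathcal{C}_r/\mathsf{A}$ guarantees that this lands in the right family.

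For the natural isomorphism $G F \cong \mathrm{Id}_{\mathbf{Col}^{r}(\mathsf{A})}$, unfold $G(F(\mathsf{B}(x)))$ to obtain $\{x' \mid x\,\varepsilon\,\mathsf{A}\wedge x'\,\varepsilon\,\Sigma(\mathsf{A},\mathsf{B}(x))\wedge p_1(x') = x\}$ and compare with $\mathsf{B}(x)$ via the mutually inverse operations $[\Lambda x.\Lambda x'.\{\mathbf{p}_2\}(x')]_\approx$ and $[\Lambda x.\Lambda y.\{\mathbf{p}\}(x,y)]_\approx$. For $F G \cong \mathrm{Id}_{\mathcal{C}_r/\mathsf{A}}$, given $\mathsf{f} = [\mathbf{n}]_\approx : \mathsf{B} \to \mathsf{A}$, unfold $F(G(\mathsf{f}))$ to
\[
\{z \mid p_1(z)\,\varepsilon\,\mathsf{A}\wedge p_2(z)\,\varepsilon\,\mathsf{B}\wedge \{\mathbf{n}\}(p_2(z)) = p_1(z)\}
\]
projecting to $\mathsf{A}$ by $p_1$, and exhibit the inverse isomorphisms $[\Lambda z.\{\mathbf{p}_2\}(z)]_\approx$ and $[\Lambda y.\{\mathbf{p}\}(\{\mathbf{n}\}(y),y)]_\approx$ between this and $\mathsf{B}$, both of which are morphisms over $\mathsf{A}$ since $\{\mathbf{n}\}(p_2(z)) = p_1(z)$ for any realizer $z$ in the total space.

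The verifications are all routine rewritings modulo $\approx$, so there is no serious obstacle; the only point that needs a moment of care is checking naturality of these isomorphisms and functoriality of $F$ and $G$, both of which reduce to equalities between $\lambda$-terms built from Kleene application and the pairing functions $\mathbf{p}, \mathbf{p}_1, \mathbf{p}_2$, and hold by the defining properties of the latter recalled in Section~\ref{tar}.
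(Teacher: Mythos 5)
Your proposal is correct and follows essentially the same route as the paper: your $F$ and $G$ are exactly the paper's functors $\mathbf{I}_{\mathsf{A}}$ (the $\Sigma(\mathsf{A},-)$ total-space construction with first projection) and $\mathbf{J}_{\mathsf{A}}$ (the fibre family $\{x'\mid x'\,\varepsilon\,\mathsf{B}\wedge x=\{\mathbf{b}\}(x')\}$), with the same action on morphisms. The only difference is that you spell out the two natural isomorphisms via explicit pairing/projection realizers, which the paper leaves as an implicit verification.
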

\begin{proof} Suppose $\mathsf{A}$ is an object of $\mathcal{C}_{r}$.

It is sufficient to consider the functor $\mathbf{J}_{\mathsf{A}}:\mathcal{C}_{r}/\mathsf{A}\rightarrow \mathbf{Col}^{r}(\mathsf{A})$ defined by  the assignments
$$[\mathbf{b}]_{\approx}:\mathsf{B}\rightarrow \mathsf{A}\mapsto \{x'|\,x'\,\varepsilon\, \mathsf{B}\wedge x=\{\mathbf{b}\}(x')\}$$
$$[\mathbf{n}]_{\approx}\mapsto [\Lambda x.\Lambda x'.\{\mathbf{n}\}(x')]_{\approx} $$

and the functor $\mathbf{I}_{\mathsf{A}}:\mathbf{Col}^{r}(\mathsf{A})\rightarrow\mathcal{C}_{r}/\mathsf{A}$ defined by the assignments (see def.~\ref{sigma})
$$\mathsf{B}(x)\mapsto \mathsf{p}_{1}^{\Sigma}:=[\mathbf{p}_{1}]_{\approx}:\Sigma(\mathsf{A},\mathsf{B}(x))\rightarrow \mathsf{A}$$
$$[\mathbf{n}]_{\approx}\mapsto [\Lambda x.\{\mathbf{p}\}(\{\mathbf{p}_{1}\}(x),\{\mathbf{n}\}(\{\mathbf{p}_{1}\}(x),\{\mathbf{p}_{2}\}(x)))]_{\approx}$$

\end{proof}
\begin{corollary}\label{weakly}
$\mathcal{C}_{r}$ is weakly locally cartesian closed.
\end{corollary}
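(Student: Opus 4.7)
The plan is essentially to chain together the two theorems immediately preceding the corollary, with a small observation about transport of structure under equivalence.

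First, I would fix an object $\mathsf{A}$ of $\mathcal{C}_r$ and apply Theorem~\ref{pdue}, which provides an explicit equivalence of categories
$$\mathbf{J}_{\mathsf{A}}:\mathcal{C}_{r}/\mathsf{A}\;\simeq\;\mathbf{Col}^{r}(\mathsf{A}):\mathbf{I}_{\mathsf{A}}.$$
This reduces the task of exhibiting weak cartesian closure in the slice to exhibiting it in the indexed fibre $\mathbf{Col}^{r}(\mathsf{A})$, since finite products and weak exponentials are notions that transfer along equivalences: given a weakly cartesian closed structure on one side, one can transport objects, evaluation maps, and witnesses of the factorisation property back through the equivalence (postcomposing candidate factorisations with the unit/counit isomorphisms to produce witnesses on the other side).

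Next, I would invoke Theorem~\ref{tc1}, which already gives $\mathbf{Col}^{r}(\mathsf{A})$ finite limits (in particular a terminal object and binary products, computed fibrewise as $\mathsf{1}_{\mathsf{A}}$ and $\mathsf{B}(x)\times_{\mathsf{A}}\mathsf{C}(x)$) together with weak exponentials $\mathsf{B}(x)\Rightarrow_{\mathsf{A}}\mathsf{C}(x)$. This is exactly the definition of a weakly cartesian closed category. Combining with the previous step, $\mathcal{C}_{r}/\mathsf{A}$ inherits this structure.

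Since $\mathsf{A}$ was arbitrary, every slice of $\mathcal{C}_r$ is weakly cartesian closed, which by the definition of weakly locally cartesian closed category concludes the proof. I do not expect any obstacle here: the genuine work has already been carried out in the preceding subsection, where the functorial presentation of the codomain pseudofunctor via $\mathbf{Col}^{r}$ made the fibrewise structure visible in syntactic form. Note in particular that the presence of the weak right adjoints $\Pi_{\mathsf{f}}$ from Theorem~\ref{tc3} is \emph{not} needed to read off weak cartesian closure of each fibre (weak exponentials inside $\mathbf{Col}^{r}(\mathsf{A})$ suffice), although conceptually it is their existence, together with the left adjoints $\Sigma_{\mathsf{f}}$ and the fibrewise weak exponentials, that morally expresses the weak local cartesian closure via the standard formula $B^{A}\cong \Pi_{f}(f^{*}g)$ in the slice.
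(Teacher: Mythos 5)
Your proof is correct and follows essentially the same route as the paper: the corollary is drawn from Theorem~\ref{pdue} (the equivalence $\mathcal{C}_{r}/\mathsf{A}\simeq\mathbf{Col}^{r}(\mathsf{A})$) together with Theorem~\ref{tc1} (finite products and weak exponentials in each fibre $\mathbf{Col}^{r}(\mathsf{A})$), with the transfer of this structure along the equivalence left implicit. Your side remark that the weak right adjoints of Theorem~\ref{tc3} are not needed for this particular corollary, given the paper's definition of weak local cartesian closure via weakly cartesian closed slices, is also accurate.
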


$\mathbf{Col}^{r}$ is a functorial account of the slice pseudofunctor $\mathcal{C}_{r}/-$. In fact, whilst  the $\mathbf{J}_{\mathsf{A}}$'s and the $\mathbf{I}_{\mathsf{A}}$'s don't give rise to natural transformations, we have the following result whose proof follows easily. 
\begin{theorem}\label{t3}
For every $\mathsf{f}:\mathsf{A}\rightarrow \mathsf{B}$ in $\mathcal{C}^{r}$
the functor
$$\mathbf{I}_{\mathsf{A}}\circ \mathbf{Col}^{r}_{\mathsf{f}}\circ \mathbf{J}_{\mathsf{B}}$$ coincides with the functor
$$\mathcal{C}_{r}/\mathsf{f}:\mathcal{C}_{r}/\mathsf{B}\rightarrow \mathcal{C}_{r}/\mathsf{A}$$
defined as follows:
\begin{enumerate}
\item $\mathcal{C}_{r}/\mathsf{f}$ send an object $\mathsf{e}:\mathsf{E}\rightarrow \mathsf{B}$ in $\mathcal{C}_{r}/\mathsf{B}$ to its pullback $(\mathcal{C}_{r}/\mathsf{f})(\mathsf{e})$ along $\mathsf{f}$ determined by the following commutative diagram
{\small \def\objectstyle{\scriptstyle}
\def\labelstyle{\scriptstyle}
{
$$\xymatrix@-1pc{ 
\mathsf{Eq}(\mathsf{f}\circ \mathsf{p}_{1},\mathsf{e}\circ\mathsf{p}_{2})\ar[rd]^{\qquad\mathsf{eq}(\mathsf{f}\circ \mathsf{p}_{1},\mathsf{e}\circ\mathsf{p}_{2})}\ar[rdd]_-{(\mathcal{C}_{r}/\mathsf{f})(\mathsf{e})}\ar[rd]^{}		&	&\\
								&\mathsf{A}\times \mathsf{E}\ar[r]^-{\mathsf{p}_{2}}\ar[d]^-{\mathsf{p}_{1}}		&\mathsf{E}\ar[d]^-{\mathsf{e}}\\
								&\mathsf{A}\ar[r]_-{\mathsf{f}}										&\mathsf{B}\\
}$$}}
\item if $\mathsf{g}$ is an arrow in $\mathcal{C}_{r}/\mathsf{f}$ from $\mathsf{e}:\mathsf{E}\rightarrow \mathsf{B}$ to $\mathsf{e}':\mathsf{E}'\rightarrow \mathsf{B}$, then $(\mathcal{C}_{r}/\mathsf{f})(\mathsf{g})$ is the unique arrow making the following diagram commute
{\small \def\objectstyle{\scriptstyle}
\def\labelstyle{\scriptstyle}
{
$$\xymatrix@-1pc{
														&\mathsf{Eq}(\mathsf{f}\circ \mathsf{p}_{1},\mathsf{e}'\circ\mathsf{p}_{2})\ar[ldd]^-{(\mathcal{C}_{r}/\mathsf{f})(\mathsf{e}')\qquad}	\ar[r]^-{\mathsf{p}_{2}\circ \mathsf{eq}}&\mathsf{E}'\ar[ldd]^-{\mathsf{e}'}\\
\mathsf{Eq}(\mathsf{f}\circ \mathsf{p}_{1},\mathsf{e}\circ\mathsf{p}_{2})\ar[d]_-{(\mathcal{C}_{r}/\mathsf{f})(\mathsf{e})}\ar[ru]^-{(\mathcal{C}_{r}/\mathsf{f})(\mathsf{g})\qquad}\ar[r]^-{\mathsf{p}_{2}\circ \mathsf{eq}}	&\mathsf{E}\ar[d]_-{\mathsf{e}}\ar[ru]^-{\mathsf{g}}				&\\
\mathsf{A}	\ar[r]_-{\mathsf{f}}			&\mathsf{B}			&\\									
}$$}}

\end{enumerate}
\end{theorem}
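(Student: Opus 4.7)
The plan is to unfold the three composed functors on an object and on a morphism and check that the result coincides, up to the explicit formulas given in Theorem \ref{t1} for the equalizer and binary product, with the pullback functor $\mathcal{C}_r/\mathsf{f}$. Since all the functors involved are defined by wholly syntactic operations on classes of $\tar$, the verification will be essentially a matter of rewriting the membership predicates and recognising equal formulas.

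First, I take an arbitrary object $\mathsf{e}:=[\mathbf{e}]_{\approx}:\mathsf{E}\to \mathsf{B}$ in $\mathcal{C}_{r}/\mathsf{B}$ and compute:
\begin{align*}
\mathbf{J}_{\mathsf{B}}(\mathsf{e}) &= \{x'\mid x'\,\varepsilon\,\mathsf{E}\wedge x=\{\mathbf{e}\}(x')\},\\
\mathbf{Col}^{r}_{\mathsf{f}}(\mathbf{J}_{\mathsf{B}}(\mathsf{e})) &= \{x'\mid x\,\varepsilon\,\mathsf{A}\wedge x'\,\varepsilon\,\mathsf{E}\wedge \{\mathbf{n}\}(x)=\{\mathbf{e}\}(x')\},
\end{align*}
using $\mathsf{f}=[\mathbf{n}]_{\approx}$. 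Then, by Definition \ref{sigma}, $\mathbf{I}_{\mathsf{A}}\circ\mathbf{Col}^{r}_{\mathsf{f}}\circ\mathbf{J}_{\mathsf{B}}$ applied to $\mathsf{e}$ is the first projection $[\mathbf{p}_{1}]_{\approx}$ out of
\[
\{x\mid p_{1}(x)\,\varepsilon\,\mathsf{A}\wedge p_{2}(x)\,\varepsilon\,\mathsf{E}\wedge \{\mathbf{n}\}(p_{1}(x))=\{\mathbf{e}\}(p_{2}(x))\}.
\]
I next compute the pullback side: by the explicit formulas in Theorem \ref{t1} the binary product $\mathsf{A}\times\mathsf{E}$ is $\{x\mid p_{1}(x)\,\varepsilon\,\mathsf{A}\wedge p_{2}(x)\,\varepsilon\,\mathsf{E}\}$ and the equalizer of two parallel arrows of code $\mathbf{n}\circ\mathbf{p}_{1}$ and $\mathbf{e}\circ\mathbf{p}_{2}$ imposes the additional conjunct $\{\mathbf{n}\}(p_{1}(x))=\{\mathbf{e}\}(p_{2}(x))$; inspecting the arrow $(\mathcal{C}_{r}/\mathsf{f})(\mathsf{e})=\mathsf{p}_{1}\circ \mathsf{eq}$ one sees its code is $\mathbf{p}_{1}$ again. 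Hence the two slices objects over $\mathsf{A}$ coincide.

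For morphisms, take $\mathsf{g}:=[\mathbf{m}]_{\approx}$ from $\mathsf{e}:\mathsf{E}\to\mathsf{B}$ to $\mathsf{e}':\mathsf{E}'\to\mathsf{B}$ in $\mathcal{C}_{r}/\mathsf{B}$. Unwinding the definitions of $\mathbf{J}_{\mathsf{B}}$ on arrows, then of $\mathbf{Col}^{r}_{\mathsf{f}}$ on arrows, and then of $\mathbf{I}_{\mathsf{A}}$ on arrows (which, on a class of the form $\Sigma(\mathsf{A},\mathsf{B}(x))$, essentially pairs the first projection with the action of the operation on the second) produces the code $\Lambda x.\{\mathbf{p}\}\bigl(\{\mathbf{p}_{1}\}(x),\{\mathbf{m}\}(\{\mathbf{p}_{2}\}(x))\bigr)$. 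On the other hand, by the universal property of the equalizer/pullback, $(\mathcal{C}_{r}/\mathsf{f})(\mathsf{g})$ is uniquely determined by the equations $\mathsf{p}_{1}\circ\mathsf{eq}\circ (\mathcal{C}_{r}/\mathsf{f})(\mathsf{g})=\mathsf{p}_{1}\circ\mathsf{eq}$ and $\mathsf{p}_{2}\circ\mathsf{eq}\circ(\mathcal{C}_{r}/\mathsf{f})(\mathsf{g})=\mathsf{g}\circ \mathsf{p}_{2}\circ\mathsf{eq}$; a direct check shows the code above satisfies both.

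The only mildly non-routine point is checking morphisms at the level of equivalence classes of numerals: one has to make sure that two a priori different codes are identified under the appropriate $\approx$ on the common domain, which is exactly the content of the equalizer's defining formula (so the equation $\{\mathbf{n}\}(p_{1}(x))=\{\mathbf{e}\}(p_{2}(x))$ is available as a hypothesis). Everything else is symbol pushing along the definitions already laid out, and the proof closes with functoriality, which is immediate from functoriality of each of $\mathbf{J}_{\mathsf{B}}$, $\mathbf{Col}^{r}_{\mathsf{f}}$ and $\mathbf{I}_{\mathsf{A}}$.
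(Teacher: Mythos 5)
Your verification is correct and is exactly the direct unfolding the paper has in mind: the paper gives no proof at all for this theorem (it states that the result ``follows easily''), and your computation of the object part via Definition \ref{sigma} and Theorem \ref{t1}, together with the code $\Lambda x.\{\mathbf{p}\}(\{\mathbf{p}_{1}\}(x),\{\mathbf{m}\}(\{\mathbf{p}_{2}\}(x)))$ on morphisms and the remark that the equalizer's defining equation is what makes the two codes agree under $\approx$, supplies precisely the missing details. No gap.
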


The following two lemmas will be helpful in the following sections. Their proofs are immediate.

\begin{lemma}\label{univprod}Let $\mathsf{A}$ and $\mathsf{B}$ be objects of $\mathcal{C}_{r}$ and let $\mathsf{C}$ and $\mathsf{D}$ be object of $\mathbf{Col}^{r}(\mathsf{B})$. Suppose that $\mathsf{f}:\mathsf{A}\rightarrow \Sigma(\mathsf{B},\mathsf{C})$ and $\mathsf{g}:\mathsf{A}\rightarrow \Sigma(\mathsf{B},\mathsf{D})$ be arrows in $\mathcal{C}_{r}$ such that $\mathsf{p}_{1}^{\Sigma}\circ \mathsf{f}=\mathsf{p}_{1}^{\Sigma}\circ \mathsf{g}:\mathsf{A}\rightarrow \mathsf{B}$. Then there exists a unique arrow $$\mathsf{h}:\mathsf{A}\rightarrow \Sigma(\mathsf{B},\mathsf{C}\times_{\mathsf{B}} \mathsf{D})$$ such that the following diagram commutes.
{\small\def\objectstyle{\scriptstyle}
\def\labelstyle{\scriptstyle}
{
$$\xymatrix@-1pc{
&\Sigma(\mathsf{B},\mathsf{C})\\
\mathsf{A}\ar[r]^-{\mathsf{h}}\ar[ru]^{\mathsf{f}}\ar[rd]_{\mathsf{g}}		&\Sigma(\mathsf{B}, \mathsf{C}\times_{\mathsf{B}} \mathsf{D})\ar[u]_{\mathbf{I}(\mathsf{p}_{1})}\ar[d]^{\mathbf{I}(\mathsf{p}_{2})}\\
										&\Sigma(\mathsf{B},\mathsf{D})\\
}$$}}
\end{lemma}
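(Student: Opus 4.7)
}
The plan is to give an explicit formula for $\mathsf{h}$ in terms of chosen representatives of $\mathsf{f}$ and $\mathsf{g}$, exploiting the fact that the bijective pair encoding $p$ on $\tar$ is available on the nose, not merely up to isomorphism.

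First I would choose numerals $\mathbf{f}$ and $\mathbf{g}$ with $\mathsf{f}=[\mathbf{f}]_{\approx}$ and $\mathsf{g}=[\mathbf{g}]_{\approx}$. Unfolding Definition~\ref{sigma} and Theorem~\ref{tc1}(2), the hypotheses become the provability in $\tar$, under the assumption $x\,\varepsilon\,\mathsf{A}$, of $p_{1}(\{\mathbf{f}\}(x))\,\varepsilon\,\mathsf{B}$, $p_{2}(\{\mathbf{f}\}(x))\,\varepsilon\,\mathsf{C}(p_{1}(\{\mathbf{f}\}(x)))$, and the analogous statements for $\mathbf{g}$, while the compatibility condition $\mathsf{p}_{1}^{\Sigma}\circ\mathsf{f}=\mathsf{p}_{1}^{\Sigma}\circ\mathsf{g}$ translates into the derivability of the equality $p_{1}(\{\mathbf{f}\}(x))=p_{1}(\{\mathbf{g}\}(x))$ from $x\,\varepsilon\,\mathsf{A}$.

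Next I would define
$$\mathsf{h}\;:=\;[\,\Lambda x.\{\mathbf{p}\}(\,p_{1}(\{\mathbf{f}\}(x)),\,\{\mathbf{p}\}(p_{2}(\{\mathbf{f}\}(x)),\,p_{2}(\{\mathbf{g}\}(x)))\,)\,]_{\approx}.$$
Using the derivable equation $p_{1}(\{\mathbf{f}\}(x))=p_{1}(\{\mathbf{g}\}(x))$, the second component lies both in $\mathsf{C}(p_{1}(\{\mathbf{f}\}(x)))$ and in $\mathsf{D}(p_{1}(\{\mathbf{f}\}(x)))$, so it is an element of the fibrewise product $\mathsf{C}\times_{\mathsf{B}}\mathsf{D}$, as required by the membership formula of $\Sigma(\mathsf{B},\mathsf{C}\times_{\mathsf{B}}\mathsf{D})$. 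Commutativity of the diagram then reduces to the defining equations $p_{1}(\mathbf{p}(a,b))=a$ and $p_{2}(\mathbf{p}(a,b))=b$: composing with $\mathbf{I}_{\mathsf{B}}(\mathsf{p}_{1})$ recovers $\mathbf{p}(p_{1}(\{\mathbf{f}\}(x)),p_{2}(\{\mathbf{f}\}(x)))=\{\mathbf{f}\}(x)$, hence $\mathsf{f}$, and likewise composing with $\mathbf{I}_{\mathsf{B}}(\mathsf{p}_{2})$ recovers $\{\mathbf{g}\}(x)$ after reusing the hypothesis to rewrite $p_{1}(\{\mathbf{f}\}(x))$ as $p_{1}(\{\mathbf{g}\}(x))$.

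For uniqueness, suppose $\mathsf{h}'=[\mathbf{h}']_{\approx}$ is any arrow making the diagram commute. The two commutativity conditions force the $\tar$-provable equalities $\mathbf{p}(p_{1}(\{\mathbf{h}'\}(x)),p_{1}(p_{2}(\{\mathbf{h}'\}(x))))=\{\mathbf{f}\}(x)$ and $\mathbf{p}(p_{1}(\{\mathbf{h}'\}(x)),p_{2}(p_{2}(\{\mathbf{h}'\}(x))))=\{\mathbf{g}\}(x)$, and by applying $p_{1}$, $p_{2}$ and using injectivity of $\mathbf{p}$ one recovers each of the three components of $\{\mathbf{h}'\}(x)$ as the corresponding component of the witness chosen above; hence $\{\mathbf{h}'\}(x)=\{\mathbf{h}\}(x)$ in $\tar$, so $\mathsf{h}'=\mathsf{h}$ in $\mathcal{C}_{r}$. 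I do not anticipate any real obstacle: the only point requiring some care is the transport step in which the hypothesis on the first projections of $\mathsf{f}$ and $\mathsf{g}$ is used to certify that $p_{2}(\{\mathbf{g}\}(x))$ really inhabits $\mathsf{D}(p_{1}(\{\mathbf{f}\}(x)))$, and for that one only needs substitutivity of equality in $\tar$ applied to the membership formula of $\mathsf{D}$.
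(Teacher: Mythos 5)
Your proof is correct and is exactly the direct verification the paper has in mind: the paper itself gives no argument for Lemma~\ref{univprod}, declaring its proof ``immediate'', and your explicit construction of $\mathsf{h}$ via the pairing numeral $\mathbf{p}$, together with the transport of $p_{2}(\{\mathbf{g}\}(x))$ along the provable equality $p_{1}(\{\mathbf{f}\}(x))=p_{1}(\{\mathbf{g}\}(x))$ and the uniqueness argument from bijectivity of $p$ (both $p_{i}(p(a,b))=a,b$ and $p(p_{1}(y),p_{2}(y))=y$), is precisely that omitted verification.
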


\begin{lemma}\label{univsig}Let $\mathsf{A}$,$\mathsf{B}$ and $\mathsf{C}$ be objects of $\mathcal{C}_{r}$, let $\mathsf{D}$ be an object of $\mathbf{Col}^{r}(\mathsf{C})$ and let $\mathsf{f}:\mathsf{B}\rightarrow \mathsf{C}$ be an arrow in $\mathcal{C}_{r}$. Suppose that $\mathsf{g}:\mathsf{A}\rightarrow \Sigma(\mathsf{C},\mathsf{D})$ and $\mathsf{h}:\mathsf{A}\rightarrow \mathsf{B}$ be arrows in $\mathcal{C}_{r}$ such that $\mathsf{f}\circ \mathsf{h}=\mathsf{p}_{1}^{\Sigma}\circ \mathsf{g}:\mathsf{A}\rightarrow \mathsf{C}$. Then there exists a unique arrow $$\mathsf{j}:\mathsf{A}\rightarrow \Sigma(\mathsf{B},\mathbf{Col}_{\mathsf{f}}\mathsf{D})$$ such that the following diagram commutes.
{\small \def\objectstyle{\scriptstyle}
\def\labelstyle{\scriptstyle}
{
$$\xymatrix@-1pc{										&\Sigma(\mathsf{C},\mathsf{D})\\
\mathsf{A}\ar[r]^-{\mathsf{j}}\ar[ru]^{\mathsf{g}}\ar[rd]_{\mathsf{h}}		&\Sigma(\mathsf{B}, \mathbf{Col}_{\mathsf{f}}(\mathsf{D}))\ar[u]_{\Sigma(\mathsf{f},\mathsf{D})}\ar[d]^{\mathsf{p}_{1}^{\Sigma}}\\
										&\mathsf{B}\\
}$$}}
\end{lemma}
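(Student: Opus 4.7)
The plan is to recognise the conclusion as the universal property of a pullback that has already been established in the lemma immediately preceding Theorem~\ref{pdue}. That lemma asserts that, for any $\mathsf{f}\colon\mathsf{B}\to\mathsf{C}$ in $\mathcal{C}_r$ and any $\mathsf{D}\in\mathbf{Col}^r(\mathsf{C})$, the square
\[
\xymatrix@-1pc{
\Sigma(\mathsf{B},\mathbf{Col}^r_{\mathsf{f}}(\mathsf{D}))\ar[r]^-{\Sigma(\mathsf{f},\mathsf{D})}\ar[d]_-{\mathsf{p}_1^{\Sigma}} & \Sigma(\mathsf{C},\mathsf{D})\ar[d]^-{\mathsf{p}_1^{\Sigma}}\\
\mathsf{B}\ar[r]_-{\mathsf{f}} & \mathsf{C}\\
}
\]
is a pullback in $\mathcal{C}_r$. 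The proof therefore reduces to checking that the outer data $(\mathsf{g},\mathsf{h})$ form a competing cone and then invoking the universal property.

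First I would observe that the compatibility hypothesis $\mathsf{f}\circ\mathsf{h}=\mathsf{p}_1^{\Sigma}\circ\mathsf{g}$ is precisely the condition for $\mathsf{g}\colon\mathsf{A}\to\Sigma(\mathsf{C},\mathsf{D})$ and $\mathsf{h}\colon\mathsf{A}\to\mathsf{B}$ to be a cone over the cospan $\mathsf{B}\xrightarrow{\mathsf{f}}\mathsf{C}\xleftarrow{\mathsf{p}_1^{\Sigma}}\Sigma(\mathsf{C},\mathsf{D})$. The universal property of the pullback above then yields a unique arrow $\mathsf{j}\colon\mathsf{A}\to\Sigma(\mathsf{B},\mathbf{Col}^r_{\mathsf{f}}(\mathsf{D}))$ such that $\Sigma(\mathsf{f},\mathsf{D})\circ\mathsf{j}=\mathsf{g}$ and $\mathsf{p}_1^{\Sigma}\circ\mathsf{j}=\mathsf{h}$, which is exactly the commutativity required by the triangles in the statement.

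If a self-contained construction is preferred, one takes realisers $\mathsf{g}=[\mathbf{n}]_{\approx}$ and $\mathsf{h}=[\mathbf{m}]_{\approx}$ and sets $\mathsf{j}:=[\Lambda x.\{\mathbf{p}\}(\{\mathbf{m}\}(x),\{\mathbf{p}_2\}(\{\mathbf{n}\}(x)))]_{\approx}$. The only non-trivial point is to verify that this realiser lands in $\Sigma(\mathsf{B},\mathbf{Col}^r_{\mathsf{f}}(\mathsf{D}))$: the hypothesis guarantees, provably in $\tar$, that the second projection of $\{\mathbf{n}\}(x)$ witnesses membership in $\mathsf{D}$ at the point $\mathsf{f}(\mathsf{h}(x))=\mathsf{p}_1^{\Sigma}(\mathsf{g}(x))$, which is exactly what membership in $\mathbf{Col}^r_{\mathsf{f}}(\mathsf{D})$ over $\mathsf{h}(x)$ demands. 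Uniqueness up to $\approx$ follows because any arrow of $\mathcal{C}_r$ into a $\Sigma$-object is determined by its two projections; thus the main obstacle is purely the bookkeeping of realisers, with no conceptual difficulty beyond the pullback lemma already in hand.
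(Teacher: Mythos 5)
Your proof is correct and matches the paper's treatment: the paper dismisses this lemma as immediate, and the natural immediate argument is exactly the one you give, namely that the statement is the universal property of the pullback square established in the lemma preceding Theorem~\ref{pdue}, instantiated at the cone $(\mathsf{g},\mathsf{h})$. Your explicit realizer $[\Lambda x.\{\mathbf{p}\}(\{\mathbf{m}\}(x),\{\mathbf{p}_2\}(\{\mathbf{n}\}(x)))]_{\approx}$ and the uniqueness argument via bijectivity of the pairing are also sound, so nothing is missing.
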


\subsection{The hyperdoctrine $\thprop$ of realized propositions}
We first define the indexed category of realized propositions on $\mathcal{C}_{r}$ whose posetal reflection $\thprop$ will be a Lawvere's hyperdoctrine.
\begin{definition}
The indexed category $\mathbf{Prop}^{r}:\mathcal{C}_{r}^{op}\rightarrow \mathbf{Cat}$ is defined as the preorder reflection $\mathbf{P}[\mathbf{Col}^{r}]$ of $\mathbf{Col}^{r}$ (see definition \ref{preorder}).
In this case we write $x'\Vdash \mathsf{P}(x)$ instead of $x'\,\varepsilon\, \mathsf{P}(x)$ for an object $\mathsf{P}(x)$ in $\mathbf{Col}^{r}(\mathsf{A})$ when we look at it as an object of $\mathbf{Prop}^{r}(\mathsf{A})$.
\end{definition}

\begin{remark}\label{unif}
Notice that if $\mathsf{A}$ is an object of $\mathcal{C}_{r}$ and $\mathsf{P}(x)$ and $\mathsf{Q}(x)$ are objects of $\mathsf{Prop}(\mathsf{A})$, then $\mathsf{P}(x)\sqsubseteq_{\tar}\mathsf{Q}(x)$ if and only if there is a numeral $\mathbf{r}$ such that 
$$x'\Vdash \mathsf{P}(x)\vdash_{\tar}\{\mathbf{r}\}(x,x')\Vdash \mathsf{Q}(x).$$
So $\mathsf{P}(x)$ entails $\mathsf{Q}(x)$ if there is a recursive way (recursively depending on $x$ in $\mathsf{A}$) to send realizers of $\mathsf{P}(x)$ to realizers of $\mathsf{Q}(x)$. The fact that $\mathsf{A}$ is a realized collection of natural numbers allows us to exploit the numerical data from the underlying domain $\mathsf{A}$ and include them in the notion of entailment. This does not happen in the effective tripos~\cite{eff} where the base category is $\mathsf{Set}$ and its entailment is uniformly defined with respect to the points in the underlying domain $A$. Namely, if $P$ and $Q$ are functions from $A$ to $\mathcal{P}(\mathbb{N})$, then in the effective tripos $$P\sqsubseteq_{A,\eff}Q\textnormal{ if and only if }\bigcap_{a\in A}\{n\in \mathbb{N}|\,\forall x\in P(a) (\{n\}(x)\in Q(b))\}\neq \emptyset.$$
However (see remark \ref{rmk}) these two notions can be compared.

\end{remark}

Now we are read to prove the following theorem:
\begin{theorem} 
$\mathbf{Prop}^{r}$ is a first-order hyperdoctrine (see definition \ref{hyp}).
\end{theorem}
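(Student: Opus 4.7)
The strategy is to lift all the required structure from the indexed category $\mathbf{Col}^{r}$ through its preorder reflection, exploiting the fact that the weak universal properties established in the previous subsection become strict once hom-sets in the fibres are quotiented by entailment.

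First, I would show that each fibre $\mathbf{Prop}^{r}(\mathsf{A})$ is a Heyting prealgebra. By Theorem \ref{tc1}, $\mathbf{Col}^{r}(\mathsf{A})$ has a terminal object $\mathsf{1}_{\mathsf{A}}$, an initial object $\mathsf{0}_{\mathsf{A}}$, binary products $\times_{\mathsf{A}}$, binary coproducts $+_{\mathsf{A}}$, and weak exponentials $\Rightarrow_{\mathsf{A}}$. After preorder reflection, these constructions furnish, respectively, $\top$, $\bot$, meet, join, and Heyting implication: the key observation is that in a preorder reflection any weak universal property becomes a genuine one since uniqueness is automatic, so the weak evaluation arrow from $(\mathsf{B}(x)\Rightarrow_{\mathsf{A}}\mathsf{C}(x))\times_{\mathsf{A}}\mathsf{B}(x)$ to $\mathsf{C}(x)$ exhibits $\Rightarrow_{\mathsf{A}}$ as a genuine right adjoint to $-\times_{\mathsf{A}}\mathsf{B}(x)$.

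Next, I would verify that the substitution maps $\mathbf{Prop}^{r}_{\mathsf{f}}$ are morphisms of Heyting prealgebras. Since by Theorem \ref{tc1} the functors $\mathbf{Col}^{r}_{\mathsf{f}}$ preserve finite products, coproducts, and weak exponentials on the nose (up to isomorphism given by the explicit formulas), their preorder reflections preserve the induced Heyting operations. For the adjoints, Theorems \ref{tc2} and \ref{tc3} give a genuine left adjoint $\Sigma_{\mathsf{f}}$ and a weak right adjoint $\Pi_{\mathsf{f}}$ to $\mathbf{Col}^{r}_{\mathsf{f}}$; the weak transpose property of $\Pi_{\mathsf{f}}$ translates, upon preorder reflection, to the adjunction equivalence $\mathbf{Prop}^{r}_{\mathsf{f}}(\mathsf{D})\sqsubseteq \mathsf{C}$ iff $\mathsf{D}\sqsubseteq \Pi_{\mathsf{f}}(\mathsf{C})$, and similarly one reads off $\Sigma_{\mathsf{f}}\dashv \mathbf{Prop}^{r}_{\mathsf{f}}$. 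Note that the failure of functoriality of $\Pi_{\mathsf{f}}$ remarked after Theorem \ref{tc3} disappears here because a monotone map between preorders is automatically functorial once the object assignment is specified and respects equivalence of formulas in $\tar$.

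Finally, I would verify the Beck-Chevalley condition. Given a pullback square in $\mathcal{C}_{r}$, one must check that $\mathbf{Prop}^{r}_{\mathsf{g}}\circ \exists_{\mathsf{f}} \equiv \exists_{\mathsf{p}_{2}}\circ \mathbf{Prop}^{r}_{\mathsf{p}_{1}}$ for each object in $\mathbf{Prop}^{r}(\mathsf{A})$. Using Theorem \ref{pdue} one can reduce to the canonical pullbacks expressed via equalizers of pairs (as in Theorem \ref{t3}), and then unfold the explicit formulas in Theorem \ref{tc2} and Remark \ref{remark}: both sides encode essentially the same statement ``there exists a witness $y\,\varepsilon\,\mathsf{A}$ with appropriate compatibility'', so mutual realizers can be exhibited using primitive recursive pairing and projections. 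The main obstacle I expect is not conceptual but notational: carrying out the bookkeeping of realizers through the equalizer presentation of pullbacks while keeping track of which variables refer to the base object, the fibre witness, and the equality proof. Once this is done, the corresponding statement for $\forall_{\mathsf{f}}$ follows formally from the uniqueness of right adjoints.
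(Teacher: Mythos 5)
Your proposal is correct and follows essentially the same route as the paper: the fibres inherit the Heyting prealgebra structure from the finite (co)limits and weak exponentials of $\mathbf{Col}^{r}(\mathsf{A})$ given in Theorem \ref{tc1}, substitution preserves it by the same theorem, and the adjoints $\exists_{\mathsf{f}}$ and $\forall_{\mathsf{f}}$ come from $\Sigma_{\mathsf{f}}$ and $\Pi_{\mathsf{f}}$ of Theorems \ref{tc2} and \ref{tc3}, with Beck--Chevalley checked on the explicit formulas. Your explicit remarks that weak universal properties become genuine adjunctions after preorder reflection, and that the non-functoriality of $\Pi_{\mathsf{f}}$ thereby ceases to be an obstacle, make precise what the paper leaves implicit.
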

\begin{proof}
For every object $\mathsf{A}$ of $\mathcal{C}_{r}$, $\mathbf{Prop}^{r}(\mathsf{A})$ is a Heyting prealgebra. In fact it is sufficient to consider bottom, top, binary infimums, binary supremums and Heyting implication given by $\bot_{\mathsf{A}}:=\mathsf{0}_{\mathsf{A}}$, $\top_{\mathsf{A}}:=\mathsf{1}_{\mathsf{A}}$, $\mathsf{C}(x)\sqcap_{\mathsf{A}}\mathsf{D}(x):=\mathsf{C}(x)\times_{\mathsf{A}}\mathsf{D}(x)$, $\mathsf{C}(x)\sqcup_{\mathsf{A}}\mathsf{D}(x):=\mathsf{C}(x)+_{\mathsf{A}}\mathsf{D}(x)$ and $\mathsf{C}(x)\rightarrow_{\mathsf{A}}\mathsf{D}(x):=\mathsf{C}(x)\Rightarrow_{\mathsf{A}}\mathsf{D}(x)$ respectively for all objects $\mathsf{C}(x),\mathsf{D}(x)$ in $\mathbf{Prop}^{r}(\mathsf{A})$.
Moreover from $\ref{tc1}$ it immediately follows that $\mathbf{Prop}^{r}_{\mathsf{f}}$ is a morphism of Heyting prealgebras from $\mathbf{Prop}^{r}(\mathsf{B})$ to $\mathbf{Prop}^{r}(\mathsf{A})$ for every arrow $\mathsf{f}:\mathsf{A}\rightarrow \mathsf{B}$ in $\mathcal{C}_{r}$.
From $\ref{tc2}$ and $\ref{tc3}$ one can easily obtain that for every such an arrow, $\exists_{\mathbf{f}}(\mathsf{C}(x)):=\Sigma_{\mathbf{f}}(\mathsf{C}(x))$ and $\forall_{\mathsf{f}}(\mathsf{C}(x)):=\Pi_{\mathbf{f}}(\mathsf{C}(x))$ define left and right adjoints to $\mathbf{Prop}^{r}_{\mathsf{f}}$ respectively in the category of preorders.
One can easily check that these adjoints satisfy Beck-Chevalley condition.
\end{proof}

We can also prove the following important property of $\mathbf{Prop}^{r}$:
\begin{lemma}\label{weco} The hyperdoctrine $\mathbf{Prop}^{r}$ has weak comprehensions (see definition \ref{weacomp}).
\end{lemma}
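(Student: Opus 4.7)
The plan is to take as comprehension object the total-space construction $\Sigma(\mathsf{A}, \mathsf{P}(x))$ from Definition~\ref{sigma}, namely $\mathsf{Cmp}(\mathsf{P}) := \Sigma(\mathsf{A}, \mathsf{P}(x)) = \{z \,|\, p_{1}(z)\,\varepsilon\,\mathsf{A} \wedge p_{2}(z)\,\varepsilon\,\mathsf{P}(p_{1}(z))\}$, together with the first projection $\mathsf{cmp} := \mathsf{p}_{1}^{\Sigma} = [\mathbf{p}_{1}]_{\approx}:\Sigma(\mathsf{A},\mathsf{P}(x))\to \mathsf{A}$. This is the natural candidate because an element of $\Sigma(\mathsf{A}, \mathsf{P}(x))$ is by construction a point of $\mathsf{A}$ packaged with a realizer for $\mathsf{P}$ at that point, which is exactly the data a proof-relevant comprehension should record.

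For the first requirement, I need to show $\top_{\Sigma(\mathsf{A},\mathsf{P}(x))}\sqsubseteq \mathbf{Prop}^{r}_{\mathsf{p}_{1}^{\Sigma}}(\mathsf{P}(x))$. Unfolding the substitution formula from Section~\ref{indexcol} using the numeral $\mathbf{p}_{1}$ representing $\mathsf{p}_{1}^{\Sigma}$, the reindexed proposition over $\Sigma(\mathsf{A},\mathsf{P}(x))$ has realizers satisfying $z\,\varepsilon\,\Sigma(\mathsf{A},\mathsf{P}(x)) \wedge y \Vdash \mathsf{P}(\{\mathbf{p}_{1}\}(z))$. By the very membership predicate of $\Sigma(\mathsf{A},\mathsf{P}(x))$, every such $z$ already carries its own realizer $p_{2}(z)$ of $\mathsf{P}(p_{1}(z))$, so by Remark~\ref{unif} the numeral $\Lambda z.\Lambda y.\,\{\mathbf{p}_{2}\}(z)$ witnesses the required entailment.

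For the universal property, assume $\mathsf{f} = [\mathbf{n}]_{\approx}:\mathsf{B}\to \mathsf{A}$ with $\top_{\mathsf{B}}\sqsubseteq \mathbf{Prop}^{r}_{\mathsf{f}}(\mathsf{P}(x))$. By Remark~\ref{unif} there is a numeral $\mathbf{s}$ such that $x\,\varepsilon\,\mathsf{B} \vdash_{\tar} \{\mathbf{s}\}(x,0) \Vdash \mathsf{P}(\{\mathbf{n}\}(x))$, after instantiating the canonical realizer $0$ of $\top_{\mathsf{B}}$. I then set
$$\mathsf{f}' := [\Lambda x.\,\{\mathbf{p}\}(\{\mathbf{n}\}(x),\,\{\mathbf{s}\}(x,0))]_{\approx}:\mathsf{B}\longrightarrow \Sigma(\mathsf{A},\mathsf{P}(x)).$$
Well-definedness is an immediate check: $p_{1}$ of the output is $\{\mathbf{n}\}(x)\,\varepsilon\,\mathsf{A}$ and $p_{2}$ of the output is $\{\mathbf{s}\}(x,0)\Vdash\mathsf{P}(\{\mathbf{n}\}(x))$. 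Composing with $\mathsf{p}_{1}^{\Sigma}$ extracts the first component, giving back $[\Lambda x.\{\mathbf{n}\}(x)]_{\approx} = \mathsf{f}$, as required.

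There is no serious obstacle here: the whole statement amounts to exhibiting one realizer and one lifted arrow and then unwinding the substitution formula of $\mathbf{Col}^{r}$. The only conceptual point worth flagging is that the definition of weak comprehension demands only \emph{existence} of $\mathsf{f}'$, not its uniqueness; this matches the fact that our lift depends on the choice of a realizer $\mathbf{s}$ for $\mathbf{Prop}^{r}_{\mathsf{f}}(\mathsf{P}(x))$, and different choices produce arrows into $\Sigma(\mathsf{A},\mathsf{P}(x))$ that are propositionally but not necessarily extensionally equal --- precisely the phenomenon that forces the weakening from strict to weak comprehensions in this predicative realizability setting.
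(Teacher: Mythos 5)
Your proof is correct and follows exactly the paper's own construction: the comprehension of $\mathsf{P}(x)$ is taken to be $\mathsf{p}_{1}^{\Sigma}:\Sigma(\mathsf{A},\mathsf{P}(x))\rightarrow\mathsf{A}$, and the paper leaves the two verifications implicit while you spell them out (the realizer $\Lambda z.\Lambda y.\{\mathbf{p}_{2}\}(z)$ for the first condition and the lift $[\Lambda x.\{\mathbf{p}\}(\{\mathbf{n}\}(x),\{\mathbf{s}\}(x,0))]_{\approx}$ for the universal property), both of which check out. Your closing remark on why only \emph{weak} comprehension is obtained is also accurate.
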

\begin{proof}
If $\mathsf{A}$ is an object of $\mathcal{C}_{r}$ and $\mathsf{P}(x)$ is in $\mathbf{Prop}^{r}(\mathsf{A})$, then we can consider the object $\Sigma(\mathsf{A},\mathsf{P}(x))$ of $\mathcal{C}_{r}$ and the arrow $\mathsf{p}_{1}^{\Sigma}:\Sigma(\mathsf{A},\mathsf{P}(x)) \rightarrow\mathsf{A}$.
This arrow determines a weak comprehension for $\mathsf{P}(x)$. \end{proof}
\begin{remark}
Notice that an equalizer for $\mathsf{f}:=[\mathbf{n}]_{\approx},\mathsf{g}:=[\mathbf{m}]_{\approx}:\mathsf{A}\rightarrow \mathsf{B}$ in $\mathcal{C}_{r}$ can also be defined as $$[\mathbf{p}_{1}]_{\approx}:\Sigma(\mathsf{A},\mathsf{f}(x)=_{\mathsf{B}}\mathsf{g}(x))\rightarrow \mathsf{A}$$ where $\mathsf{f}(x)=_{\mathsf{B}}\mathsf{g}(x)$ is the object of $\mathbf{Prop}^{r}(\mathsf{A})$ defined as $$\{x'|\,x'=0\wedge x\,\varepsilon\, \mathsf{A}\wedge\{\mathbf{n}\}(x)=\{\mathbf{m}\}(x)\}$$
\end{remark}
\begin{remark}\label{rmk}The hyperdoctrine of realized propositions enjoys also another interesting property. To this purpose
we give the definition of ``separated realized proposition'':  if $\mathsf{A}$ is an object of $\mathcal{C}_{r}$, a realized proposition $\mathsf{P}(x)$ in $\mathbf{Prop}^{r}(\mathsf{A})$ is called \emph{separated} if $$x'\Vdash \mathsf{P}(x)\wedge x'\Vdash \mathsf{P}(y)\vdash_{\tar}x=y $$
It is very easy to show that if $\mathsf{A}$ is an object of $\mathcal{C}_{r}$, then every object of $\mathbf{Prop}^{r}(\mathsf{A})$ is equivalent in $\mathbf{Prop}^{r}(\mathsf{A})$ to a separated one. In fact if $\mathsf{P}(x)$ is an object of $\mathbf{Prop}^{r}(\mathsf{A})$, then we can consider the separated object 
$$\mathsf{P}_{sep}(x):=\{x'|\,p_{1}(x')=x\wedge p_{2}(x')\Vdash \mathsf{P}(x)\}$$
and observe that
%% \begin{enumerate}
%% \item $x'\Vdash \mathsf{P}(x)\vdash_{\tar} \{\Lambda x.\Lambda x'.\{\mathbf{p}\}(x,x')\}(x,x')\Vdash \mathsf{P}_{sep}(x)$
%% \item $x'\Vdash \mathsf{P}_{sep}(x)\vdash_{\tar} \{\Lambda x.\Lambda x'.\{\mathbf{p}_{2}\}(x')\}(x,x')\Vdash \mathsf{P}(x)$
%% \end{enumerate}
%% which means that
 $\mathsf{P}(x)\sim_{\mathsf{A}}\mathsf{P}_{sep}(x)$ follows in \tar.

Notice that this property does not hold in the subobject doctrine of the Effective Topos.

\end{remark}

\begin{definition}We denote with $\overline{\mathbf{Prop}^{r}}$ the posetal reflection (see definition \ref{preorder}) of $\mathbf{Prop}^{r}$ (which coincides with the posetal reflection of $\mathbf{Col}^{r}$).
\end{definition}

Note that
\begin{theorem}\label{ws}
The first-order hyperdoctrine $\overline{\mathbf{Prop}^{r}}$ is naturally isomorphic to the doctrine of weak subobjects $\mathbf{wSub}_{\mathcal{C}^{r}}$ (see definition~\ref{fcc}). 
\end{theorem}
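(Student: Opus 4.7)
My plan is to derive the natural isomorphism from two inputs already in place: the fibrewise equivalence established in Theorem~\ref{pdue} and the strict compatibility with substitution recorded in Theorem~\ref{t3}.

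First I would work fibrewise. For each object $\mathsf{A}$ of $\mathcal{C}_r$, Theorem~\ref{pdue} gives functors $\mathbf{J}_{\mathsf{A}}\colon \mathcal{C}_r/\mathsf{A}\to \mathbf{Col}^r(\mathsf{A})$ and $\mathbf{I}_{\mathsf{A}}\colon \mathbf{Col}^r(\mathsf{A})\to \mathcal{C}_r/\mathsf{A}$ witnessing an equivalence of categories. Passing to posetal reflections is a 2-functor and so sends such an equivalence to an isomorphism of posets. Since two successive reflections (first to a preorder, then to a poset) coincide with a single posetal reflection, one has $\thprop(\mathsf{A})=\mathbf{PR}[\mathbf{Col}^r](\mathsf{A})$, and the induced pair
\[
\tilde{\mathbf{J}}_{\mathsf{A}}\colon \mathbf{wSub}_{\mathcal{C}_r}(\mathsf{A})\to \thprop(\mathsf{A}),\qquad \tilde{\mathbf{I}}_{\mathsf{A}}\colon \thprop(\mathsf{A})\to \mathbf{wSub}_{\mathcal{C}_r}(\mathsf{A})
\]
is a mutually inverse pair of monotone bijections. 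Because the Heyting structure on $\thprop(\mathsf{A})$ is built from the finite limits, finite coproducts and weak exponentials of $\mathbf{Col}^r(\mathsf{A})$ recorded in Theorem~\ref{tc1}, and because $\mathbf{J}_{\mathsf{A}}$ and $\mathbf{I}_{\mathsf{A}}$ send these constructions to the corresponding slice-category constructions (pullbacks, coproducts over $\mathsf{A}$, and weak exponentials in the slice), $\tilde{\mathbf{J}}_{\mathsf{A}}$ is automatically a morphism of Heyting algebras.

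Next I would verify naturality in $\mathsf{A}$. For any $\mathsf{f}\colon \mathsf{A}\to \mathsf{B}$ in $\mathcal{C}_r$, Theorem~\ref{t3} yields the strict equality $\mathbf{I}_{\mathsf{A}}\circ \mathbf{Col}^r_{\mathsf{f}}\circ \mathbf{J}_{\mathsf{B}}=\mathcal{C}_r/\mathsf{f}$. Applying the posetal reflection to this equality gives
\[
\tilde{\mathbf{I}}_{\mathsf{A}}\circ \thprop_{\mathsf{f}}\circ \tilde{\mathbf{J}}_{\mathsf{B}}=\mathbf{wSub}_{\mathcal{C}_r,\mathsf{f}},
\]
and composing on the left with $\tilde{\mathbf{J}}_{\mathsf{A}}$, the inverse of $\tilde{\mathbf{I}}_{\mathsf{A}}$, produces the naturality square $\tilde{\mathbf{J}}_{\mathsf{A}}\circ \mathbf{wSub}_{\mathcal{C}_r,\mathsf{f}}=\thprop_{\mathsf{f}}\circ \tilde{\mathbf{J}}_{\mathsf{B}}$. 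The collection $\{\tilde{\mathbf{J}}_{\mathsf{A}}\}_{\mathsf{A}}$ then assembles into a natural isomorphism of first-order hyperdoctrines.

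The point requiring care is the strictness issue signalled after Theorem~\ref{pdue}: the families $\{\mathbf{J}_{\mathsf{A}}\}$ and $\{\mathbf{I}_{\mathsf{A}}\}$ fail to be strictly natural before reflection, essentially because $\mathcal{C}_r/-$ is only a pseudofunctor whose reindexing depends on a choice of pullback. The posetal reflection, however, identifies any two objects connected by morphisms in both directions, so the invertible coherence 2-cells comparing different pullback choices collapse to equalities, and the pseudonatural equivalence of Theorem~\ref{pdue} descends to a strictly natural isomorphism of functors into posets. This is the only delicate point; the preservation of Heyting structure and of the adjoints $\exists_{\mathsf{f}},\forall_{\mathsf{f}}$ under $\tilde{\mathbf{J}}_{\mathsf{A}}$ then follows formally from Theorems~\ref{tc1}, \ref{tc2} and \ref{tc3}.
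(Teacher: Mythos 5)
Your argument is correct and follows exactly the route the paper intends: its proof of Theorem~\ref{ws} simply states that the result is an immediate consequence of Theorems~\ref{pdue} and~\ref{t3}, which are precisely the two inputs you use (fibrewise equivalence passing to an isomorphism of posetal reflections, and strict compatibility with reindexing giving naturality). Your additional remarks on the collapse of the coherence 2-cells under posetal reflection and on the automatic preservation of the Heyting structure and adjoints just make explicit what the paper leaves implicit.
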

\begin{proof}
This is an immediate consequence of theorems \ref{pdue} and \ref{t3}.
\end{proof}

\subsection{Family of sets and small propositions in $\mathcal{C}_{r}$}

Here we give the notion of {\it families of realized sets}
by adopting the realizability interpretation 
given in \cite{IMMSt} of ``{\it  dependent sets}''
belonging to the type theory $\mtt$ of 
the intensional level of the Minimalist Foundation   in \cite{m09}.

More in detail, following \cite{IMMSt}  we define a universe of sets internally in $\mathcal{C}_{r}$ as a fixpoint of a suitable admissible formula of $\tar$. 
This admissible formula will describe the elements of such a universe as codes of realized sets (which are defined in turn with their elements) in an \emph{inductive} way. However, since  we will not need to use induction on our universe of sets, we do not need to work in the proper theory of inductive definitions with least fixpoints and we can work just in  \tar\ with fixpoints that are not necessarily the least ones.

%% More concretely
%% The idea behind the definition of the universe was to first consider
%% a notion of set in a set theory interpretable in first-order arithmetics 
%% by extending Kleene
%% realizability interpretation of intuitionistic connectives in order
%% to encode the interpretation of such sets  as subsets of natural numbers.
%% To this purpose we thought of choosing a type theory as our set theory of reference and we choose  the first order fragment
%% of Martin-L\"of type theory $ML_0$ in \cite{ML84} because it has been
%% interpreted in an extension of Kleene realizability in  \cite{beeson}
%% and this theory can be thought of an enrichment with proof-terms
%% of first order intuitionistic arithmetics.

%%  it as a collection of their codes in $\tar$ in   However of sets and we will see that for our purposes \emph{any solution} to the fixpoint equation is enough.

%% The first step consists in considering a notion of Kleene realizability for first-order arithmetics enriched with proof terms as represented for example by Martin-L\"of type theory. 

Now, we proceed by defining in $\tar$ the formulas $set(A)$ expressing that $A$ is a {\it realized set} 
and formulas $x\,\varepsilon \,A$ expressing the notion of {\it realizers of a set $A$}, or {\it elements of the realized set $A$}, inductively {\it by using the notation of sets in the type theory%\footnote{$\mathsf{N}_{0}$ denotes the empty set, $\mathsf{N}_{1}$ the singleton set, $\Sigma$ the dependent sum, $\Pi$ the dependent product, $+$ the disjoint binary sum, $\mathsf{List}$ the list set and $\mathsf{Id}$ the propositional identity.} 
\  \mtt\ }~\cite{m09} as follows.
 
\noindent
For the emptyset $\mathsf{N}_{0}$ we have the following clauses:
\begin{enumerate}
\item[$0s)$] $set(\mathsf{N}_{0})\equiv^{def} \top$ 
\item[$0\varepsilon)$] $n\,\varepsilon\, \mathsf{N}_{0}\equiv^{def}\bot$
\end{enumerate}
For the singleton set $\mathsf{N}_{1}$ we have the following clauses:
\begin{enumerate}
\item[$1s)$] $set(\mathsf{N}_{1})\equiv^{def} \top$
\item[$1\varepsilon)$] $n\,\varepsilon\, \mathsf{N}_{1}\equiv^{def}n=0$
\end{enumerate}
For the dependent sum $(\Sigma x\in A)B$ of a family of sets $B$ indexed on a set $A$ we have the following clauses:
\begin{enumerate}
\item[$\Sigma s)$] $set((\Sigma x\in A)B)\equiv^{def} set(A)\,\wedge\,\forall x\,(x\,\varepsilon\,A\rightarrow set(B))$
\item[$\Sigma\varepsilon)$] $n\,\varepsilon\,(\Sigma x\in A)B\equiv^{def}set(A)\,\wedge\,\forall x\,(x\,\varepsilon\,A\rightarrow set(B))\,\wedge\, p_{1}(n)\,\varepsilon\,A\,\wedge\,(p_{2}(n)\,\varepsilon\,B)[p_{1}(n)/x]$
\end{enumerate}
For the dependent product $(\Pi x\in A)B$ of a family of sets $B$ indexed on a set $A$ we have the following clauses:
\begin{enumerate}
\item[$\Pi s)$] $set((\Pi x\in A)B)\equiv^{def} set(A)\,\wedge\,\forall x\,(x\,\varepsilon\,A\rightarrow set(B))$
\item[$\Pi\varepsilon)$] $n\,\varepsilon\,(\Pi x\in A)B\equiv^{def} set(A)\,\wedge\,\forall x\,(x\,\varepsilon\,A\rightarrow set(B))\,\wedge\,\forall x\,(x\,\varepsilon \,A\rightarrow \{n\}(x)\,\varepsilon\,B)$
\end{enumerate}
For the disjoint sum $A+B$ of two sets $A$ and $B$ we have the following clauses:
\begin{enumerate}
\item[$+ s)$] $set(A+B)\equiv^{def} set(A)\,\wedge\,set(B)$
\item[$+\varepsilon)$] $n\,\varepsilon\,A+B\equiv^{def}$\\
$set(A)\,\wedge\,set(B)\,\wedge\, ((p_{1}(n)=0\,\wedge\,p_{2}(n)\,\varepsilon\,A)\,\vee\,(p_{1}(n)=1\,\wedge\,p_{2}(n)\,\varepsilon\,B))$
\end{enumerate}
For the set $\mathsf{List}(A)$ of lists of elements of a set $A$ we have the following clauses:
\begin{enumerate}
\item[$\mathsf{L} s)$] $set(\mathsf{List}(A))\equiv^{def} set(A)$
\item[$\mathsf{L}\varepsilon)$] $n\,\varepsilon\,\mathsf{List}(A)\equiv^{def}set(A)\,\wedge\,\forall i\,(i<lh(n)\rightarrow (n)_{i}\,\varepsilon\, A)$
\end{enumerate}
For the propositional identity $\mathsf{Id}(A,x,y)$ of two elements $x,y$ of a set $A$ of \mtt\  we have the following clauses:
\begin{enumerate}
\item[$\mathsf{Id}s)$] $set(\mathsf{Id}(A,x,y))\equiv^{def} set(A)\,\wedge\, x\,\varepsilon A\,\wedge\,y\,\varepsilon \,A$
\item[$\mathsf{Id}\varepsilon)$] $n\,\varepsilon\,\mathsf{Id}(A,x,y)\equiv^{def}set(A)\,\wedge\, x\,\varepsilon A\,\wedge\,y\,\varepsilon \,A\,\wedge\,x=y$
\end{enumerate}

We can make these clauses positive by adding a predicate of \emph{non-realizability} $\noteps$ and using classical logic; clauses $0s,0\varepsilon,1s,1\varepsilon,+s,+\varepsilon, \mathsf{L}s, \mathsf{Id}s,\mathsf{Id}\varepsilon$ remain equal, $\Sigma s, \Sigma \varepsilon, \Pi s, \Pi \varepsilon, \mathsf{L}\varepsilon$ are transformed into the following clauses $\Sigma s', \Sigma \varepsilon', \Pi s', \Pi \varepsilon', \mathsf{L}\varepsilon'$ and moreover we add clauses $0\noteps, 1\noteps, \Sigma\noteps, \Pi\noteps, +\noteps, \mathsf{L}\noteps, \mathsf{Id}\noteps$ for $\noteps$ which are obtained using the fact that $\noteps$ is intended to positively mimic the negation of $\overline{\varepsilon}$ and that in classical logic $\varphi\rightarrow \psi$ and $\neg \varphi \vee \psi$ are equivalent:
\begin{enumerate}
\item[$\Sigma s')$] $set((\Sigma x\in A)B)\equiv^{def} set(A)\,\wedge\,\forall x\,(x\noteps A\,\vee\, set(B))$
\item[$\Sigma \varepsilon')$] $n\,\varepsilon\,(\Sigma x\in A)B\equiv^{def}$
\item[]	$\qquad \qquad set(A)\wedge\forall x\,(x\noteps A\vee set(B))\wedge p_{1}(n)\,\varepsilon\,A\wedge(p_{2}(n)\,\varepsilon\,B)[p_{1}(n)/x]$
\item[$\Pi s')$] $set((\Pi x\in A)B)\equiv^{def} set(A)\,\wedge\,\forall x\,(x\noteps A\,\vee\, set(B))$
\item[$\Pi \varepsilon')$] $n\,\varepsilon\,(\Pi x\in A)B\equiv^{def} set(A)\,\wedge\,\forall x\,(x\noteps A\,\vee\, set(B))\,\wedge\,\forall x\,(x\noteps A\vee \{n\}(x)\,\varepsilon\,B)$
\item[$\mathsf{L}\varepsilon')$] $n\,\varepsilon\,\mathsf{List}(A)\equiv^{def}set(A)\,\wedge\,\forall i\,(i\geq lh(n)\,\vee\, (n)_{i}\,\varepsilon\, A)$
\item[$0\noteps)$] $n\noteps \mathsf{N}_{0}\equiv^{def}\top$
\item[$1\noteps)$] $n\noteps \mathsf{N}_{1}\equiv^{def}n>0$
\item[$\Sigma\noteps)$] $n\noteps(\Sigma x\in A)B\equiv^{def}$\\
$set(A)\,\wedge\,\forall x\,(x\noteps A\vee set(B))\,\wedge\,( p_{1}(n)\noteps A\,\vee\,(p_{2}(n)\noteps B)[p_{1}(n)/x])$
\item[$\Pi\noteps)$] $n\noteps(\Pi x\in A)B\equiv^{def} set(A)\,\wedge\,\forall x\,(x\noteps A\vee set(B))\,\wedge\,\exists x\,(x\,\varepsilon\, A\,\wedge\, \{n\}(x)\noteps B)$
\item[$+\noteps)$] $n\noteps A+B\equiv^{def}$\\
$set(A)\,\wedge\,set(B)\,\wedge\,(\neg (p_{1}(x)=0)\,\vee\,p_{2}(x)\noteps A)\,\wedge\,(\neg (p_{1}(x)=1)\,\vee\,p_{2}(x)\noteps B)$
\item[$\mathsf{L}\noteps)$] $n\noteps\mathsf{List}(A)\equiv^{def}set(A)\,\wedge\,\exists i\,(i< lh(n)\,\wedge\, (n)_{i}\noteps A)$
\item[$\mathsf{Id}\noteps)$] $n\noteps\mathsf{Id}(A,x,y)\equiv^{def}set(A)\,\wedge\, x\,\varepsilon \,A\,\wedge\,y\,\varepsilon \,A\,\wedge\,\neg(x=y)$
\end{enumerate}
The next step consists in rendering such a realizability notion internally in $\tar$ using a fixpoint formula for an admissible formula $\varphi(x,X)$.
In order to do this:
\begin{enumerate}
\item we will encode the above list of \mtt-sets  in $\tar$ as a number;
\item we will use also the \mtt-notion of {\it dependent set $B$ (or family of $B$ of sets) on a set $A$}~\cite{m09}  
which will be interpreted as a code $b$ of a recursive function sending realizers of $A$ to codes of sets; in this case $B[a/x]$ will become $\{b\}(a)$;
\item set constructors will be encoded by numerals $1,...,6$ in such a way that we have the following encodings:
\begin{enumerate}
\item $\mathsf{n}_{0}:=p(1,0)$ and $\mathsf{n}_{1}=p(1,1)$ will encode basic sets $\mathsf{N}_{0}$ and $\mathsf{N}_{1}$ respectively;
\item $\sigma(a,b):=p(2,p(a,b))$, $\pi(a,b):=p(3,p(a,b))$ and $a\oplus b:=p(4,p(a,b))$ will encode sets of the form $(\Sigma x\in A)B$, $(\Pi x\in A)B$ and $A+B$ respectively;
\item $\mathsf{list}(a):=p(5,a)$ and $\mathsf{id}(a,b,c):=p(6,p(a,p(b,c)))$ will encode sets of the form $\mathsf{List}(A)$ and $\mathsf{Id}(A,b,c)$ respectively;
\end{enumerate}
\item we have numerals $\mathbf{n}_{0}$, $\mathbf{n}_{1}$, $\boldsymbol{\sigma}$, $\boldsymbol{\pi}$, $\boldsymbol{\oplus}$, $\mathbf{list}$ and $\mathbf{id}$  which represent these constructors in $\tar$: 
\begin{enumerate}
\item $\tar\vdash \mathsf{n}_{0}=\mathbf{n}_{0}$ and $\tar\vdash \mathsf{n}_{1}=\mathbf{n}_{1}$
\item $\tar\vdash \{\boldsymbol{\sigma}\}(x,y)=\sigma(x,y)$ and $\tar\vdash \{\boldsymbol{\pi}\}(x,y)=\pi(x,y)$
\item $\tar\vdash \{\boldsymbol{\oplus}\}(x,y)=x\oplus y$, $\tar\vdash \{\mathbf{list}\}(x)=\mathsf{list}(x)$ and $\tar\vdash \{\mathbf{id}\}(x,y,z)=\mathsf{id}(x,y,z)$
\end{enumerate}
\item we encode formulas $set(A)$, $x\,\varepsilon\,A$ and $x\noteps A$ as $\mathsf{set}(a)\,\epsilon\,X$, $\mathsf{E}(x,a)\,\epsilon\,X$ and $\overline{\mathsf{E}}(x,a)\,\epsilon\,X$ respectively where $\mathsf{set}(a)$, $\mathsf{E}(x,a)$ and $\overline{\mathsf{E}}(x,a)$ are $p(20,a)$, $p(21,p(x,a))$ and $p(22,p(x,a))$ respectively.
\end{enumerate}
 
A formulation of the admissible formula $\varphi(x,X)$ is the following one:

 {\tiny
 \begin{enumerate}
 \item[]
 \item[$(basic)$] $x=\mathsf{set}(\mathsf{n}_{0})\,\qquad \vee$
 \item[] $\exists y\,(x=\overline{\mathsf{E}}(y,\mathsf{n}_{0}))\,\qquad \vee\,$
 \item[] $x=\mathsf{set}(\mathsf{n}_{1})\,\qquad \vee$
 \item[] $x=\mathsf{E}(0,\mathsf{n}_{1})\,\qquad \vee$
 \item[] $\exists y\,(\, y>0\,\wedge\,x=\overline{\mathsf{E}}(y,\mathsf{n}_{1})\, )$
 \\
 \item[] $\vee$
 \\
 \item[$(\Sigma)$] $\exists y\,\exists z\,[\;\;\mathsf{set}(y)\,\epsilon\,X\ \wedge\ \forall t\,(\, \overline{\mathsf{E}}(t,y)\,\epsilon\,X\ \vee\
 \mathsf{set}(\{z\}(t))\,\epsilon\,X\, )\  \wedge $ 
 \item[] $\qquad\qquad (\;x=\mathsf{set}(\sigma(y,z))\,\qquad \vee$
 \item[] $\qquad\qquad\;\exists u\,(\mathsf{E}(\, p_{1}(u),y)\,\epsilon\,X\,\wedge\,\mathsf{E}(p_{2}(u),\{z\}(p_{1}(u))\,)\,\epsilon\,X \,\wedge\, x=\mathsf{E}(u,\sigma(y,z))\, )\,\qquad \vee\,$
 \item[] $\qquad\qquad\;\exists u\,(\, (\overline{\mathsf{E}}(p_{1}(u),y)\,\epsilon\,X\,\vee\,\overline{\mathsf{E}}(p_{2}(u),\{z\}(p_{1}(u))\, )\,\epsilon\,X\,  )\,\wedge\, x=\overline{\mathsf{E}}(u,\sigma(y,z))\, )\;	\;)\;\;]$
 \\
 \item[]$\vee$
 \\
 \item[$(\Pi)$] $\exists y\,\exists z\,[\;\;\mathsf{set}(y)\,\epsilon\,X\ \wedge\ \forall t\,(\,  \overline{\mathsf{E}}(t,y)\,\epsilon\,X\ \vee\ \mathsf{set}(\{z\}(t))\,\epsilon\,X\, )\, \wedge\, $
 \item[] $\qquad\qquad(\;x=\mathsf{set}(\pi(y,z))\,\vee$
 \item[] $\qquad\qquad\;\exists u\,(\forall t\,(\, \overline{\mathsf{E}}(t,y)\,\epsilon\,X\,\vee\,\mathsf{E}(\{u\}(t),\{z\}(t))\,\epsilon\,X\, ) \,\wedge\, x=\mathsf{E}(u,\pi(y,z))  \, )\,\qquad \vee\,$
 \item[] $\qquad\qquad\;\exists u\,(\, \exists t\,(\, \mathsf{E}(t,y)\,\epsilon\,X\,\wedge\,\overline{\mathsf{E}}(\{u\}(t),\{z\}(t))\,\epsilon\,X\, )\,\wedge\, x=\overline{\mathsf{E}}(u,\pi(y,z)) \, )\;)\;\;]$
 \\
 \item[]$\vee$\\
 \item[$(+)$] $\exists y\,\exists z\,[\;\;\mathsf{set}(y)\,\epsilon\,X\,\wedge\,\mathsf{set}(z)\,\epsilon\,X\wedge $
 \item[] $(\;x=\mathsf{set}(y\oplus z)\,\qquad \vee$
 \item[] $\exists u(\, (\, (\, p_{1}(u)=0\, \wedge\, \mathsf{E}(p_{2}(u),y)\,\epsilon\,X\, )\ \vee\ (\ p_{1}(u)=1\, \wedge\, \mathsf{E}(p_{2}(u),z)\,\epsilon\,X\, )\ )\, \wedge\,  x=\mathsf{E}(u,y\oplus z)\ )\,\qquad \vee\,$
 \item[] $\exists u\, (\ (\, p_{1}(u)\neq 0\ \vee\ \overline{\mathsf{E}}(p_{2}(u),y)\epsilon X\,)\, \wedge\, (\, p_{1}(u)\neq 1\, \vee\, \overline{\mathsf{E}}(p_{2}(u),z)\epsilon X\, )\, \wedge\, x=\overline{\mathsf{E}}(u,y\oplus z))\;) \;\;]\,$
 \\
 \item[]$\vee$\\
 \item[$(\mathsf{List})$] $\exists y\,[\;\;\mathsf{set}(y)\,\epsilon\,X\,\wedge\,$
 \item[] $\qquad\qquad(\;x=\mathsf{set}(\mathsf{list}(y))\,\qquad\vee$
 \item[] $\qquad\qquad\;\exists u\,(\, \forall i(\ i\geq lh(u)\ \vee \ \mathsf{E}((u)_{i},y)\,\epsilon\,X\ ) \ \wedge\ x=\mathsf{E}(u,\mathsf{list}(y))\ )\,\qquad\vee\,$
 \item[] $\qquad\qquad\;\exists u\,(\exists i(i< lh(u)\,\wedge \,\overline{\mathsf{E}}((u)_{i},y)\,\epsilon\,X)\,\wedge\, x=\overline{\mathsf{E}}(u,\mathsf{list}(y)))\;\;\;) \;\;]$\\
 \item[]$\vee$\\
 \item[$(\mathsf{Id})$] $\exists y\,\exists z\,\exists z'\,[\;\;\mathsf{set}(y)\,\epsilon\,X\,\wedge\,\mathsf{E}(z,y)\,\epsilon\,X\,\wedge\,\mathsf{E}(z',y)\,\epsilon\,X\ \wedge\ $
 \item[] $\qquad\qquad\qquad(\;x=\mathsf{set}(\mathsf{id}(y,z,z'))\,\qquad \vee$
 \item[] $\qquad\qquad\qquad\;\exists u\,(z=z' \,\wedge\, x=\mathsf{E}(u,\mathsf{id}(y,z,z')))\,\qquad \vee\,$
 \item[] $\qquad\qquad\qquad\;\exists u\,(\neg(z=z')\,\wedge\, x=\overline{\mathsf{E}}(u,\mathsf{id}(y,z,z')))\;\;\;) \;\;]$
 \end{enumerate}
 }

Now we employ the  admissible formula $\varphi(x,X)$ to  obtain its fixpoint $P_{\varphi}(x)$ in \tar\ by which we define
$$\mathsf{Set}(x)\equiv^{def}P_{\varphi}(\mathsf{set}(x))\qquad x\,\overline{\varepsilon}\,y\equiv^{def}P_{\varphi}(\mathsf{E}(x,y))\qquad x\noteps y\equiv^{def}P_{\varphi}(\overline{\mathsf{E}}(x,y))$$
In order to define a universe made of these encodings in the category $\mathcal{C}_{r}$ we have to pay attention to a side effect of the construction of fixpoints in $\tar$. Indeed, we must add to the formula $\mathsf{Set}(x)$ a coherence requirement which selects only those codes for which $\noteps$ acts as the negation of $\overline{\varepsilon}$; in fact this is not guaranteed by the fixpoint construction in $\tar$, as fixpoints in $\tar$ are not necessarily \emph{least} fixpoints.

So we define the universe of sets in $\mathcal{C}_{r}$ as follows:
$$\mathsf{U}_{\mathsf{S}}:=\{x|\;\mathsf{Set}(x)\,\wedge\,\forall t\,(t\,\overline{\varepsilon}\,x\leftrightarrow \neg\, t\noteps x)\}$$
Then, we employ such a universe
to define a full indexed subcategory of $\mathbf{Col}^{r}$ 
with  families of realized sets.

\begin{definition}
Let $\mathsf{A}$ be an object of $\mathcal{C}_{r}$. The category $\mathbf{Set}^{r}(\mathsf{A})$ is the full subcategory of $\mathbf{Col}^{r}(\mathsf{A})$ whose objects are families of realized collections on $\mathsf{A}$ of the form $$\tau_{\mathsf{A}}(\mathbf{n}):=\{x'|\,x'\,\overline{\varepsilon}\, \{\mathbf{n}\}(x)\,\wedge\, x\,\varepsilon \,\mathsf{A}\}$$ for a numeral $\mathbf{n}$ defining an operation 
 $[\mathbf{n}]_{\approx}:\mathsf{A}\rightarrow \mathsf{U_{S}}$ in $\mathcal{C}_{r}$. 
 The objects of $\mathbf{Set}^{r}(\mathsf{A})$ are called \emph{families of realized sets} on $\mathsf{A}$.
\end{definition}
The following lemma is an immediate consequence of the previous definition.
\begin{lemma}
Suppose $\mathsf{f}:\mathsf{A}\rightarrow \mathsf{B}$ and $\mathsf{g}:[\mathsf{n}]_{\sim}:\mathsf{B}\rightarrow \mathsf{U}_{S}$ are arrows in $\mathcal{C}_{r}$ and suppose  $\mathbf{m}$ is a numeral such that $\mathsf{g}\circ \mathsf{f}=[\mathbf{m}]_{\sim}:\mathsf{A}\rightarrow \mathsf{U}_{S}$.
Then $\mathbf{Col}^{r}_{\mathsf{f}}(\tau_{\mathsf{B}}(\mathbf{n}))=\tau_{\mathsf{A}}(\mathbf{m})$.
\end{lemma}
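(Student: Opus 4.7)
The plan is to unfold the two sides of the claimed equality using the definitions of $\tau$, of the substitution functor $\mathbf{Col}^{r}_{\mathsf{f}}$, and of composition of operations in $\mathcal{C}_{r}$, and then invoke the convention that families of realized collections with provably equivalent membership relations are identified.

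Concretely, I fix a representative $\mathsf{f}=[\mathbf{k}]_{\approx}:\mathsf{A}\to\mathsf{B}$. By the definition of $\mathbf{Col}^{r}_{\mathsf{f}}$ given in Section~\ref{indexcol}, I have
\[
\mathbf{Col}^{r}_{\mathsf{f}}(\tau_{\mathsf{B}}(\mathbf{n}))=\{x'\mid x\,\varepsilon\,\mathsf{A}\ \wedge\ x'\,\overline{\varepsilon}\,\{\mathbf{n}\}(\{\mathbf{k}\}(x))\ \wedge\ \{\mathbf{k}\}(x)\,\varepsilon\,\mathsf{B}\},
\]
while by definition
\[
\tau_{\mathsf{A}}(\mathbf{m})=\{x'\mid x'\,\overline{\varepsilon}\,\{\mathbf{m}\}(x)\ \wedge\ x\,\varepsilon\,\mathsf{A}\}.
\]
Now the hypothesis $\mathsf{g}\circ\mathsf{f}=[\mathbf{m}]_{\approx}:\mathsf{A}\to\mathsf{U_{S}}$ expanded via Definition~\ref{col} means that
\[
x\,\varepsilon\,\mathsf{A}\ \vdash_{\tar}\ \{\mathbf{m}\}(x)=\{\mathbf{n}\}(\{\mathbf{k}\}(x)),
\]
and the fact that $\mathsf{f}=[\mathbf{k}]_{\approx}$ is an arrow from $\mathsf{A}$ to $\mathsf{B}$ gives
\[
x\,\varepsilon\,\mathsf{A}\ \vdash_{\tar}\ \{\mathbf{k}\}(x)\,\varepsilon\,\mathsf{B}.
\]
Combining these two derivations in $\tar$, the membership predicate of $\mathbf{Col}^{r}_{\mathsf{f}}(\tau_{\mathsf{B}}(\mathbf{n}))$ is provably equivalent to that of $\tau_{\mathsf{A}}(\mathbf{m})$: the extra conjunct $\{\mathbf{k}\}(x)\,\varepsilon\,\mathsf{B}$ is redundant under $x\,\varepsilon\,\mathsf{A}$, and the two occurrences $\{\mathbf{n}\}(\{\mathbf{k}\}(x))$ and $\{\mathbf{m}\}(x)$ can be interchanged.

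Because families of realized collections with provably equivalent membership relations in $\tar$ are identified (by the convention laid down in Definition~\ref{colcon}), this equivalence of defining formulas yields the required equality $\mathbf{Col}^{r}_{\mathsf{f}}(\tau_{\mathsf{B}}(\mathbf{n}))=\tau_{\mathsf{A}}(\mathbf{m})$ on the nose rather than merely up to isomorphism. There is no real obstacle here: the only subtle point to keep in mind is that one must use both facts (the totality of $\mathsf{f}$ into $\mathsf{B}$ and the chosen representative equation for $\mathsf{g}\circ\mathsf{f}$) in order to replace $\{\mathbf{n}\}(\{\mathbf{k}\}(x))$ by $\{\mathbf{m}\}(x)$ under the hypothesis $x\,\varepsilon\,\mathsf{A}$, so that the identification of defining formulas, and not just of isomorphism classes, is legitimate.
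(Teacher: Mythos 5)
Your proof is correct and is precisely the direct unfolding of the definitions of $\tau$, of $\mathbf{Col}^{r}_{\mathsf{f}}$, and of composition that the paper has in mind when it declares the lemma ``an immediate consequence of the previous definition.'' Nothing to add.
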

As a consequence of the previous lemma we can give the following definition.
 
\begin{definition}
 $\mathbf{Set}^{r}:\mathcal{C}_{r}^{op}\rightarrow \mathbf{Cat}$ is the indexed category whose fibre over an object $\mathsf{A}$ of $\mathcal{C}_{r}$ is $\mathbf{Set}^{r}(\mathsf{A})$ and which send an arrow $f:\mathsf{A}\rightarrow \mathsf{B}$ in $\mathcal{C}_{r}$ to the restriction of $\mathbf{Col}^{r}_{f}$ to $\mathbf{Set}^{r}(\mathsf{B})$,  since its image is included in $\mathbf{Set}^{r}(\mathsf{A})$.\end{definition}
 
One can prove as in the case of $\mathbf{Col}^{r}$ the following theorem.
\begin{theorem}\label{setset}
For every $\mathsf{A}$ in $\mathcal{C}_{r}$, $\mathbf{Set}^{r}(\mathsf{A})$ is a finitely complete category with finite coproducts, list objects and weak exponentials. 
\end{theorem}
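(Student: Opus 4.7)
The plan is to mirror Theorem~\ref{tc1} construction-by-construction, exhibiting for each finite limit, coproduct, list object and weak exponential built there in $\mathbf{Col}^{r}(\mathsf{A})$ a code in the universe $\mathsf{U_{S}}$ whose associated $\tau_{\mathsf{A}}(\mathbf{r})$ realizes the same construction up to isomorphism in $\mathbf{Col}^{r}(\mathsf{A})$. Since $\mathbf{Set}^{r}(\mathsf{A})$ is a full subcategory of $\mathbf{Col}^{r}(\mathsf{A})$, such an isomorphism suffices and the universal properties transfer automatically. Concretely, given input families $\tau_{\mathsf{A}}(\mathbf{n}),\tau_{\mathsf{A}}(\mathbf{m}),\dots$, I would exhibit a numeral $\mathbf{r}$ such that $[\Lambda x.\{\mathbf{r}\}(x)]_{\approx}:\mathsf{A}\rightarrow\mathsf{U_{S}}$ is a well-defined arrow of $\mathcal{C}_{r}$ and verify in $\tar$, via the fixpoint equations for $\varphi(x,X)$, that $\tau_{\mathsf{A}}(\mathbf{r})$ has the required universal property.

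The codes are read off directly from the encodings $\mathbf{n}_{0},\mathbf{n}_{1},\boldsymbol{\sigma},\boldsymbol{\pi},\boldsymbol{\oplus},\mathbf{list},\mathbf{id}$. The terminal and initial objects of $\mathbf{Set}^{r}(\mathsf{A})$ are realized by $\Lambda x.\mathbf{n}_{1}$ and $\Lambda x.\mathbf{n}_{0}$. Binary products, coproducts, weak exponentials and list objects of $\tau_{\mathsf{A}}(\mathbf{n})$ and $\tau_{\mathsf{A}}(\mathbf{m})$ are given respectively by the codes $\Lambda x.\sigma(\{\mathbf{n}\}(x),\Lambda y.\{\mathbf{m}\}(x))$, $\Lambda x.(\{\mathbf{n}\}(x)\oplus\{\mathbf{m}\}(x))$, $\Lambda x.\pi(\{\mathbf{n}\}(x),\Lambda y.\{\mathbf{m}\}(x))$ and $\Lambda x.\mathsf{list}(\{\mathbf{n}\}(x))$, read as non-dependent $\Sigma$'s and $\Pi$'s. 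For the equalizer of $\mathsf{f}=[\mathbf{f}]_{\approx},\mathsf{g}=[\mathbf{g}]_{\approx}:\tau_{\mathsf{A}}(\mathbf{n})\rightarrow\tau_{\mathsf{A}}(\mathbf{m})$ I would take a dependent sum with $\mathsf{Id}$, namely $\Lambda x.\sigma(\{\mathbf{n}\}(x),\Lambda x'.\mathsf{id}(\{\mathbf{m}\}(x),\{\mathbf{f}\}(x,x'),\{\mathbf{g}\}(x,x')))$; clauses $\Sigma\varepsilon'$ and $\mathsf{Id}\varepsilon$ of $\varphi(x,X)$ then make the first-component projection an isomorphism in $\mathbf{Col}^{r}(\mathsf{A})$ between the resulting $\tau_{\mathsf{A}}(\mathbf{r})$ and the equalizer $\mathsf{Eq}(\mathsf{f},\mathsf{g})$ of Theorem~\ref{tc1}. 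All remaining required equivalences are immediate from the clauses $\Sigma\varepsilon'$, $\Pi\varepsilon'$, $+\varepsilon$ and $\mathsf{L}\varepsilon'$.

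The only delicate step I foresee is checking that each freshly formed code $\mathbf{r}$ actually lies in $\mathsf{U_{S}}$ and not merely in $\{x\,|\,\mathsf{Set}(x)\}$, i.e.\ that the coherence clause $\forall t(t\,\overline{\varepsilon}\,x\leftrightarrow\neg\, t\noteps x)$ is transferred from the input codes $\mathbf{n},\mathbf{m}$ to the composite. I would handle this uniformly, one constructor at a time, by inspecting the paired $\varepsilon$- and $\noteps$-clauses of $\varphi(x,X)$: they were deliberately written to be classically complementary (via the equivalence $\varphi\rightarrow\psi\equiv\neg\varphi\vee\psi$ used to pass from the original clauses to their positive reformulations) whenever the immediate subcodes already satisfy the coherence clause. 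A single application of the fixpoint schema then propagates coherence from $\mathbf{n},\mathbf{m}$ to $\mathbf{r}$, which completes the argument.
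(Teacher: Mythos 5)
Your proposal is correct and follows essentially the same route as the paper: the paper's proof exhibits exactly the codes you list (terminal via $\Lambda x.\mathbf{n}_{1}$, products and weak exponentials as non-dependent $\boldsymbol{\sigma}$ and $\boldsymbol{\pi}$, equalizers via $\boldsymbol{\sigma}$ combined with $\mathbf{id}$, coproducts via $\boldsymbol{\oplus}$, lists via $\mathbf{list}$, initial object via $\Lambda x.\mathbf{n}_{0}$) and concludes by fullness of $\mathbf{Set}^{r}(\mathsf{A})$ in $\mathbf{Col}^{r}(\mathsf{A})$. Your explicit attention to verifying the coherence condition $\forall t\,(t\,\overline{\varepsilon}\,x\leftrightarrow\neg\, t\noteps x)$ for the composite codes is a point the paper leaves implicit, and it is handled correctly.
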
     
\begin{proof}
As a consequence of the clauses determining the fixpoint formula giving rise to $\mathsf{U_{S}}$, in $\mathbf{Col}^{r}(\mathsf{A})$ we have that:
\begin{enumerate} 
\item $\mathsf{1}_{\mathsf{A}}\cong \tau_{\mathsf{A}}(\Lambda x.\mathbf{n}_{1})$
\item $\tau_{\mathsf{A}}(\mathbf{n})\times_{\mathsf{A}}\tau_{\mathsf{A}}(\mathbf{m})\cong \tau_{\mathsf{A}}(\,\Lambda x. \{\boldsymbol{\sigma}\}(\{\mathbf{n}\}(x),\,\Lambda y.\{\mathbf{m}\}(x)))$
\item if $[\mathbf{j}]_{\approx},[\mathbf{k}]_{\approx}:\tau_{\mathsf{A}}(\mathbf{n})\rightarrow \tau_{\mathsf{A}}(\,\mathbf{m})$, then 
$$\mathsf{Eq}([\mathbf{j}]_{\approx},[\mathbf{k}]_{\approx})\cong \tau_{\mathsf{A}}(\Lambda x. \{\boldsymbol{\sigma}\}(\{\mathbf{n}\}(x),\Lambda y. \{\mathbf{id}\}(\,\{\mathbf{m}\}(x),\{\mathbf{j}\}(x,y),\{\mathbf{k}\}(x,y)))) $$
\item $\mathbf{0}_{\mathsf{A}}\cong \tau_{\mathsf{A}}(\,\Lambda x.\mathbf{n}_{0})$
\item $\tau_{\mathsf{A}}(\mathbf{n})+_{\mathsf{A}}\tau_{\mathsf{A}}(\mathbf{m})\cong \tau_{\mathsf{A}}(\,\Lambda x. \{\boldsymbol{\oplus}\}(\,\{\mathbf{n}\}(x),\, \{\mathbf{m}\}(x)))$
\item $\mathsf{List}(\tau_{\mathsf{A}}(\mathbf{n}))\cong \tau_{\mathsf{A}}(\,\Lambda x. \{\mathbf{list}\}(\,\{\mathbf{n}\}(x)))$
\item $\tau_{\mathsf{A}}(\mathbf{n})\Rightarrow \tau_{\mathsf{A}}(\,\mathbf{m})\cong \tau_{\mathsf{A}}(\,\Lambda x. \{\boldsymbol{\pi}\}(\,\{\mathbf{n}\}(x),\, \Lambda y.\{\mathbf{m}\}(x)))$
\end{enumerate}
\end{proof}
\begin{remark}
Notice also that the embedding of $\mathbf{Set}^{r}$ in $\mathbf{Col}^{r}$ preserves finite limits, finite coproducts, list objects and exponentials.  
\end{remark}
Now we need to define a notion of map with set-sized fibers in $\mathcal{C}_{r}$ in order to prove other meaningful properties of $\mathbf{Set}^{r}$.
\begin{definition}\label{repress} An arrow $f:\mathsf{A}\rightarrow \mathsf{B}$ in $\mathcal{C}_{r}$ is \emph{representable} if there exists an object $\mathsf{C}(x)$ in $\mathbf{Set}^{r}(\mathsf{B})$ for which $f$ is isomorphic in $\mathcal{C}_{r}/\mathsf{B}$ to $\mathbf{I}_{\mathsf{B}}(\mathsf{C}(x))$ (see notation in theorem \ref{pdue}).
\end{definition}
Using the encoding of $\Sigma$ and $\Pi$ sets we can prove the following two properties about substitution along representable maps.
\begin{theorem}\label{ts} If $\mathsf{f}:\mathsf{A}\rightarrow \mathsf{B}$ in $\mathcal{C}_{r}$ is representable, then $\mathbf{Set}^{r}_{\mathsf{f}}$ has a left adjoint.\end{theorem}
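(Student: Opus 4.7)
The plan is to reduce to a canonical form using representability and then lift the left adjoint $\Sigma_{\mathsf{f}}$ already constructed in $\mathbf{Col}^{r}$ (Theorem~\ref{tc2}) through the inclusion $\mathbf{Set}^{r}\hookrightarrow \mathbf{Col}^{r}$. Since $\mathsf{f}$ is representable there exists $\tau_{\mathsf{B}}(\mathbf{c})\in \mathbf{Set}^{r}(\mathsf{B})$ with $\mathsf{f}$ isomorphic in $\mathcal{C}_{r}/\mathsf{B}$ to $\mathbf{I}_{\mathsf{B}}(\tau_{\mathsf{B}}(\mathbf{c}))=\mathsf{p}_{1}^{\Sigma}:\Sigma(\mathsf{B},\tau_{\mathsf{B}}(\mathbf{c}))\to\mathsf{B}$. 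Because substitution functors depend on $\mathsf{f}$ only up to iso in the slice (via the correspondence in Theorems~\ref{pdue} and~\ref{t3}), it suffices to produce a left adjoint when $\mathsf{f}=\mathsf{p}_{1}^{\Sigma}$ and the domain is $\mathsf{A}=\Sigma(\mathsf{B},\tau_{\mathsf{B}}(\mathbf{c}))$.

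For this $\mathsf{f}$, Theorem~\ref{tc2} gives a left adjoint $\Sigma_{\mathsf{f}}$ in $\mathbf{Col}^{r}$. I will show that $\Sigma_{\mathsf{f}}$ sends a family of realized sets to an object isomorphic to a family of realized sets, which is enough because $\mathbf{Set}^{r}(-)$ is a full subcategory of $\mathbf{Col}^{r}(-)$; transporting the unit/counit of $\Sigma_{\mathsf{f}}\dashv \mathbf{Col}^{r}_{\mathsf{f}}$ across these isomorphisms then yields the desired adjunction $\Sigma_{\mathsf{f}}^{\mathsf{Set}}\dashv \mathbf{Set}^{r}_{\mathsf{f}}$. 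Given $\tau_{\mathsf{A}}(\mathbf{d})$ with $[\mathbf{d}]_{\approx}:\mathsf{A}\to \mathsf{U}_{\mathsf{S}}$, define the numeral
\[
\mathbf{e}\;:=\;\Lambda x.\,\{\boldsymbol{\sigma}\}\bigl(\{\mathbf{c}\}(x),\,\Lambda y.\,\{\mathbf{d}\}(\{\mathbf{p}\}(x,y))\bigr)
\]
so that $\{\mathbf{e}\}(x)=\sigma(\{\mathbf{c}\}(x),\Lambda y.\{\mathbf{d}\}(p(x,y)))$. Using the fixpoint clauses $(\Sigma)$, $\Sigma s'$, $\Sigma \varepsilon'$, and $\Sigma\noteps$ together with the coherence requirement satisfied by $\mathbf{c}$ and by each $\{\mathbf{d}\}(p(x,y))$ (since both land in $\mathsf{U}_{\mathsf{S}}$), one checks that for every $x\,\varepsilon\,\mathsf{B}$ the code $\{\mathbf{e}\}(x)$ also lies in $\mathsf{U}_{\mathsf{S}}$; hence $[\mathbf{e}]_{\approx}:\mathsf{B}\to \mathsf{U}_{\mathsf{S}}$ is an arrow in $\mathcal{C}_{r}$ and $\tau_{\mathsf{B}}(\mathbf{e})\in \mathbf{Set}^{r}(\mathsf{B})$.

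The required isomorphism $\Sigma_{\mathsf{f}}(\tau_{\mathsf{A}}(\mathbf{d}))\cong \tau_{\mathsf{B}}(\mathbf{e})$ in $\mathbf{Col}^{r}(\mathsf{B})$ is given by the primitive recursive bijection matching the two presentations of $\Sigma$-realizers: on one side, an element over $x\,\varepsilon\,\mathsf{B}$ of $\Sigma_{\mathsf{f}}(\tau_{\mathsf{A}}(\mathbf{d}))$ unfolds, via Definition of $\Sigma_{\mathsf{f}}$ and of $\tau_{\mathsf{A}}$, to a pair $(u,v)$ with $u=p(x,y)$, $y\,\overline{\varepsilon}\,\{\mathbf{c}\}(x)$ and $v\,\overline{\varepsilon}\,\{\mathbf{d}\}(p(x,y))$; on the other side, an element of $\tau_{\mathsf{B}}(\mathbf{e})$ over $x$ is, by clause $\Sigma \varepsilon'$, a pair $(y,v)$ with $y\,\overline{\varepsilon}\,\{\mathbf{c}\}(x)$ and $v\,\overline{\varepsilon}\,\{\mathbf{d}\}(p(x,y))$. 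The assignments $(u,v)\mapsto (p_{2}(u),v)$ and $(y,v)\mapsto (p(x,y),v)$ are mutually inverse and both are given by closed $\lambda$-terms in the partial combinatory algebra, so they define inverse arrows in $\mathbf{Col}^{r}(\mathsf{B})$.

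The main obstacle is the verification that $[\mathbf{e}]_{\approx}$ actually factors through $\mathsf{U}_{\mathsf{S}}$, i.e., the coherence side-condition $\forall t\,(t\,\overline{\varepsilon}\,\{\mathbf{e}\}(x)\leftrightarrow \neg\, t\noteps \{\mathbf{e}\}(x))$. The semantic content is unproblematic, but because fixpoints in $\tar$ need not be least, coherence at $\sigma$-codes must be deduced from coherence of the constituents $\{\mathbf{c}\}(x)$ and $\{\mathbf{d}\}(p(x,y))$ by unpacking clauses $\Sigma \varepsilon'$ and $\Sigma\noteps$ in parallel. Once this is established the restricted adjunction is purely formal, since full faithfulness of the inclusion $\mathbf{Set}^{r}(\mathsf{B})\hookrightarrow \mathbf{Col}^{r}(\mathsf{B})$ together with the natural bijection
\[
\mathbf{Col}^{r}(\mathsf{B})\bigl(\Sigma_{\mathsf{f}}(\tau_{\mathsf{A}}(\mathbf{d})),\tau_{\mathsf{B}}(\mathbf{e}')\bigr)\;\cong\;\mathbf{Col}^{r}(\mathsf{A})\bigl(\tau_{\mathsf{A}}(\mathbf{d}),\mathbf{Col}^{r}_{\mathsf{f}}(\tau_{\mathsf{B}}(\mathbf{e}'))\bigr)
\]
transports along $\Sigma_{\mathsf{f}}(\tau_{\mathsf{A}}(\mathbf{d}))\cong \tau_{\mathsf{B}}(\mathbf{e})$ and along $\mathbf{Col}^{r}_{\mathsf{f}}(\tau_{\mathsf{B}}(\mathbf{e}'))=\mathbf{Set}^{r}_{\mathsf{f}}(\tau_{\mathsf{B}}(\mathbf{e}'))$ to the required adjoint isomorphism for $\mathbf{Set}^{r}$.
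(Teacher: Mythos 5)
Your proposal is correct and follows essentially the same route as the paper: reduce via representability to the canonical projection $\mathsf{p}_{1}^{\Sigma}:\Sigma(\mathsf{B},\tau_{\mathsf{B}}(\mathbf{c}))\rightarrow\mathsf{B}$ and show that the left adjoint $\Sigma_{\mathsf{f}}$ from Theorem~\ref{tc2} carries a family of realized sets to something isomorphic to $\tau_{\mathsf{B}}(\Lambda x.\{\boldsymbol{\sigma}\}(\{\mathbf{c}\}(x),\Lambda y.\{\mathbf{d}\}(\{\mathbf{p}\}(x,y))))$, which is exactly the paper's one-line computation. You additionally spell out the coherence check for membership in $\mathsf{U}_{\mathsf{S}}$ and the formal transport of the adjunction along the isomorphism, details the paper leaves implicit.
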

\begin{proof}
Consider the arrow $\mathsf{p}_{1}^{\Sigma}=[\mathbf{p}_{1}]_{\approx}:\Sigma(\mathsf{B},\tau_{\mathsf{B}}(\mathbf{n}))\rightarrow \mathsf{B}$. Then 
$$\Sigma_{\mathsf{p}_{1}^{\Sigma}}(\tau_{\Sigma(\mathsf{B},\tau_{\mathsf{B}}(\mathbf{n}))}(\mathbf{m}))\simeq \tau_{\mathsf{B}}(\Lambda x. \{\boldsymbol{\sigma}\}( \{\mathbf{n}\}(x),\Lambda y. \{\mathbf{m}\}(\{\mathbf{p}\}(x,y))))$$ \end{proof}

\begin{theorem}\label{ts2} If $\mathsf{f}:\mathsf{A}\rightarrow \mathsf{B}$ in $\mathcal{C}_{r}$ is representable, then for every object $\mathsf{C}(x)$ in $\mathbf{Set}^{r}(\mathsf{A})$, $\Pi^{r}_{\mathsf{f}}(\mathsf{C}(x))$ is isomorphic in $\mathbf{Col}^{r}(\mathsf{B})$ to an object of $\mathbf{Set}^{r}(\mathsf{B})$.
\end{theorem}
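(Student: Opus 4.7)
The plan is to reduce the statement to the case where $\mathsf{f}$ is literally the projection $\mathsf{p}_1^\Sigma:\Sigma(\mathsf{B},\tau_\mathsf{B}(\mathbf{n}))\rightarrow \mathsf{B}$ and then write down explicitly a numeral encoding a $\pi$-set that represents $\Pi^{r}_{\mathsf{f}}(\mathsf{C}(x))$. First, representability of $\mathsf{f}$ gives an object $\tau_{\mathsf{B}}(\mathbf{n})\in \mathbf{Set}^{r}(\mathsf{B})$ together with an isomorphism between $\mathsf{f}$ and $\mathbf{I}_{\mathsf{B}}(\tau_{\mathsf{B}}(\mathbf{n}))=\mathsf{p}_1^{\Sigma}$ in $\mathcal{C}_{r}/\mathsf{B}$. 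From the weak universal property in Theorem~\ref{tc3} one checks that $\Pi_{(-)}(\mathsf{C}(x))$ respects isomorphisms of arrows, so we may assume $\mathsf{f}=\mathsf{p}_1^{\Sigma}$.

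Next, $\mathsf{C}(x)$ belongs to $\mathbf{Set}^{r}(\Sigma(\mathsf{B},\tau_{\mathsf{B}}(\mathbf{n})))$, so there is a numeral $\mathbf{m}$ representing an operation into $\mathsf{U_S}$ with $\mathsf{C}(x)=\tau_{\Sigma(\mathsf{B},\tau_{\mathsf{B}}(\mathbf{n}))}(\mathbf{m})$. Using the numeral $\boldsymbol{\pi}$ encoding the $\Pi$-set constructor I would set
$$\mathbf{q}\;:=\;\Lambda x.\,\{\boldsymbol{\pi}\}\bigl(\{\mathbf{n}\}(x),\,\Lambda y.\,\{\mathbf{m}\}(\{\mathbf{p}\}(x,y))\bigr)$$
and claim $\tau_{\mathsf{B}}(\mathbf{q})\simeq \Pi_{\mathsf{p}_1^{\Sigma}}(\mathsf{C}(x))$ in $\mathbf{Col}^{r}(\mathsf{B})$. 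To see that $\mathbf{q}$ really represents an operation $\mathsf{B}\rightarrow \mathsf{U_S}$, for $x\,\varepsilon\,\mathsf{B}$ one unfolds $\{\mathbf{q}\}(x)=\pi(\{\mathbf{n}\}(x),\Lambda y.\{\mathbf{m}\}(p(x,y)))$ and applies the clause $\Pi s'$: the hypothesis $\mathsf{Set}(\{\mathbf{n}\}(x))$ is immediate from $[\mathbf{n}]_{\approx}:\mathsf{B}\rightarrow \mathsf{U_S}$, and whenever $t\,\overline{\varepsilon}\,\{\mathbf{n}\}(x)$ one has $p(x,t)\,\varepsilon\,\Sigma(\mathsf{B},\tau_{\mathsf{B}}(\mathbf{n}))$ by Definition~\ref{sigma}, hence $\mathsf{Set}(\{\mathbf{m}\}(p(x,t)))$ from $[\mathbf{m}]_{\approx}:\Sigma(\mathsf{B},\tau_{\mathsf{B}}(\mathbf{n}))\rightarrow \mathsf{U_S}$. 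The coherence requirement $\forall u(u\,\overline{\varepsilon}\,\{\mathbf{q}\}(x)\leftrightarrow \neg\, u\noteps \{\mathbf{q}\}(x))$ is then read off from the clauses $\Pi \varepsilon'$ and $\Pi\noteps$ using the corresponding coherence already available for $\{\mathbf{n}\}(x)$ and $\{\mathbf{m}\}(p(x,t))$.

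For the isomorphism itself, adapt Remark~\ref{remark} from $\mathsf{p}_1:\mathsf{A}\times\mathsf{B}\rightarrow\mathsf{A}$ to our $\mathsf{p}_1^{\Sigma}$: the formula defining $\Pi_{\mathsf{p}_1^{\Sigma}}(\mathsf{C}(x))$ from Theorem~\ref{tc3} is equivalent in $\mathbf{Col}^{r}(\mathsf{B})$ to $\{x'\mid x\,\varepsilon\,\mathsf{B}\wedge \forall t(t\,\overline{\varepsilon}\,\{\mathbf{n}\}(x)\rightarrow \{x'\}(t)\,\overline{\varepsilon}\,\{\mathbf{m}\}(p(x,t)))\}$, which matches the unfolding of $\tau_{\mathsf{B}}(\mathbf{q})$ via $\Pi\varepsilon'$. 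The explicit mutually inverse operations are $[\Lambda x.\Lambda x'.\Lambda t.\{x'\}(\{\mathbf{p}\}(x,t))]_{\approx}$ going from $\Pi_{\mathsf{p}_1^{\Sigma}}(\mathsf{C}(x))$ to $\tau_{\mathsf{B}}(\mathbf{q})$, and $[\Lambda x.\Lambda x''.\Lambda y.\{x''\}(\{\mathbf{p}_2\}(y))]_{\approx}$ in the other direction; the fact that these compose to identities (modulo $\approx$) uses that every realizer $y$ of the fibre of $\mathsf{p}_1^\Sigma$ over $x$ satisfies $p(x,p_2(y))=y$.

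The main obstacle I expect is the coherence verification for $\{\mathbf{q}\}(x)$, since $\noteps$ is not literally the negation of $\overline{\varepsilon}$ but a positively axiomatised surrogate; one must trace the $\Pi\noteps$ clause through classically and use the biconditional already present in the definition of $\mathsf{U_S}$ to recover $\neg u\noteps\{\mathbf{q}\}(x)\leftrightarrow u\,\overline{\varepsilon}\,\{\mathbf{q}\}(x)$. Everything else (the reduction using representability and the isomorphism via currying) is essentially bookkeeping on the fixpoint clauses and on the calculus of realizers already exercised in the proofs of Theorems~\ref{tc1}, \ref{setset} and \ref{ts}.
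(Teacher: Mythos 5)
Your proposal matches the paper's proof exactly: the paper also reduces (implicitly) to the projection $\mathsf{p}_1^{\Sigma}:\Sigma(\mathsf{B},\tau_{\mathsf{B}}(\mathbf{n}))\rightarrow\mathsf{B}$ and asserts $\Pi_{\mathsf{p}_{1}^{\Sigma}}(\tau_{\Sigma(\mathsf{B},\tau_{\mathsf{B}}(\mathbf{n}))}(\mathbf{m}))\simeq \tau_{\mathsf{B}}(\Lambda x. \{\boldsymbol{\pi}\}( \{\mathbf{n}\}(x),\Lambda y. \{\mathbf{m}\}(\{\mathbf{p}\}(x,y))))$, which is literally your numeral $\mathbf{q}$. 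The verifications you spell out (that $\mathbf{q}$ lands in $\mathsf{U}_{\mathsf{S}}$ including the coherence condition, and the explicit currying realizers for the isomorphism) are exactly the details the paper leaves implicit.
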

\begin{proof}
Consider the arrow $\mathsf{p}_{1}^{\Sigma}=[\mathbf{p}_{1}]_{\approx}:\Sigma(\mathsf{B},\tau_{\mathsf{B}}(\mathbf{n}))\rightarrow \mathsf{B}$. Then 
$$\Pi_{\mathsf{p}_{1}^{\Sigma}}(\tau_{\Sigma(\mathsf{B},\tau_{\mathsf{B}}(\mathbf{n}))}(\mathbf{m}))\simeq \tau_{\mathsf{B}}((\Lambda x. \{\boldsymbol{\pi}\}( \{\mathbf{n}\}(x),\Lambda y. \{\mathbf{m}\}(\{\mathbf{p}\}(x,y))))$$\end{proof}

Now we are going to define the indexed category of small realized propositions
as the preorder reflection of the indexed category of realized sets.
Therefore small realized propositions coincide with  those
realized propositions that are also fibre objects of $\mathbf{Set}^{r}$:
\begin{definition} The functor $\mathbf{Prop}^{r}_{s}:\mathcal{C}_{r}^{op}\rightarrow \mathbf{Pord}$ is defined as the preorder reflection $\mathbf{P}[\mathbf{Set}^{r}]$ (see definition \ref{preorder}) of the indexed category $\mathbf{Set}^{r}$. Then,  fibre objects of $\mathbf{Prop}^{r}_{s}$ are called {\it small realized propositions}.
\end{definition}
Similarly to the case of $\mathbf{Prop}^{r} $ one can deduce from \ref{setset} the following theorem.
\begin{theorem}For every object $\mathsf{A}$ in $\mathcal{C}_{r}$, $\mathbf{Prop}^{r}_{s}(\mathsf{A})$ is a Heyting prealgebra and for every arrow $\mathsf{f}$ in $\mathcal{C}_{r}$, $(\mathbf{Prop}^{r}_{s})_{\mathsf{f}}$ preserves the structure of Heyting prealgebra.\end{theorem}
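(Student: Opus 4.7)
The plan is to transport the proof of the analogous theorem for $\mathbf{Prop}^{r}$ across to the small fragment, using Theorem~\ref{setset} in place of Theorem~\ref{tc1}. First I would exhibit the candidate Heyting prealgebra operations on $\mathbf{Prop}^{r}_{s}(\mathsf{A})$ as the fibrewise operations already available in $\mathbf{Set}^{r}(\mathsf{A})$: set $\bot_{\mathsf{A}}:=\mathsf{0}_{\mathsf{A}}$, $\top_{\mathsf{A}}:=\mathsf{1}_{\mathsf{A}}$, $\mathsf{P}\sqcap_{\mathsf{A}}\mathsf{Q}:=\mathsf{P}\times_{\mathsf{A}}\mathsf{Q}$, $\mathsf{P}\sqcup_{\mathsf{A}}\mathsf{Q}:=\mathsf{P}+_{\mathsf{A}}\mathsf{Q}$, and $\mathsf{P}\rightarrow_{\mathsf{A}}\mathsf{Q}:=\mathsf{P}\Rightarrow_{\mathsf{A}}\mathsf{Q}$, exactly as in the proof that $\mathbf{Prop}^{r}$ is a first-order hyperdoctrine.

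Second, I would argue that these operations are well defined on $\mathbf{Prop}^{r}_{s}(\mathsf{A})$, i.e.\ that they remain in the image of $\tau_{\mathsf{A}}$. This is precisely the content of the isomorphisms displayed in the proof of Theorem~\ref{setset}: given small propositions $\tau_{\mathsf{A}}(\mathbf{n})$ and $\tau_{\mathsf{A}}(\mathbf{m})$, the codes $\Lambda x.\mathbf{n}_{0}$, $\Lambda x.\mathbf{n}_{1}$, $\Lambda x.\{\boldsymbol{\sigma}\}(\{\mathbf{n}\}(x),\Lambda y.\{\mathbf{m}\}(x))$, $\Lambda x.\{\boldsymbol{\oplus}\}(\{\mathbf{n}\}(x),\{\mathbf{m}\}(x))$ and $\Lambda x.\{\boldsymbol{\pi}\}(\{\mathbf{n}\}(x),\Lambda y.\{\mathbf{m}\}(x))$ define objects of $\mathbf{Set}^{r}(\mathsf{A})$ isomorphic in $\mathbf{Col}^{r}(\mathsf{A})$ to the corresponding constructions. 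Hence the preorder reflection $\mathbf{Prop}^{r}_{s}(\mathsf{A})$ inherits the Heyting prealgebra laws from $\mathbf{Prop}^{r}(\mathsf{A})$, since the candidate operations are literally the same objects once we forget the extra smallness information.

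Third, for preservation by substitution, I would invoke the final clause of Theorem~\ref{tc1}, which states that $\mathbf{Col}^{r}_{\mathsf{f}}$ preserves finite products, the initial and terminal objects, finite coproducts, and weak exponentials. By definition $\mathbf{Set}^{r}_{\mathsf{f}}$ is the restriction of $\mathbf{Col}^{r}_{\mathsf{f}}$, and this restriction already lands in $\mathbf{Set}^{r}$ (the preceding lemma shows $\mathbf{Col}^{r}_{\mathsf{f}}(\tau_{\mathsf{B}}(\mathbf{n}))=\tau_{\mathsf{A}}(\mathbf{m})$ whenever $\mathbf{g}\circ\mathsf{f}=[\mathbf{m}]_{\approx}$). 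Passing to the preorder reflection, $(\mathbf{Prop}^{r}_{s})_{\mathsf{f}}$ therefore preserves $\bot,\top,\sqcap,\sqcup$ and $\rightarrow$ on the nose.

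The only delicate point, and the one I would write out explicitly, is that the Heyting implication corresponds to a \emph{weak} exponential: one must verify that the adjunction $\mathsf{P}\sqcap_{\mathsf{A}}\mathsf{Q}\sqsubseteq \mathsf{R}$ iff $\mathsf{P}\sqsubseteq \mathsf{Q}\rightarrow_{\mathsf{A}}\mathsf{R}$ holds in the preorder reflection even though the universal arrow $f'$ in the definition of a weak exponential is not unique. But uniqueness is irrelevant at the level of entailment because, as noted in Remark~\ref{unif}, $\sqsubseteq$ requires only the \emph{existence} of a numeral witnessing the transformation of realizers; the evaluation arrow $\mathsf{ev}$ together with currying via $\Lambda$-abstraction furnishes such a numeral in both directions. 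This is the same argument used for $\mathbf{Prop}^{r}$, and since the witnesses just constructed are small again, no new obstruction arises.
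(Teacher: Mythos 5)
Your proposal is correct and follows essentially the same route as the paper, which simply states that the result is deduced from Theorem~\ref{setset} in the same way that the Heyting prealgebra structure of $\mathbf{Prop}^{r}(\mathsf{A})$ was deduced from Theorem~\ref{tc1}. Your write-up just makes explicit the two points the paper leaves implicit — that the codes exhibited in the proof of Theorem~\ref{setset} show the operations stay within $\mathbf{Set}^{r}(\mathsf{A})$, and that substitution preserves them because $\mathbf{Set}^{r}_{\mathsf{f}}$ is the structure-preserving restriction of $\mathbf{Col}^{r}_{\mathsf{f}}$ — so there is nothing to correct.
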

The following theorem follows directly from theorems \ref{ts} and \ref{ts2}:
\begin{theorem}\label{tsp} 
If $\mathsf{f}:\mathsf{A}\rightarrow \mathsf{B}$ in $\mathcal{C}_{r}$ is representable, then $(\mathbf{Prop}^{r}_{s})_{\mathsf{f}}$ has left and right adjoints satisfying Beck-Chevalley conditions.
\end{theorem}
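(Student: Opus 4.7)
The plan is to derive the adjoints and the Beck-Chevalley condition for $(\mathbf{Prop}^{r}_{s})_{\mathsf{f}}$ by lifting the corresponding structure from $\mathbf{Set}^{r}$ through the preorder reflection, using the fact that $\mathbf{Prop}^{r}_{s}$ already sits as a sub-indexed object of the hyperdoctrine $\mathbf{Prop}^{r}$.

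First I would handle the left adjoint. By Theorem \ref{ts}, representability of $\mathsf{f}$ yields a functor $\Sigma^{\mathbf{Set}}_{\mathsf{f}}:\mathbf{Set}^{r}(\mathsf{A})\to\mathbf{Set}^{r}(\mathsf{B})$ left adjoint to $\mathbf{Set}^{r}_{\mathsf{f}}$, defined on $\tau_{\mathsf{A}}(\mathbf{m})$ via the code $\Lambda x.\{\boldsymbol{\sigma}\}(\{\mathbf{n}\}(x),\Lambda y.\{\mathbf{m}\}(\{\mathbf{p}\}(x,y)))$. Passing to the preorder reflection $\mathbf{P}[\mathbf{Set}^{r}]=\mathbf{Prop}^{r}_{s}$, any adjunction in $\mathbf{Set}^{r}$ descends to an adjunction between the underlying preorders, so this assignment yields the required left adjoint $\exists_{\mathsf{f}}$ of $(\mathbf{Prop}^{r}_{s})_{\mathsf{f}}$. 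One checks monotonicity from the functoriality upstairs.

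For the right adjoint I would use Theorem \ref{ts2}: along a representable $\mathsf{f}$, the weak right adjoint $\Pi_{\mathsf{f}}$ at the level of $\mathbf{Col}^{r}$ restricts, up to isomorphism, to an assignment $\mathbf{Set}^{r}(\mathsf{A})\to\mathbf{Set}^{r}(\mathsf{B})$ with explicit code $\Lambda x.\{\boldsymbol{\pi}\}(\{\mathbf{n}\}(x),\Lambda y.\{\mathbf{m}\}(\{\mathbf{p}\}(x,y)))$. The weak universal property of $\Pi_{\mathsf{f}}$ together with preorder reflection (where arrows collapse to $\sqsubseteq$) turns this weak right adjoint into a genuine right adjoint $\forall_{\mathsf{f}}$ of $(\mathbf{Prop}^{r}_{s})_{\mathsf{f}}$: the weak factorisation needed for $\mathsf{Col}^{r}$ suffices because in the reflected preorder it only has to exist, not be unique. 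Note that both $\exists_{\mathsf{f}}$ and $\forall_{\mathsf{f}}$ coincide with the adjoints in the ambient hyperdoctrine $\mathbf{Prop}^{r}$, by the explicit formulas.

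Finally, for Beck-Chevalley, given a pullback square in $\mathcal{C}_{r}$ with $\mathsf{f}$ representable, observe that the left and right adjoints just constructed agree on the nose with $\exists_{\mathsf{f}}$ and $\forall_{\mathsf{f}}$ in $\mathbf{Prop}^{r}$, and that substitution $(\mathbf{Prop}^{r}_{s})_{(-)}$ is the restriction of $\mathbf{Prop}^{r}_{(-)}$ to small propositions. The Beck-Chevalley equivalence then transports verbatim from $\mathbf{Prop}^{r}$ (established when proving that $\mathbf{Prop}^{r}$ is a hyperdoctrine), provided the other leg of the pullback square also enjoys the representability needed to define adjoints on the small fibres; this follows because representable maps are stable under pullback in $\mathcal{C}_{r}$, as the substitution functors $\mathbf{Col}^{r}_{\mathsf{g}}$ restrict to $\mathbf{Set}^{r}$ and $\Sigma$-types are stable under substitution (witnessed by the explicit $\boldsymbol{\sigma}$-code). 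The main obstacle, and the only place requiring care, is precisely this stability check for representables, together with verifying that the chosen codes for $\exists$ and $\forall$ on small fibres match the ones inherited from $\mathbf{Prop}^{r}$ up to the equivalence $\sqsubseteq$; once this is in place the theorem is immediate.
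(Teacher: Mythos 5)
Your proposal is correct and follows essentially the same route as the paper, which derives this theorem directly from Theorems~\ref{ts} and~\ref{ts2} by passing the (weak) adjoints of $\mathbf{Set}^{r}_{\mathsf{f}}$ through the preorder reflection, where weak universal properties become genuine adjunctions and the adjoints agree with those of $\mathbf{Prop}^{r}$. Your additional check that representable maps are stable under pullback (so that the Beck--Chevalley condition is even well posed for the sub-doctrine of small propositions) is a detail the paper leaves implicit, and it is handled correctly via the restriction of the substitution functors to $\mathbf{Set}^{r}$.
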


\begin{definition}
We define $\overline{\mathbf{Prop}^{r}_{s}}$ as the posetal reflections (see definition \ref{preorder}) of the doctrine $\mathbf{Prop}^{r}_{s}$ and we use the same notations for order, bottom, top, binary infima and suprema, Heyting implication and left and right adjoints $\exists$ and $\forall$.
\end{definition}

\subsection{The definition of the Effective Kleene \mf-tripos}
Now we are ready to give the definition of the Effective Kleene \mf-tripos.
%Since it extends Kleene realizability of intuitionistic connectives, contrary
%to the ??? it is not uniform (see remark~\ref{unif}):
\begin{definition} The $5$-uple $(\mathcal{C}_{r}, \mathbf{Col}^{r}, \mathbf{Set}^{r}, \mathbf{Prop}^{r},\mathbf{Prop}^{r}_{s})$ is called the \emph{Effective  Kleene \mf-tripos} in \tar.
\end{definition}
Note that  the mentioned indexed categories in the definition of the
 Effective Kleene \mf-tripos
 make the following
diagram commute in $\mathbf{Cat}^{\mathcal{C}_{r}^{op}}$
{\small \def\objectstyle{\scriptstyle}
\def\labelstyle{\scriptstyle}
{
$$\xymatrix@-1pc{
\mathbf{Set}^{r}\ar @{^{(}->}[r]			&\mathbf{Col}^{r}\\
\mathbf{Prop}^{r}_{s}\ar @{^{(}->}[r]\ar @{^{(}->}[u]			&\mathbf{Prop}^{r}\ar @{^{(}->}[u]\\
}$$}}

The Effective Kleene \mf-tripos is so called because its logical part   extends Kleene realizability of intuitionistic connectives and it has enough
structure to support an interpretation (via combinators) of the intensional level $\mtt$ of the Minimalist Foundation in \cite{m09} (as shown in \cite{IMMSt}) and to produce  
a predicative variant of $\eff$ once the elementary quotient completion is applied to it.
%% As already anticipated, in \cite{IMMSt},
%% the Effective Kleene \mf-tripos supports a model of the intensional level $\mtt$ of the Minimalist Foundation where the  $\lambda$-abstraction of $\mtt$ is
%% interpreted  via combinators.
%% , which unfortunately is not categorical
%% because  the weak exponentials and $\Pi$-types in the $\mathcal{C}^{r}$ don't allow to define such a categorical model. 
%% One can only define a syntactic translation of $\mtt$ in $\tar$ as done in \cite{IMMSt}.

\subsection{The internal language of the doctrine $\thprop$}
Here we define a {\it fragment} of the internal language of the doctrine $\thprop$ to easily prove that  it extends Kleene realizability interpretation of Heyting arithmetics and hence it validates the Formal Church's thesis.

\begin{enumerate}
\item For every object $\mathsf{A}$ of $\mathcal{C}_{r}$ there is a list of variables $x_{1}^{\mathsf{A}},...,x_{n}^{\mathsf{A}}...$;
\item a \emph{context} is a (possibly empty) finite list of objects and distinct variables $[\xi_{1}\in\mathsf{A}_{1},...,\xi_{n}\in\mathsf{A}_{n}]$, where for every $i=1,...,n$, the variable $\xi_{i}$ is $x_{k}^{\mathsf{A}_{i}}$ for some $k$;
\item if $\Gamma=[\xi_{1}\in\mathsf{A}_{1},...,\xi_{n}\in\mathsf{A}_{n}]$ is a context, then $\xi_{i}[\Gamma]$ is a term in context of type $\mathsf{A}_{i}$;
\item if $\mathsf{e}:\mathsf{1}\rightarrow \mathsf{B}$, then $\mathsf{e}[\Gamma]$ is a term in context of type $\mathsf{B}$;
\item if $t_{1}[\Gamma],..,t_{n}[\Gamma]$ are terms in context of type $\mathsf{A}_{1},...,\mathsf{A}_{n}$ respectively and \\ $\mathsf{f}:\mathsf{A}_{1}\times...\times\mathsf{A}_{n}\rightarrow \mathsf{B}$ is an arrow of $\mathcal{C}_{r}$, then $\mathsf{f}(t_{1},...,t_{n})[\Gamma]$ is a term in context of type $\mathsf{B}$;
\item if $\mathsf{P}$ is an element of $\overline{\mathbf{Prop}^{r}}(\mathsf{1})$, then $\mathsf{P}[\Gamma]$ is a formula in context;
\item if $t_{1}[\Gamma],..,t_{n}[\Gamma]$ are terms in context of type $\mathsf{A}_{1},...,\mathsf{A}_{n}$ respectively and $\mathsf{R}$ is an element of \\$\overline{\mathbf{Prop}^{r}}(\mathsf{A}_{1}\times...\times\mathsf{A}_{n})$, then $\mathsf{R}(t_{1},...,t_{n})[\Gamma]$ is a formula in context;
\item if $t[\Gamma]$ and $s[\Gamma]$ are terms in context of type $\mathsf{A}$, then $(t=_{\mathsf{A}}s)[\Gamma]$ is a formula in context;
\item $\bot[\Gamma]$ is a formula in context;
\item if $\phi[\Gamma]$ and $\phi'[\Gamma]$ are formulas in context, then $(\phi\wedge \phi')[\Gamma]$, $(\phi\vee \phi')[\Gamma]$ and $(\phi\rightarrow \phi')[\Gamma]$ are formulas in context;
\item if $\phi[\Gamma,\xi\in \mathsf{A}]$ is a formula in context, then $(\exists \xi\in \mathsf{A})\phi[\Gamma]$ and $(\forall \xi\in \mathsf{A})\phi[\Gamma]$ are formulas in context. 
\end{enumerate}
Every term and formula in context of the internal language of $\overline{\mathbf{Prop}^{r}}$ is interpreted in the Predicative Effective Kleene \mf-tripos as follows.
Suppose $\Gamma$ is the context $[\xi_{1}\in\mathsf{A}_{1},...,\xi_{n}\in\mathsf{A}_{n}]$, then 
\begin{enumerate}
\item $\left\|[\;]\right\|:=\mathsf{1}$ and $\left\|\Gamma\right\|:=\mathsf{A}_{1}\times....\times \mathsf{A}_{n}$;
\item $\left\|\xi_{i}[\Gamma]\right\|:=\mathsf{p}_{i}:\left\|\Gamma\right\|\rightarrow \mathsf{A}_{i}$;
\item $\left\|\mathsf{e}[\Gamma]\right\|:=\mathsf{e}\,\circ\, !_{\left\|\Gamma\right\|,\mathsf{1}}:\left\|\Gamma\right\|\rightarrow cod(\mathsf{e})\footnote{Here and later we use $cod$ to denote the codomain of an arrow and $dom$ to denote its domain.}$
\item $\left\|\mathsf{f}(t_{1},..,t_{n})[\Gamma]\right\|:=\mathsf{f}\circ \langle \left\|t_{1}[\Gamma]\right\|,...,\left\|t_{n}[\Gamma]\right\| \rangle:\left\|\Gamma\right\|\rightarrow cod(\mathsf{f})$;
\item $\left\|\mathsf{P}[\Gamma]\right\|:=\overline{\mathbf{Prop}^{r}}_{!_{\left\|\Gamma\right\|,\mathsf{1}}}(\mathsf{P})\in \overline{\mathbf{Prop}^{r}}(\left\|\Gamma\right\|)$
\item $\left\|\mathsf{R}(t_{1},..,t_{n})[\Gamma]\right\|:=\overline{\mathbf{Prop}^{r}}_{\langle \left\|t_{1}[\Gamma]\right\|,...,\left\|t_{n}[\Gamma]\right\|\rangle}(\mathsf{R})\in \overline{\mathbf{Prop}^{r}}(\left\|\Gamma\right\|)$;
\item $\left\|(t=_{\mathsf{A}}s)[\Gamma]\right\|:=\overline{\mathbf{Prop}^{r}}_{\langle \left\|t[\Gamma]\right\|, \left\|s[\Gamma]\right\|\rangle}(\exists_{\Delta_{\mathsf{A}}}(\top_{\mathsf{A}}))$;
\item $\left\|(\phi\wedge\phi')[\Gamma]\right\|:=\left\|\phi[\Gamma]\right\|\sqcap_{\left\|\Gamma\right\|} \left\|\phi'[\Gamma]\right\|\in \overline{\mathbf{Prop}^{r}}(\left\|\Gamma\right\|)$, 
\item $\left\|(\phi\vee\phi')[\Gamma]\right\|:=\left\|\phi[\Gamma]\right\|\sqcup_{\left\|\Gamma\right\|} \left\|\phi'[\Gamma]\right\|\in \overline{\mathbf{Prop}^{r}}(\left\|\Gamma\right\|)$,
\item $\left\|(\phi\rightarrow\phi')[\Gamma]\right\|:=\left\|\phi[\Gamma]\right\|\Rightarrow_{\left\|\Gamma\right\|} \left\|\phi'[\Gamma]\right\|\in \overline{\mathbf{Prop}^{r}}(\left\|\Gamma\right\|)$;
\item $\left\|(\exists \xi\in \mathsf{A})\phi[\Gamma]\right\|:=\exists_{\langle\mathsf{p}_{1},..,\mathsf{p}_{n}\rangle}(\left\|\phi[\Gamma,\xi\in\mathsf{A}]\right\|)\in \overline{\mathbf{Prop}^{r}}(\left\|\Gamma\right\|)$,
\item $\left\|(\forall \xi\in \mathsf{A})\phi[\Gamma]\right\|:=\forall_{\langle\mathsf{p}_{1},..,\mathsf{p}_{n}\rangle}(\left\|\phi[\Gamma,\xi\in\mathsf{A}]\right\|)\in \overline{\mathbf{Prop}^{r}}(\left\|\Gamma\right\|)$.
\end{enumerate}
If $\phi[\Gamma]$ is a formula in context of the internal language of $\overline{\mathbf{Prop}^{r}}$, we define its validity as follows:
$$\overline{\mathbf{Prop}^{r}}\vdash \phi[\Gamma]\textnormal{ if and only if }\top_{\left\|\Gamma\right\|}\sqsubseteq_{\left\|\Gamma\right\|} \left\|\phi[\Gamma]\right\|.$$
The language of Heyting arithmetic $\mathsf{HA}$ can be translated into the internal language of $\overline{\mathbf{Prop}^{r}}$, as every primitive $n$-ary recursive function $f$ can be represented in $\mathcal{C}_{r}$ by an arrow $$\overline{f}:\underbrace{\mathsf{N}\times....\times\mathsf{N}}_{n\textnormal{ times}}\rightarrow \mathsf{N}.$$ The translation which assigns  a term $\overline{t}$ of the language of $\overline{\mathbf{Prop}^{r}}$ to every term $t$ of $\mathsf{HA}$ 
 and  a formula $\overline{\phi}$ in the internal language of $\overline{\mathbf{Prop}^{r}}$ to every formula $\phi$ of $\mathsf{HA}$ can be defined as follows:\\
\begin{enumerate}
\item $\overline{x_{i}}$ is $x_{i}^{\mathsf{N}}$ for every $i=1,...,n,...$ and $\overline{f(t_{1},...,t_{n})}$ is $\overline{f}(\overline{t_{1}},...,\overline{t_{n}})$,
\item $\overline{t=s}$ is $\overline{t}=_{\mathsf{N}}\overline{s}$ and $\overline{\bot}$ is $\bot$, 
\item $\overline{\phi\wedge \psi}$ is $\overline{\phi}\wedge \overline{\psi}$, $\overline{\phi\vee \psi}$ is $\overline{\phi}\vee \overline{\psi}$ and $\overline{\phi\rightarrow \psi}$ is $\overline{\phi}\rightarrow \overline{\psi}$,
\item $\overline{\forall \xi\, \phi}$ is $(\forall \overline{\xi}\in \mathsf{N}) \overline{\phi}$ and $\overline{\exists \xi\, \phi}$ is $(\exists \overline{\xi}\in \mathsf{N}) \overline{\phi}$.\\
\end{enumerate}
We can prove that the formulations of the principle  of  Axiom of choice and of the Formal Church's thesis
with weak exponentials are validated by $\overline{\mathbf{Prop}^{r}}$.
\begin{theorem}[axiom of choice with weak exponentials]
For every objects $\mathsf{A}$ and $\mathsf{B}$ in $\mathcal{C}_{r}$ and $\mathsf{R}\in \overline{\mathbf{Prop}^{r}}(A\times B)$ 
$$\overline{\mathbf{Prop}^{r}}\vdash (\forall x\in \mathsf{A})(\exists y\in \mathsf{B})\mathsf{R}(x,y)\rightarrow (\exists f\in \mathsf{A}\Rightarrow \mathsf{B})(\forall x\in \mathsf{A})\mathsf{R}(x,\mathsf{ev}(f,x)).\footnote{\label{nb}Here and later it is intended that variables have the right type, i.\,e.\ the formula is well defined.}$$
\end{theorem}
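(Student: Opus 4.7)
The plan is to unfold the interpretations on both sides of the implication as realizers in $\tar$, and then to exhibit a single closed term (built via $\Lambda$-abstraction, $p$, $p_1$ and $p_2$) that uniformly converts realizers of the antecedent into realizers of the consequent. First I would unpack the semantics: by the definitions of $\exists_{-}$, $\forall_{-}$ and the $\sqsubseteq$ relation in $\overline{\mathbf{Prop}^r}$, a realizer of $(\forall x\in \mathsf{A})(\exists y\in \mathsf{B})\mathsf{R}(x,y)$ is a numeral $\mathbf{n}$ such that, for every $x\,\varepsilon\,\mathsf{A}$, we have $p_1(\{\mathbf{n}\}(x))\,\varepsilon\, \mathsf{B}$ and $p_2(\{\mathbf{n}\}(x))\Vdash \mathsf{R}(x,p_1(\{\mathbf{n}\}(x)))$. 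This is precisely the Kleene-style pairing that underlies $\Sigma_{\mathsf{p}_1}$ and $\Pi_{\mathsf{p}_1}$ (see Remark~\ref{remark}).

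Given such an $\mathbf{n}$, I would set $\mathbf{f}:=\Lambda x.\,p_1(\{\mathbf{n}\}(x))$. By Theorem~\ref{t1}(7), $\mathbf{f}\,\varepsilon\,(\mathsf{A}\Rightarrow\mathsf{B})$ because $\{\mathbf{f}\}(x)=p_1(\{\mathbf{n}\}(x))\,\varepsilon\,\mathsf{B}$ whenever $x\,\varepsilon\,\mathsf{A}$. Moreover, from the definition of $\mathsf{ev}$ in the same theorem, $\mathsf{ev}(\mathbf{f},x)$ reduces in $\tar$ to $p_1(\{\mathbf{n}\}(x))$. Hence $\mathbf{s}:=\Lambda x.\,p_2(\{\mathbf{n}\}(x))$ satisfies $\{\mathbf{s}\}(x)\Vdash \mathsf{R}(x,\mathsf{ev}(\mathbf{f},x))$ for every $x\,\varepsilon\,\mathsf{A}$, which by the same unfolding makes $\mathbf{s}$ a realizer of $(\forall x\in \mathsf{A})\mathsf{R}(x,\mathsf{ev}(\mathbf{f},x))$. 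Pairing yields $p(\mathbf{f},\mathbf{s})$ as a realizer of the consequent.

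Collecting these operations, I would exhibit the single numeral
\[
\mathbf{r}\;:=\;\Lambda n.\,p\bigl(\Lambda x.\,p_1(\{n\}(x)),\;\Lambda x.\,p_2(\{n\}(x))\bigr)
\]
and verify that, for every realizer $\mathbf{n}$ of the antecedent, $\{\mathbf{r}\}(\mathbf{n})$ realizes the consequent. Via Remark~\ref{unif}, this is exactly the entailment needed to conclude that in $\overline{\mathbf{Prop}^{r}}(\mathsf{1})$ the Heyting implication from antecedent to consequent is $\top$, i.e.\ that the formula is valid.

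The only real obstacle is bookkeeping: one must check that the implicit context (over which $\mathsf{R}$, $\mathsf{A}$ and $\mathsf{B}$ may depend on parameters) is handled correctly by the internal-language translation, so that $\Lambda$-abstraction in $\tar$ produces a \emph{uniform} realizer in the sense of Remark~\ref{unif}. Because $\tar$ supports combinatory $\lambda$-abstraction on Kleene application, this uniformity is immediate, and there is no further difficulty; the argument is precisely the standard Kleene realizer for choice, lifted into our doctrine by virtue of the weak (rather than strict) character of the exponential $\mathsf{A}\Rightarrow\mathsf{B}$.
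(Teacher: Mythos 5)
Your proposal is correct and follows essentially the same route as the paper: the paper's proof simply says the result is immediate from Remark~\ref{remark}, whose explicit descriptions of $\Sigma'_{\mathsf{p}_1}$ and $\Pi'_{\mathsf{p}_1}$ show that a realizer of the antecedent already packages the choice function and its verification, exactly as you unfold it. Your explicit realizer $\mathbf{r}:=\Lambda n.\,p(\Lambda x.\,p_1(\{n\}(x)),\,\Lambda x.\,p_2(\{n\}(x)))$ is precisely the computation the paper leaves implicit.
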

\begin{proof}
This fact is immediate once one uses remark \ref{remark}.
\end{proof}

The next theorem  follows easily from the fact that the interpretation of the internal language of $\overline{\mathbf{Prop}^{r}}$ extends  Kleene realizability interpretation of Heyting arithmetics:
\begin{lemma}
Suppose $\phi$ is a formula of Heyting arithmetic $\mathsf{HA}$ and $\mathbf{n}$ is a numeral.  If $\mathsf{HA}\vdash \mathbf{n}\Vdash \phi$, then $\overline{\mathbf{Prop}^{r}}\vdash \overline{\phi}$.
\end{lemma}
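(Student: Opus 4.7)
The plan is to proceed by induction on the structure of the HA formula $\phi$, establishing the following auxiliary claim: for every $\phi$ with free variables among $x_{1},\ldots,x_{k}$, writing $\Gamma=[x_{1}^{\mathsf{N}},\ldots,x_{k}^{\mathsf{N}}]$, the interpreted proposition $\left\|\overline{\phi}[\Gamma]\right\|\in \overline{\mathbf{Prop}^{r}}(\left\|\Gamma\right\|)$ is equivalent, in $\overline{\mathbf{Prop}^{r}}$, to the realized proposition $\{y\mid y\Vdash \phi(x_{1},\ldots,x_{k})\}$ cut out in $\tar$ by the Kleene realizability predicate for $\mathsf{HA}$. Granted this auxiliary claim, the theorem is immediate: if $\mathsf{HA}\vdash\mathbf{n}\Vdash\phi$, then the same derivation---carried out inside $\tar$, which contains Peano arithmetic with full induction on all formulas---shows that the numeral $\mathbf{n}$ is a uniform realizer of $\left\|\overline{\phi}[\Gamma]\right\|$ in the sense of Remark~\ref{unif}, which is exactly $\top_{\left\|\Gamma\right\|}\sqsubseteq \left\|\overline{\phi}[\Gamma]\right\|$.

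The inductive verification is a clause-by-clause match between the categorical recipes recorded in Theorems~\ref{t1} and~\ref{tc1} and in Remark~\ref{remark}, and the standard Kleene realizability clauses. Binary products and coproducts in $\mathbf{Col}^{r}$ use precisely the pairing $\mathbf{p}$ and the tagging $\mathbf{p}(0,-),\mathbf{p}(1,-)$ appearing in Kleene's clauses for $\wedge$ and $\vee$; the weak exponential $\Rightarrow_{\mathsf{A}}$ is by definition the realized collection of codes of partial recursive functions preserving membership, which coincides with Kleene's clause for implication; and Remark~\ref{remark} computes $\Sigma_{\mathsf{p}_{1}}$ and $\Pi_{\mathsf{p}_{1}}$ along a projection as the pair and function encodings used for the existential and the universal quantifier. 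The base cases are handled by inspecting the interpretations of $t=_{\mathsf{N}}s$ and $\bot$: the former is the proposition, realizable by any numeral, asserting $t=s$, matching the Kleene clause for equality, while the latter has no realizers on either side.

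The only conceptual subtlety is that $\Pi_{\mathsf{f}}$ is a weak right adjoint in $\mathbf{Col}^{r}$ rather than a genuine one, but this disappears after passing to the posetal reflection $\overline{\mathbf{Prop}^{r}}$: the required equivalence is modulo the entailment of Remark~\ref{unif}, which asks for a single recursive function uniform in the parameters translating realizers of one side to realizers of the other, and this is exactly what each Kleene clause supplies. Hence no further functoriality issue obstructs the inductive step, the auxiliary claim closes, and the lemma follows. The main ``work'' lies entirely in setting up the matching encodings; once this is in place, passing from the HA-internal derivation of $\mathbf{n}\Vdash\phi$ to the $\tar$-derivation of top-entailment is just an observation about the axioms shared by $\mathsf{HA}$ and $\tar$.
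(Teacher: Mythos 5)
Your proposal is correct and follows essentially the same route as the paper: the paper gives no explicit proof, stating only that the lemma ``follows easily from the fact that the interpretation of the internal language of $\overline{\mathbf{Prop}^{r}}$ extends Kleene realizability interpretation of Heyting arithmetics,'' and your clause-by-clause induction (matching products, coproducts, weak exponentials, and the $\Sigma'_{\mathsf{p}_1}$/$\Pi'_{\mathsf{p}_1}$ descriptions of Remark~\ref{remark} against Kleene's clauses, then transporting the $\mathsf{HA}$-derivation of $\mathbf{n}\Vdash\phi$ into $\tar$) is exactly the content of that remark, made explicit. Your handling of the weak right adjoint via the posetal reflection and of uniformity via Remark~\ref{unif} is the right way to discharge the only delicate points.
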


\begin{theorem}[Formal Church's Thesis]
\label{wect}
{\small
$$\overline{\mathbf{Prop}^{r}}\vdash (\forall \overline{x}\in \mathsf{N})(\exists \overline{z}\in \mathsf{N})\mathsf{R}(\overline{x},\overline{z})\rightarrow(\exists \overline{e}\in \mathsf{N})(\forall \overline{x}\in \mathsf{N})(\exists \overline{y}\in \mathsf{N})(\overline{T(e,x,y)}\wedge \mathsf{R}(\overline{y},\overline{U(y)}))$$}
%In particular $\overline{\mathbf{Prop}^{r}}\vdash \overline{\mathsf{CT}}$ where 
%$$\mathsf{CT}\equiv^{def}\forall x\,\exists z\,R(x,z)\rightarrow \exists e\,\forall x\,\exists y\,(T(e,x,y)\,\wedge\, R(x,U(y)))$$
where $T(e,x,y)$ and $U(y)$ are Kleene's predicate and primitive recursive function respectively.
\end{theorem}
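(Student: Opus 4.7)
The plan is to unfold the interpretation in the internal language and exhibit an explicit numerical realizer for the implication. By the preceding lemma, the interpretation extends Kleene realizability on Heyting arithmetic, so what remains is a careful bookkeeping of the extra parameter coming from the relation~$\mathsf{R}$.

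First I would unfold what it means to realize the hypothesis $(\forall \overline{x}\in\mathsf{N})(\exists\overline{z}\in\mathsf{N})\,\mathsf{R}(\overline{x},\overline{z})$. Tracing through the definitions of $\forall$ and $\exists$ (which, by the adjoint construction of Theorems \ref{tc2} and \ref{tc3}, reduce in the remark after Theorem~\ref{tc3} to the concrete formulas $\Pi'_{\mathsf{p}_{1}}$ and $\Sigma'_{\mathsf{p}_{1}}$), a realizer of the hypothesis is a numeral $\mathbf{m}$ such that for every $x\in\mathsf{N}$, $\{\mathbf{m}\}(x)$ is a pair whose first projection $p_1(\{\mathbf{m}\}(x))$ is any natural number (the chosen $z$) and whose second projection $p_2(\{\mathbf{m}\}(x))$ realizes $\mathsf{R}(x,p_1(\{\mathbf{m}\}(x)))$.

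Next, I would exhibit a uniform recipe that, from such an $\mathbf{m}$, produces a realizer of the conclusion. Using the s-m-n theorem (internalised in $\tar$ via $\Lambda$-abstraction), there is a numeral $\mathbf{sm}$ such that $\{\{\mathbf{sm}\}(\mathbf{m})\}(x) = p_1(\{\mathbf{m}\}(x))$ holds provably in $\tar$ for all $x$. Set $\mathbf{e}:=\{\mathbf{sm}\}(\mathbf{m})$. Then by the definition of Kleene's $T$-predicate and $U$-function, for every $x$ there exists a numeral $\mathbf{y}$ with $T(\mathbf{e},x,\mathbf{y})$ and $U(\mathbf{y})=p_1(\{\mathbf{m}\}(x))$, and this $\mathbf{y}$ can be produced by a primitive recursive search whose index $\mathbf{w}$ is fixed once and for all. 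The desired realizer of the conclusion is then
$$\Lambda m.\;\{\mathbf{p}\}\bigl(\{\mathbf{sm}\}(m),\;\Lambda x.\;\{\mathbf{p}\}(\{\mathbf{w}\}(\{\mathbf{sm}\}(m),x),\,p_2(\{m\}(x)))\bigr),$$
and one checks that its first component realises $\overline{e}\in\mathsf{N}$ while its second component, for each $x$, packages the witness $y$ for $\overline{T(e,x,y)}$ (whose realizability reduces to decidable equality on $\mathsf{N}$) together with $p_2(\{m\}(x))$, which realises $\mathsf{R}(x,U(y))$ because $U(y)=p_1(\{m\}(x))$.

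The main obstacle is purely bookkeeping: one has to match the encoding of $\forall$ and $\exists$ in $\overline{\mathbf{Prop}^{r}}$ (which, being defined on the posetal reflection of $\mathbf{Col}^{r}$, is only equivalence-class-wise specified) with the Kleene-style realizer described above, and verify in $\tar$ that $T$ and $U$ behave as expected on the index $\{\mathbf{sm}\}(\mathbf{m})$. Because $\mathsf{N}$ was defined in Theorem \ref{t1} as $\{x\mid x=x\}$ precisely to avoid encoding overhead, the verification is a direct calculation in $\tar$ and the uniformity of the choice of $\mathbf{e}$ in $\mathbf{m}$ yields the required uniform realizer of the implication.
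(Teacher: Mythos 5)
Your proof is correct and is essentially the argument the paper intends: the paper states Theorem~\ref{wect} without an explicit proof, asserting only that it follows from the fact that the interpretation extends Kleene realizability of $\mathsf{HA}$, and your explicit realizer is exactly the standard Kleene-style witness for formal Church's thesis, made uniform in the realizer $\mathbf{m}$ of the premise and matched against the concrete $\Sigma'/\Pi'$ encodings of the quantifiers. The only blemishes are cosmetic and do not affect correctness: the displayed term omits one pairing level (the canonical realizer of $\overline{T(e,x,y)}$, which exists because the equality is decidable and provable in $\tar$, should be paired with $p_{2}(\{m\}(x))$ inside the existential witness), and the search for the computation trace $y$ is an unbounded $\mu$-search that terminates provably in $\tar$ rather than a primitive recursive one.
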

Finally we prove the following important result.
\begin{theorem}[Choice rule]\label{choice rule}
If $\mathsf{A}$ and $\mathsf{B}$ are objects in $\mathcal{C}_{r}$, $\Gamma$ is a context of the internal language of $\overline{\mathbf{Prop}^{r}}$ with list of variables $\overline{x}$, $\mathsf{R}\in \overline{\mathbf{Prop}^{r}}(\left\|\Gamma\right\|\times A\times B)$ and $$\overline{\mathbf{Prop}^{r}}\vdash (\forall x\in \mathsf{A})(\exists y\in \mathsf{B})\mathsf{R}(\overline{x},x,y)\,[\Gamma]$$
then there exists an arrow $\mathsf{f}:\left\|\Gamma\right\|\times\mathsf{A}\rightarrow \mathsf{B}$ such that $$\overline{\mathbf{Prop}^{r}}\vdash (\forall x\in A)\mathsf{R}(\overline{x},x,\mathsf{f}(\overline{x},x))\,[\Gamma]$$
\end{theorem}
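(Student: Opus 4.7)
The plan is to extract the desired arrow $\mathsf{f}$ directly from a realizer of the hypothesis, using the fact that in the Kleene-style doctrine $\overline{\mathbf{Prop}^{r}}$ the existential quantifier is interpreted via the $\Sigma$-construction of theorem \ref{tc2}, so that a realizer of an existential statement carries its witness in the first projection of its underlying code. The choice rule then becomes essentially the internal axiom of choice with weak exponentials of the preceding theorem, but stripped of its weak-exponential wrapper and read externally.

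First I would unfold the hypothesis. By definition of the interpretation, validity of $(\forall x\in \mathsf{A})(\exists y\in \mathsf{B})\mathsf{R}(\overline{x},x,y)\,[\Gamma]$ asserts that $\top_{\|\Gamma\|}$ entails $\forall_{\pi_{\Gamma}}\exists_{\pi_{\Gamma,x}}\|\mathsf{R}\|$, where $\pi_{\Gamma}\colon \|\Gamma\|\times\mathsf{A}\to\|\Gamma\|$ and $\pi_{\Gamma,x}\colon \|\Gamma\|\times\mathsf{A}\times\mathsf{B}\to\|\Gamma\|\times\mathsf{A}$ are the obvious projections. Combining the explicit descriptions of $\Sigma_{\mathsf{f}}$ and $\Pi_{\mathsf{f}}$ in remark \ref{remark} with the realizer characterization of entailment in remark \ref{unif}, one obtains a numeral $\mathbf{r}$ such that, provably in $\tar$,
\[
z\,\varepsilon\,\|\Gamma\|,\; a\,\varepsilon\,\mathsf{A}\;\vdash\;p_{1}(\{\mathbf{r}\}(z,a))\,\varepsilon\,\mathsf{B}\quad\text{and}\quad p_{2}(\{\mathbf{r}\}(z,a))\Vdash\|\mathsf{R}\|\text{ at }(z,a,p_{1}(\{\mathbf{r}\}(z,a))).
\]

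With such an $\mathbf{r}$ in hand I would set
\[
\mathsf{f}:=[\Lambda u.\;\{\mathbf{p}_{1}\}(\{\mathbf{r}\}(\{\mathbf{p}_{1}\}(u),\{\mathbf{p}_{2}\}(u)))]_{\approx}\colon\|\Gamma\|\times\mathsf{A}\to\mathsf{B},
\]
which is a legitimate arrow of $\mathcal{C}_{r}$ thanks to the first conjunct above. To see that $(\forall x\in\mathsf{A})\mathsf{R}(\overline{x},x,\mathsf{f}(\overline{x},x))\,[\Gamma]$ is valid it suffices to exhibit a realizer, and the numeral $\Lambda z.\Lambda a.\,p_{2}(\{\mathbf{r}\}(z,a))$ does the job: by the Beck--Chevalley clause of definition \ref{hyp}, the interpretation of $\mathsf{R}(\overline{x},x,\mathsf{f}(\overline{x},x))$ is obtained from $\|\mathsf{R}\|$ by substitution along the tuple of projections composed with $\mathsf{f}$ in the last component, and by construction of $\mathsf{f}$ this substitution requires precisely a realizer of $\mathsf{R}$ at $(z,a,p_{1}(\{\mathbf{r}\}(z,a)))$, which the second projection supplies.

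The only real difficulty, which I expect to be pure bookkeeping rather than a genuine conceptual obstacle, is keeping the iterated pair encoding of $\|\Gamma\|=\mathsf{A}_{1}\times\cdots\times\mathsf{A}_{n}$ consistent with the action of the substitution tuples $\langle\mathsf{p}_{1},\dots,\mathsf{p}_{n+1}\rangle$ and of the newly defined $\mathsf{f}$, and matching the $\Sigma$-realizer encoding of theorem \ref{tc2} with the ambient projections $p_{1},p_{2}$. Conceptually nothing beyond the identification, in any realizability model over a partial combinatory algebra, of the $\exists$ quantifier with the $\Sigma$-construction is needed.
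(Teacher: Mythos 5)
Your proposal is correct and follows essentially the same route as the paper: unfold the validity of the hypothesis into an entailment whose realizer, via the explicit $\Sigma'$ description of the existential from remark \ref{remark} and the characterization of entailment in remark \ref{unif}, carries the witness in its first projection, and then define $\mathsf{f}$ by post-composing with $\mathbf{p}_{1}$. The paper merely simplifies the bookkeeping by treating only the empty context explicitly (transposing the outer $\forall$ across the adjunction so that the realizer is applied to the trivial realizer $0$ of $\top_{\mathsf{A}}$), whereas you keep $\Gamma$ throughout; this is an inessential difference.
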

\begin{proof}
Let's prove the case in which $\Gamma$ is the empty context, as the general case is just a simple variation of this one.

Suppose $\mathsf{A}$ and $\mathsf{B}$ are object in $\mathcal{C}_{r}$ and $\mathsf{R}=[\mathsf{R}'(x)]_{\sim_{\mathsf{A}\times \mathsf{B}}}\in \overline{\mathbf{Prop}^{r}}(\mathsf{A}\times \mathsf{B})$. 

If $\overline{\mathbf{Prop}^{r}}\vdash (\forall x\in \mathsf{A})(\exists y\in \mathsf{B})\mathsf{R}(x,y)$, then $\top_{\mathsf{A}}\sqsubseteq_{\mathsf{A}}\Sigma'_{\mathsf{p}_{1}}(\mathsf{R}'(x))$.

In particular this means that there exists a numeral $\mathbf{r}$ such that 
$$x\,\varepsilon\, \mathsf{A}\wedge x'=0\vdash_{\tar}x\,\varepsilon\, \mathsf{A} \wedge p_{1}(\{\mathbf{r}\}(x,x'))\,\varepsilon\,\mathsf{B}\wedge p_{2}(\{\mathbf{r}\}(x,x'))\,\varepsilon\,\mathsf{R}(p(x,p_{1}(\{\mathbf{r}\}(x,x'))))$$
The arrow $\mathsf{f}:\mathsf{A}\rightarrow \mathsf{B}$ we are looking for can be defined as $[\Lambda x.\{\mathbf{p}_{1}\}(\{\mathbf{r}\}(x,0))]_{\approx}$. \end{proof}

\section{A strictly predicative version of Hyland's Effective Topos}
Here we are going to build the category $\peff$ which we propose
as a strictly predicative variant of the Effective Topos $\eff$ in $\tar$.

We construct
$\peff$  by applying the elementary quotient completion construction introduced in \cite{qu12}  to the hyperdoctrine $\thprop$ of the  Effective Kleene \mf-tripos. We do this for two reasons.
First, from \cite{m09} we know that the elementary quotient completion of $\thprop$
has enough structure to support an interpretation of the extensional level
of the Minimalist Foundation given that the doctrine $\thprop$  supports a (non-categorical)
interpretation of its intensional level.

Second, we know that for $\thprop$ the elementary quotient completion happens to coincide with the exact on lex completion
of the category of realized collections $\mathcal{C}_{r}$.
This follows from the fact that in
 \cite{MR16} it was shown that  the elementary quotient
completion of an elementary doctrine $\mathcal{D}$ on a lex base category $\mathcal{B}$ is equivalent to the exact completion on a lex category $\mathcal{B}$ precisely
when a choice rule holds in $\mathcal{D}$ and this rule holds in $\thprop$
as theorem~\ref{choice rule} shows.

From results in \cite{qu12}, by construction $\peff$ comes equipped with 
a Lawvere's hyperdoctrine that we call $\mprop$ and that is closed under stable effective
quotient of equivalence relations defined by $\mprop$.

\subsection{The category $\peff$}
\begin{definition}\label{efftop} The base category $\peff $ of the \emph{Predicative Effective p-Topos} is the category $\mathcal{Q}_{\overline{\mathbf{Prop}^{r}}}$ obtained by applying the elementary quotient completion
in \cite{qu12} to the doctrine $\thprop$, i.\,e.\ it is the category whose objects are the pairs $(\mathsf{A},\mathsf{R})$ where $\mathsf{A}$ is an object of the category $\mathcal{C}_{r}$ and $\mathsf{R}$ is an object of $\overline{\mathbf{Prop}^{r}}(\mathsf{A}\times \mathsf{A})$ for which the following hold:\\
\begin{enumerate}
\item $\overline{\mathbf{Prop}^{r}}\vdash(\forall x\in \mathsf{A})\mathsf{R}(x,x)$;
\item $\overline{\mathbf{Prop}^{r}}\vdash(\forall x\in \mathsf{A})(\forall y\in \mathsf{A})(\mathsf{R}(x,y)\rightarrow \mathsf{R}(y,x))$;
\item $\overline{\mathbf{Prop}^{r}}\vdash(\forall x\in \mathsf{A})(\forall y\in \mathsf{A})(\forall z\in \mathsf{A})(\mathsf{R}(x,y)\wedge \mathsf{R}(y,z)\rightarrow \mathsf{R}(x,z))$.\\
\end{enumerate}
An arrow from $(\mathsf{A},\mathsf{R})$ to $(\mathsf{B},\mathsf{S})$ is an equivalence class $[\mathsf{f}]_{\simeq}$ of arrows $\mathsf{f}:\mathsf{A}\rightarrow \mathsf{B}$ in $\mathcal{C}_{r}$ such that 
$$\overline{\mathbf{Prop}^{r}}\vdash(\forall x\in \mathsf{A})(\forall y\in \mathsf{A})(\mathsf{R}(x,y)\rightarrow \mathsf{S}(\mathsf{f}(x),\mathsf{f}(y)))$$
with respect to the equivalence relation defined by 
$$\mathsf{f}\simeq \mathsf{g} \textnormal{ if and only if }\overline{\mathbf{Prop}^{r}}\vdash (\forall x\in \mathsf{A})(\mathsf{R}(x,x)\rightarrow \mathsf{S}(\mathsf{f}(x),\mathsf{g}(x))).$$
\end{definition}

As pointed out in $\cite{MR16}$ the elementary completion coincides with the ex/lex completion by Carboni and Celia Magno \cite{excom} when we consider the doctrine of weak subobjects $\mathbf{wSub}$ of a finitely complete category.  As we proved theorem \ref{ws}, this applies to our case.
\begin{theorem}\label{compl}
$$\peff\cong (\mathcal{C}_{r})_{ex/lex}$$
In particular $\peff$ is an exact category.
\end{theorem}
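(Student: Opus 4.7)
The plan is to assemble three ingredients that have already been established in the excerpt. First, by Theorem~\ref{ws} the hyperdoctrine $\overline{\mathbf{Prop}^{r}}$ used in the construction of $\peff$ is naturally isomorphic to the doctrine of weak subobjects $\mathbf{wSub}_{\mathcal{C}_{r}}$. Second, by Theorem~\ref{choice rule} the doctrine $\overline{\mathbf{Prop}^{r}}$ satisfies the choice rule required in \cite{MR16}. Third, the general fact cited from \cite{MR16} says that whenever an elementary doctrine on a lex base satisfies this choice rule, the elementary quotient completion construction of \cite{qu12} applied to it yields exactly the ex/lex completion of its base category; or equivalently, as remarked by the authors and traced back to \cite{excom}, the ex/lex completion of any lex category $\mathcal{C}$ is nothing but the elementary quotient completion of its weak subobjects doctrine $\mathbf{wSub}_{\mathcal{C}}$.

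Concretely, I would proceed in three short steps. First I would record that by definition $\peff=\mathcal{Q}_{\overline{\mathbf{Prop}^{r}}}$. Then I would invoke Theorem~\ref{ws} to rewrite the quotient completion, using the fact that the elementary quotient completion construction is functorial on naturally isomorphic elementary doctrines, to obtain an equivalence of categories
\[
\mathcal{Q}_{\overline{\mathbf{Prop}^{r}}}\;\simeq\;\mathcal{Q}_{\mathbf{wSub}_{\mathcal{C}_{r}}}.
\]
Finally, I would apply the characterization from \cite{excom} (as re-proved in \cite{MR16} via the choice rule provided here by Theorem~\ref{choice rule}) which identifies $\mathcal{Q}_{\mathbf{wSub}_{\mathcal{C}_{r}}}$ with $(\mathcal{C}_{r})_{ex/lex}$. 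Chaining the two equivalences yields the stated isomorphism $\peff\cong(\mathcal{C}_{r})_{ex/lex}$. Exactness of $\peff$ then follows automatically, since the ex/lex completion of any lex category is by construction an exact category.

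The only point that requires any care is checking that the abstract results being cited really do apply to our doctrine. The lex base $\mathcal{C}_{r}$ is finitely complete by Theorem~\ref{t1}; the natural isomorphism of Theorem~\ref{ws} is an isomorphism of elementary doctrines (respecting fibrewise meets, equality predicates, and reindexing), so it transports the elementary quotient completion on the nose; and the choice rule hypothesis of \cite{MR16} is exactly the conclusion of Theorem~\ref{choice rule}. Once these verifications are in place, no further calculation is needed. The main potential obstacle would be if the formulation of the choice rule in \cite{MR16} were slightly stronger than that proved in Theorem~\ref{choice rule} (e.g.\ demanding choice in an arbitrary context rather than in the empty one); the proof of Theorem~\ref{choice rule} already handles an arbitrary context $\Gamma$, so this subtlety is also covered and the argument goes through essentially as a citation.
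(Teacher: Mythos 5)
Your proposal is correct and follows essentially the same route as the paper: the paper's justification is precisely the remark preceding the theorem, namely that by \cite{MR16} the elementary quotient completion coincides with the ex/lex completion of \cite{excom} when applied to the weak subobjects doctrine of a finitely complete category, which applies here by Theorem~\ref{ws} (with the choice rule of Theorem~\ref{choice rule} invoked, as you also note, as the reason this coincidence holds for $\thprop$). Your additional care about transporting the completion along the natural isomorphism and about the context in the choice rule is a sensible elaboration of what the paper leaves implicit.
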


From theorems \ref{compl} and \ref{weakly}, using the main result in \cite{Ros}, we get the following theorem. %ex lex
\begin{theorem}\label{mairos} $\peff$ is a locally cartesian closed category.
\end{theorem}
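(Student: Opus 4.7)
The plan is to derive this theorem as a direct corollary of three ingredients already laid out in the paper: the identification of $\peff$ with an ex/lex completion from Theorem~\ref{compl}, the weak local cartesian closure of the base category $\mathcal{C}_r$ from Corollary~\ref{weakly}, and the external result of \cite{Ros} which upgrades weak local cartesian closure on a lex base to genuine local cartesian closure after passing to the exact completion. So the proof is essentially an invocation of a preservation-and-strictification theorem, not a fresh construction.

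First I would recall the precise statement of the main theorem of \cite{Ros} (Carboni--Rosolini): if $\mathcal{B}$ is a finitely complete category which is weakly locally cartesian closed, then its exact-on-lex completion $\mathcal{B}_{ex/lex}$ is a locally cartesian closed exact category, and the canonical embedding $\mathcal{B} \hookrightarrow \mathcal{B}_{ex/lex}$ preserves finite limits. Next I would verify that the two hypotheses of this theorem are satisfied in our situation: by Theorem~\ref{t1} the base $\mathcal{C}_r$ is finitely complete (with further structure we do not need here), and by Corollary~\ref{weakly} it is weakly locally cartesian closed. At this point the Carboni--Rosolini theorem applies to $\mathcal{C}_r$, yielding that $(\mathcal{C}_r)_{ex/lex}$ is locally cartesian closed.

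Finally, I would transport this conclusion across the equivalence $\peff \cong (\mathcal{C}_r)_{ex/lex}$ provided by Theorem~\ref{compl}. Since local cartesian closure is an equivalence-invariant property of a category, the equivalence immediately gives that $\peff$ is locally cartesian closed, completing the proof.

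The only mild obstacle is a bookkeeping one: one must be careful that the ex/lex completion used in Theorem~\ref{compl} is literally the Carboni--Celia~Magno construction \cite{excom} to which \cite{Ros} applies, and that the weak exponentials exhibited in Theorem~\ref{t1} together with the weak dependent products constructed via the indexed category $\mathbf{Col}^r$ in Theorem~\ref{tc3} and transported to the slices via Theorem~\ref{pdue} do provide the data that \cite{Ros} requires on $\mathcal{C}_r$. Both points are already secured by the earlier results of the paper, so no further verification beyond citing them is needed.
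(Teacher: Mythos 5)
Your proposal is correct and follows exactly the paper's own derivation: the paper obtains Theorem~\ref{mairos} precisely by combining Theorem~\ref{compl}, Corollary~\ref{weakly}, and the main result of \cite{Ros}. The additional bookkeeping remarks you make about matching the ex/lex construction and the weak dependent products are sensible but, as you note, already covered by the earlier results.
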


Even more we can show that $\peff$ is actually a list-arithmetic pretopos
by proving:

\begin{lemma}
$\peff$ has disjoint and stable finite coproducts and list objects.
\end{lemma}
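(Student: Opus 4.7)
The plan is to lift the stable disjoint coproducts and parameterized list objects from $\mathcal{C}_r$ (established in Theorems~\ref{t1} and via the disjoint/stable result for $\mathcal{C}_r$) to $\peff$ along the quotient completion. By Theorem~\ref{compl} the category $\peff$ equals the ex/lex completion of $\mathcal{C}_r$, so abstractly one can invoke the general fact that the ex/lex completion of a lex category preserves stable disjoint finite coproducts and parameterized list objects. I prefer, however, to give the explicit constructions so that compatibility with the equivalence relation data of the objects of $\peff$ is visible, and so that the verifications reduce to short calculations in the internal language of $\thprop$ developed in Section~5.6.

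For the initial object I take $(\mathsf{0},\mathsf{R}_{\mathsf{0}})$, where $\mathsf{R}_{\mathsf{0}}$ is the necessarily trivial equivalence relation on $\mathsf{0}$; the unique arrow to any $(\mathsf{A},\mathsf{R})$ is the quotient of the unique $\mathcal{C}_r$-arrow $\mathsf{0}\to\mathsf{A}$. For the binary coproduct of $(\mathsf{A},\mathsf{R})$ and $(\mathsf{B},\mathsf{S})$, I set $(\mathsf{A}+\mathsf{B},\mathsf{R}\oplus\mathsf{S})$ where $\mathsf{R}\oplus\mathsf{S}$ is the element of $\overline{\mathbf{Prop}^{r}}((\mathsf{A}+\mathsf{B})\times(\mathsf{A}+\mathsf{B}))$ which relates two elements iff either both lie in the left summand and are $\mathsf{R}$-related, or both lie in the right summand and are $\mathsf{S}$-related; the injections are the images under the quotient functor of the $\mathcal{C}_r$-injections $\mathsf{j}_1,\mathsf{j}_2$. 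For list objects I take $(\mathsf{List}(\mathsf{A}),\mathsf{R}^*)$, where $\mathsf{R}^*$ is the pointwise extension of $\mathsf{R}$ to equal-length lists, together with the images of the $\mathcal{C}_r$-level $\epsilon$ and $\mathsf{cons}$.

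The universal properties reduce to facts about $\mathcal{C}_r$ combined with the choice rule of Theorem~\ref{choice rule}. Given cocone arrows $[\mathsf{f}]_\simeq,[\mathsf{g}]_\simeq$ into $(\mathsf{C},\mathsf{T})$, the $\mathcal{C}_r$-copair $[\mathsf{f},\mathsf{g}]$ gives an arrow $\mathsf{A}+\mathsf{B}\to\mathsf{C}$ whose relational compatibility with $\mathsf{R}\oplus\mathsf{S}$ and $\mathsf{T}$ is immediate by case analysis on the injections. For the parameterized list object, given $[\mathsf{f}]:(\mathsf{P},\mathsf{R}_{\mathsf{P}})\to(\mathsf{B},\mathsf{T})$ and $[\mathsf{g}]:(\mathsf{B},\mathsf{T})\times(\mathsf{A},\mathsf{R})\to(\mathsf{B},\mathsf{T})$, I first select $\mathcal{C}_r$-representatives $\mathsf{f},\mathsf{g}$ and form $listrec(\mathsf{f},\mathsf{g})$ at the level of $\mathcal{C}_r$; to promote this to an arrow of $\peff$ I need compatibility with the three equivalence relations involved, which I prove by induction (in $\tar$) on the length of the list, using the given relational compatibility of $\mathsf{f}$ and $\mathsf{g}$ in a straightforward way. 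Uniqueness in $\peff$ up to $\simeq$ likewise follows from the $\mathcal{C}_r$-uniqueness together with compatibility.

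Disjointness and stability are essentially inherited from $\mathcal{C}_r$. For disjointness, pulling back $\mathsf{j}_1$ and $\mathsf{j}_2$ in $\peff$ can be computed by pulling back in $\mathcal{C}_r$ (which yields $\mathsf{0}$ by Theorem~4.8) and equipping the result with the trivial equivalence relation, giving the initial object of $\peff$; likewise the injections are monic because they are so in $\mathcal{C}_r$ and $\mathsf{R}\oplus\mathsf{S}$ separates the two summands. Stability under pullback reduces, via the explicit description of pullbacks in the elementary quotient completion from~\cite{qu12}, to stability of coproducts in $\mathcal{C}_r$. The main obstacle I foresee is the inductive verification of the universal property of the list object, since this is the only place where one must genuinely combine the $\mathcal{C}_r$-recursor with the equivalence relation data, and care is needed to choose representatives for $[\mathsf{g}]$ (using Theorem~\ref{choice rule}) before invoking the list recursor of $\mathcal{C}_r$.
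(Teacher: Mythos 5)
Your constructions coincide with the paper's: the initial object $(\mathsf{0},\top_{\mathsf{0}\times\mathsf{0}})$, the coproduct $\mathsf{A}+\mathsf{B}$ with the summand-respecting equivalence relation, and $\mathsf{List}(\mathsf{A})$ with the equal-length componentwise relation, with structure arrows inherited from $\mathcal{C}_r$; the paper simply delegates the verifications to results of Carboni et al.\ and ``direct verification'' where you sketch them. One small remark: choosing $\mathcal{C}_r$-representatives of arrows of $\peff$ needs no appeal to the choice rule of Theorem~\ref{choice rule}, since arrows are by definition equivalence classes of syntactic data and any representative will do.
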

\begin{proof}
The existence of disjoint stable coproducts follows essentially from results in \cite{Car}.
\begin{enumerate}
\item an initial object is defined by $(\mathsf{0},\top_{\mathsf{0}\times \mathsf{0}})$.
\item a binary coproduct for $(\mathsf{A},\mathsf{R})$ and $(\mathsf{B},\mathsf{S})$ is given by the object
{\small
$$\begin{array}{lc}(\mathsf{A}+\mathsf{B}\, ,\, & \  ||\ 
 (\exists t\in \mathsf{A})\,(\exists s\in \mathsf{A})\,(\ \mathsf{R}(t,s)\,\wedge\, \mathsf{p}_{1}(x)=_{\mathsf{A}+\mathsf{B}}\mathsf{j}_{1}(t)\,\wedge \,\mathsf{p}_{2}(x)=_{\mathsf{A}+\mathsf{B}}\mathsf{j}_{1}(s)\ )\\
&\vee\\
&(\exists t\in \mathsf{B})\,(\exists s\in \mathsf{B})\,(\ \mathsf{S}(t,s)\,\wedge\, \mathsf{p}_{1}(x)=_{\mathsf{A}+\mathsf{B}}\mathsf{j}_{2}(t)\,\wedge\, \mathsf{p}_{2}(x)=_{\mathsf{A}+\mathsf{B}}\mathsf{j}_{2}(s)\ )\\
& \,[x\in (\mathsf{A}+\mathsf{B})\times (\mathsf{A}+\mathsf{B})]\ \  ||\ )\end{array}$$
}
together with the injections $[\mathsf{j}_{1}]_{\simeq}$ and $[\mathsf{j}_{2}]_{\simeq}$.
\item a list object for $(\mathsf{A},\mathsf{R})$ has as underlying object $\mathsf{List}(\mathsf{A})$ and equivalence relation given by
{\small
$$\begin{array}{c}
||\ \ \mathsf{lh}(\mathsf{p}_{1}(x))=_{\mathsf{N}}\mathsf{lh}(\mathsf{p}_{2}(x))\\
 \wedge\\
\begin{array}{c}
 (\forall n\in \mathsf{N})\ \ \\
((\exists a\in \mathsf{A})(\exists b\in \mathsf{A})(\ \mathsf{R}(a,b)
\wedge\mathsf{j}_{1}(a)=_{\mathsf{A}+\mathsf{1}}\mathsf{comp}(\mathsf{p}_{1}(x),n)\wedge\mathsf{j}_{1}(b)=_{\mathsf{A}+\mathsf{1}}\mathsf{comp}(\mathsf{p}_{2}(x),n) )\\
\vee
\\
 (\mathsf{comp}(\mathsf{p}_{1}(x),n)=_{\mathsf{A}+\mathsf{1}}\mathsf{j}_{2}(0)\,\wedge\,\mathsf{comp}(\mathsf{p}_{2}(x),n)=_{\mathsf{A}+\mathsf{1}}\mathsf{j}_{2}(0))\ )\end{array}\\
\qquad \qquad \qquad \qquad \,[x\in \mathsf{List}(\mathsf{A})\times \mathsf{List}(\mathsf{A})]\ \ ||
\end{array}$$
}
where $\mathsf{lh}:\mathsf{List}(\mathsf{A})\rightarrow \mathsf{N}$ is the length arrow and $\mathsf{comp}:\mathsf{List}(\mathsf{A})\times \mathsf{N}\rightarrow \mathsf{A}+\mathsf{1}$ is the component arrow\footnote{The component arrow sends a pair $((l_{0},...,l_{n}),j)$ to $(0,l_{j})$ if $j\leq n$ and to $(1,0)$ otherwise.} in the category $\mathcal{C}_{r}$. The empty list arrow and the append arrow are given by $[\epsilon]_{\simeq}$ and $[\mathsf{cons}]_{\simeq}$ respectively.
\item Stability and disjointness of coproducts follow from a direct verification. 
\end{enumerate}
\end{proof}

Now we give the definition of the hyperdoctrine $\mprop$ of propositions  associated by construction to $\peff$ as an elementary quotient completion in \cite{qu12} (this doctrine defines the equivalence relations with respect to which the elementary quotient completion is closed under effective quotients):
\begin{definition}
The functor $\mprop:\peff^{op}\rightarrow \mathbf{Pord}$
is defined as follows (see \ref{qu12}):\\
\begin{enumerate}
\item $\mathsf{P}\in \mprop(\mathsf{A},\mathsf{R})\textnormal{ iff }\mathsf{P}\in \overline{\mathbf{Prop}^{r}}(\mathsf{A})\textnormal{ and }$
$$\overline{\mathbf{Prop}^{r}}\vdash (\forall x\in \mathsf{A})(\forall y\in \mathsf{A})(\mathsf{P}(x)\wedge \mathsf{R}(x,y)\rightarrow \mathsf{P}(y))$$
\item the fibre preorder of $\mprop(\, ((\mathsf{A},\mathsf{R}))$ for objects
$P,Q$ is defined as follows: $\mathsf{P}\sqsubseteq_{(\mathsf{A},\mathsf{R})}\mathsf{Q}\textnormal{ iff }\mathsf{P}\sqsubseteq_{\mathsf{A}}\mathsf{Q};$
\item $\mprop_{[\mathsf{f}]_{\simeq}}(\mathsf{P}):=\overline{\mathbf{Prop}^{r}}_{\mathsf{f}}(\mathsf{P})$. \\
\end{enumerate}
\end{definition}

From results in $\cite{MR16}$ and theorem \ref{ws} it follows that:
\begin{theorem}
The hyperdoctrine  $\mprop$ of $\peff$ is equivalent to its subobject doctrine, i.e.
$$\mprop \cong \mathbf{Sub}_{\peff}$$
\end{theorem}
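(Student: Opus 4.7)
The plan is to transfer the identification $\thprop \cong \mathbf{wSub}_{\mathcal{C}_{r}}$ of theorem~\ref{ws} across the elementary quotient completion and then invoke the general results of \cite{MR16}. The doctrine lifting described in \cite{qu12} that produces $\mprop$ from $\thprop$ is functorial in the underlying doctrine: naturally isomorphic doctrines on the same base yield naturally isomorphic lifts on their (isomorphic) elementary quotient completions. Hence, using theorem~\ref{ws} to replace $\thprop$ with $\mathbf{wSub}_{\mathcal{C}_{r}}$ at the outset, the statement reduces to proving that the canonical lift of the weak subobject doctrine $\mathbf{wSub}_{\mathcal{C}_{r}}$ to the elementary quotient completion of $\mathbf{wSub}_{\mathcal{C}_{r}}$ coincides with the subobject doctrine of that completion.

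Next, I would recall that the choice rule established in theorem~\ref{choice rule} holds for $\thprop$, and hence also for the isomorphic $\mathbf{wSub}_{\mathcal{C}_{r}}$. The characterization theorem of \cite{MR16} then asserts that, under a choice rule, the elementary quotient completion of $\mathbf{wSub}_{\mathcal{C}_{r}}$ coincides with the exact completion $(\mathcal{C}_{r})_{ex/lex}$ of Carboni--Celia Magno, which by theorem~\ref{compl} is exactly $\peff$. Moreover, in this coincidence, the lifted doctrine on $(\mathcal{C}_{r})_{ex/lex}$ is precisely its doctrine of subobjects $\mathbf{Sub}_{\peff}$. Chaining the two natural equivalences then gives $\mprop \cong \mathbf{Sub}_{\peff}$.

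The main obstacle I expect is not a logical one but a careful bookkeeping check of fibre compatibility. Concretely: an object of $\mprop(\mathsf{A},\mathsf{R})$ is a predicate $\mathsf{P}\in\thprop(\mathsf{A})$ satisfying the descent condition $\overline{\mathbf{Prop}^{r}}\vdash(\forall x,y\in \mathsf{A})(\mathsf{P}(x)\wedge \mathsf{R}(x,y)\rightarrow \mathsf{P}(y))$; under the correspondence between $\thprop$ and $\mathbf{wSub}_{\mathcal{C}_{r}}$ of theorem~\ref{ws}, such a $\mathsf{P}$ corresponds to an $\mathsf{R}$-saturated weak subobject of $\mathsf{A}$, and the standard Carboni--Celia Magno description of subobjects in an ex/lex completion identifies these saturated weak subobjects with the subobjects of $(\mathsf{A},\mathsf{R})$ in $\peff$. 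The fibre preorder of $\mprop(\mathsf{A},\mathsf{R})$ is by definition inherited from $\sqsubseteq_{\mathsf{A}}$ in $\thprop(\mathsf{A})$, and the subobject order on $(\mathsf{A},\mathsf{R})$ in $\peff$ is likewise determined by $\sqsubseteq_{\mathsf{A}}$ on representatives, so the two orderings match verbatim; naturality with respect to reindexing along $[\mathsf{f}]_{\simeq}$ is immediate from the definition $\mprop_{[\mathsf{f}]_{\simeq}}(\mathsf{P}):=\overline{\mathbf{Prop}^{r}}_{\mathsf{f}}(\mathsf{P})$ together with the fact that pullback of subobjects in $\peff$ is computed via the same reindexing up to the equivalence $\simeq$.
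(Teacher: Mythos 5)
Your proposal is correct and follows essentially the same route as the paper, which likewise derives the statement directly from theorem~\ref{ws} (the identification $\thprop \cong \mathbf{wSub}_{\mathcal{C}_{r}}$) together with the results of \cite{MR16} on elementary quotient completions under a choice rule. The additional fibre-level bookkeeping you sketch (descent-closed predicates as $\mathsf{R}$-saturated weak subobjects matching the Carboni--Celia Magno description of subobjects in the ex/lex completion) is a sound elaboration of what the paper leaves to the cited references.
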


Moreover from results in \cite{qu12}, theorem \ref{choice rule} and results in \cite{MR16} we get
\begin{theorem}
$\mprop $ is a first-order hyperdoctrine. In particular $\peff$ is a Heyting category.
Moreover, the rule of unique choice holds for $\mprop $,  i.\,e.\ if 
$$\mprop\vdash (\forall x\in (\mathsf{A},\mathsf{R}))\,(\exists !\, y\in (\mathsf{B},\mathsf{S}))\,\mathsf{P}(x,y)$$
then there exists an arrow $\mathsf{f}:(\mathsf{A},\mathsf{R})\rightarrow(\mathsf{B},\mathsf{S})$ such that 
$$\mprop\vdash (\forall x\in (\mathsf{A},\mathsf{R}))\,\mathsf{P}(x,\mathsf{f}(x))\footnote{The internal language of $\mprop$ is defined analogously to that of $\overline{\mathbf{Prop}^{r}}$.}$$
\end{theorem}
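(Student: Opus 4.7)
The plan is to break the statement into three claims and handle each via the cited general theory together with what has already been established in the paper. The inputs I will use are: (i) $\thprop$ is a first-order hyperdoctrine with weak comprehensions (theorem on $\thprop$ and lemma \ref{weco}); (ii) the choice rule for $\thprop$ (theorem \ref{choice rule}); (iii) the general results of \cite{qu12} on the elementary quotient completion and those of \cite{MR16} relating unique choice in the original doctrine to unique choice in the quotient completion.

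For the first claim, that $\mprop$ is a first-order hyperdoctrine, I would apply the construction of \cite{qu12}, which guarantees that the elementary quotient completion of a first-order hyperdoctrine inherits its first-order structure. Concretely, I would verify by direct computation that Heyting connectives computed in $\thprop(\mathsf{A})$ preserve $\mathsf{R}$-descent: if $\mathsf{P},\mathsf{Q}\in\mprop(\mathsf{A},\mathsf{R})$, then $\mathsf{P}\sqcap\mathsf{Q}$, $\mathsf{P}\sqcup\mathsf{Q}$, $\mathsf{P}\Rightarrow\mathsf{Q}$, $\top$ and $\bot$ are again compatible with $\mathsf{R}$, using that $\mathsf{R}$ is an equivalence relation in the sense of the internal language. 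For quantifiers along an arrow $[\mathsf{f}]_{\simeq}:(\mathsf{A},\mathsf{R})\to(\mathsf{B},\mathsf{S})$, I would set $\exists_{[\mathsf{f}]_{\simeq}}(\mathsf{P})$ to be $\exists_{\mathsf{f}}(\mathsf{P})$ suitably $\mathsf{S}$-saturated (or equivalently take the $\mathsf{S}$-closure of $\exists_{\mathsf{f}}(\mathsf{P})$) and similarly for $\forall_{[\mathsf{f}]_{\simeq}}$; adjointness and Beck--Chevalley lift from the analogous properties in $\thprop$ since pullbacks in $\peff$ are computed, via the equivalence $\mprop\cong\mathbf{Sub}_{\peff}$ recorded just before the statement, from those in $\mathcal{C}_r$.

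The second claim, that $\peff$ is a Heyting category, follows immediately as a corollary of the first claim together with $\mprop\cong\mathbf{Sub}_{\peff}$: a first-order hyperdoctrine on subobjects says precisely that every pullback functor on subobjects has both adjoints satisfying Beck--Chevalley, which is exactly the Heyting structure. Alternatively, it is obtainable directly from theorem \ref{mairos} (local cartesian closure, giving the right adjoints $\forall_{\mathsf{f}}$) together with exactness (theorem \ref{compl}, giving stable image factorisations hence $\exists_{\mathsf{f}}$) and the preceding lemma on finite stable coproducts.

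For the third claim, the rule of unique choice, I would proceed by extraction. Unfolding the premise $\mprop\vdash(\forall x\in(\mathsf{A},\mathsf{R}))(\exists!\,y\in(\mathsf{B},\mathsf{S}))\mathsf{P}(x,y)$ according to the definition of $\mprop$ and its internal language yields the $\thprop$-judgement $\thprop\vdash(\forall x\in\mathsf{A})(\exists y\in\mathsf{B})(\mathsf{P}(x,y)\wedge(\forall z\in\mathsf{B})(\mathsf{P}(x,z)\to\mathsf{S}(y,z)))$. Applying theorem \ref{choice rule} to the outer $\forall\exists$ produces an arrow $\mathsf{f}:\mathsf{A}\to\mathsf{B}$ in $\mathcal{C}_r$ such that $\thprop\vdash(\forall x\in\mathsf{A})(\mathsf{P}(x,\mathsf{f}(x))\wedge(\forall z\in\mathsf{B})(\mathsf{P}(x,z)\to\mathsf{S}(\mathsf{f}(x),z)))$. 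It then remains to check (a) that $[\mathsf{f}]_{\simeq}$ is a bona fide arrow in $\peff$, i.e.\ that $\mathsf{R}(x,y)\vdash\mathsf{S}(\mathsf{f}(x),\mathsf{f}(y))$, and (b) that $\mprop\vdash(\forall x\in(\mathsf{A},\mathsf{R}))\mathsf{P}(x,\mathsf{f}(x))$, which amounts to $\mathsf{R}$-invariance of the proposition $\mathsf{P}(x,\mathsf{f}(x))$. Both (a) and (b) follow from the uniqueness clause together with the $\mathsf{R}$-compatibility of $\mathsf{P}\in\mprop((\mathsf{A},\mathsf{R})\times(\mathsf{B},\mathsf{S}))$: if $\mathsf{R}(x,y)$ then $\mathsf{P}(x,\mathsf{f}(x))$ transports to $\mathsf{P}(y,\mathsf{f}(x))$, and uniqueness of the witness at $y$ forces $\mathsf{S}(\mathsf{f}(x),\mathsf{f}(y))$.

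The main obstacle I expect is the bookkeeping in this last step: the choice rule of theorem \ref{choice rule} extracts a plain $\mathcal{C}_r$-arrow, and one must then show that this arrow automatically descends to the quotient. This is exactly the content of the results of \cite{MR16} relating a choice rule for $\thprop$ to unique choice for the elementary quotient completion, and the verification above is essentially the concrete instantiation of that general fact; the rest of the argument is routine once the adjoints on $\mprop$ have been set up as in the first paragraph.
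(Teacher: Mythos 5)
Your proposal is correct and follows essentially the same route as the paper, which derives this theorem directly from the results of \cite{qu12} on elementary quotient completions, theorem~\ref{choice rule}, and the results of \cite{MR16} without giving further detail. Your three-part decomposition and the explicit descent/uniqueness verifications are exactly the concrete instantiation of those cited facts.
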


An important property of the doctrine $\mprop$ is that 
it validates the Formal Church's Thesis as a consequence of the fact
that the underlying doctrine $\thprop$ validates its weak form in lemma~\ref{wect}:
\begin{theorem}[Formal Church's thesis in $\mprop$]
{\small
$$\mprop\vdash (\forall \overline{x}\in \mathsf{N}_{\peff})(\exists \overline{z}\in \mathsf{N}_{\peff})\mathsf{R}(\overline{x},\overline{z})\rightarrow$$
$$\qquad\qquad(\exists \overline{e}\in \mathsf{N}_{\peff})(\forall \overline{x}\in \mathsf{N}_{\peff})(\exists \overline{y}\in \mathsf{N})(\overline{T(e,x,y)}\wedge \mathsf{R}(\overline{y},\overline{U(y)}))$$}
\end{theorem}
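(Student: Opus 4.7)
The plan is to reduce the claim to the already established weak Formal Church's Thesis in Theorem~\ref{wect} by exploiting the fact that $\mprop$ arises as the lift of $\thprop$ to the elementary quotient completion $\peff$ of $\thprop$, exactly as described in \cite{qu12}. The key observation is that the natural numbers object $\mathsf{N}_{\peff}$ of $\peff$ is (isomorphic to) $(\mathsf{N},\, \exists_{\Delta_{\mathsf{N}}}(\top_{\mathsf{N}}))$, i.\,e.\ the carrier $\mathsf{N}$ in $\mathcal{C}_{r}$ equipped with the propositional equality $=_{\mathsf{N}}$ as its equivalence relation. Indeed, the embedding $\mathcal{C}_{r}\hookrightarrow\peff$ sending $\mathsf{A}\mapsto (\mathsf{A},=_{\mathsf{A}})$ preserves the list-arithmetic pretopos structure, and in particular the parameterized natural numbers object constructed from $\mathsf{N}$ in Theorem~\ref{t1}.

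Next, I would use that, by the general construction of $\mprop$ on the quotient completion, predicates in $\mprop(\mathsf{N}_{\peff})$ are exactly those predicates in $\overline{\mathbf{Prop}^{r}}(\mathsf{N})$ that are $=_{\mathsf{N}}$-invariant. Since $=_{\mathsf{N}}$ is propositional equality, every predicate in $\overline{\mathbf{Prop}^{r}}(\mathsf{N})$ is trivially invariant, so $\mprop(\mathsf{N}_{\peff})\simeq \overline{\mathbf{Prop}^{r}}(\mathsf{N})$, and the adjoints $\exists$ and $\forall$ along projections $\mathsf{N}_{\peff}\times \mathsf{N}_{\peff}\to \mathsf{N}_{\peff}$ in $\mprop$ agree with those of $\thprop$ along the corresponding projections of $\mathsf{N}\times\mathsf{N}\to \mathsf{N}$. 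The primitive recursive encodings $T(e,x,y)$ and $U(y)$ translate into the same arrows of $\mathcal{C}_{r}$ used to interpret $\overline{T(e,x,y)}$ in $\thprop$, and the relation $\mathsf{R}$ on $\mathsf{N}_{\peff}$ arises from a predicate in $\overline{\mathbf{Prop}^{r}}(\mathsf{N}\times \mathsf{N})$.

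Putting this together, the statement
$$\mprop\vdash (\forall \overline{x}\in \mathsf{N}_{\peff})(\exists \overline{z}\in \mathsf{N}_{\peff})\mathsf{R}(\overline{x},\overline{z})\rightarrow (\exists \overline{e}\in \mathsf{N}_{\peff})(\forall \overline{x}\in \mathsf{N}_{\peff})(\exists \overline{y}\in \mathsf{N})(\overline{T(e,x,y)}\wedge \mathsf{R}(\overline{y},\overline{U(y)}))$$
is equivalent, under the above identifications, to the corresponding statement in the internal language of $\thprop$ with all quantifiers ranging over $\mathsf{N}$. That statement is precisely Theorem~\ref{wect} (the Formal Church's Thesis for $\thprop$). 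So the claim follows.

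The main obstacle I expect is the careful verification that the identifications just sketched commute with the interpretation of quantifiers and implication in the two doctrines: that is, that the interpretation $\|\phi\|$ of a formula $\phi$ over $\mathsf{N}_{\peff}$ in $\mprop$ coincides, under the isomorphism $\mprop(\mathsf{N}_{\peff}^{k})\simeq \overline{\mathbf{Prop}^{r}}(\mathsf{N}^{k})$, with its interpretation in $\thprop$. This is essentially a routine but bookkeeping-heavy check that the lifting construction of \cite{qu12} preserves Heyting structure and quantifiers along projections, which in our case is especially clean because the equivalence relation is equality. Once this is in hand, the proof reduces to invoking Theorem~\ref{wect}.
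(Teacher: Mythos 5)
Your proposal is correct and matches the paper's (implicit) argument: the paper offers no separate proof, stating only that the result is ``a consequence of the fact that the underlying doctrine $\thprop$ validates its weak form'' in Theorem~\ref{wect}, which is exactly the reduction you carry out. Your identification of $\mathsf{N}_{\peff}$ with $(\mathsf{N},\exists_{\Delta_{\mathsf{N}}}(\top_{\mathsf{N}}))$ and the observation that every predicate is invariant under the propositional equality (so the fibres and the adjoints along projections agree with those of $\thprop$) supply precisely the bookkeeping the paper leaves to the reader.
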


We can now define an hyperdoctrine of small propositions on $\peff$ as follows:

\begin{definition}
The functor $$\mprops:\peff^{op}\rightarrow \mathbf{Pord}$$
is the subfunctor of $\mprop$ whose value on $(\mathsf{A},\mathsf{R})$ in $\peff$ is the full subcategory $\mprops(\mathsf{A},\mathsf{R})$ of $\mprop(\mathsf{A},\mathsf{R})$ whose objects are those $\mathsf{P}$ which are both in $\overline{\mathbf{Prop}^{r}_{s}}(\mathsf{A})$ and in $\mprop(\mathsf{A},\mathsf{R})$.
\end{definition}

\begin{theorem}\label{class}
There exists an object $\Omega$ in $\peff$ which represents $\mprops$, i.\,e.\ there is a natural isomorphism between $\mprops(-)$ and $\peff(-,\Omega)$.
\end{theorem}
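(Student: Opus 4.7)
The plan is to set $\Omega := (\mathsf{U}_{\mathsf{S}}, \mathsf{R}_\Omega)$ where $\mathsf{U}_{\mathsf{S}}$ is the universe of sets constructed in the previous section and $\mathsf{R}_\Omega \in \thprop(\mathsf{U}_{\mathsf{S}}\times \mathsf{U}_{\mathsf{S}})$ is the equivalence relation of mutual recursive realizability, informally given by
$$\mathsf{R}_\Omega(p,q)\;:=\;(\exists f)\,(\forall t)(t\,\overline{\varepsilon}\, p \to \{f\}(t)\,\overline{\varepsilon}\, q) \,\wedge\, (\exists g)\,(\forall t)(t\,\overline{\varepsilon}\, q \to \{g\}(t)\,\overline{\varepsilon}\, p).$$
Reflexivity, symmetry and transitivity of $\mathsf{R}_\Omega$ are witnessed respectively by an identity, swap, and composition combinator, so $\Omega$ is indeed an object of $\peff$. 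Note that $\Omega$ is a \emph{collection} rather than a set, in keeping with its predicative status.

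I would then exhibit the natural isomorphism via explicit maps in both directions. An arrow $[\mathsf{f}]_\simeq \in \peff((\mathsf{A}, \mathsf{R}), \Omega)$ represented by $\mathsf{f} = [\mathbf{n}]_\approx : \mathsf{A} \to \mathsf{U}_{\mathsf{S}}$ in $\mathcal{C}_r$ is sent to the class $[\tau_\mathsf{A}(\mathbf{n})] \in \mprops(\mathsf{A},\mathsf{R})$; conversely, a small proposition in $\mprops(\mathsf{A},\mathsf{R})$ is by definition a class in $\thprops(\mathsf{A})$ represented by some $\tau_\mathsf{A}(\mathbf{n})$ (equivalently, an arrow $[\mathbf{n}]_\approx : \mathsf{A} \to \mathsf{U}_{\mathsf{S}}$ in $\mathcal{C}_r$) subject to an $\mathsf{R}$-invariance condition, and it is sent to the class $[[\mathbf{n}]_\approx]_\simeq$. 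The heart of the argument is well-definedness, and the match is essentially by design. For the backward map, $\tau_\mathsf{A}(\mathbf{n}) \sim \tau_\mathsf{A}(\mathbf{m})$ in $\thprops(\mathsf{A})$ unfolds, via the preorder reflection of $\mathbf{Set}^r$, to realizers witnessing both entailments pointwise---precisely the condition $[\mathbf{n}]_\approx \simeq [\mathbf{m}]_\approx$ as arrows into $\Omega$ modulo $\mathsf{R}_\Omega$---and the $\mathsf{R}$-invariance clause translates directly into the requirement that $[\mathbf{n}]_\approx$ respects $\mathsf{R}$, making it a bona fide morphism of $\peff$. For the forward map, $\tau_\mathsf{A}(\mathbf{n})$ automatically lies in $\mprops(\mathsf{A},\mathsf{R})$ because $[\mathsf{f}]_\simeq$ was already a morphism of $\peff$. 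The two assignments are then mutually inverse on the nose, and naturality in $(\mathsf{A},\mathsf{R})$ reduces to the compatibility between precomposition in $\peff$ and the substitution functors $\mathbf{Col}^r_\mathsf{g}$, using that $\mathbf{Col}^r_\mathsf{g}(\tau_\mathsf{B}(\mathbf{n}))$ is itself of the form $\tau_\mathsf{A}(\mathbf{m})$.

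The main obstacle I anticipate is not conceptual but combinatorial: one must carefully verify that the realizer data encoding ``bi-entailment in $\mathbf{Set}^r(\mathsf{A})$'' and the data encoding ``$\mathsf{R}_\Omega$-equivalence of the composites'' are genuinely the same packages of recursive functions. Provided the formula for $\mathsf{R}_\Omega$ is engineered so that its realizers are precisely pairs converting realizers of $p$ into those of $q$ and back, this bookkeeping is direct, and no further input beyond the constructions of $\mathsf{U}_{\mathsf{S}}$ and of the elementary quotient completion is needed.
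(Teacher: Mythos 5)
Your proposal is correct and follows essentially the same route as the paper: the paper also takes $\Omega:=(\mathsf{U}_{\mathsf{S}},[\mathsf{EQ}])$ where a realizer of $\mathsf{EQ}(x)$ is precisely a pair of codes translating realizers of $p_{1}(x)$ into realizers of $p_{2}(x)$ and back, which is your $\mathsf{R}_\Omega$ read off at the level of realizers. The paper's proof stops after exhibiting $\Omega$, so your explicit description of the bijection via $[\mathbf{n}]_{\approx}\mapsto\tau_{\mathsf{A}}(\mathbf{n})$ and the naturality check through $\mathbf{Col}^{r}_{\mathsf{g}}(\tau_{\mathsf{B}}(\mathbf{n}))=\tau_{\mathsf{A}}(\mathbf{m})$ simply fills in details the authors leave implicit.
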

\begin{proof}
It is sufficient to define the object $\Omega$ of $\peff$ as follows:
$$\Omega:=(\mathsf{U}_{\mathsf{S}}, [\mathsf{EQ}(x)]_{\sim_{\mathsf{U}_{\mathsf{S}}\times\mathsf{U}_{\mathsf{S}} }})$$
where $x'\Vdash\mathsf{EQ}(x)$ is defined as 
$$\forall t\,(t\,\overline{\varepsilon}\,p_{1}(x)\rightarrow \{p_{1}(x')\}(t)\,\overline{\varepsilon}\,p_{2}(x))\wedge \forall s(s\,\overline{\varepsilon}\,p_{2}(x)\rightarrow \{p_{2}(x')\}(s)\,\overline{\varepsilon}\,p_{1}(x))$$
\end{proof}

The previous results about $\peff$ can be summarized in the following theorem.
\begin{theorem}
The category $\peff$ is a locally cartesian closed list-arithmetic pretopos with a classifier for $\mprops$.
\end{theorem}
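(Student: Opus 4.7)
The plan is to observe that this final theorem is essentially a synthesis of the results already established in this section, so the proof will amount to assembling the four constitutive properties from previously proved statements and recalling the relevant definitions.

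First I would dispatch local cartesian closure by citing Theorem~\ref{mairos} directly: since $\peff \cong (\mathcal{C}_{r})_{ex/lex}$ by Theorem~\ref{compl} and $\mathcal{C}_{r}$ is weakly locally cartesian closed by Corollary~\ref{weakly}, the main result of \cite{Ros} applied to the ex/lex completion yields local cartesian closure of $\peff$. Next, for the pretopos structure, the missing ingredients beyond the exactness already granted by Theorem~\ref{compl} are the existence and good behavior of finite coproducts, together with list objects; these are precisely provided by the immediately preceding lemma, which constructs an initial object, binary coproducts and list objects in $\peff$ and verifies stability and disjointness of coproducts directly from the defining formulas in the underlying doctrine $\thprop$. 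Combining exactness with disjoint stable finite coproducts gives that $\peff$ is a pretopos, and the presence of list objects upgrades this to list-arithmetic.

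Finally, the existence of a classifier for $\mprops$ is exactly the content of Theorem~\ref{class}, where the object $\Omega := (\mathsf{U}_{\mathsf{S}}, [\mathsf{EQ}(x)]_{\sim})$ is constructed and shown to represent the doctrine $\mprops$ via a natural isomorphism $\mprops(-) \simeq \peff(-, \Omega)$. So the entire proof reduces to quoting Theorem~\ref{mairos}, the coproducts/list-objects lemma, and Theorem~\ref{class}, and observing that an exact category with disjoint stable finite coproducts is by definition a pretopos.

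The main potential obstacle, as I see it, is purely bookkeeping: one must verify that the classifier $\Omega$ of $\mprops$ produced by Theorem~\ref{class} is genuinely compatible with the local cartesian closed structure (i.e.\ that $\mprops$ really is representable in $\peff$ as an object of $\peff$ and not merely up to some coherence issue between the doctrines $\thprop$ and $\thprops$). This has been already established in Theorem~\ref{class}, so no further work is needed beyond invoking it. Consequently I would present the proof as a short paragraph of the form ``Local cartesian closure is Theorem~\ref{mairos}; the list-arithmetic pretopos structure follows by combining the exactness of Theorem~\ref{compl} with the preceding lemma on disjoint stable coproducts and list objects; the classifier for $\mprops$ is provided by Theorem~\ref{class}.''
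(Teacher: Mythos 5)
Your proposal is correct and matches the paper exactly: the paper introduces this theorem with the phrase that the previous results about $\peff$ ``can be summarized in the following theorem'' and gives no separate proof, so the intended argument is precisely your assembly of Theorem~\ref{mairos} (local cartesian closure), Theorem~\ref{compl} (exactness) together with the lemma on disjoint stable coproducts and list objects, and Theorem~\ref{class} (the classifier for $\mprops$).
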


\subsection{The fibration of sets over $\peff$}
The aim of this section is to introduce a notion of {\it a family of  sets depending on an object $(\mathsf{A},\mathsf{R})$ of $\peff$} that will give rise to a full subcategory of
the slice category $\peff/(\mathsf{A},\mathsf{R})$ whose subobjects 
are comprehensions of {\it small propositions}. 

It is well known that in a finitely complete category $\mathcal{C}$
we can represent a family of $\mathcal{C}$-objects depending on an object $A$ of
$\mathcal{C}$ as an arrow $B\rightarrow A$ of $\mathcal{C}$ and that we can
organize such families into the so called codomain fibration $\mathcal{C}^\rightarrow\rightarrow \mathcal{C}$ (see \cite{jacobbook}).

In section~\ref{indexcol} we have seen that for the category $C_r$
the codomain fibration admits a splitting via the indexed category of realized collections. 

In the case of $\peff$, a splitting of its associated codomain fibration seems out of reach for the lack not only of a functorial  choice of pullbacks but even of a choice of generic pullbacks due to the fact that $\peff$ is an exact completion.

So we need to proceed by simply adopting the fibrational approach.
Then, the idea, which we borrow from algebraic set theory \cite{MJ}, is to define a family of sets depending on an object 
 $(\mathsf{A},\mathsf{R})$ of $\peff$ as an arrow  
$$[\mathsf{f}]_{\simeq}:(\mathsf{B},\mathsf{S})\longrightarrow (\mathsf{A},\mathsf{R})$$
whose antimages are defined via realized sets in  $\mathcal{C}_{r}$ in some way.

To this purpose, we observe that in our setting,
as in the elementary quotient completion in \cite{m09},
the slice category of $\peff$ over an object $(\mathsf{A},\mathsf{R})$ 
is {\it equivalent} to the {\it category of internal categorical diagrams on $(\mathsf{A},\mathsf{R})$ 
seen as an internal posetal groupoid }, i.e. on the poset internal category whose objects are the elements of $\mathsf{A}$
and for which, assuming to work in the internal language of the doctrine
$\thprop$, an arrow from $a$ to $a'$ in $\mathsf{A}$ exists if and only if
$\mathsf{R}(a,a')$ holds. In \cite{m09} such internal diagrams are shown
to be equivalently described
in terms of {\it dependent extensional collections}.

Since our base category  $\mathcal{C}_{r}$ is of syntactic nature
as that in \cite{m09}, instead of working with internal diagrams
we  also prefer to adopt the notion of  {\it dependent extensional collection} which we mimick in our setting
under the name of  {\it family of collection over $(\mathsf{A},\mathsf{R})$}.
Then, we specialize such a notion to that of {\it family of sets depending over  $(\mathsf{A},\mathsf{R})$}, always following the corresponding one in \cite{m09} .

\subsubsection{The notion of family of collections in $\peff$}

The intuitive idea behind the definition of a family of collections 
over an object $(\mathsf{A},\mathsf{R})$ in $\peff$ suggested by \cite{m09}
 is the following: 
\begin{itemize}
\item first take  a family of realized collections $\mathsf{B}(a)$ over $\mathsf{A}$  in $\mathcal{C}_{r}$;
\item then,  take an equivalence relation $\mathsf{S}_{a}(b,b')$ on each $B(a)$ (depending on $\mathsf{A}$);
\item furthermore, take  a family of isomorphisms $\sigma_{a,a'}(b)$ for any $a,a'\,\varepsilon\, \mathsf{A}$ such that $\mathsf{R}(a,a')$ holds, i.e. there exists a realizer for $\mathsf{R}(a,a')$, without making their definition depending
on the  specific realizer of $\mathsf{R}(a,a')$;
\item finally make the various isomorphisms  represent
an action of the relation $\mathsf{R}$ on the family $\mathsf{B}(a)$ which
also preserves  $\mathsf{S}$ by requiring that the isomorphisms satisfy the following conditions:
\begin{enumerate}
\item if $S_{a}(b,b')$ and $\mathsf{R}(a,a')$, then $S_{a'}(\sigma_{a,a'}(b),\sigma_{a,a'}(b'))$
\item if $\mathsf{R}(a,a)$ and $b\,\varepsilon\, \mathsf{B}(a)$, then $S_{a}(b,\sigma_{a,a}(b)))$
\item if $\mathsf{R}(a,a')$, $\mathsf{R}(a',a'')$ and $b\,\varepsilon\, \mathsf{B}(a)$, then $S_{a}(\sigma_{a,a''}(b),\sigma_{a',a''}(\sigma_{a,a'}(b))))$
\end{enumerate}
All these conditions imply that the family $\sigma_{-,-}$ is well-defined (it preserves relations $S_{a}$'s) and \emph{functorial} up to the relation $\mathsf{S}$. 
\end{itemize}

This informal idea is made precise by the following list of definitions.
\begin{definition}
Let $\mathsf{A}$ be an object of $\mathcal{C}_{r}$ and let $\mathsf{B}$ be an object of $\mathbf{Col}^{r}(\mathsf{A})$. A {\em  $\overline{\mathbf{Prop}^r}$-equivalence relation depending on $\mathsf{B}$}, or simply a {\em dependent
equivalence relation},  is an object $\mathsf{S}$ in $\overline{\mathbf{Prop}^{r}}(\Sigma(\mathsf{A},\mathsf{B}\times\mathsf{B}))$ such that 
\begin{enumerate}
\item $\top_{\Sigma(\mathsf{A},\mathsf{B})}\sqsubseteq_{\Sigma(\mathsf{A},\mathsf{B})}\overline{\mathbf{Prop}^{r}}_{\mathbf{I}_{\mathsf{A}}(\Delta_{\mathsf{B}})}(\mathsf{S})$
\item  $\mathsf{S}\sqsubseteq_{\Sigma(\mathsf{A},\mathsf{B}\times\mathsf{B})}\overline{\mathbf{Prop}^{r}}_{\mathbf{I}_{\mathsf{A}}(\mathsf{tw}_{\mathsf{B}})}(\mathsf{S})$

\item $\overline{\mathbf{Prop}^{r}}_{\mathbf{I}_{\mathsf{A}}(\langle \mathsf{p}_{1},\mathsf{p}_{2}\rangle)}(\mathsf{S})\sqcap \overline{\mathbf{Prop}^{r}}_{\mathbf{I}_{\mathsf{A}}(\langle \mathsf{p}_{2},\mathsf{p}_{3}\rangle)}(\mathsf{S})\sqsubseteq_{\Sigma(\mathsf{A},\mathsf{B}\times\mathsf{B}\times\mathsf{B})}\overline{\mathbf{Prop}^{r}}_{\mathbf{I}_{\mathsf{A}}(\langle \mathsf{p}_{1},\mathsf{p}_{3}\rangle)}(\mathsf{S})$
\end{enumerate}

where 
\begin{enumerate}
\item $\mathbf{I}_{\mathsf{A}}$ is the functor defined in the proof of theorem \ref{pdue}, 
\item $\Delta_{\mathsf{B}}:\mathsf{B}\rightarrow \mathsf{B}\times \mathsf{B}$ in $\mathbf{Col}^{r}\mathsf{A})$,
\item $\mathsf{tw}_{\mathsf{B}}:=\langle \mathsf{p}_{2},\mathsf{p}_{1}\rangle:\mathsf{B}\times \mathsf{B}\rightarrow \mathsf{B}\times \mathsf{B}$ in $\mathbf{Col}^{r}\mathsf{A})$,
\item $\langle \mathsf{p}_{i},\mathsf{p}_{j}\rangle:\mathsf{B}\times \mathsf{B}\times \mathsf{B}\rightarrow \mathsf{B}\times \mathsf{B}$ for $i<j$, $i,j\in \{1,2,3\}$ in $\mathbf{Col}^{r}\mathsf{A})$.
\end{enumerate}
\end{definition}

\begin{definition}\label{labb}
Let $(\mathsf{A},\mathsf{R})$ be an object of $\peff$, $\mathsf{B}$ an object of $\mathbf{Col}^{r}\mathsf{A})$ and $\mathsf{S}$ an $\overline{\mathbf{Prop}^r}$-equivalence relation depending  on $\mathsf{B}$. Suppose that $\mathsf{R}'$ is a
representative of $\mathsf{R}$. Then {\em an action of  $\mathsf{R}'$ over $\mathsf{B}$ with respect to $\mathsf{S}$} is an arrow 
$$\sigma:\mathbf{Col}^{r}_{\mathsf{p}_{1}\circ \mathsf{p}_{1}^{\Sigma}}(\mathsf{B})\rightarrow \mathbf{Col}^{r}_{\mathsf{p}_{2}\circ \mathsf{p}_{1}^{\Sigma}}(\mathsf{B})$$ in $\mathbf{Col}^{r}\Sigma(\mathsf{A}\times \mathsf{A},\mathsf{R}'))$ such that

\begin{enumerate}
\item $\sigma$ preserves $\mathsf{S}$, i.\,e.\,
$$\overline{\mathbf{Prop}}^{r}_{\Sigma(\mathsf{p}_{1}\circ \mathsf{p}_{1}^{\Sigma},\mathsf{B}\times \mathsf{B})}(\mathsf{S})\sqsubseteq \overline{\mathbf{Prop}}^{r}_{\Sigma(\mathsf{p}_{2}\circ \mathsf{p}_{1}^{\Sigma},\mathsf{B}\times \mathsf{B})\circ \mathbf{I}(\sigma\times \sigma)}(\mathsf{S}) $$
where for clarity recall that the considered arrows make  the following diagram
commutative
{\small \def\objectstyle{\scriptstyle}
\def\labelstyle{\scriptstyle}
{
$$\xymatrix@-1pc{
\Sigma(\mathsf{A},\mathsf{B}\times \mathsf{B})\\
\Sigma(\Sigma(\mathsf{A}\times \mathsf{A},\mathsf{R}'),\mathbf{Col}_{\mathsf{p}_{1}\circ \mathsf{p}_{1}^{\Sigma}}(\mathsf{B}\times \mathsf{B}))\ar[u]^-{\Sigma(\mathsf{p}_{1}\circ \mathsf{p}_{1}^{\Sigma},\mathsf{B}\times \mathsf{B})}\ar[r]_-{\mathbf{I}(\sigma\times \sigma)}	&\Sigma(\Sigma(\mathsf{A}\times \mathsf{A},\mathsf{R}'),\mathbf{Col}_{\mathsf{p}_{2}\circ \mathsf{p}_{1}^{\Sigma}}(\mathsf{B}\times \mathsf{B}))\ar[ul]_-{\qquad\Sigma(\mathsf{p}_{2}\circ \mathsf{p}_{1}^{\Sigma},\mathsf{B}\times \mathsf{B})}\\
}$$}}

\item $\sigma $ does not depend on realizers in $\mathsf{R}'$, i.\,e.\ if $\overline{\sigma}$ is the unique arrow in $\mathcal{C}_{r}$ making the following diagram commutatitve (which exists thanks to lemma \ref{univprod}), 
{\small \def\objectstyle{\scriptstyle}
\def\labelstyle{\scriptstyle}
{
$$\xymatrix@-1pc{
		&\Sigma(\Sigma(\mathsf{A}\times \mathsf{A},\mathsf{R'}\times \mathsf{R}'),\mathbf{Col}_{\mathsf{p}_{2}\circ \mathsf{p}_{1}^{\Sigma}}(\mathsf{B}))\\
\Sigma(\Sigma(\mathsf{A}\times \mathsf{A},\mathsf{R'}\times \mathsf{R}'),\mathbf{Col}_{\mathsf{p}_{1}\circ \mathsf{p}_{1}^{\Sigma}}(\mathsf{B}))\ar[r]^{\overline{\sigma}}\ar[ru]^-{\mathbf{I}(\mathbf{Col}_{\mathbf{I}(\mathsf{p}_{1})}(\sigma))}	\ar[rd]_{\mathbf{I}(\mathbf{Col}_{\mathbf{I}(\mathsf{p}_{2})}(\sigma))}	&\Sigma(\Sigma(\mathsf{A}\times \mathsf{A},\mathsf{R'}\times \mathsf{R}'),\mathbf{Col}_{\mathsf{p}_{2}\circ \mathsf{p}_{1}^{\Sigma}}(\mathsf{B}\times \mathsf{B}))\ar[u]^{\mathbf{I}(\mathsf{p}_{1})}	\ar[d]_{\mathbf{I}(\mathsf{p}_{2})}\\
&\Sigma(\Sigma(\mathsf{A}\times \mathsf{A},\mathsf{R'}\times \mathsf{R}'),\mathbf{Col}_{\mathsf{p}_{2}\circ \mathsf{p}_{1}^{\Sigma}}(\mathsf{B}))\\
}$$}}
then 
$\top\sqsubseteq \mathbf{Col}_{\,\Sigma(\mathsf{p}_{2}\circ \mathsf{p}_{1}^{\Sigma},\mathsf{B}\times \mathsf{B})\circ\, \overline{\sigma}}(\mathsf{S}).$

\item $\sigma$ preserves the identities:
let $\rho$ be the unique arrow making the following diagram commutative in $\mathcal{C}_{r}$ (which exists thanks to lemma \ref{univprod}), 
{\small \def\objectstyle{\scriptstyle}
\def\labelstyle{\scriptstyle}
{
$$\xymatrix@-1pc{
						&\Sigma(\mathsf{A},\mathsf{B})\\
\Sigma(\Sigma(\mathsf{A},\mathbf{Col}_{\Delta_{\mathsf{A}}}(\mathsf{R}')),\mathbf{Col}_{\mathsf{p}_{1}^{\Sigma}}(\mathsf{B}))\ar[r]^-{\rho}\ar[ru]^-{\Sigma(\mathsf{p}_{1}\circ \mathsf{p}_{1}^{\Sigma},\mathsf{B})}\ar[d]^{\Sigma(\Sigma(\Delta_{\mathsf{A}},\mathsf{R}'),\mathbf{Col}_{\mathsf{p}_{1}\circ \mathsf{p}_{1}^{\Sigma}}(\mathsf{B}))} &\Sigma(\mathsf{A},\mathsf{B}\times \mathsf{B})\ar[u]_{\mathbf{I}(\mathsf{p}_{1})}\ar[dd]^{\mathbf{I}(\mathsf{p}_{2})}\\
\Sigma(\Sigma(\mathsf{A}\times \mathsf{A},\mathsf{R}'),\mathbf{Col}_{\mathsf{p}_{1}\circ\mathsf{p}_{1}^{\Sigma}}(\mathsf{B}))\ar[d]^-{\mathbf{I}(\sigma)}	&\\
\Sigma(\Sigma(\mathsf{A}\times \mathsf{A},\mathsf{R}'),\mathbf{Col}_{\mathsf{p}_{2}\circ\mathsf{p}_{1}^{\Sigma}}(\mathsf{B}))\ar[r]^-{\Sigma(\mathsf{p}_{2}\circ \mathsf{p}_{1}^{\Sigma},\mathsf{B})}	&\Sigma(\mathsf{A},\mathsf{B})\\
}$$}}
then 
$\top\sqsubseteq\overline{\mathbf{Prop}^{r}}_{\rho}(\mathsf{S})$.

\item if $\tau$ (which exists thanks to lemma \ref{univprod}) and $\tau'$ (which exists thanks to lemma \ref{univsig}) are the unique arrows in $\mathcal{C}_{r}$ making the following diagrams commute
{\scriptsize
\def\objectstyle{\scriptstyle}
\def\labelstyle{\scriptstyle}
{
$$\xymatrix@-1pc{
\Sigma(\Sigma(\mathsf{A}\times \mathsf{A},\mathsf{R}'),\mathbf{Col}_{\mathsf{p}_{1}\circ\mathsf{p}_{1}^{\Sigma}}(\mathsf{B}))\ar[r]^{\mathbf{I}(\sigma)}		&\Sigma(\Sigma(\mathsf{A}\times \mathsf{A},\mathsf{R}'),\mathbf{Col}_{\mathsf{p}_{2}\circ\mathsf{p}_{1}^{\Sigma}}(\mathsf{B}))\ar[d]^-{\Sigma(\mathsf{p}_{2}\circ \mathsf{p}_{1}^{\Sigma},\mathsf{B})}	\\	
\Sigma(\Sigma(\mathsf{A}\times \mathsf{A}\times \mathsf{A},\mathbf{Col}_{\langle \mathsf{p}_{1},\mathsf{p}_{2}\rangle}(\mathsf{R}'),\mathbf{Col}_{\mathsf{p}_{1}\circ\mathsf{p}_{1}^{\Sigma}}(\mathsf{B}))\ar[u]_-{\Sigma(\Sigma(\mathsf{p}_{1},\mathsf{R}'),\mathbf{Col}_{\mathsf{p}_{1}\circ \mathsf{p}_{1}^{\Sigma}}(\mathsf{B}))}				&\Sigma(\mathsf{A},\mathsf{B})\\
\Sigma(\Sigma(\mathsf{A}\times \mathsf{A}\times \mathsf{A},\mathbf{Col}_{\langle \mathsf{p}_{1},\mathsf{p}_{2}\rangle}(\mathsf{R}')\times\mathbf{Col}_{\langle \mathsf{p}_{2},\mathsf{p}_{3}\rangle}(\mathsf{R}')\times\mathbf{Col}_{\langle \mathsf{p}_{1},\mathsf{p}_{3}\rangle}(\mathsf{R}') ),\mathbf{Col}_{\mathsf{p}_{1}\circ\mathsf{p}_{1}^{\Sigma}}(\mathsf{B}))\ar[d]^-{\mathsf{p}_{1}^{\Sigma}}\ar[r]^-{\tau'}\ar[u]_-{\Sigma(\mathbf{I}(\mathsf{p}_{1}),\mathbf{Col}_{\mathsf{p}_{1}\circ \mathsf{p}_{1}^{\Sigma}}(\mathsf{B}))} &\Sigma(\Sigma(\mathsf{A}\times \mathsf{A},\mathsf{R}' ),\mathbf{Col}_{\mathsf{p}_{1}\circ\mathsf{p}_{1}^{\Sigma}}(\mathsf{B}))\ar[dd]^{\mathsf{p}_{1}^{\Sigma}}\ar[u]_-{\Sigma(\mathsf{p}_{1}\circ \mathsf{p}_{1}^{\Sigma},\mathsf{B})}\\
\Sigma(\mathsf{A}\times \mathsf{A}\times \mathsf{A},\mathbf{Col}_{\langle \mathsf{p}_{1},\mathsf{p}_{2}\rangle}(\mathsf{R}')\times\mathbf{Col}_{\langle \mathsf{p}_{2},\mathsf{p}_{3}\rangle}(\mathsf{R}')\times\mathbf{Col}_{\langle \mathsf{p}_{1},\mathsf{p}_{3}\rangle}(\mathsf{R}') )\ar[d]^{\mathbf{I}(\mathsf{p}_{2})}&\\
\Sigma(\mathsf{A}\times \mathsf{A}\times \mathsf{A},\mathbf{Col}_{\langle \mathsf{p}_{2},\mathsf{p}_{3}\rangle}(\mathsf{R}') )\ar[r]^{\Sigma(\langle \mathsf{p}_{2},\mathsf{p}_{3}\rangle,\mathsf{R}')}	&\Sigma(\mathsf{A}\times \mathsf{A},\mathsf{R}') )\\
}$$}}

{\scriptsize \def\objectstyle{\scriptstyle}
\def\labelstyle{\scriptstyle}
{
$$\xymatrix@-1pc{
\Sigma(\Sigma(\mathsf{A}\times \mathsf{A},\mathsf{R}' ),\mathbf{Col}_{\mathsf{p}_{1}\circ\mathsf{p}_{1}^{\Sigma}}(\mathsf{B}))\ar[r]^-{\mathbf{I}(\sigma)}	&\Sigma(\Sigma(\mathsf{A}\times \mathsf{A},\mathsf{R}' ),\mathbf{Col}_{\mathsf{p}_{2}\circ\mathsf{p}_{1}^{\Sigma}}(\mathsf{B}))\ar[d]^-{\Sigma(\mathsf{p}_{2}\circ \mathsf{p}_{1}^{\Sigma},\mathsf{B})}\\
\Sigma(\Sigma(\mathsf{A}\times \mathsf{A}\times \mathsf{A},\mathbf{Col}_{\langle \mathsf{p}_{1},\mathsf{p}_{3}\rangle}(\mathsf{R}') ),\mathbf{Col}_{\mathsf{p}_{1}\circ\mathsf{p}_{1}^{\Sigma}}(\mathsf{B}))\ar[u]^{\Sigma(\Sigma(\langle \mathsf{p}_{1},\mathsf{p}_{3}\rangle,\mathsf{R'}),\mathbf{Col}_{\mathsf{p}_{1}\circ \mathsf{p}_{1}^{\Sigma}}(\mathsf{B}))}	&\Sigma(\mathsf{A},\mathsf{B})\\
\Sigma(\Sigma(\mathsf{A}\times \mathsf{A}\times \mathsf{A},\mathbf{Col}_{\langle \mathsf{p}_{1},\mathsf{p}_{2}\rangle}(\mathsf{R}')\times\mathbf{Col}_{\langle \mathsf{p}_{2},\mathsf{p}_{3}\rangle}(\mathsf{R}')\times\mathbf{Col}_{\langle \mathsf{p}_{1},\mathsf{p}_{3}\rangle}(\mathsf{R}') ),\mathbf{Col}_{\mathsf{p}_{1}\circ\mathsf{p}_{1}^{\Sigma}}(\mathsf{B}))\ar[r]^-{\tau}\ar[u]^-{\Sigma(\mathbf{I}(\mathsf{p}_{3}),\mathbf{Col}_{\mathsf{p}_{1}\circ \mathsf{p}_{1}^{\Sigma}}(\mathsf{B}))}\ar[dd]_-{\tau'}&\Sigma(\mathsf{A},\mathsf{B}\times \mathsf{B})\ar[u]_-{\mathbf{I}(\mathsf{p}_{1})}\ar[d]^{\mathbf{I}(\mathsf{p}_{2})}\\
					&\Sigma(\mathsf{A},\mathsf{B})\\
\Sigma(\Sigma(\mathsf{A}\times \mathsf{A},\mathsf{R}' ),\mathbf{Col}_{\mathsf{p}_{1}\circ\mathsf{p}_{1}^{\Sigma}}(\mathsf{B}))\ar[r]^{\mathbf{I}(\sigma)}					&\Sigma(\Sigma(\mathsf{A}\times \mathsf{A},\mathsf{R}' ),\mathbf{Col}_{\mathsf{p}_{2}\circ\mathsf{p}_{1}^{\Sigma}}(\mathsf{B}))\ar[u]_-{\Sigma(\mathsf{p}_{2}\circ \mathsf{p}_{1}^{\Sigma})}\\
}$$
}}

then
$\top\sqsubseteq \overline{\mathbf{Prop}^{r}}_{\tau}(\mathsf{S})$.
\end{enumerate}
\end{definition}
\begin{definition}Let $(\mathsf{A},\mathsf{R})$ be an object of $\peff$ and let
$\mathsf{R}'$ a representative of the equivalence class  $\mathsf{R}$.
We define {\em a family of collections over $(\mathsf{A},\mathsf{R})$}
as 
% with respect to $\mathsf{R}'$  a triple $(\mathsf{B},\mathsf{S},\sigma)$
 %where {?? dire in un lemma che la famiglia non dipende da R'}
\begin{enumerate}
\item $\mathsf{B}$ is an object of $\mathbf{Col}^{r}(\mathsf{A})$;
\item $\mathsf{S}$ is a $\overline{\mathbf{Prop}^r}$-equivalence relation depending  on $\mathsf{B}$:
\item $\sigma$ is an action of  $\mathsf{R}'$ over $\mathsf{B}$ with respect to $\mathsf{S}$.
\end{enumerate}
\end{definition}
Now we are going to define
an arrow between families of collections over $(\mathsf{A},\mathsf{R})$ from $(\mathsf{B},\mathsf{S},\sigma)$ to $(\mathsf{C},\mathsf{T},\psi)$.
Informally, this arrow should be an equivalence class of arrows $\mathsf{f}$ from $\mathsf{B}$ to $\mathsf{C}$ in $\mathbf{Col}^{r}(\mathsf{A})$ respecting the following clauses:
\begin{enumerate}
\item if $\mathsf{S}_{a}(b,b')$, then $\mathsf{T}_{a}(\mathsf{f}_{a}(b),\mathsf{f}_{a}(b'))$
\item if $r\Vdash \mathsf{R}(a,a')$ and $b\,\varepsilon\, \mathsf{B}(a)$, then $\mathsf{T}_{a'}(\mathsf{f}_{a'}(\sigma_{a,a'}(b)),\psi_{a,a'}(\mathsf{f}_{a}(b)))$
\end{enumerate}
with respect to the equivalence for which $\mathsf{f}\simeq \mathsf{g}$ if and only if from $\mathsf{S}_{a}(b,b')$ it follows that $\mathsf{T}_{a}(\mathsf{f}_{a}(b),\mathsf{g}_{a}(b'))$.

This is precisely expressed by the following definition.

\begin{definition}
Let $(\mathsf{A},\mathsf{R})$ be an object of $\peff$ and let $\mathsf{R}=[\mathsf{R}']$.
A {\em morphism between families of collections over $(\mathsf{A},\mathsf{R})$} with respect to $\mathsf{R}'$ from $(\mathsf{B},\mathsf{S},\sigma)$ to $(\mathsf{C},\mathsf{T},\psi)$ is an equivalence class $[\mathsf{f}]_{\cong}$ of arrows $\mathsf{f}:\mathsf{B}\rightarrow \mathsf{C}$ in $\mathbf{Col}^{r}(\mathsf{A})$ such that 
\begin{enumerate}
\item $\mathsf{S}\sqsubseteq \overline{\mathbf{Prop}^{r}}_{\mathbf{I}(\mathsf{f}\times \mathsf{f})}(\mathsf{T})$
\item $\top\sqsubseteq \overline{\mathbf{Prop}^{r}}_{\beta}(\mathsf{T})$
where $\beta$ is the unique arrow in $\mathcal{C}_{r}$ (which exists thanks to lemma \ref{univprod}) making the following diagram commute
{\small \def\objectstyle{\scriptstyle}
\def\labelstyle{\scriptstyle}
{
$$\xymatrix@-1pc{
\Sigma(\mathsf{A},\mathsf{B})\ar[r]^-{\mathbf{I}(\mathsf{f})}	&\Sigma(\mathsf{A},\mathsf{C})\\
\Sigma(\Sigma(\mathsf{A}\times \mathsf{A},\mathsf{R'}),\mathbf{Col}_{\mathsf{p}_{2}\circ \mathsf{p}_{1}^{\Sigma}}(\mathsf{B}))\ar[u]_-{\Sigma(\mathsf{p}_{2}\circ \mathsf{p}_{1}^{\Sigma},\mathsf{B})}	&\\
\Sigma(\Sigma(\mathsf{A}\times \mathsf{A},\mathsf{R'}),\mathbf{Col}_{\mathsf{p}_{1}\circ \mathsf{p}_{1}^{\Sigma}}(\mathsf{B}))\ar[r]^-{\beta}\ar[u]_-{\mathbf{I}(\sigma)}\ar[d]^{\mathbf{I}(\mathbf{Col}_{\mathsf{p}_{1}\circ \mathsf{p}_{1}^{\Sigma}}(\mathsf{f}))}	&\Sigma(\mathsf{A},\mathsf{C}\times \mathsf{C})\ar[uu]_-{\mathbf{I}(\mathsf{p}_{1})}\ar[dd]^-{\mathbf{I}(\mathsf{p}_{2})}\\
\Sigma(\Sigma(\mathsf{A}\times \mathsf{A},\mathsf{R'}),\mathbf{Col}_{\mathsf{p}_{1}\circ \mathsf{p}_{1}^{\Sigma}}(\mathsf{C}))\ar[d]^-{\mathbf{I}(\psi)}		&\\
\Sigma(\Sigma(\mathsf{A}\times \mathsf{A},\mathsf{R'}),\mathbf{Col}_{\mathsf{p}_{2}\circ \mathsf{p}_{1}^{\Sigma}}(\mathsf{C}))\ar[r]^-{\Sigma(\mathsf{p}_{2}\circ \mathsf{p}_{1}^{\Sigma},\mathsf{C})}		&\Sigma(\mathsf{A},\mathsf{C})\\
}$$}}

\end{enumerate}

with respect to the equivalence relation defined by $\mathsf{f}\simeq \mathsf{g}$ if and only if $\mathsf{S}\sqsubseteq \overline{\mathbf{Prop}^{r}}_{\mathbf{I}(\mathsf{f}\times \mathsf{g})}(\mathsf{T})$.
\end{definition}
\begin{lemma} Let $(\mathsf{A},\mathsf{R})$ be an object of $\peff$ and let $\mathsf{R}=[\mathsf{R}']$. Families of collections over $(\mathsf{A},\mathsf{R})$ (with respect to $\mathsf{R}'$) form a category together with morphisms between them and compositions and identities inherited from $\mathbf{Col}^{r}(\mathsf{A})$.\end{lemma}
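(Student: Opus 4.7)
The plan is to verify in turn that $\simeq$ is an equivalence relation on the set of morphism candidates, that composition in $\mathbf{Col}^{r}(\mathsf{A})$ preserves both morphism conditions and descends to equivalence classes, and finally that the unit and associativity axioms are inherited automatically from $\mathbf{Col}^{r}(\mathsf{A})$.

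First I would check that $\simeq$ is an equivalence relation on the set of arrows $\mathsf{f}:\mathsf{B}\to\mathsf{C}$ in $\mathbf{Col}^{r}(\mathsf{A})$ satisfying the two morphism clauses. Reflexivity is precisely clause~1. Symmetry follows by pulling back the symmetry axiom for $\mathsf{T}$ (as a $\thprop$-dependent equivalence relation) along $\mathbf{I}(\mathsf{f}\times\mathsf{g})$ and invoking functoriality of $\thprop$; transitivity follows analogously from the transitivity axiom for $\mathsf{T}$ by pairing three candidates and substituting along the appropriate products. For the identity, $\mathsf{id}_{\mathsf{B}}$ satisfies clause~1 trivially since $\mathbf{I}(\mathsf{id}_{\mathsf{B}}\times\mathsf{id}_{\mathsf{B}})=\mathsf{id}$, and the $\beta$-clause collapses to a diagonal substitution of $\mathsf{S}$, which holds by reflexivity of the dependent equivalence relation.

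The main work is composition. Given morphism candidates $\mathsf{f}:(\mathsf{B},\mathsf{S},\sigma)\to(\mathsf{C},\mathsf{T},\psi)$ and $\mathsf{g}:(\mathsf{C},\mathsf{T},\psi)\to(\mathsf{D},\mathsf{U},\chi)$ one sets $[\mathsf{g}]\circ[\mathsf{f}]:=[\mathsf{g}\circ\mathsf{f}]$. Preservation of the equivalence relations chains immediately by functoriality of $\thprop$: from $\mathsf{S}\sqsubseteq\thprop_{\mathbf{I}(\mathsf{f}\times\mathsf{f})}(\mathsf{T})$ and $\mathsf{T}\sqsubseteq\thprop_{\mathbf{I}(\mathsf{g}\times\mathsf{g})}(\mathsf{U})$ one deduces $\mathsf{S}\sqsubseteq\thprop_{\mathbf{I}((\mathsf{g}\circ\mathsf{f})\times(\mathsf{g}\circ\mathsf{f}))}(\mathsf{U})$. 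Well-definedness on equivalence classes (so that $\mathsf{f}\simeq\mathsf{f}'$ and $\mathsf{g}\simeq\mathsf{g}'$ yield $\mathsf{g}\circ\mathsf{f}\simeq\mathsf{g}'\circ\mathsf{f}'$) is a variant of the same argument combined with transitivity of $\mathsf{U}$. Action compatibility is the real obstacle: one pastes the $\beta$-square for $\mathsf{f}$ (which says $\mathsf{f}$ intertwines $\sigma$ and $\psi$ up to $\mathsf{T}$) with the $\beta$-square for $\mathsf{g}$ (which says $\mathsf{g}$ intertwines $\psi$ and $\chi$ up to $\mathsf{U}$), first applying clause~1 for $\mathsf{g}$ to push the $\mathsf{T}$-inequality forward to an $\mathsf{U}$-inequality, and then using transitivity of $\mathsf{U}$ to splice the two $\mathsf{U}$-inequalities into $\top\sqsubseteq\thprop_{\beta_{\mathsf{g}\circ\mathsf{f}}}(\mathsf{U})$. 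Unit and associativity laws then pass to the quotient because they hold strictly in $\mathbf{Col}^{r}(\mathsf{A})$.

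The hardest step will be the bookkeeping for action compatibility of the composite, since the arrows $\beta_{\mathsf{f}}$, $\beta_{\mathsf{g}}$ and $\beta_{\mathsf{g}\circ\mathsf{f}}$ all live over $\Sigma(\mathsf{A}\times\mathsf{A},\mathsf{R}')$ and their pasting involves several $\Sigma$-reindexings along projections. The identification of the pasted arrow with the canonical $\beta_{\mathsf{g}\circ\mathsf{f}}$ is forced by the universal properties recorded in Lemmas~\ref{univprod} and \ref{univsig}, so once the diagram is drawn carefully the remaining verification is purely algebraic, resting on Beck--Chevalley for $\thprop$, functoriality of $\mathbf{Col}^{r}_{-}$, and transitivity of $\mathsf{U}$. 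Everything else is either a direct application of the corresponding fact in $\mathbf{Col}^{r}(\mathsf{A})$ or an instance of functoriality of $\thprop$.
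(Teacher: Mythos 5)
Your verification is correct, and in fact the paper states this lemma without any proof at all, treating it as the routine check you carry out: that $\simeq$ is an equivalence relation (via reflexivity, symmetry and transitivity of the dependent equivalence relations pulled back along the relevant $\mathbf{I}(-)$ arrows), that composition preserves both morphism clauses and descends to $\simeq$-classes, and that unit and associativity laws are inherited strictly from $\mathbf{Col}^{r}(\mathsf{A})$. Your pasting argument for clause 2 of the composite (push the $\mathsf{T}$-inequality forward with clause 1 for $\mathsf{g}$, then splice with transitivity of $\mathsf{U}$) is exactly the step the paper leaves implicit, so there is nothing to reconcile.
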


\begin{definition}
Let $(\mathsf{A},\mathsf{R})$ be an object of $\peff$ and let $\mathsf{R}=[\mathsf{R}']$. We call $\mcol(\mathsf{A},\mathsf{R}') $ the category which is described in the previous lemma. 

Moreover,
we call $\mset(\mathsf{A},\mathsf{R}')$ the full subcategory of $\mcol(\mathsf{A},\mathsf{R}') $ whose objects are families $(\mathsf{B},\mathsf{S},\sigma)$ of collections over $(\mathsf{A},\mathsf{R})$ with 
\begin{enumerate}
\item $\mathsf{B}$ in $\mathbf{Set}(\mathsf{A})$;
\item $\mathsf{S}$ in $\overline{\mathbf{Prop}^{r}_{s}}(\Sigma(\mathsf{A},\mathsf{B}\times \mathsf{B}))$. 
\end{enumerate}
\end{definition}
We can first notice the following property of $\mcol(\mathsf{A},\mathsf{R}') $.
\begin{remark}
Let $(\mathsf{A},\mathsf{R})$ be an object of $\peff$ and let $\mathsf{R}'$ and $\mathsf{R}''$ be representative of $\mathsf{R}$. Then $\mcol(\mathsf{A},\mathsf{R}') $ and $\mcol(\mathsf{A},\mathsf{R}'') $ are equivalent categories. This a consequence of the equivalence between $\mathsf{R}'$ and $\mathsf{R}''$ and point 2 in definition \ref{labb}.
\end{remark}

\begin{theorem}Let $(\mathsf{A},\mathsf{R})$ be an object of $\peff$ and let $\mathsf{R}=[\mathsf{R}']$. $\mcol(\mathsf{A},\mathsf{R}') $ and $\mset(\mathsf{A},\mathsf{R}')$ are cartesian closed list-arithmetic pretoposes.
\end{theorem}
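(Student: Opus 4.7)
The overall strategy is to lift, pointwise, the structure already available in $\mathbf{Col}^{r}(\mathsf{A})$ (Theorems \ref{tc1}, \ref{tc2}, \ref{tc3}) and in $\mathbf{Set}^{r}(\mathsf{A})$ (Theorem \ref{setset}), endowing each construction with the induced dependent equivalence relation and the induced action of $\mathsf{R}'$. The conceptual picture, suggested by the paragraph preceding Definition \ref{labb}, is that $\mcol(\mathsf{A},\mathsf{R}')$ should be read as an internal-diagram description of the slice $\peff/(\mathsf{A},\mathsf{R})$, and therefore is expected to inherit the locally cartesian closed list-arithmetic pretopos structure that was established for $\peff$ in Theorems \ref{mairos} and the immediately following lemma; what must be done here is to realise this structure explicitly on the triples $(\mathsf{B},\mathsf{S},\sigma)$.

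For finite limits I would take the terminal object to be $(\mathsf{1}_{\mathsf{A}},\top,\mathsf{id})$; the binary product of $(\mathsf{B},\mathsf{S},\sigma)$ and $(\mathsf{C},\mathsf{T},\psi)$ is $\mathsf{B}\times_{\mathsf{A}}\mathsf{C}$ with the conjunction of the (pulled-back) relations and the coordinatewise action $\sigma\times\psi$; and the equalizer of two parallel morphisms $[\mathsf{f}]_{\cong},[\mathsf{g}]_{\cong}$ is the weak-comprehension (Lemma \ref{weco}) of $\mathsf{B}$ along the proposition ``$\mathsf{T}(\mathsf{f}(b),\mathsf{g}(b))$'', which is stable under $\sigma$ precisely because both representatives preserve the action. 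For stable disjoint coproducts I would use $+_{\mathsf{A}}$ from Theorem \ref{tc1}, the relation being $\mathsf{S}$ on left-left pairs, $\mathsf{T}$ on right-right pairs, and $\bot$ on mixed pairs, with action $\sigma\sqcup\psi$; stability and disjointness transfer from $\mathbf{Col}^{r}(\mathsf{A})$. List objects are obtained from $\mathsf{List}_{\mathsf{A}}$ by the pointwise extension of $\mathsf{S}$ to equal-length lists together with the componentwise action. Effective quotients of internal equivalence relations come for free, since quotienting $(\mathsf{B},\mathsf{S},\sigma)$ by an equivalence $\mathsf{E}\supseteq \mathsf{S}$ is realised on the same carrier by replacing $\mathsf{S}$ with $\mathsf{E}$ and keeping $\sigma$ (which still respects $\mathsf{E}$); this mirrors the construction of $\peff$ out of $\thprop$. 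In each case the restriction to $\mset(\mathsf{A},\mathsf{R}')$ is immediate from Theorem \ref{setset} and the encodings via $\mathsf{U}_{\mathsf{S}}$, because the chosen connectives on $\overline{\mathbf{Prop}^{r}_{s}}$ and the set constructors on $\mathbf{Set}^{r}$ close up under all the operations used above.

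The substantial step, and what I expect to be the main obstacle, is cartesian closedness. I would start from the weak exponential $\mathsf{B}\Rightarrow_{\mathsf{A}}\mathsf{C}$ of Theorem \ref{tc1} and carve out of it the subcollection of those realizers $f$ which (i) send $\mathsf{S}$-classes into $\mathsf{T}$-classes and (ii) commute up to $\mathsf{T}$ with the actions $\sigma$ and $\psi$; the equivalence relation is extensional equality modulo $(\mathsf{S},\mathsf{T})$, i.e. $f\sim g$ iff $\mathsf{S}(b,b')$ implies $\mathsf{T}(f(b),g(b'))$, and the action on the exponential is conjugation $f\mapsto \psi_{a,a'}\circ f\circ \sigma_{a',a}$. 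The universal property follows by the same currying map as in $\mathbf{Col}^{r}$, while uniqueness of the transpose up to $\cong$ is rescued by the choice rule (Theorem \ref{choice rule}) exactly as in the proof of Theorem \ref{mairos}. For $\mset(\mathsf{A},\mathsf{R}')$ the exponential lives in $\mathbf{Set}^{r}$ thanks to the encoding of $\pi$ in $\mathsf{U}_{\mathsf{S}}$, and extensionality promotes the construction to a genuine one within small propositions. The delicate bookkeeping is the verification of the four axioms of Definition \ref{labb} for the conjugation action: compatibility with $\mathsf{S}$, independence from the chosen realizer of $\mathsf{R}$, preservation of reflexivity, and coherence with composition of $\mathsf{R}$-related triples. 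Each of these reduces to a diagram chase that recombines the corresponding axioms for $\sigma$ and $\psi$, but the sheer size of the diagrams in Definition \ref{labb} means the check is the only part that is not essentially formal.
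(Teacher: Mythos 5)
Your core constructions---terminal object, binary products with the conjunction of the pulled-back relations and the componentwise action $\sigma\times\psi$, equalizers as the comprehension of ``$\mathsf{T}(\mathsf{f}(b),\mathsf{g}(b))$'' with the action transported via the morphism conditions, coproducts, lists, and effective quotients obtained by enlarging the relation on the same carrier---are exactly the paper's, so for the bulk of the statement you and the paper proceed identically. The genuine divergence is at the two ends of the statement. On cartesian closedness the paper's proof is in fact silent (its enumeration covers only the finite-limit, extensive, list and exact structure), so your explicit exponential---the relation-preserving realizers inside $\mathsf{B}\Rightarrow_{\mathsf{A}}\mathsf{C}$ modulo extensional equality, with conjugation action, small in the case of $\mset(\mathsf{A},\mathsf{R}')$ via the $\pi$- and $\sigma$-codes in $\mathsf{U}_{\mathsf{S}}$ and set-bounded quantification---supplies a step the paper leaves to be recovered from the later equivalence $\mathbf{K}:\mcol(\mathsf{A},\mathsf{R}')\rightarrow \peff/(\mathsf{A},\mathsf{R})$ together with the local cartesian closure of $\peff$. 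Two small corrections there: your membership condition (ii) (commutation with the actions) is not a property of a single fibre element---it relates distinct fibres and is automatic once the action on the exponential is taken to be conjugation, so it should be dropped from the carrier and verified instead as naturality of evaluation; and uniqueness of the transpose is delivered by the extensional quotient on the exponential, not by the choice rule of Theorem \ref{choice rule}.

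Conversely, you omit the one exactness item the paper does spell out, namely the pullback-stable regular epi--mono factorization: the paper's image object $\mathsf{Im}([\mathsf{f}]_{\simeq})$ is the comprehension of the $\mathsf{T}$-saturated graph of $\mathsf{f}$ (triples $(b,c,t)$ with $t\Vdash\mathsf{T}(\mathsf{f}(b),c)$) carrying the relation pulled back from $\mathsf{S}$ and the inherited action. This can in principle be recovered from pullback-stable effective quotients of kernel pairs, but you assert that quotients ``come for free'' without verifying their pullback stability, so the regular/exact half of ``pretopos'' is the one place where your argument is materially thinner than the paper's and needs to be supplemented, either by the explicit image construction or by a stability check for your quotients.
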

\begin{proof}
\begin{enumerate}
\item A terminal object is given by $(\mathsf{1},\overline{\mathbf{Prop}^{r}}_{\mathsf{p}_{1}^{\Sigma}}(\top),\mathsf{id})$. This object is a well defined terminal object also in $\mset(\mathsf{A},\mathsf{R}')$.
\item A binary product for $(\mathsf{B},\mathsf{S},\sigma)$ and $(\mathsf{C},\mathsf{T},\psi)$ can be given by 
$$(\mathsf{B}\times \mathsf{C},\overline{\mathbf{Prop}^{r}}_{\mathbf{I}(\mathsf{p}_{1}\times \mathsf{p}_{1})}(\mathsf{S})\sqcap \overline{\mathbf{Prop}^{r}}_{\mathbf{I}(\mathsf{p}_{2}\times \mathsf{p}_{2})}(\mathsf{T}),\sigma\times \psi)$$
with projections inherited by those of $\mathsf{B}\times \mathsf{C}$. This construction restricts to $\mset(\mathsf{A},\mathsf{R}')$.

\item An equalizer for $[\mathsf{f}]_{\simeq},[\mathsf{g}]_{\simeq}: (\mathsf{B},\mathsf{S},\sigma)\rightarrow(\mathsf{C},\mathsf{T},\psi)$ is given by 
$$(\mathsf{E},\;\overline{\mathbf{Prop}^{r}}_{\mathbf{I}(\mathsf{e}^{\Sigma}_{1}\times \mathsf{e}^{\Sigma}_{1})}(\mathsf{S}),\;\sigma')$$
with 
$$\mathsf{E}:=\{x'|\,p_{1}(x')\,\varepsilon\, \mathsf{B}(x)\,\wedge\,p_{2}(x)\Vdash \mathsf{T}'(p(x,p(\{\mathbf{n}_{\mathsf{f}}\}(p_{1}(x')), \{\mathbf{n}_{\mathsf{g}}(p_{1}(x')))))\}$$
and with $\mathsf{T}=[\mathsf{T}']$ and $\mathsf{f}:=[\mathbf{n}_{\mathsf{f}}]_{\sim}$ and $\mathsf{g}:=[\mathbf{n}_{\mathsf{g}}]_{\sim}$ and $\mathsf{e}^{\Sigma}_{1}:=[\Lambda x.\Lambda x'.\{\mathbf{p}_{1}\}(x')]_{\sim}$ and $\sigma'$ is 
an arrow who coincides with $\sigma$ on the first component and acts using witnesses given by the first condition in the definition of morphism applied to $[\mathsf{f}]_{\simeq}$ an $[\mathsf{g}]_{\simeq}$ on the second component. 
One can easily show that this construction restricts to $\mset(\mathsf{A},\mathsf{R}')$.
\item An initial object is given by $(\mathsf{0},\overline{\mathbf{Prop}^{r}}_{\mathsf{p}_{1}^{\Sigma}}(\top),\mathsf{id})$.  This object is a well defined initial object also in $\mset(\mathsf{A},\mathsf{R}')$.
\item A binary coproduct for $(\mathsf{B},\mathsf{S},\sigma)$ and $(\mathsf{C},\mathsf{T},\psi)$ can be given by 
$$(\mathsf{B}+ \mathsf{C},\exists_{\mathbf{I}(\mathsf{j}_{\mathsf{B}}\times \mathsf{j}_{\mathsf{B}})) }(\mathsf{S})\sqcup \exists_{\mathbf{I}(\mathsf{j}_{\mathsf{C}}\times \mathsf{j}_{\mathsf{C}}))}(\mathsf{T}),\sigma+ \psi)$$
with injections inherited by those of $\mathsf{B}\times \mathsf{C}$.
One can easily notice that $\mathsf{j}_{\mathsf{B}}\times \mathsf{j}_{\mathsf{B}}$ and $\mathsf{j}_{\mathsf{C}}\times \mathsf{j}_{\mathsf{C}}$ are isomorphic to representable arrows (see \ref{repress}) when acting on sets, which implies that this construction restricts to $\mset(\mathsf{A},\mathsf{R}')$.
\item Stability and disjointness of coproducts follow from direct verification.
\item A list object for $(\mathsf{B},\mathsf{S},\sigma)$ is given by 
$$(\mathsf{List}(\mathsf{B}), \overline{\mathsf{Prop}^{r}}_{\mathsf{lh}\times \mathsf{lh}}(\exists_{\Delta}(\top))\sqcap \forall_{\mathbf{I}(\mathsf{p}_{1})}(\mathsf{Prop}_{\mathbf{I}(\mathsf{cons}\times \mathsf{cons})}(\exists_{\mathbf{I}(\mathsf{j}_{\mathsf{A}}\times \mathsf{j}_{\mathsf{A}})}(\mathsf{S})\sqcup \exists_{\mathbf{I}(\mathsf{j}_{\mathsf{1}}\times \mathsf{j}_{\mathsf{1}})}(\top) )), \mathsf{List}(\sigma))$$
where $\mathsf{lh}:\mathsf{List}(A)\rightarrow \mathsf{N}$ is the length arrow, $\mathsf{comp}:\mathsf{List}(A)\times \mathsf{N}\rightarrow \mathsf{A}+1$ is the component arrow in $\mathcal{C}_{r}$ and 
{\small $$\mathsf{List}(\sigma):\mathsf{List}(\mathbf{Col}^{r}_{\mathsf{p}_{1}\circ \mathsf{p}_{1}^{\Sigma}}(\mathsf{B}))=\mathbf{Col}^{r}_{\mathsf{p}_{1}\circ \mathsf{p}_{1}^{\Sigma}}(\mathsf{List}(\mathsf{B}))\rightarrow \mathbf{Col}^{r}_{\mathsf{p}_{2}\circ \mathsf{p}_{1}^{\Sigma}}(\mathsf{List}(\mathsf{B}))=\mathsf{List}(\mathbf{Col}^{r}_{\mathsf{p}_{2}\circ \mathsf{p}_{1}^{\Sigma}}(\mathsf{B}))$$ }
is the unique arrow making the following diagram in $\mathbf{Col}^{r}\Sigma(\mathsf{A}\times \mathsf{A},\mathsf{R}'))$ commute.
{\small \def\objectstyle{\scriptstyle}
\def\labelstyle{\scriptstyle}
{
$$\xymatrix@-1pc{
\mathsf{1}	\ar[r]^-{\epsilon}\ar[rd]_-{\epsilon}	&\mathsf{List}(\mathbf{Col}^{r}_{\mathsf{p}_{1}\circ \mathsf{p}_{1}^{\Sigma}}(\mathsf{B}))\ar[d]^-{\mathsf{List}(\sigma)}&\mathsf{List}(\mathbf{Col}^{r}_{\mathsf{p}_{1}\circ \mathsf{p}_{1}^{\Sigma}}(\mathsf{B}))\times \mathbf{Col}^{r}_{\mathsf{p}_{1}\circ \mathsf{p}_{1}^{\Sigma}}(\mathsf{B})\ar[l]^-{\mathsf{cons}}\ar[d]^-{\mathsf{List}(\sigma)\times \sigma}	\\
			&\mathsf{List}(\mathbf{Col}^{r}_{\mathsf{p}_{2}\circ \mathsf{p}_{1}^{\Sigma}}(\mathsf{B}))	&\mathsf{List}(\mathbf{Col}^{r}_{\mathsf{p}_{2}\circ \mathsf{p}_{1}^{\Sigma}}(\mathsf{B}))\times \mathbf{Col}^{r}_{\mathsf{p}_{2}\circ \mathsf{p}_{1}^{\Sigma}}(\mathsf{B})\ar[l]^-{\mathsf{cons}}\\
}$$}}
This construction restricts to $\mset(\mathsf{A},\mathsf{R}')$ as $\mathsf{j}_{\mathsf{A}}\times \mathsf{j}_{\mathsf{A}}$ and $\mathsf{j}_{\mathsf{1}}\times \mathsf{j}_{\mathsf{1}}$ are isomorphic to representable arrows.
\item Every arrow admits a pullback stable mono-regular epi factorization: suppose 
$[\mathsf{f}]_{\simeq}: (\mathsf{B},\mathsf{S},\sigma)\rightarrow(\mathsf{C},\mathsf{T},\psi)$ is a morphism in  $\mcol(\mathsf{A},\mathsf{R}')$, then the following object $\mathsf{Im}([\mathsf{f}]_{\simeq})$ defines an image for $[\mathsf{f}]_{\simeq}$
$$(\mathsf{I},\;,\overline{\mathbf{Prop}^{r}}_{\mathbf{I}(\mathsf{e}^{\Sigma}_{1}\times \mathsf{e}^{\Sigma}_{1})}(\mathsf{S}),\sigma'')$$
with $\mathsf{I}$ defined as 
$$\{x'|\,p^{3}_{1}(x')\,\varepsilon\, \mathsf{B}(x)\,\wedge\,p^{3}_{2}(x')\,\varepsilon\,\mathsf{C}(x)\,\wedge\, p^{3}_{3}(x')
\Vdash \mathsf{T}'(p(x,p(\{\mathbf{n}\}(x,p_{1}(x')), p_{2}(x')) ))\}$$
and with $\mathsf{T}=[\mathsf{T}']$, $\mathbf{n}$ a numeral such that $\mathsf{f}:=[\mathbf{n}]_{\sim}$ and $\mathsf{e}^{\Sigma}_{1}:=[\Lambda x.\Lambda x'.\{\mathbf{p}^{3}_{1}\}(x')]_{\sim}\footnote{The functions $p^{3}_{1}$,$p^{3}_{2}$, $p^{3}_{3}$ are projections relative to a primitive recursive encoding of ternary products and $\mathbf{p}^{3}_{1}$ is a numeral representing $p_{1}^{3}$.}$ and $\sigma''$ is an arrow who coincides with $\sigma$ on the first component, with $\psi$ on the second component and acts using witnesses given by the first condition in the definition of morphism applied to $[\mathsf{f}]_{\simeq}$ on the second component. The arrows in the factorization are the obvious ones.
{\small \def\objectstyle{\scriptstyle}
\def\labelstyle{\scriptstyle}
{
$$\xymatrix@-1pc{
(\mathsf{B},\mathsf{S},\sigma)\ar[rr]^{[\mathsf{f}]_{\simeq}}\ar@{>>}[rd]	&									&(\mathsf{C},\mathsf{T},\psi)\\
													&\mathsf{Im}([\mathsf{f}]_{\simeq})\ar@{^{(}->}[ru]		&\\
}$$}}
One can easily show that this construction restricts to $\mset(\mathsf{A},\mathsf{R}')$.

\item Effective coequalizers of equivalence relations
exist  by definition of \\$\mcol(\mathsf{A},\mathsf{R}')$ and the same holds for $\mset(\mathsf{A},\mathsf{R}')$ thanks to the properties for elementary quotient completions shown in \cite{qu12}.
\end{enumerate}

\end{proof}

Now we are ready to show that any slice category  $\peff/(\mathsf{A},\mathsf{R})$
is essentially equivalent (up to a choice of representatives)
to the category of families of collections over $(\mathsf{A},\mathsf{R})$:

\begin{theorem} Let $(\mathsf{A},\mathsf{R})$ be an object of $\peff$ and let $\mathsf{R}=[\mathsf{R}']$. If $(\mathsf{B},\mathsf{S},\sigma)$ is an object of $\mcol((\mathsf{A},\mathsf{R}')) $, then $$\mathbf{K}(\mathsf{B},\mathsf{S},\sigma):=[\mathsf{p}_{1}^{\Sigma}]_{\simeq}:(\Sigma(\mathsf{A},\mathsf{B}), \exists_{\mathsf{p}_{1}^{\Sigma}}(\overline{\mathbf{Prop}}_{\eta}(\mathsf{S})))\rightarrow (\mathsf{A},\mathsf{R})$$
where 
$\eta$ is the unique arrow (which exists thanks to lemma \ref{univprod}) making the following diagram commute in $\mathcal{C}_{r}$:

{\small \def\objectstyle{\scriptstyle}
\def\labelstyle{\scriptstyle}
{
$$\xymatrix@-1pc{
\Sigma(\Sigma(\mathsf{A},\mathsf{B})\times \Sigma(\mathsf{A},\mathsf{B}),\mathbf{Col}^{r}_{\mathsf{p}_{1}^{\Sigma}\times \mathsf{p_{1}^{\Sigma}}}(\mathsf{R}'))\ar[d]^{\gamma}\ar[r]^-{\mathsf{p}_{2}\circ \mathsf{p}_{1}^{\Sigma}})\ar[rd]^{\mathsf{\eta}}	&\Sigma(\mathsf{A},\mathsf{B})\\		
\Sigma(\Sigma(\mathsf{A}\times \mathsf{A},\mathsf{R}'),\mathbf{Col}_{\mathsf{p}_{1}\circ\mathsf{p}_{1}^{\Sigma}}(\mathsf{B}))\ar[d]^{\mathbf{I}(\sigma)}		&\Sigma(\mathsf{A},\mathsf{B}\times \mathsf{B})\ar[u]_-{\mathbf{I}(\mathsf{p}_{1})}\ar[d]^-{\mathbf{I}(\mathsf{p}_{2})}\\
\Sigma(\Sigma(\mathsf{A}\times \mathsf{A},\mathsf{R}'),\mathbf{Col}_{\mathsf{p}_{1}\circ\mathsf{p}_{1}^{\Sigma}}(\mathsf{B}))\ar[r]^-{\Sigma(\mathbf{p}_{2}\circ \mathsf{p}_{1}^{\Sigma},\mathsf{B})}&\Sigma(\mathsf{A},\mathsf{B})\\
}$$}}
where $\gamma$ is the unique arrow (which exists thanks to lemma \ref{univsig}) such that the following diagram commute
{\small \def\objectstyle{\scriptstyle}
\def\labelstyle{\scriptstyle}
{
$$\xymatrix@-1pc{
\Sigma(\mathsf{A},\mathsf{B})&\Sigma(\Sigma(\mathsf{A}\times \mathsf{A},\mathsf{R}'),\mathbf{Col}_{\mathsf{p}_{1}\circ\mathsf{p}_{1}^{\Sigma}}(\mathsf{B}))\ar[rd]^-{\mathsf{p}_{1}^{\Sigma}}\ar[l]_-{\Sigma(\mathsf{p}_{1}\circ \mathsf{p}_{1}^{\Sigma},\mathsf{B})}\\
&\Sigma(\Sigma(\mathsf{A},\mathsf{B})\times \Sigma(\mathsf{A},\mathsf{B}),\mathbf{Col}^{r}_{\mathsf{p}_{1}^{\Sigma}\times \mathsf{p_{1}^{\Sigma}}}(\mathsf{R}'))\ar[r]_-{\Sigma(\mathsf{p}_{1}^{\Sigma}\times \mathsf{p}_{1}^{\Sigma},\mathsf{R}')}\ar[u]^-{\gamma}\ar[lu]^-{\mathsf{p}_{1}\circ \mathsf{p}_{1}^{\Sigma}}		&\Sigma(\mathsf{A}\times \mathsf{A},\mathsf{R}')	\\
}$$}}
is a well defined object of $\peff/(\mathsf{A},\mathsf{R})$.

Moreover if $[\mathsf{f}]_{\simeq}$ is an arrow in $\mcol(\mathsf{A},\mathsf{R}')$ from $(\mathsf{B},\mathsf{S},\sigma)$ to $(\mathsf{C},\mathsf{T},\psi)$, then $\mathbf{K}([\mathsf{f}]_{\simeq}):=[\mathbf{I}(\mathsf{f})]_{}$ is a well defined arrow from $\mathbf{K}(\mathsf{B},\mathsf{S},\sigma)$ to $\mathbf{K}(\mathsf{C},\mathsf{T}, \psi)$ in $\peff/(\mathsf{A},\mathsf{R})$.

Finally,
$\mathbf{K}$ is a well defined full and faithful functor from $\mcol(\mathsf{A},\mathsf{R}')$ to $\peff/(\mathsf{A},\mathsf{R})$ which preserves finite limits and  is surjective on objects.
\end{theorem}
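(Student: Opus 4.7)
The plan is to break the statement into six sub-claims: well-definedness of $\mathbf{K}$ on objects, well-definedness on morphisms, functoriality, full faithfulness, preservation of finite limits, and (essential) surjectivity on objects.

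First I would verify that $\mathbf{K}(\mathsf{B},\mathsf{S},\sigma)$ is a well-defined object of $\peff/(\mathsf{A},\mathsf{R})$. The relation $\exists_{\mathsf{p}_{1}^{\Sigma}}(\overline{\mathbf{Prop}^{r}}_{\eta}(\mathsf{S}))$ must be reflexive, symmetric and transitive on $\Sigma(\mathsf{A},\mathsf{B})$: reflexivity uses reflexivity of $\mathsf{S}$ together with clause (3) of Definition \ref{labb} forcing $\sigma$ to act essentially as the identity on $\mathsf{S}$-classes along diagonal witnesses of $\mathsf{R}$; symmetry combines symmetry of $\mathsf{S}$ with condition (2) expressing independence of $\sigma$ from the realizer of $\mathsf{R}$; and transitivity is exactly clause (4) encoding functoriality of $\sigma$ up to $\mathsf{S}$. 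That $[\mathsf{p}_{1}^{\Sigma}]_{\simeq}$ respects the equivalence relations on source and target is immediate from the construction of $\eta$, since any two $\eta$-related pairs project to an $\mathsf{R}'$-related pair in $\mathsf{A}\times \mathsf{A}$. Well-definedness of $\mathbf{K}$ on arrows amounts to checking that $\mathbf{I}(\mathsf{f})$ respects the equivalence on $\mathbf{K}(\mathsf{C},\mathsf{T},\psi)$; this is a direct translation of conditions (1) and (2) on morphisms of families of collections, while functoriality of $\mathbf{K}$ follows from functoriality of $\mathbf{I}$.

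Next I would verify that $\mathbf{K}$ is full and faithful. Faithfulness is immediate: two arrows $\mathsf{f},\mathsf{g}$ induce the same morphism in $\peff/(\mathsf{A},\mathsf{R})$ precisely when the equivalence defining $\mathbf{K}(\mathsf{C},\mathsf{T},\psi)$ holds between $\mathbf{I}(\mathsf{f})$ and $\mathbf{I}(\mathsf{g})$, which unfolds exactly to $\mathsf{f}\cong \mathsf{g}$. For fullness, any morphism in $\peff/(\mathsf{A},\mathsf{R})$ between $\mathbf{K}(\mathsf{B},\mathsf{S},\sigma)$ and $\mathbf{K}(\mathsf{C},\mathsf{T},\psi)$ is represented by an arrow $\Sigma(\mathsf{A},\mathsf{B})\rightarrow \Sigma(\mathsf{A},\mathsf{C})$ commuting with $\mathsf{p}_{1}^{\Sigma}$ modulo the equivalence on the target; Lemma \ref{univsig} then yields an arrow $\mathsf{B}\rightarrow \mathsf{C}$ in $\mathbf{Col}^{r}(\mathsf{A})$ whose image under $\mathbf{I}$ is $\simeq$-equivalent to the original representative. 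Preservation of finite limits follows term by term: the terminal object $(\mathsf{1}_{\mathsf{A}},\top,\mathsf{id})$ maps to $[\mathsf{id}_{\mathsf{A}}]_{\simeq}$ because $\Sigma(\mathsf{A},\mathsf{1}_{\mathsf{A}})\cong \mathsf{A}$ in $\mathcal{C}_{r}$, while binary products and equalizers in $\mcol(\mathsf{A},\mathsf{R}')$ are built over $\mathsf{A}$ componentwise in a way that matches the slice construction after applying $\mathbf{I}$.

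The main obstacle is essential surjectivity on objects. Given an arrow $[\mathsf{g}]_{\simeq}:(\mathsf{D},\mathsf{T})\rightarrow (\mathsf{A},\mathsf{R})$ in $\peff$, I would construct a family of collections $(\mathsf{B},\mathsf{S},\sigma)$ such that $\mathbf{K}(\mathsf{B},\mathsf{S},\sigma)$ is isomorphic to $[\mathsf{g}]_{\simeq}$ in $\peff/(\mathsf{A},\mathsf{R})$. The natural candidate takes $\mathsf{B}(x)$ to be the comprehension $\{y\,|\,p_{1}(y)\,\varepsilon\,\mathsf{D}\wedge p_{2}(y)\Vdash \mathsf{R}'(x,\mathsf{g}(p_{1}(y)))\}$ living in $\mathbf{Col}^{r}(\mathsf{A})$, with $\mathsf{S}$ inherited from $\mathsf{T}$ via the first projection, and with $\sigma$ acting as the identity on the $\mathsf{D}$-component while updating the $\mathsf{R}$-witness by transitivity with the given realizer of $\mathsf{R}(a,a')$. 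Clauses (1)--(4) of Definition \ref{labb} then reduce to the equivalence axioms for $\mathsf{R}$ in $\overline{\mathbf{Prop}^{r}}$, the delicate point being clause (2) (independence from the choice of realizer): this holds because transitivity in $\overline{\mathbf{Prop}^{r}}$ is witnessed uniformly, so different realizers of $\mathsf{R}(a,a')$ yield $\mathsf{S}$-related outputs. The required isomorphism in $\peff/(\mathsf{A},\mathsf{R})$ between $\mathbf{K}(\mathsf{B},\mathsf{S},\sigma)$ and $[\mathsf{g}]_{\simeq}$ is provided by the projection $\Sigma(\mathsf{A},\mathsf{B})\rightarrow \mathsf{D}$ in one direction and, in the other, by the arrow $d\mapsto (\mathsf{g}(d),(d,r))$ built using a uniform witness $r$ of $\mathsf{R}(\mathsf{g}(d),\mathsf{g}(d))$ coming from reflexivity of $\mathsf{R}$; compatibility with the quotient equivalences then follows from the fact that $\mathsf{g}$ already respects $\mathsf{T}$ and $\mathsf{R}$ by hypothesis.
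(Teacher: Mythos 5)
Your proposal is correct and follows the only viable route here — a direct verification of well‑definedness, functoriality, full faithfulness, limit preservation and essential surjectivity — which is exactly what the paper does, except that the paper compresses the entire argument into the single remark that it is ``a direct verification \ldots completely analogous to that in \cite{m09}.'' Your outline in fact supplies the checks the paper omits (the assignment of clauses of Definition~\ref{labb} to reflexivity/symmetry/transitivity, the correction of a slice arrow by the action $\psi$ in the fullness step since commutativity over $(\mathsf{A},\mathsf{R})$ holds only up to $\simeq$, and the comprehension-plus-witness construction for surjectivity), and these are all consistent with the intended argument.
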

\begin{proof} The proof is a direct verification and
is completely analogous to that in \cite{m09} (between slice categories
of the elementary quotient completions and the category of extensional dependent collections). 
\end{proof}

\subsubsection{The notion of family of sets}

\begin{definition}Let $\mathsf{b}$ be an arrow of $\peff$ with codomain $(\mathsf{A},\mathsf{R})$. An  arrow $\mathsf{b}$ is {\em family of sets depending on}  $(\mathsf{A},\mathsf{R})$ or  is a \emph{small map} (in the spirit of algebraic set theory) if and only if there exists an $\mathsf{R}'$ with $\mathsf{R}=[\mathsf{R}']$ and an object $(\mathsf{B},\mathsf{S},\sigma)$ of $\mcol(\mathsf{A},\mathsf{R}')$ such that
\begin{enumerate}
\item $\mathsf{b}$ is isomorphic to $\mathbf{K}(\mathsf{B},\mathsf{S},\sigma)$ in $\peff/(\mathsf{A},\mathsf{R})$;
\item $\mathsf{B}$ is an object of $\mathbf{Set}^{r}(\mathsf{A})$
\item $\mathsf{S}$ is an object of $\overline{\mathbf{Prop}^{r}_{s}}(\Sigma(\mathsf{A}\times \mathsf{A},\mathsf{R}'))$.
\end{enumerate}
Small maps of $\peff$ form a full subcategory of $\peff^{\rightarrow}$ which we denote with $\peff^{\rightarrow}_{small}$.
\end{definition}
\begin{lemma}
The restriction of the codomain fibration $\mathbf{cod}:\peff^{\rightarrow}\rightarrow \peff$ to $\peff^{\rightarrow}_{small}$ is a fibration.
\end{lemma}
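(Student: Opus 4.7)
The plan is to show that the class of small maps is closed under pullback in $\peff$, from which the statement follows immediately: since $\peff$ is locally cartesian closed by Theorem~\ref{mairos}, the full codomain fibration $\mathbf{cod}\colon\peff^{\rightarrow}\to\peff$ is a fibration, and its restriction to a full subcategory of $\peff^{\rightarrow}$ remains a fibration exactly when cartesian lifts can be chosen within that subcategory.

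First I would fix a small map $\mathsf{b}\colon(\mathsf{D},\mathsf{U})\to(\mathsf{A},\mathsf{R})$ and an arbitrary arrow $[\mathsf{f}]_{\simeq}\colon(\mathsf{C},\mathsf{T})\to(\mathsf{A},\mathsf{R})$ in $\peff$. By the definition of small map, choose representatives $\mathsf{R}'$ of $\mathsf{R}$ and $\mathsf{T}'$ of $\mathsf{T}$ together with $(\mathsf{B},\mathsf{S},\sigma)\in\mset(\mathsf{A},\mathsf{R}')$ such that $\mathsf{b}\cong\mathbf{K}(\mathsf{B},\mathsf{S},\sigma)$, and fix a representative $\mathsf{f}\colon\mathsf{C}\to\mathsf{A}$ of $[\mathsf{f}]_{\simeq}$.

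Next I would construct a candidate triple $\mathsf{f}^{*}(\mathsf{B},\mathsf{S},\sigma)=(\mathsf{B}^{*},\mathsf{S}^{*},\sigma^{*})$ living in $\mset(\mathsf{C},\mathsf{T}')$. Take $\mathsf{B}^{*}:=\mathbf{Col}^{r}_{\mathsf{f}}(\mathsf{B})$, which lies in $\mathbf{Set}^{r}(\mathsf{C})$ because $\mathbf{Set}^{r}$ is an indexed subcategory of $\mathbf{Col}^{r}$ closed under substitution. Obtain $\mathsf{S}^{*}$ by substituting $\mathsf{S}$ along the arrow $\Sigma(\mathsf{f},\mathsf{B}\times\mathsf{B})$ into $\Sigma(\mathsf{A},\mathsf{B}\times\mathsf{B})$; since $\overline{\mathbf{Prop}^{r}_{s}}$ is itself closed under substitution, $\mathsf{S}^{*}\in\overline{\mathbf{Prop}^{r}_{s}}(\Sigma(\mathsf{C},\mathsf{B}^{*}\times\mathsf{B}^{*}))$. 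Finally, build $\sigma^{*}$ by transporting $\sigma$ along the canonical map $\Sigma(\mathsf{C}\times\mathsf{C},\mathsf{T}')\to\Sigma(\mathsf{A}\times\mathsf{A},\mathsf{R}')$ whose existence is guaranteed by the very fact that $[\mathsf{f}]_{\simeq}$ is a well-defined arrow in $\peff$, namely $\mathsf{T}'\sqsubseteq\overline{\mathbf{Prop}^{r}}_{\mathsf{f}\times\mathsf{f}}(\mathsf{R}')$. The four conditions in Definition~\ref{labb} (preservation of $\mathsf{S}$, independence from the realizer, identity and composition) pass through $\mathsf{f}^{*}$ by pure functoriality of the substitution pseudofunctor $\mathbf{Col}^{r}$ and of $\overline{\mathbf{Prop}^{r}}$.

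I would then verify that applying $\mathbf{K}$ to $(\mathsf{B}^{*},\mathsf{S}^{*},\sigma^{*})$ produces, up to isomorphism in $\peff/(\mathsf{C},\mathsf{T})$, the pullback of $\mathbf{K}(\mathsf{B},\mathsf{S},\sigma)\cong\mathsf{b}$ along $[\mathsf{f}]_{\simeq}$. Here I would invoke the previous theorem stating that $\mathbf{K}$ preserves finite limits: the carrier of $\mathbf{K}(\mathsf{B}^{*},\mathsf{S}^{*},\sigma^{*})$ is $\mathsf{p}_{1}^{\Sigma}\colon\Sigma(\mathsf{C},\mathbf{Col}^{r}_{\mathsf{f}}(\mathsf{B}))\to\mathsf{C}$, which is by construction the pullback of $\mathsf{p}_{1}^{\Sigma}\colon\Sigma(\mathsf{A},\mathsf{B})\to\mathsf{A}$ along $\mathsf{f}$ in $\mathcal{C}_{r}$, and the equivalence relation $\mathsf{S}^{*}$ and the action $\sigma^{*}$ are precisely the substitution-data that make this arrow into the pullback in $\peff/(\mathsf{C},\mathsf{T})$. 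The universal property required of a cartesian lift is then inherited from that of the pullback in $\peff$, giving a cartesian lift of $[\mathsf{f}]_{\simeq}$ in $\peff^{\rightarrow}_{small}$.

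The main obstacle will be checking that $\sigma^{*}$ really satisfies the four coherence conditions and that the whole construction is, up to canonical isomorphism, independent of the choice of the representatives $\mathsf{R}'$, $\mathsf{T}'$, $\mathsf{f}$. Both points reduce to routine manipulations of pullback diagrams in $\mathcal{C}_{r}$ and to the Beck--Chevalley-style compatibilities already built into $\mathbf{Col}^{r}$ and $\overline{\mathbf{Prop}^{r}}$, but the sheer amount of diagrammatic bookkeeping is the real cost of the proof.
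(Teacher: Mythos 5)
Your proposal is correct and follows essentially the same route as the paper: reduce the statement to pullback stability of small maps, construct the pullback at the level of the coding triples $(\mathsf{B},\mathsf{S},\sigma)$ by substitution along a representative of the given arrow, and invoke the fact that $\mathbf{K}$ preserves finite limits to transfer the construction to $\peff/(\mathsf{C},\mathsf{T})$. The paper compresses all of this into one sentence, so your version is simply a fuller elaboration of the intended argument.
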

\begin{proof} 
The class of small maps  is pullback stable in $\peff$ as one can work with codings in the subcategory of $\mcol(\mathsf{A},\mathsf{R}')$ of those objects which give rise to small maps in $\peff$ and use the fact that $\mathbf{K}$ preserves finite limits.
\end{proof}

\begin{remark} Notice that if one consider the full subcategory $\peff_{s}$ of $\peff$ whose objects are those objects $(\mathsf{A},\mathsf{R})$ such that the unique arrow from them to the terminal object $\mathbf{1}$ in $\peff$ is small, then $\mprops$ restricted to $\peff_{s}$ is equivalent to $\mathbf{Sub}_{\peff_{s}}$. 
In fact one can easily see that an object of $\peff_{s}$ is an object of $\peff$ which is isomorphic to an object $(\mathsf{A},\mathsf{R})$ such that
\begin{enumerate}
\item $\mathsf{A}$ has the form $\{x|\,x\,\overline{\varepsilon }\,\mathbf{n}\}$ where $\tar\vdash \mathbf{n}\,\varepsilon\,\mathsf{U}_{\mathsf{S}}$
\item $\mathsf{R}$ is an object of $\overline{\mathbf{Prop}^{r}_{s}}(\mathsf{A}\times \mathsf{A})$.
\end{enumerate}

One can notice that $\peff_{s}$ is equivalent to the exact completion on a lex category of the full subcategory $\mathcal{S}_{r}$ of $\mathcal{C}_{r}$ whose objects are those objects of the form $\{x|\,x\,\overline{\varepsilon }\,\mathbf{n}\}$ where $\tar\vdash \mathbf{n}\,\varepsilon\,\mathsf{U}_{\mathsf{S}}$. In fact the subobject doctrine on $\mathcal{S}_{r}$ is equivalent to the doctrine $\overline{\mathbf{Prop}_{s}^{r}}$ restricted to $\mathcal{S}_{r}$.

Moreover $\overline{\mathbf{Sub}}_{\peff_{s}}$ is equivalent to the restriction of $\mprops$ to $\peff_{s}$.

If $[\mathsf{j}]_{\simeq}:(\mathsf{B},\mathsf{S})\rightarrow (\mathsf{A},\mathsf{R})$ is a monomorphism in $\peff_{s}$, then $\mathsf{p}_{1}:\mathsf{A}\times \mathsf{B}\rightarrow \mathsf{A}$ is representable (see \ref{repress}) and in particular $\exists_{\mathsf{p}_{1}}((\overline{\mathbf{Prop}_{s}})_{\mathsf{id}\times \mathsf{j}}(\mathsf{R}))$
is isomorphic to an element of $\overline{\mathbf{Prop}_{s}}(\mathsf{A})$ which is an element of $\mprops(\mathsf{A},\mathsf{R})$ too.

Vice versa, if $\mathsf{P}$ is an element of $\mprops(\mathsf{A},\mathsf{R})$, then 
$$[\mathsf{p}_{1}^{\Sigma}]_{\simeq}:(\Sigma(\mathsf{A},\mathsf{P}),(\overline{\mathbf{Prop}_{s}})_{\mathsf{p}_{1}^{\Sigma}\times \mathsf{p}_{1}^{\Sigma}}(\mathsf{R}))\rightarrow (\mathsf{A},\mathsf{R})$$
can easily be shown to be a monomorphism in $\peff_{s}$.

\end{remark}

Working with codings one can also prove the following lemma.
\begin{lemma} If $(\mathsf{A},\mathsf{R})$ is an object of $\peff_{s}$ and $[\mathsf{f}]_{\simeq}$ is a small map with codomain $(\mathsf{A},\mathsf{R})$ in $\peff$, then the dependent product of $[\mathsf{f}]_{\simeq}$ over $(\mathsf{A},\mathsf{R})$ is an object of $\peff_{s}$.
\end{lemma}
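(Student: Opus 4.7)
The plan is to produce an explicit representative of the dependent product by exploiting the codings that smallness gives us. From $(\mathsf{A},\mathsf{R})\in\peff_{s}$ I may take a representative in which $\mathsf{A} = \{x\mid x\,\overline{\varepsilon}\,\mathbf{n}\}$ for a code $\mathbf{n}$ with $\tar\vdash \mathbf{n}\,\varepsilon\,\mathsf{U}_{\mathsf{S}}$ and $\mathsf{R}\in\thprops(\mathsf{A}\times \mathsf{A})$; in particular the terminal map $!_{\mathsf{A}}:\mathsf{A}\to\mathsf{1}$ is representable in the sense of definition \ref{repress}. Smallness of $[\mathsf{f}]_{\simeq}$ then gives, via the functor $\mathbf{K}$, a representative of the form $\mathbf{K}(\mathsf{B},\mathsf{S},\sigma)$ with $\mathsf{B}=\tau_{\mathsf{A}}(\mathbf{m})$ in $\mathbf{Set}^{r}(\mathsf{A})$ and $\mathsf{S}\in\thprops(\Sigma(\mathsf{A},\mathsf{B}\times\mathsf{B}))$.

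First, I would describe the underlying $\mathcal{C}_{r}$-object of the dependent product as $\Pi'_{!_{\mathsf{A}}}(\mathsf{B})$ in the sense of remark \ref{remark}, namely $\{u\mid(\forall a)(a\,\varepsilon\,\mathsf{A}\to\{u\}(a)\,\overline{\varepsilon}\,\{\mathbf{m}\}(a))\}$. Because $!_{\mathsf{A}}$ is representable, theorem \ref{ts2} shows that this object is isomorphic in $\mathbf{Col}^{r}(\mathsf{1})$ to a realized set, concretely the one coded by $\boldsymbol{\pi}(\mathbf{n},\mathbf{m})$, so it belongs to the subcategory $\mathcal{S}_{r}$ of $\mathcal{C}_{r}$ singled out in the preceding remark.

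Next, I would equip this carrier with the equivalence relation expressed in the internal language of $\thprop$ by
$$u\,\rho\,v\ \equiv\ (\forall a_1,a_2\in\mathsf{A})\bigl(\mathsf{R}(a_1,a_2)\to\mathsf{S}_{a_2}(\sigma_{a_1,a_2}(\{u\}(a_1)),\{v\}(a_2))\bigr),$$
which simultaneously encodes the $\sigma$-equivariance of sections and the pointwise $\mathsf{S}$-equivalence (the latter appearing through the $a_1=a_2$ instances together with reflexivity of $\mathsf{R}$ and the identity-preservation axiom for $\sigma$). Since $\mathsf{R}$ and $\mathsf{S}$ both lie in $\thprops$ and the universal quantifiers are taken along representable projections (both factors being copies of $\mathsf{A}$), theorem \ref{tsp} guarantees that $\rho$ itself is small. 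Hence $(\Pi'_{!_{\mathsf{A}}}(\mathsf{B}),\rho)$ is an object of $\peff_{s}$ as required.

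The main obstacle will be the verification that this candidate really is the dependent product $\Pi_{!_{(\mathsf{A},\mathsf{R})}}([\mathsf{f}]_{\simeq})$ supplied by theorem \ref{mairos}. Only a weak universal property is available in $\mathbf{Col}^{r}$ by theorem \ref{tc3}, and promoting it to a strict one in $\peff$ goes through the same quotient completion mechanism that yields theorem \ref{compl}: the choice rule of theorem \ref{choice rule} is precisely what rigidifies the weak right adjoint in $\mathcal{C}_{r}$ after quotienting, so that equivalence classes of sections lift uniquely to morphisms into $(\Pi'_{!_{\mathsf{A}}}(\mathsf{B}),\rho)$.
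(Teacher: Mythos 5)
Your proposal follows exactly the route the paper intends when it dismisses the argument with ``working with codings'': extract the codes $\mathbf{n}$ and $\mathbf{m}$ from the hypotheses, realize the carrier of the object of sections as the realized set coded by an application of $\boldsymbol{\pi}$ via theorem \ref{ts2} (using that $!_{\mathsf{A}}$ is representable because $(\mathsf{A},\mathsf{R})$ is in $\peff_{s}$), and obtain smallness of the equivalence relation on sections from theorem \ref{tsp}, since $\mathsf{R}$ and $\mathsf{S}$ are small and the universal quantification runs along a representable projection. Since the paper records no further detail, your write-up is a legitimate expansion of its intended proof; and delegating the identification of your candidate with the dependent product of theorem \ref{mairos} to the construction of local exponentials in the ex/lex completion is reasonable, because that is precisely how the proof of theorem \ref{mairos} via \cite{Ros} manufactures them from the weak dependent products of theorem \ref{tc3} together with the choice rule.

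There is, however, one concrete defect in the construction as written: the pair $(\Pi'_{!_{\mathsf{A}}}(\mathsf{B}),\rho)$ is not a well-formed object of $\peff$, because definition \ref{efftop} requires the relation to be reflexive, and $u\,\rho\,u$ holds only for those sections $u$ that are $\sigma$-equivariant up to $\mathsf{S}$; an arbitrary element of $\Pi'_{!_{\mathsf{A}}}(\mathsf{B})$ need not satisfy this. You must first restrict the carrier to $\{u\mid u\,\rho\,u\}$, i.e.\ take the comprehension $\Sigma(\Pi'_{!_{\mathsf{A}}}(\mathsf{B}),\rho(u,u))$, and only then pair it with (the restriction of) $\rho$. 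This does not threaten the conclusion: $\rho(u,u)$ is a small proposition over a realized set, and the comprehension of a small proposition over a realized set is again isomorphic to a realized set via the $\boldsymbol{\sigma}$-coding used in the proof of theorem \ref{setset}, so the corrected carrier still lies in the subcategory $\mathcal{S}_{r}$ and the corrected pair is an object of $\peff_{s}$. But without this restriction the displayed object does not exist in $\peff$ at all, so the step needs to be made explicit.
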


\subsection{The Predicative Effective p-Topos}
Now we are ready to give the definition of our predicative variant of Hyland's Effective Topos:
\begin{definition}
We call {\em Predicative Effective p-Topos} the $5$-uple
$$(\peff,\mathbf{cod},\mathbf{cod}|_{\peff^{\rightarrow}_{small}},\mprop,\mprops)$$
\end{definition}
The name {\it p-topos} stands for a {\it predicative generalization of an elementary topos}.

If we denote with $\mathsf{Gr}(\mathbf{C})$ the Grothendieck construction of a fibration assigned to a pseudofunctor $\mathbf{C}:\mathcal{C}^{op}\rightarrow \mathbf{Cat}$ (see \cite{jacobbook}) we can define functors 
\begin{enumerate}
\item $\Delta:\mathcal{C}_{r}\rightarrow \peff$
\item $\Delta_{c}:\mathsf{Gr}(\mathbf{Col}^{r})\rightarrow\peff^{\rightarrow}$
\item $\Delta_{s}:\mathsf{Gr}(\mathbf{Set}^{r})\rightarrow\peff^{\rightarrow}_{small}$
\item $\Delta_{p}:\mathsf{Gr}(\overline{\mathbf{Prop}^{r}})\rightarrow\mathsf{Gr}(\mprop)$
\item $\Delta_{ps}:\mathsf{Gr}(\overline{\mathbf{Prop}_{s}^{r}})\rightarrow\mathsf{Gr}(\mprops)$
\end{enumerate}
as follows:
\begin{enumerate}
\item $\Delta(\mathsf{A}):=(\mathsf{A},\exists_{\Delta_{\mathsf{A}}}(\top_{\mathsf{A}}))$ and $\Delta(\mathsf{A})(\mathsf{f}):=[\mathsf{f}]_{\simeq}$
\item $\Delta_{c}(\mathsf{A},\mathsf{B}):=[\mathsf{p}_{1}^{\Sigma}]_{\simeq}: (\Sigma(\mathsf{A},\mathsf{B}),\exists_{\mathbf{I}(\Delta_{\Sigma(\mathsf{A},\mathsf{B})})}(\top_{\Sigma(\mathsf{A},\mathsf{B})}))\rightarrow(\mathsf{A},\exists_{\Delta_{\mathsf{A}}}(\top_{\mathsf{A}}))$
\item[] If $(\mathsf{f},\mathsf{g}):(\mathsf{A},\mathsf{B})\rightarrow (\mathsf{A}',\mathsf{B}')$ in $\mathsf{Gr}(\mathsf{Col}^{r})$, then $\Delta_{c}((\mathsf{f},\mathsf{g})):=[\Sigma(\mathsf{f},\mathsf{B}')\circ \mathbf{I}_{\mathsf{A}(\mathsf{g})}]_{\simeq}$ 
\item $\Delta_{c}$ restricts from $\mathsf{Gr}(\mathbf{Set}^{r})$ to $\peff^{\rightarrow}_{small}$ giving rise to $\Delta_{s}$
\item $\Delta_{p}(\mathsf{A},\mathsf{P}):=((\mathsf{A},\exists_{\Delta_{\mathsf{A}}}(\top_{\mathsf{A}})),\mathsf{P})$
\item[] $\Delta_{p}(\mathsf{f},*):=([\mathsf{f}]_{\simeq},*)$
\item $\Delta_{p}$ restricts from $\mathsf{Gr}(\overline{\mathbf{Prop}_{s}^{r}})$ to $\mathsf{Gr}(\mprops)$ giving rise to $\Delta_{ps}$
\end{enumerate}

The following commutative diagram shows how  the  Effective Kleene \mf-tripos embeds into the Predicative Effective p-Topos 
{\small \def\objectstyle{\scriptstyle}
\def\labelstyle{\scriptstyle}
{
$$\xymatrix@-1pc{											&					&														&\peff^{\rightarrow}\ar[ddr]	&	&\peff^{\rightarrow}_{small}\ar[ll]\ar[ldd]\\
\mathsf{Gr}(\mathbf{Col}^{r})\ar[ddr]\ar[rrru]^-{\Delta_{c}}			&					&\mathsf{Gr}(\mathbf{Set}^{r})\ar[ll]\ar[ddl]\ar[rrru]^-{\Delta_{s}}						&\mathsf{Gr}(\mprop)\ar[rd]\ar[u]			&	&\mathsf{Gr}(\mprops)\ar[ll]\ar[ld]\ar[u]\\
\mathsf{Gr}(\overline{\mathbf{Prop}^{r}})\ar[dr]\ar[rrru]_-{\Delta_{p}}	&					&\mathsf{Gr}(\overline{\mathbf{Prop}_{s}^{r}})\ar[ll]\ar[dl]\ar[rrru]_-{\Delta_{ps}}		& &\peff		&\\
											&\mathcal{C}_{r}\ar[rrru]		&														& &			&\\
}$$}}

\section{Embedding of $\peff$ in  Hyland's Effective Topos}
Here we are going to show that the construction of $\peff$ performed on
the subcategory of recursive functions of $\eff$ gives rise
to a full subcategory of the Effective Topos $\eff$ whose embedding
preserves the list-arithmetic locally cartesian closed pretopos structure.

Before proceeding we recall that
the Effective Topos (see \cite{eff}) can be presented in three ways:
\begin{enumerate}
\item as the result of the tripos-to-topos (see \cite{tripos}) construction from a tripos $$\mathbf{P}:\mathbf{Set}^{op}\rightarrow \mathbf{preHeyt}$$
\item as the exact on regular completion $(\mathbf{Asm})_{ex/reg}$ of the category $\mathbf{Asm}$ of assemblies (see \cite{CFS});
\item as the exact on lex completion $(\mathbf{pAsm})_{ex/lex}$ of the category $\mathbf{pAsm}$ of partitioned assemblies (see \cite{MRC}).
\end{enumerate}
Let us recall the definition of these categories.
\begin{definition}
The category $\mathbf{Asm}$ is the category whose objects are pairs $(A,P)$ where $A$ is a set and $P:A\rightarrow \mathcal{P}(\mathbb{N})$ is a function such that for every $a\in A$, $P(a)\neq \emptyset$ and whose arrows from such an object $(A,P)$ to another $(B,Q)$ is a function $f:A\rightarrow B$ such that $P\sqsubseteq_{A}\mathbf{p}_{f}(Q)$.

The category $\mathbf{pAsm}$ of partitioned assemblies is the full subcategory of $\mathbf{Asm}$ whose objects $(A,P)$ satisfy the following additional property: for every $a\in A$, $P(a)$ is a singleton, namely  $\#P(a)=1$.

The category $\mathbf{Mod}$ of modest sets is the full subcategory of $\mathbf{Asm}$ whose objects $(A,P)$ satisfy the following additional property: for every $a,b\in A$, if it holds that $P(a)\cap P(b)\neq \emptyset$, then $a=b$.
\end{definition}

We now consider the category of recursive definitions and we recall a well-known characterization (see also \cite{Discrete}, \cite{Car}).
\begin{definition}
The category $\mathbf{Rec}$ is defined as follows:
\begin{enumerate}
\item the objects of $\mathbf{Rec}$ are subsets $A\subseteq\mathbb{N}$;
\item an arrow from $A$ to $B$ is a function $f:A\rightarrow B$ such that there exists a (partial) recursive function $\overline{f}:\mathbb{N}\rightharpoonup \mathbb{N}$ such that $A\subseteq dom(\overline{f})$ and $\overline{f}|_{A}=f$;
\item composition is given by composition of functions and identities are defined as identity functions.
\end{enumerate}
\end{definition}

\begin{lemma}
The category $\mathbf{Rec}$ is equivalent to the intersection of the categories $\mathbf{pAsm}$ and $\mathbf{Mod}$.
\end{lemma}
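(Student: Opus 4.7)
The plan is to exhibit an equivalence of categories by unpacking the defining conditions of $\mathbf{pAsm}$ and $\mathbf{Mod}$ and observing that, together, they force the second component of an assembly to be (the graph of) an injection into $\mathbb{N}$. An object $(A,P)$ lies in $\mathbf{pAsm}\cap\mathbf{Mod}$ exactly when there exists a function $p_A:A\to\mathbb{N}$ such that $P(a)=\{p_A(a)\}$ for every $a\in A$ and, by modesty, $p_A$ is injective. So such an object is essentially the same as its image $p_A(A)\subseteq\mathbb{N}$.

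I would then define a functor $G:\mathbf{pAsm}\cap\mathbf{Mod}\to\mathbf{Rec}$ by setting $G(A,P):=p_A(A)$ and, for an arrow $f:(A,P)\to(B,Q)$ realized by a partial recursive $\overline{f}:\mathbb{N}\rightharpoonup\mathbb{N}$, by setting $G(f):=p_B\circ f\circ p_A^{-1}:p_A(A)\to p_B(B)$. The realizer condition $P\sqsubseteq_A \mathbf{p}_f(Q)$ rewrites as $\overline{f}(p_A(a))=p_B(f(a))$ for every $a\in A$, so $\overline{f}$ restricts to $G(f)$ on $p_A(A)$; hence $G(f)$ is a morphism of $\mathbf{Rec}$. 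Functoriality is immediate from the fact that $p_A^{-1}$ cancels correctly in composites.

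In the opposite direction I would define $F:\mathbf{Rec}\to\mathbf{pAsm}\cap\mathbf{Mod}$ by $F(A):=(A,\,a\mapsto\{a\})$ on objects and $F(f):=f$ on arrows, using the recursive extension $\overline{f}$ witnessing $f\in\mathbf{Rec}$ as its own realizer. The image clearly lies in $\mathbf{pAsm}\cap\mathbf{Mod}$. One checks directly that $G\circ F=\mathrm{Id}_{\mathbf{Rec}}$ and that $F\circ G$ sends $(A,P)$ to $(p_A(A),\,n\mapsto\{n\})$, which is isomorphic to $(A,P)$ via $p_A:A\to p_A(A)$ whose realizer is $\Lambda x.x$; this isomorphism is natural in $(A,P)$.

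The only delicate point is that the definition of $G$ on arrows a priori depends on a choice of realizer $\overline{f}$ for each morphism, so one must verify that the restriction of $\overline{f}$ to $p_A(A)$ is independent of the choice (which it is, since two realizers must agree on $p_A(A)$ to witness the same underlying set-function $f$) and that the construction respects identities and composition. Once this routine bookkeeping is carried out, the pair $(F,G)$ establishes the claimed equivalence.
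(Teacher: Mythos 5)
Your proof is correct and follows essentially the same route as the paper: your $F$ is the paper's functor $\mathbf{Sgl}'$ (singleton realizers) and your $G$ is the paper's forgetful-type functor $\mathbf{Frg}$ sending $(A,P)$ to $\bigcup_{a\in A}P(a)$, with the same observation that objects of $\mathbf{pAsm}\cap\mathbf{Mod}$ are exactly assemblies whose realizer map is a singleton-valued injection. The only cosmetic difference is that you define $G$ on arrows as the conjugate $p_B\circ f\circ p_A^{-1}$ (which, as you note, makes realizer-independence automatic) rather than via a chosen realizer as the paper does; the two definitions agree.
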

\begin{proof}
First notice that an assembly $(A,P)$ is partitioned and modest at the same time if and only if there exists an injective function $j:A\rightarrow \mathbb{N}$ such that $P(a)=\{j(a)\}$. These assemblies form a full subcategory of $\mathbf{Asm}$ which is equivalent to $\mathbf{Rec}$. In fact one can consider the functor 
$$\mathbf{Sgl}:\mathbf{Rec}\rightarrow \mathbf{Asm}$$
sending each object $A$ in $\mathbf{Rec}$ to the assembly $(A, a\mapsto \{a\})$ and every function $f$ to itself as an arrow between assemblies.
The functor $\mathbf{Sgl}$ factorizes both through the category of partioned assemblies $\mathbf{pAsm}$ (via a functor $\mathbf{Sgl}_{p}$) and via that of modest sets $\mathbf{Mod}$ and make $\mathbf{Rec}$ a full subcategory of $\mathbf{Asm}$ (every map in $\mathbf{Asm}$ from $\mathbf{Sgl}(A)$ to $\mathbf{Sgl}(B)$ is the restriction of a partial recursive function).
$$\xymatrix{
\mathbf{pAsm}\ar[r]								&\mathbf{Asm}\\
\mathbf{Rec}\ar[ru]^{\mathbf{Sgl}}\ar[u]^{\mathbf{Sgl}_{p}}\ar[r]	&\mathbf{Mod}\ar[u]\\
}$$
This means in particular that there exists a functor $\mathbf{Sgl}':\mathbf{Rec}\rightarrow \mathbf{pAsm}\cap\mathbf{Mod}$.
In order to obtain an equivalence one can just consider the functor 
$$\mathbf{Frg}:\mathbf{pAsm}\cap \mathbf{Mod}\rightarrow \mathbf{Rec}$$ defined as follows:
\begin{enumerate} 
\item an object $(A,P)$ is sent to $\cup_{a\in A}P(a)$;
\item an arrow $f:(A,P)\rightarrow (B,Q)$ is sent to the function $x\mapsto \{r\}(x)$ where $r$ is a natural number for which for all $a\in A$ and $x\in P(a)$, $\{r\}(x)$ is defined and is in $Q(f(a))$.
\end{enumerate}  
gives rise to the equivalence we were looking for.
\end{proof}

%The functors $\mathbf{Sgl}$ and $\mathbf{Sgl}_{p}$ preserve finite limits.

One can now prove the following remarkable property:
\begin{theorem}
\label{sub}
The functor of weak subobjects over $\mathbf{Rec}$ is equivalent
to that of weak subobjects over partitioned assemblies and also to that of subobjects over assemblies, both
composed with the corresponding embeddings of $\mathbf{Rec}$ into them:
$$\mathbf{wSub}_{\mathbf{Rec}}\cong\mathbf{wSub}_{\mathbf{pAsm}}\circ \mathbf{Sgl}_{p}\cong\mathbf{Sub}_{\mathbf{Asm}}\circ \mathbf{Sgl}$$
\end{theorem}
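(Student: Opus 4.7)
My approach is to construct, for each $A$ in $\mathbf{Rec}$, explicit natural isomorphisms between the three posets in the statement, and then check naturality under substitution along $\mathbf{Rec}$-arrows. Throughout I exploit the identification of $\mathbf{Rec}$ with $\mathbf{pAsm}\cap\mathbf{Mod}$ established in the previous lemma, so that $\mathbf{Sgl}_p$ and $\mathbf{Sgl}$ act by equipping each subset $A\subseteq\mathbb{N}$ with the realizer map $a\mapsto\{a\}$.

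For the first isomorphism $\mathbf{wSub}_{\mathbf{Rec}}(A)\cong \mathbf{wSub}_{\mathbf{pAsm}}(\mathbf{Sgl}_p(A))$, the functor $\mathbf{Sgl}_p$ being full and faithful induces a monotone map $[f:B\to A]\mapsto[\mathbf{Sgl}_p(f)]$. Faithfulness gives injectivity, since any pair of $\mathbf{pAsm}$-arrows mutually factoring two $\mathbf{Sgl}_p$-images must themselves come from $\mathbf{Rec}$. For surjectivity, given an arrow $f:(B,p)\to\mathbf{Sgl}_p(A)$ in $\mathbf{pAsm}$, I form its recursive image by setting $\widetilde B:=p(B)\subseteq\mathbb{N}$ and defining $\widetilde f:\widetilde B\to A$ by $\widetilde f(p(b)):=f(b)$. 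Any realizer tracking $f$ in $\mathbf{pAsm}$ forces $p(b)=p(b')\Rightarrow f(b)=f(b')$, so $\widetilde f$ is well defined, and the same realizer witnesses it as a $\mathbf{Rec}$-arrow. Then $b\mapsto p(b)$ supplies a $\mathbf{pAsm}$-map $(B,p)\to\mathbf{Sgl}_p(\widetilde B)$ over $\mathbf{Sgl}_p(A)$, while any set-theoretic section of $p$, tracked by the identity recursive function, supplies a map in the opposite direction, yielding $f\sim\mathbf{Sgl}_p(\widetilde f)$ as weak subobjects.

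For the second isomorphism $\mathbf{wSub}_{\mathbf{pAsm}}(\mathbf{Sgl}_p(A))\cong \mathbf{Sub}_{\mathbf{Asm}}(\mathbf{Sgl}(A))$, I define quasi-inverse assignments directly. An arrow $f:(B,p)\to\mathbf{Sgl}_p(A)$ is sent to the subobject of $\mathbf{Sgl}(A)$ given by the assembly $(f(B),\ a\mapsto p(f^{-1}(a)))$ together with the set-theoretic inclusion; the realizer of $f$ in $\mathbf{pAsm}$ exhibits this inclusion as a morphism in $\mathbf{Asm}$, and injectivity of the underlying function makes it a monomorphism. Conversely, a monomorphism $(S,P)\hookrightarrow\mathbf{Sgl}(A)$ in $\mathbf{Asm}$ comes with a recursive realizer $r$ satisfying $\{r\}(n)=s$ for all $s\in S$ and $n\in P(s)$, which in particular forces the family $\{P(s)\}_{s\in S}$ to be pairwise disjoint. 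I send it to the partitioned assembly $\widetilde B:=\{(s,n):s\in S,\,n\in P(s)\}$ with realizer map $(s,n)\mapsto n$, together with the arrow $(s,n)\mapsto s$ into $\mathbf{Sgl}_p(A)$ tracked by $r$. A direct check shows the two assignments are mutually inverse modulo the posetal equivalences on both sides, and monotonicity is transparent from the construction. Naturality in $A$ is then a routine verification, since all the constructions commute with postcomposition by an arrow of $\mathbf{Rec}$.

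The main obstacle I expect is the surjectivity step in both isomorphisms: in the $\mathbf{pAsm}$-case one must carefully verify that a single realizer of a $\mathbf{pAsm}$-arrow suffices both to reduce the domain recursively to a subset of $\mathbb{N}$ and to track the comparison maps in both directions; in the $\mathbf{Asm}$-case the delicate point is that the mono condition, via its realizer, forces the disjointness of realizer sets needed to reassemble a subobject as a genuine partitioned assembly. Once these are handled, well-definedness on equivalence classes and naturality are essentially bookkeeping.
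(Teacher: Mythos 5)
Your proposal is correct and follows essentially the same route as the paper: explicit fiberwise assignments built from the realizers (the image of $f$ equipped with its realizer-fibres in one direction, the total space of realizers in the other), checked to be mutually inverse up to the posetal equivalence, exactly as in the paper's natural transformations $\mu,\nu,\mu',\nu'$. The only nitpick is that your injectivity argument for the first isomorphism really uses \emph{fullness} of $\mathbf{Sgl}_p$ (to see that the comparison arrows in $\mathbf{pAsm}$ between $\mathbf{Sgl}_p$-images come from $\mathbf{Rec}$), not faithfulness as you state.
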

\begin{proof}
The equivalence between $\mathbf{wSub}_{\mathbf{Rec}}$ and $\mathbf{wSub}_{\mathbf{pAsm}}\circ \mathbf{Sgl}_{p}$ is obtained via the natural transformations $\mu:\mathbf{wSub}_{\mathbf{Rec}}\rightarrow\mathbf{wSub}_{\mathbf{pAsm}}\circ \mathbf{Sgl}_{p}$ and \\$\nu:\mathbf{wSub}_{\mathbf{pAsm}}\circ \mathbf{Sgl}_{p}\rightarrow \mathbf{wSub}_{\mathbf{Rec}}$ defined as follows, for every object $A\subseteq \mathbb{N}$ of $\mathbf{Rec}$
\begin{enumerate}
\item for every $f:B\rightarrow A$ in $\mathbf{Rec}$, $\mu_{A}([f]):=[\mathbf{Sgl}_{p}(f)]$;
\item for every $f:(B,P)\rightarrow \mathbf{Sgl}_{p}(A)$ in $\mathbf{pAsm}$, $\nu_{A}([f]):=[f']$, where\\ $f':\bigcup_{b\in B}P(b)\rightarrow A$ is the recursive function (whose existence and uniqueness follows from the definition of $\mathbf{pAsm}$) such that for every $b\in B$ and every $n\in P(b)$, $f'(n)=f(b)$.
\end{enumerate}

The equivalence between $\mathbf{wSub}_{\mathbf{Rec}}$ and $\mathbf{Sub}_{\mathbf{Asm}}\circ \mathbf{Sgl}$ is obtained via the natural transformations $\mu':\mathbf{wSub}_{\mathbf{Rec}}\rightarrow\mathbf{Sub}_{\mathbf{Asm}}\circ \mathbf{Sgl}$ and \\$\nu':\mathbf{wSub}_{\mathbf{Asm}}\circ \mathbf{Sgl}\rightarrow \mathbf{wSub}_{\mathbf{Rec}}$ defined as follows, for every object $A\subseteq \mathbb{N}$ of $\mathbf{Rec}$:
\begin{enumerate}
\item for every $f:B\rightarrow A$ in $\mathbf{Rec}$, $\mu'_{A}([f]):=[j]$ where $j$ is the embedding of $(f(B),f^{-1})$ into $\mathbf{Sgl}(A)$;
\item for every mono $j:(B,P)\rightarrow \mathbf{Sgl}(A)$ in $\mathbf{Asm}$, $\nu'_{A}([j]):=[j']$, where\\ $j':\bigcup_{b\in B}P(b)\rightarrow A$ is the recursive function such that for every $b\in B$ and every $n\in P(b)$, $j'(n)=j(b)$; the existence of $j'$ is guaranteed by the fact that $j$ is mono in $\mathbf{Asm}$ and hence injective as a set theoretical function.
\end{enumerate}
\end{proof}

Now we mimick the construction of $\peff$ over the category $\mathbf{Rec}$
by replacing the doctrine of realized propositions with the doctrine of weak subobjects
in $\mathbf{Rec}$:
\begin{definition}
Let $\mathcal{Q}_{\mathbf{wSub}_{\mathbf{Rec}}}$ be the elementary quotient completion in \cite{qu12} applied to the doctrine $\mathbf{wSub}_{\mathbf{Rec}}$.
\end{definition}

Note that the category just defined is actually an exact completion
as observed in \cite{qu12,MR16}:
\begin{theorem}
The category $\mathcal{Q}_{\mathbf{wSub}_{\mathbf{Rec}}}$ is equivalent to the exact on lex completion of the category of recursive functions $\mathbf{Rec}$.
\end{theorem}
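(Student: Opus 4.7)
The plan is to invoke the general comparison result between the elementary quotient completion and the ex/lex completion of Carboni--Celia Magno, as already used in the proof of Theorem~\ref{compl}. Namely, in \cite{MR16} (see also \cite{qu12}) it is shown that if $\mathcal{C}$ is a weakly lex category, then the elementary quotient completion of its weak subobject doctrine $\mathbf{wSub}_{\mathcal{C}}$ is equivalent to $\mathcal{C}_{ex/lex}$. Applying this with $\mathcal{C}=\mathbf{Rec}$ yields exactly the statement of the theorem, so the whole argument reduces to checking that $\mathbf{Rec}$ satisfies the hypotheses needed to apply that general result.

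Concretely, the first step is to check that $\mathbf{Rec}$ is (at least) weakly lex, i.e.\ admits weak finite limits. This can be done directly by inheriting weak limits from a larger category in which $\mathbf{Rec}$ embeds fully: by the characterization lemma above, $\mathbf{Rec}\simeq \mathbf{pAsm}\cap \mathbf{Mod}$, and $\mathbf{pAsm}$ is a well-known example of a (weakly) lex category whose ex/lex completion is $\eff$ \cite{MRC}. One therefore shows that the explicit constructions of binary products and equalizers in $\mathbf{pAsm}$, when restricted to objects of the form $\mathbf{Sgl}_p(A)$ for $A\subseteq \mathbb{N}$, are preserved by the embedding $\mathbf{Sgl}_p\colon \mathbf{Rec}\hookrightarrow \mathbf{pAsm}$ up to a choice of weak realizer; equivalently, one can give these weak limits explicitly inside $\mathbf{Rec}$ by realizing pairing and equality of natural numbers through the usual primitive recursive coding, exactly in the style of Theorem~\ref{t1}.

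The second step is to combine this with Theorem~\ref{sub}: since $\mathbf{wSub}_{\mathbf{Rec}}\cong \mathbf{wSub}_{\mathbf{pAsm}}\circ \mathbf{Sgl}_p$ as doctrines, the elementary quotient completion of $\mathbf{wSub}_{\mathbf{Rec}}$ depends only on the $\mathbf{Rec}$-side of this data, and the general result from \cite{MR16} identifies $\mathcal{Q}_{\mathbf{wSub}_{\mathbf{Rec}}}$ with $(\mathbf{Rec})_{ex/lex}$. In particular this parallels exactly the proof of Theorem~\ref{compl}, with $\mathcal{C}_r$ replaced by $\mathbf{Rec}$ and $\overline{\mathbf{Prop}^r}$ replaced by $\mathbf{wSub}_{\mathbf{Rec}}$, the role of Theorem~\ref{ws} being played tautologically here since we start from the weak subobject doctrine itself.

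The main obstacle I expect is not conceptual but bookkeeping: the cited result is usually stated for a finitely complete category, and $\mathbf{Rec}$ is only weakly lex, so some care is needed to match the formulation in \cite{MR16} to the present setting. Once weak finite limits in $\mathbf{Rec}$ are written down explicitly (which is a direct verification), the application of the general theorem is immediate and the equivalence $\mathcal{Q}_{\mathbf{wSub}_{\mathbf{Rec}}}\simeq (\mathbf{Rec})_{ex/lex}$ follows with no further work.
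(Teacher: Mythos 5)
Your proposal takes essentially the same route as the paper, which proves this theorem simply by appealing to the general result of \cite{qu12,MR16} that the elementary quotient completion of the weak subobject doctrine of a lex category coincides with its ex/lex completion. The only quibble is that $\mathbf{Rec}$ is in fact genuinely finitely complete (products via recursive pairing of naturals, equalizers as subsets with the identity inclusion), so the bookkeeping you anticipate about it being merely weakly lex does not actually arise.
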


From theorem~\ref{sub} it immediately follows the following corollary.
\begin{corollary}\label{corcor}
$\mathcal{Q}_{\mathbf{wSub}_{\mathbf{Rec}}}$ is a full subcategory of the effective topos:
$$\mathcal{Q}_{\mathbf{wSub}_{\mathbf{Rec}}}\cong (\mathbf{Rec})_{ex/lex}\hookrightarrow (\mathbf{pAsm})_{ex/lex}\cong (\mathbf{Asm})_{ex/reg}\cong \eff$$\end{corollary}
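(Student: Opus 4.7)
The plan is to combine the three equivalences claimed in the statement with the key full embedding $(\mathbf{Rec})_{ex/lex}\hookrightarrow (\mathbf{pAsm})_{ex/lex}$, using the ingredients already assembled in this section. Two of the identifications, namely $(\mathbf{pAsm})_{ex/lex}\cong \eff$ and $(\mathbf{Asm})_{ex/reg}\cong \eff$, are the classical presentations recalled at the beginning of this section from \cite{MRC} and \cite{CFS}. The leftmost equivalence $\mathcal{Q}_{\mathbf{wSub}_{\mathbf{Rec}}}\cong (\mathbf{Rec})_{ex/lex}$ is precisely the preceding theorem. So the remaining, essential, task is to produce the middle full and faithful embedding.

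First, I would apply the same identification in the other direction too, recognizing $(\mathbf{pAsm})_{ex/lex}$ as the elementary quotient completion $\mathcal{Q}_{\mathbf{wSub}_{\mathbf{pAsm}}}$ of the doctrine of weak subobjects on partitioned assemblies. Then, the lex, full and faithful inclusion $\mathbf{Sgl}_{p}:\mathbf{Rec}\hookrightarrow \mathbf{pAsm}$, together with the natural isomorphism $\mu:\mathbf{wSub}_{\mathbf{Rec}}\to \mathbf{wSub}_{\mathbf{pAsm}}\circ \mathbf{Sgl}_{p}$ obtained in Theorem~\ref{sub}, yields by the functoriality of the elementary quotient completion construction of \cite{qu12} an induced functor $\widetilde{\mathbf{Sgl}_{p}}:\mathcal{Q}_{\mathbf{wSub}_{\mathbf{Rec}}}\to \mathcal{Q}_{\mathbf{wSub}_{\mathbf{pAsm}}}$, which on objects sends $(A,R)$ to $(\mathbf{Sgl}_{p}(A),\mu_{A\times A}(R))$ and on arrows applies $\mathbf{Sgl}_{p}$ to the chosen representative.

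The hard part will be to verify that $\widetilde{\mathbf{Sgl}_{p}}$ is actually full and faithful. Faithfulness is routine since $\mathbf{Sgl}_{p}$ is faithful and since the equivalence of arrows in both quotient completions is expressed by entailments in the respective weak-subobject doctrines, which correspond under $\mu$. Fullness is the delicate ingredient: given a morphism in $\mathcal{Q}_{\mathbf{wSub}_{\mathbf{pAsm}}}$ between two $\mathbf{Sgl}_{p}$-embedded objects, one must lift its functional-relation representative back to $\mathbf{Rec}$. Here the explicit inverse $\nu$ of $\mu$ produced in the proof of Theorem~\ref{sub} does the work, because every point of a partitioned assembly is pinpointed by a unique realizer, so a functional weak-subobject representing the morphism arises from the graph of a partial recursive function defined on the underlying set of codes, and hence lies in $\mathbf{wSub}_{\mathbf{Rec}}$. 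Chaining this full and faithful embedding with the classical presentations of $\eff$ and with the preceding theorem delivers the whole chain of equivalences and embedding asserted in the corollary.
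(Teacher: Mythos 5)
Your proposal is correct and follows essentially the same route as the paper, which simply states that the corollary follows immediately from Theorem~\ref{sub}: the doctrine isomorphism $\mathbf{wSub}_{\mathbf{Rec}}\cong\mathbf{wSub}_{\mathbf{pAsm}}\circ\mathbf{Sgl}_{p}$ together with the full and faithful lex functor $\mathbf{Sgl}_{p}$ induces the full embedding of the elementary quotient (i.e.\ ex/lex) completions, and the remaining identifications are the classical presentations of $\eff$ recalled at the start of the section. Your elaboration of faithfulness and fullness of the induced functor is a correct filling-in of the details the paper leaves implicit.
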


\begin{remark}
Notice that the category $(\mathbf{Rec})_{ex/lex}$ is clearly equivalent to the full subcategory of $\mathbf{Eff}$ whose objects are quotients in $\mathbf{Eff}$ of objects of the form $\mathbf{Sgl}(A)$ for some object $A$ of $\mathbf{Rec}$.
\end{remark}

The theory $\tar$ has a \emph{standard} model in set theory: one can in fact consider the set of natural numbers with the interpretation of Peano arithmetic and interpret the fixpoint formulas in $\tar$  using transfinite induction (till the first uncountable ordinal $\omega_{1}$). 

\begin{theorem}
The standard interpretation of $\tar$ in $\mathsf{ZFC}$ gives rise to a functor 
$$\mathbf{Int}:\mathcal{C}_{r}\rightarrow \mathbf{Rec}$$
sending each realized collection $\mathsf{A}$ of $\tar$ to the subset of $\mathbb{N}$ given by the interpretation of the formula $x\,\varepsilon\, \mathsf{A}$ and sending each arrow $[\mathbf{n}]_{\approx}$ to the recursive function encoded by the corresponding natural number $n$.
The interpretation also gives rise to a natural transformation preserving connectives and quantifiers
$$\eta:\overline{\mathbf{Prop}^{r}}\rightarrow \mathbf{wSub}_{\mathbf{Rec}}\circ \mathbf{Int}$$
which is defined analogously using theorem \ref{ws}.
In particular this allows one to define a functor $\mathbf{J}:\peff\rightarrow (\mathbf{Rec})_{ex/lex}$
and then also the functor 
$$\mathbf{I}:\peff\rightarrow \eff$$
obtained by composing $\mathbf{J}$ with the embedding in corollary \ref{corcor}.
The functor $\mathbf{I}$ preserves finite limits, exponentials, lists, finite coproducts and quotients. 
\end{theorem}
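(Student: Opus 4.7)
The plan is to construct $\mathbf{Int}$, $\eta$, $\mathbf{J}$ and $\mathbf{I}$ in sequence, in each case exploiting the syntactic nature of $\mathcal{C}_{r}$ and the functoriality of the elementary quotient completion from \cite{qu12}. First I would fix the standard $\mathsf{ZFC}$ model of $\tar$: the individual domain is $\mathbb{N}$, successor and primitive recursive symbols are interpreted in the obvious way, equality is equality on $\mathbb{N}$, and each predicate $P_{\varphi}$ is interpreted as a fixpoint (for instance the least one, obtained by transfinite iteration up to a countable ordinal) of the monotone operator on $\mathcal{P}(\mathbb{N})$ associated with the admissible formula $\varphi$. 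Writing $|\psi|$ for the interpretation of a formula $\psi$, I then set $\mathbf{Int}(\mathsf{A}):=|\varphi_{\mathsf{A}}|\subseteq\mathbb{N}$ and let $\mathbf{Int}([\mathbf{n}]_{\approx})$ be the restriction of $x\mapsto\{n\}(x)$ to $\mathbf{Int}(\mathsf{A})$. Soundness of $\tar$ with respect to this model immediately yields that $x\,\varepsilon\,\mathsf{A}\vdash_{\tar}\{\mathbf{n}\}(x)\,\varepsilon\,\mathsf{B}$ entails that $\{n\}$ is defined on $\mathbf{Int}(\mathsf{A})$ and lands in $\mathbf{Int}(\mathsf{B})$, and that provably equivalent numerals give the same set-theoretic function on $\mathbf{Int}(\mathsf{A})$. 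Functoriality is immediate, since composition in both categories is given by Kleene composition.

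Next I would construct $\eta$. By Theorem \ref{ws}, $\overline{\mathbf{Prop}^{r}}$ is naturally isomorphic to $\mathbf{wSub}_{\mathcal{C}_{r}}$, so $\eta$ is determined by the action of $\mathbf{Int}$ on weak subobjects. The key point is that $\mathbf{Int}$ is a lex functor: this reduces, via Theorem \ref{t1}, to checking that the pairing and equalizer encodings in $\tar$ are interpreted as the honest product and equalizer in $\mathbf{Rec}$. Preservation of the full logical structure (connectives together with existential and universal quantification along all arrows) then follows by comparing the realizer-based definitions in the proof of Theorem \ref{tc1} with the explicit constructions in $\mathbf{Rec}$, using that partial recursive functions transport these constructions faithfully through the embedding $\mathbf{Sgl}_{p}$.

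Given $(\mathbf{Int},\eta)$, the functor $\mathbf{J}:\peff\rightarrow(\mathbf{Rec})_{ex/lex}$ arises from the functoriality of the elementary quotient completion: $(\mathbf{Int},\eta)$ is a morphism of elementary doctrines from $\overline{\mathbf{Prop}^{r}}$ to $\mathbf{wSub}_{\mathbf{Rec}}\circ\mathbf{Int}$, and by Theorem \ref{compl} both $\peff$ and $(\mathbf{Rec})_{ex/lex}$ are obtained as elementary quotient completions of their respective weak subobject doctrines. Composing $\mathbf{J}$ with the fully faithful embedding of Corollary \ref{corcor} yields $\mathbf{I}:\peff\rightarrow\eff$. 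Preservation of the listed structure then decomposes as follows: $\mathbf{Int}$ preserves finite limits, weak exponentials, lists and finite coproducts by Theorem \ref{t1}; $\eta$ preserves the logical structure by the previous paragraph; the elementary quotient completion is itself functorial with respect to such structure-preserving morphisms (\cite{qu12}); and the embedding of Corollary \ref{corcor} preserves the list-arithmetic locally cartesian closed pretopos structure since it is realised as a full subcategory closed under the relevant constructions.

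The main obstacle will be preservation of effective quotients. Since $\mathbf{I}$ factors through two different exact-on-lex completions and then into a third, one has to check that $\mathbf{J}$ sends the coequalizer of an equivalence relation in $\peff$ (built from an equivalence object $\mathsf{R}$ in $\overline{\mathbf{Prop}^{r}}(\mathsf{A}\times\mathsf{A})$ as in Definition \ref{efftop}) to the coequalizer of the corresponding equivalence relation in $(\mathbf{Rec})_{ex/lex}$, and likewise that the further embedding into $\eff$ remains compatible with these quotients. I expect this to follow formally from the universal property of the elementary quotient completion once it has been verified that $\eta$ transports equivalence data to equivalence data, but the explicit comparison of the equivalence relation on $(\mathsf{A},\mathsf{R})$ in $\peff$ with its image in $\eff$ needs to be spelled out carefully.
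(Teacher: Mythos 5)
Your proposal is correct and follows essentially the same route as the paper: the paper's own proof simply asserts that the first part is immediate from the definition of the standard interpretation and that preservation of the structure by $\mathbf{J}$ and by the embedding of $(\mathbf{Rec})_{ex/lex}$ into $\eff$ follows from direct verifications, which is exactly the decomposition you spell out (lexness of $\mathbf{Int}$, theorem~\ref{ws} for $\eta$, functoriality of the elementary quotient completion for $\mathbf{J}$, and corollary~\ref{corcor} for the final embedding). Your closing concern about effective quotients resolves as you expect, since the induced functor on elementary quotient completions sends $(\mathsf{A},\mathsf{R})$ to $(\mathbf{Int}(\mathsf{A}),\eta(\mathsf{R}))$ and hence preserves the canonical quotients by construction.
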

\begin{proof}
The first part is immediate and follows from how the above interpretation is defined. The functor $\mathbf{I}$ preserves finite limits, exponentials, lists, finite coproducts and quotients, because $\mathbf{J}$ and the embedding from $(\mathbf{Rec})_{ex/lex}$ to $\eff$ preserve them from direct verifications.

\end{proof}

\begin{remark}
One could also compare the construction of $\peff^{\rightarrow}_{small}$ to the construction of an effective predicative category of small maps $(\eff, \overline{S})$ from a category of assemblies with small maps $(\mathcal{A}sm_{\mathcal{E}},\mathcal{S}_{\mathcal{E}} )$ with respect to a base predicative category with small maps $(\mathcal{E},\mathcal{S})$ in \cite{VDB}. 
However the construction in \cite{VDB} is a predicative rendering of the exact on regular completion from which one can obtain the Effective Topos from the category of its assemblies, while our approach is a strictly predicative rendering of an exact on lex completion preformed on a full subcategory of the category of partitioned assembies.
Morever the properties of the class of small maps obtained by considering $\mset$ are much weaker than those validated by the small maps in \cite{VDB}. In fact the class of small maps in \cite{VDB} gives rise to an internal model of $\mathsf{CZF}$ and this is not possible in our case, as we know that the proof-theoretical strength of $\tar$ is strictly weaker than that of $\mathsf{CZF}$.
\end{remark}
%% \section{Conclusions} In this paper we isolated a  subcategory $\peff$ of the Effective Topos $\eff$ which is a list-arithmetic locally cartesian closed pretopos
%% with  a fibred category of ``small object'' over it
%% and a (non-small) classifier of small subobjects.

%% It has been obtained as the exact on lex completion of a predicative rendering
%% of the subcategory of $\eff$ of recursive functions.

%% %% In a similar way to $\eff$, the category $\peff$ has been obtained as a suitable quotient completion of
%% %%  a Lawvere's doctrine extending Kleene interpretation
%% %% of intuitionistic connectives so that the Formal Church's thesis
%% %% turns out to be valid. 

%% This category can be considered a predicative variant of a topos
%% since it can be formalized in a strictly predicative theory as $\tar$.

%% Our main motivation to build such a structure is to provide
%% a predicative universe where to extract programs from proofs of the
%% Minimalist Foundation in \cite{m09} which is validated in $\peff$ thanks to results in \cite{m09,IMMSt}.
 
%% In the future we intend to provide a general notion of predicative topos
%% that includes both $\peff$ and any elementary topos as examples and that has
%% the extensional level of the Minimalist Foundation as its internal language.

\subsection*{Acknowledgments}The authors acknowledge Steve Awodey, Martin Hyland, Pino Rosolini and Thomas Streicher for very helpful discussions and suggestions and their colleagues Francesco Ciraulo and Giovanni Sambin for supporting this line of research.

\bibliographystyle{plain}		
\bibliography{bibliopsp}

\end{document}